\documentclass[12pt,preprint]{amsart}

\usepackage{amsmath, latexsym, amsfonts, amssymb, amsthm}
\usepackage{graphicx, color,hyperref,dsfont,epsfig,caption,wrapfig,subfig}
\usepackage{mathtools}
\usepackage{movie15}
\setlength{\oddsidemargin}{0mm}
\setlength{\evensidemargin}{0mm}
\setlength{\textwidth}{164mm}
\setlength{\headheight}{0mm}
\setlength{\headsep}{12mm}
\setlength{\topmargin}{0mm}
\setlength{\textheight}{220mm}
\setcounter{secnumdepth}{3}
\hypersetup{
	linktoc=page,
	linkcolor=red,          
	citecolor=blue,        
	filecolor=blue,      
	urlcolor=cyan,
	colorlinks=true           
}

\frenchspacing

\numberwithin{equation}{section}

\newtheorem{theorem}{Theorem}[section]
\newtheorem{lemma}[theorem]{Lemma}
\newtheorem{proposition}[theorem]{Proposition}
\newtheorem{corollary}[theorem]{Corollary}

\newtheorem{remark}[theorem]{Remark}

\newcounter{thmc}

\newtheorem{thmcite}[thmc]{Theorem}

\theoremstyle{definition}


\setcounter{tocdepth}{2}

\renewcommand{\tilde}{\widetilde}          
\DeclareMathSymbol{\leqslant}{\mathalpha}{AMSa}{"36} 
\DeclareMathSymbol{\geqslant}{\mathalpha}{AMSa}{"3E} 
\DeclareMathSymbol{\eset}{\mathalpha}{AMSb}{"3F}     
\renewcommand{\leq}{\;\leqslant\;}                   
\renewcommand{\geq}{\;\geqslant\;}                   



\newcommand{\C}{\mathbb{C}}
\newcommand{\R}{\mathbb{R}}
\newcommand{\Z}{\mathbb{Z}}
\newcommand{\N}{\mathbb{N}}
\newcommand{\Q}{\mathbb{Q}} 
\newcommand{\D}{\mathbb{D}}

\newcommand{\E}{\mathds{E}}
\newcommand{\X}{\bm{\mathrm X}} 
\newcommand{\Y}{\bm{\mathrm Y}} 
\newcommand{\M}{\bm{\mathrm M}} 
\newcommand{\V}{\bm{\mathrm V}} 
\newcommand{\B}{\bm{\mathcal B}}

\renewcommand{\P}{\mathds{P}}

\newcommand{\ps}[1]{\langle #1 \rangle}

\def\FaLi{\mathfrak{C}_\gamma(\alpha_0,\alpha_1,\alpha_\infty)}


\def\eps{\varepsilon}

\def\bi{\begin{itemize}}
	\def\ei{\end{itemize}}

\def\bnum{\begin{enumerate}}
	\def\enum{\end{enumerate}}
\newcommand{\hyper}[3]{ {}_3F_2\left(\begin{matrix}#1 \\ #2\end{matrix} \middle\vert #3 \right)}
\def\<#1{\langle #1 \rangle}

\def\M{\mathbf{M}}

\newcommand{\norm}[1]{\left\lvert#1\right\rvert}
\newcommand{\expect}[1]{\mathbb{E}\left[#1\right]}
\usepackage{bm}
\usepackage{bbm}

\title[Three-point correlation functions in the $\mathfrak{sl}_3$ Toda CFT]{Three-point correlation functions in the $\mathfrak{sl}_3$ Toda theory II: the Fateev-Litvinov formula}

\author{Baptiste Cercl\'e}
\email{baptiste.cercle@universite-paris-saclay.fr}
\address{Laboratoire de Math\'ematiques d'Orsay, B\^atiment 307. Facult\'e des Sciences d'Orsay, Universit\'e Paris-Saclay. F-91405 Orsay Cedex, France}


\begin{document}

	\maketitle
	\begin{abstract}
		Toda Conformal Field Theories (CFTs) form a family of two-dimensional CFTs indexed by semisimple and complex Lie algebras. One of their remarkable features is that they are natural generalizations of Liouville CFT that enjoy an enhanced level of symmetry, prescribed by $W$-algebras. They likewise admit a probabilistic formulation in terms of Gaussian Multiplicative Chaos.
		
		Based on this probabilistic framework, this second article in a two-part series is dedicated to providing a first step towards integrability of these theories. In this perspective we prove the Fateev-Litvinov formula for a family of three-point correlation functions associated to the $\mathfrak{sl}_3$ Toda CFT. This result is the analog of the celebrated DOZZ formula in Liouville CFT. Our method of proof features techniques inspired by the physics literature together with probabilistic ones that naturally arise within the setting of Toda theories. 
	\end{abstract}
	
	
	
	\section{Introduction}
	
	
	\subsection{Toda Conformal Field Theories}
	\subsubsection{Liouville Conformal Field Theory}
	In a recent work~\cite{GKRV_Segal}, Guillarmou, Kupiainen, Rhodes and Vargas proposed a few months ago a rigorous derivation of Segal's axioms ---a functorial definition proposed in 1987 by Segal for two-dimensional Conformal Field Theory (2d CFT hereafter)--- within the framework of Liouville CFT. This achievement is the culmination of a series of work, initiated by David, Kupiainen, Rhodes and Vargas in 2014~\cite{DKRV} and aimed at recovering within a mathematical setting the predictions made in the physics literature for Liouville CFT. 
	
	Introduced by Polyakov in his 1981 seminal work~\cite{Pol81}, Liouville CFT has indeed drawn considerable attention in both the physics and mathematics community. Initially designed as a model of random surfaces in the context of 2d quantum gravity, Liouville CFT has since proved to be key in the understanding of 2d CFT, all the more following the introduction by Belavin-Polyakov-Zamolodchikov~\cite{BPZ} of a general procedure, referred to as \textit{conformal bootstrap}, for solving models of 2d CFT. The importance of Liouville CFT is stressed \textit{e.g.} in~\cite{Teschner_revisited, Nakayama} and the references therein,  to which we refer for additional details.
	
	Following these developments providing a rigorous framework for Liouville CFT has been a challenge for mathematicians and led to major important advances in the understanding of two-dimensional random geometry. One of the program proposed in this direction is the one developed by David-Guillarmou-Kupiainen-Rhodes-Vargas, who successfully addressed this issue by providing a mathematical proof of the DOZZ formula~\cite{KRV_DOZZ}, predicted by physicists~\cite{DO94, ZZ96} in the 90's, as well as a rigorous derivation of the conformal bootstrap procedure~\cite{GKRV}, one of the key inputs in the study of 2d CFT in the physics literature. On a similar perspective and building on the intrinsic connection between Liouville CFT and Conformal Loop Ensembles (CLEs)~\cite{Sh_CLE, SW, MSW}, the imaginary DOZZ formula was shown by Ang, Cai, Wu and Sun~\cite{AS_DOZZ} to describe certain CLE observables. The study of connections between Liouville CFT and random planar maps has also proved to be fundamental and let to numerous breakthroughs~\cite{LeG13, Mie13, MS15a, MS16a, MS16b, HoSu19}, all the more thanks to the construction of the so-called Liouville Quantum Gravity metric~\cite{DDDF,DFGPS, GM20} (we refer to~\cite{DDG} for an overview of the topic).
	
	\subsubsection{From Liouville to Toda Conformal Field Theories}
	Inspired by the Belavin-Polyakov-Zamolodchikov seminal work, Zamolodchikov proposed in 1985~\cite{Za85} a framework designed to extend this machinery to models that enjoy, in addition to conformal invariance, an enhanced level of symmetry. These additional symmetries, called \textit{higher-spin} or \textit{W}-symmetries, are encoded by $W$-algebras, which are Vertex Operator algebras that contain the Virasoro algebra as a subalgebra. 
	Toda CFTs, a family of two-dimensional CFTs indexed by semisimple and complex Lie algebras $\mathfrak{g}$, provide natural extensions of Liouville CFT within this setting. Indeed Liouville CFT is actually the simplest case of a Toda CFT, in that it actually corresponds to the choice of $\mathfrak g=\mathfrak{sl}_2$ for the underlying Lie algebra. However for generic $\mathfrak{g}$ the algebras of symmetry of these 2d CFTs are no longer given by the Virasoro algebra but rather by $W$-algebras, which makes their study particularly interesting from the point of view of representation theory of $W$-algebras (more on this topic can be found \emph{e.g.} in ~\cite{Arakawa_intro}), but also from the perspective of $W$-symmetry (in this respect we refer to the review~\cite{BouSch} and the references therein) and for their links with 2d Quantum Field Theories with Kac-Moody symmetry (see for instance~\cite{BFFOrW1,BFFOrW2} for their interplays with Wess-Zumino-Witten models).  However and unlike Liouville CFT, Toda CFTs are still far from being completely understood, despite having initiated a huge amount of work in the physics literature, all the more thanks to their links with certain four-dimensional gauge theories ---since they are the general setting for the AGT correspondence~\cite{AGT} (see also~\cite{MO,SV_AGT} for a mathematical take on this correspondence)--- and models of statistical physics (for instance the reference~\cite{magnet} provides a journalistic survey on the connection, first unveiled by Zamolodchikov~\cite{Za_E8}, between the Ising Model in a Magnetic Field at criticality and a certain Toda theory associated to the exceptional Lie algebra $E_8$). Their mathematical comprehension is more recent and has been initiated by Rhodes, Vargas and the author in~\cite{Toda_construction}, where a mathematically rigorous interpretation of Toda path integral has been proposed. 
	
	\subsubsection{Path integral for Toda CFTs}
	Indeed one specificity of this family of CFTs is that they can be defined using a path integral approach, thanks to which they admit a probabilistic representation that allows their mathematical study. Namely Toda CFTs provide a way of picking at random a function from a Riemannian surface $\Sigma$ to an Euclidean space $\mathfrak a\simeq \R^r$, the \textit{Toda field} $\varphi$. This Euclidean space comes equipped with a scalar product $\ps{\cdot,\cdot}$ as well as a special basis $(e_1,\cdots,e_r)$, both inherited from the underlying Lie algebra structure. The path integral defines heuristically the law of the Toda field via
	\begin{equation}\label{eq:path_integral}
		\langle F(\varphi) \rangle_{T,g} \coloneqq \int_{\X:\Sigma\to\R^r} F( \X)e^{-S_{T,\mathfrak g}( \X,g)}D \X
	\end{equation}
	where $D \X$ should stand for a \lq\lq uniform measure" over the space of square integrable $\R^r$-valued maps defined on $\Sigma$ and $ S_{T,\mathfrak{g}}$ is the Toda action given by 
	\begin{equation}\label{eq:Toda_action}
		S_{T,\mathfrak{g}}(\X,g)\coloneqq  \frac{1}{4\pi} \int_{\Sigma}  \Big (  \langle\partial_g \X(x), \partial_g \X(x) \rangle_g   +R_g \langle Q, \X(x) \rangle +4\pi \sum_{i=1}^{r} \mu_ie^{\gamma    \langle e_i,\X(x) \rangle}   \Big)\,{\rm v}_{g}(dx).
	\end{equation}
	Here $g$ is a Riemannian metric over $\Sigma$ with associated scalar curvature $R_g$, gradient $\partial_g$ and volume form ${\rm v}_g$, while $\gamma\in(0,\sqrt 2)$\footnote{The range of values $(0,\sqrt 2)$ for the coupling constant $\gamma$ only differs from the one commonly encountered in Liouville theory by a conventional matter. Namely the coupling constant from Liouville and the one of Toda are related by $\gamma\leftrightarrow \sqrt 2\gamma$. This convention accounts for the fact that some elements of the basis have squared norm $2$.} is the coupling constant, $Q\in\R^r$ the background charge and the constants $\mu_i$, $1\leq i\leq r$, are positive and referred to as the cosmological constants. 
	
	Within this framework, Vertex Operators are functionals of the Toda field that depend on an insertion point $z\in\Sigma$ as well as a weight $\alpha\in\R^r$, and are formally defined by setting $V_{\alpha}(z)=e^{\ps{\alpha,\varphi(z)}}$. The correlation functions then take the form
	\begin{equation}\label{eq:formal_correl}
		\langle \prod_{k=1}^NV_{\alpha_k}(z_k) \rangle_{T,g} \coloneqq \int_{\X:\Sigma\to\R^r} \prod_{k=1}^Ne^{\ps{\alpha_k,\X(z_k)}}e^{-S_{T,\mathfrak g}( \X,g)}D \X.
	\end{equation}
	
	\subsubsection{$\mathfrak{sl}_3$ Toda three-point correlation functions}
	It was proved in~\cite{Toda_construction} that it is possible to interpret this path integral using a probabilistic framework based on \emph{Gaussian Free Fields} (GFF in the sequel) and \emph{Gaussian Multiplicative Chaos} (GMC hereafter). Namely, under some assumptions on the weights $(\alpha_1,\cdots,\alpha_N)$ called \emph{Seiberg bounds} the correlation functions admit a probabilistic representation involving correlated GMC measures~\cite[Theorem 3.1]{Toda_construction}. In particular this allows to define probabilistic $\mathfrak{sl}_3$ Toda three-point correlation functions $C_\gamma(\alpha_0,\alpha_1,\alpha_\infty)$ by setting:
	\begin{equation}\label{eq:DOZZ_Toda}
		C_\gamma(\alpha_1,\alpha_2,\alpha_3)\coloneqq \mathbb{E}\left[\prod_{i=1}^2\frac{\Gamma(s_i)\rho_i(\alpha_1,\alpha_2,\alpha_3)^{-s_i}}{\gamma\mu_i^{s_i}}\right]
	\end{equation}
	where the exponents $s_i$ depend on $\gamma$ and the weights $\alpha_k$, $1\leq k\leq 3$, while 
	\begin{equation}\label{eq:rho}
		\rho_i(\alpha_1,\alpha_2,\alpha_3)\coloneqq\int_{\mathbb C} \frac{\norm{x}_+^{\gamma\ps{\alpha_1+\alpha_2+\alpha_3,e_i}}}{\norm{x}^{\gamma\ps{\alpha_1,e_i}}\norm{x-1}^{\gamma\ps{\alpha_2,e_i}}}M^{\gamma e_i}(d^2x)
	\end{equation}
	with $\left(M^{\gamma e_i}(d^2x)\right)_{i=1,2}$ a pair of correlated GMC measures, well-defined for $\gamma\in(0,\sqrt 2)$ ---we defer to Section~\ref{sec:toda} for more details. We are actually able to extend this definition as soon as $(\alpha_0,\alpha_1,\alpha_\infty)$ meets certain requirements, much weaker than the Seiberg bounds, and which we describe in Theorem~\ref{thm:analycity} where the set of such weights is denoted by $\mathcal{A}_3$. 
	This interpretation of the Toda correlation functions was proved to be meaningful in the context of $W$-symmetry by Huang and the author in~\cite{Toda_OPEWV}, where this enhanced level of symmetry was shown to be present within the probabilistic model via the existence of so-called \emph{Ward identities}.
	
	One of the most important steps in the perspective of unifying this probabilistic approach and the one implemented in the physics literature is a rigorous derivation of the three-point correlation functions $C_\gamma(\alpha_0,\alpha_1,\alpha_\infty)$ associated to the $\mathfrak{sl}_n$ Toda CFT, whose expressions have been proposed by Fateev and Litvinov~\cite{FaLi0,FaLi1}. 
	In the case where $\mathfrak g=\mathfrak{sl}_3$, the proposal of Fateev-Litvinov is based on the special functions $\Upsilon$ and $l(x)\coloneqq \frac{\Gamma(x)}{\Gamma(1-x)}$ (see Section~\ref{sec:toda_end} for more background) and takes the form
	\begin{equation}\label{eq:fali}
		\begin{split}
			\mathfrak{C}_\gamma(\alpha_0,\alpha_1,\alpha_\infty)=&\left(\pi\mu l\left(\frac{\gamma^2}{2}\right)\left(\frac{\gamma}{\sqrt 2}\right)^{2-\gamma^2}\right)^{\frac{\ps{2Q-\bar\alpha,\rho}}{\gamma}}\\
			&\quad\frac{\displaystyle\Upsilon'(0)^2\Upsilon(\kappa)\prod_{e\in\Phi^+}\Upsilon(\ps{Q-\alpha_0,e})\Upsilon(\ps{Q-\alpha_\infty,e})}{\prod_{1\leq j,k\leq 3}\displaystyle \Upsilon\left(\frac\kappa 3 + \ps{\alpha_0-Q,h_j})+\ps{\alpha_\infty-Q,h_k}\right)}
		\end{split}
	\end{equation} 
	where the weight $\alpha_1$ has to be taken of the form $\alpha_1=\kappa \omega_2$ for $\kappa$ real, with $\omega_2=\frac{2e_2+e_1}{3}$ and where the $(h_i)_{1\leq i\leq3}$ are special vectors in $\R^2$ (see Figure~\ref{fig:weyl} below). We also denoted by $\Phi^+\coloneqq\{e_1,e_2,e_1+e_2\}$ the set of positive roots.
	
	\begin{center}
		\includegraphics[height=8cm]{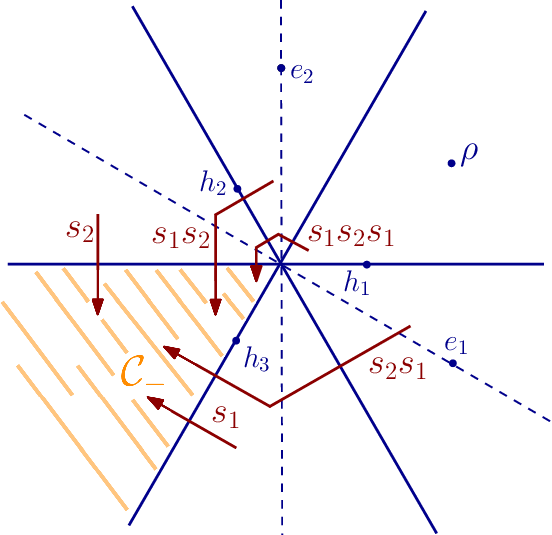}
		\captionof{figure}{The root system associated to $\mathfrak{sl}_3$}
		\label{fig:weyl}
	\end{center}
	
	A first step towards the probabilistic derivation of this formula has been achieved in the first article of this two-part series~\cite{Toda_correl1} where Toda \emph{reflection coefficients}, which are key in the perspective of recovering the formula~\eqref{eq:fali}, were shown to admit a probabilistic representation. This representation is crucial both to make sense of certain correlation functions and to study some of their properties, as well as for extending the range of validity for which the expression $C_\gamma(\alpha_0,\alpha_1,\alpha_\infty)$ makes sense. Indeed one of the specificities of the formula defined by Equation~\eqref{eq:fali} is the existence of transformations $\hat s:\R^2\to\R^2$ and reflection coefficients $R_s:\R^2\to\R$, indexed by elements $s$ of the Weyl group $W$ associated to $\mathfrak{sl}_3$ (see Figure~\ref{fig:weyl} for a simple illustration, but we refer to Subsection~\ref{subsec:lie} for additional details), such that 
	\begin{equation}
		\mathfrak{C}_\gamma(\alpha_0,\alpha_1,\alpha_\infty)=R_s(\alpha_0)\mathfrak{C}_\gamma(\hat s\alpha_0,\alpha_1,\alpha_\infty)
	\end{equation}
	for any such $s$. However from a probabilistic perspective, the probabilistically defined correlation functions can make sense only for at most one single element of $W$, which is the one such that $s(\alpha_0 -Q)\in\mathcal{C}_-$ (see again Figure~\ref{fig:weyl} and Subsection~\ref{subsec:lie}). Nevertheless we are able to prove that the following holds true:
	\begin{theorem}\label{thm:refl}
		For any $\gamma\in(0,\sqrt2)$, assume that $\alpha_1$ is of the form $\alpha_1=\kappa\omega_2$ with $\kappa\in\R$, and set for $\alpha_\infty\in\R^2$
		\begin{equation}
			\mathcal{U}(\alpha_1,\alpha_\infty)\coloneqq\left\{\alpha_0\in\R^2, (\hat s\alpha_0,\alpha_1,\alpha_\infty)\in\mathcal{A}_3\text{ for some }s\in W\right\}.
		\end{equation}
		Then the extension over $\mathcal{U}(\alpha_1,\alpha_\infty)$ of the three-point correlation functions $C_\gamma(\alpha_0,\alpha_1,\alpha_\infty)$ defined by setting
		\begin{equation}\label{eq:ext_3pts}
			C_\gamma(\alpha_0,\alpha_1,\alpha_\infty)\coloneqq R_s(\alpha_0)C_\gamma(\hat s\alpha_0,\alpha_1,\alpha_\infty)\text{ where $s\in W$ is such that }s(\alpha_0-Q)\in\mathcal{C}_-
		\end{equation}
		is analytic in a complex neighborhood of $\mathcal{U}(\alpha_1,\alpha_\infty)$ viewed as a subset of $\C^2$. The reflection coefficients are given, in agreement with~\cite{Toda_correl1}, by
		\begin{equation}\label{equ:expression_toda_refl}
			\begin{split}
				R_s(\alpha)&=\epsilon(s)\frac{A\left(s(\alpha-Q)\right)}{A(\alpha-Q)},\quad\text{where}\\
				A(\alpha)&=\prod_{i=1}^2\left(\mu_i\pi l\left(\frac{\gamma^2}{2}\right)\right)^{\frac{\ps{\alpha,\omega_i}}\gamma}\prod_{e\in\Phi^+}\Gamma\left(1-\frac{\gamma}2\ps{\alpha,e}\right)\Gamma\left(1-\frac{1}\gamma\ps{\alpha,e}\right).
			\end{split}
		\end{equation}
	\end{theorem}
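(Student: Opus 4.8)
The plan is to build the extension chamber by chamber: take the analyticity of $C_\gamma$ on (a complex neighbourhood of) $\mathcal{A}_3$ from Theorem~\ref{thm:analycity} as the seed, and cross the walls of the Weyl-chamber decomposition one at a time using the reflection coefficients from~\cite{Toda_correl1}. Recall that $W\simeq S_3$ is generated by the simple reflections $s_1,s_2$ attached to the roots $e_1,e_2$; acting on the shifted variable $\alpha_0-Q$ they tile $\R^2$ into six chambers, with $\mathcal{C}_-$ antidominant. I would set things up so that $\hat s$ is the reflection centred at $Q$, i.e. $\hat s\alpha_0-Q=s(\alpha_0-Q)$, and so that $\mathcal{C}_-$ is exactly the chamber on which $(\alpha_0,\alpha_1,\alpha_\infty)\in\mathcal{A}_3$ holds directly; note that $\hat s$ leaves the standing assumption $\alpha_1=\kappa\omega_2$ untouched since it acts on $\alpha_0$ alone.

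First I would record the cocycle identity
\begin{equation*}
R_{s's}(\alpha)=R_{s'}(\hat s\alpha)\,R_s(\alpha),\qquad s,s'\in W,
\end{equation*}
which is immediate from the closed form $R_s(\alpha)=\epsilon(s)\,A(s(\alpha-Q))/A(\alpha-Q)$: the intermediate factors $A(s(\alpha-Q))$ cancel and $\epsilon$ is a homomorphism. Two things follow. The prescription~\eqref{eq:ext_3pts} is independent of how the selected $s$ is written as a product of simple reflections, so in particular the two reduced words for the longest element (the braid relation $s_1s_2s_1=s_2s_1s_2$) give the same reflection coefficient and no monodromy can arise from the combinatorics of $W$. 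And whenever two elements both carry $\alpha_0-Q$ into $\overline{\mathcal{C}_-}$---which happens precisely on the walls---the competing candidate values in~\eqref{eq:ext_3pts} coincide as soon as the elementary (single simple reflection) relation holds. It therefore suffices to treat one wall and one simple reflection, then propagate.

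The heart of the argument is the elementary reflection relation
\begin{equation*}
C_\gamma(\alpha_0,\alpha_1,\alpha_\infty)=R_{s_i}(\alpha_0)\,C_\gamma(\hat s_i\alpha_0,\alpha_1,\alpha_\infty),
\end{equation*}
which I want to upgrade from a statement on the wall $W_i=\{s_i(\alpha_0-Q)=\alpha_0-Q\}$ to an identity of analytic functions on an open complex set straddling $W_i$. The mechanism is the one isolated in~\cite{Toda_correl1}: as $\alpha_0-Q$ crosses $W_i$ the GMC moment $\rho_i$ of~\eqref{eq:rho} reaches the critical regime of the $i$-th Seiberg bound, and the analytic continuation of $C_\gamma$ past $W_i$ departs from the naive one exactly by the factor $R_{s_i}$, whose probabilistic representation is the content of the first article. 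Provided $\mathcal{A}_3$ extends a little past each wall---which should hold since by Theorem~\ref{thm:analycity} it is defined by requirements \emph{much weaker} than the Seiberg bounds---the two chambers adjacent to $W_i$ have an open overlap on which both $C_\gamma(\alpha_0,\alpha_1,\alpha_\infty)$ and $R_{s_i}(\alpha_0)\,C_\gamma(\hat s_i\alpha_0,\alpha_1,\alpha_\infty)$ are defined and analytic; matching them there and invoking the identity theorem turns the wall-coincidence into a genuine analytic continuation across $W_i$.

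Iterating over the six chambers starting from $\mathcal{C}_-$ then covers all of $\mathcal{U}(\alpha_1,\alpha_\infty)$, the cocycle identity guaranteeing consistency of the local pieces, and the reflection coefficients come out in the form~\eqref{equ:expression_toda_refl} recorded in~\cite{Toda_correl1}. I expect the main difficulties to be the following. First, one must genuinely produce the open overlap in the previous step: if $\mathcal{A}_3$ were too narrow the matching would take place only along the real wall, which is not enough for holomorphic gluing, so the width of $\mathcal{A}_3$ must be quantified. Second, since $R_{s_i}$ is a \emph{ratio} of $\Gamma$-factors it is only meromorphic, and to obtain analyticity on a full complex neighbourhood of $\mathcal{U}$ one must check that its poles are either pushed outside that neighbourhood or cancelled by zeros of $C_\gamma(\hat s_i\alpha_0,\alpha_1,\alpha_\infty)$; controlling this pole/zero matching, together with the consistency at the codimension-two loci where walls meet (in particular at $\alpha_0=Q$), is where I anticipate the real work.
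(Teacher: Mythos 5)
Your gluing step cannot get started, because the open overlap it relies on does not exist. The set $\mathcal{A}_3$ is by definition a subset of $(Q+\mathcal{C}_-)^3$: the chamber condition $\ps{\alpha_0-Q,e_i}<0$, $i=1,2$, is \emph{not} among the bounds that Theorem~\ref{thm:analycity} relaxes. What that theorem relaxes is only the condition on $\bm s=\alpha_0+\alpha_1+\alpha_\infty-2Q$ (from $\ps{\bm s,\omega_i}>0$ down to $\ps{\bm s,\omega_i}>-\gamma$); the requirement $\alpha_0\in Q+\mathcal{C}_-$ is what makes the GMC integrals $I^i(\C)$ finite and cannot be weakened inside the probabilistic definition. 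Consequently the sets $\mathcal{U}_s\coloneqq\{\alpha_0:(\hat s\alpha_0,\alpha_1,\alpha_\infty)\in\mathcal{A}_3\}$, $s\in W$, are pairwise disjoint open subsets of distinct Weyl chambers around $Q$; two of them adjacent to a wall $W_i$ meet at most along that wall, which is not open in $\R^2$ and on which neither probabilistic expression is even defined (the chambers being open). Note also that on $W_i$ one has $R_{s_i}(\alpha_0)=\epsilon(s_i)=-1$, so the ``wall coincidence'' you invoke would read $C_\gamma=-C_\gamma$, i.e.\ a vanishing statement, not a tautology. Hence there is no domain on which the identity theorem can compare $C_\gamma(\alpha_0,\alpha_1,\alpha_\infty)$ with $R_{s_i}(\alpha_0)C_\gamma(\hat s_i\alpha_0,\alpha_1,\alpha_\infty)$. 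Finally, your appeal to \cite{Toda_correl1} for the principle that the continuation of $C_\gamma$ past $W_i$ differs from the naive one exactly by $R_{s_i}$ begs the question: that paper provides a probabilistic representation of the coefficients $R_s$, not the reflection identity for analytically continued correlation functions, which is precisely the content of Theorem~\ref{thm:refl}.

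What actually bridges the chambers in the paper is a third analytic function manufactured from shift equations, and this is where the real work (and the hypothesis $\alpha_1=\kappa\omega_2$) goes. One compares two expansions of the auxiliary four-point function $\ps{V_{-\chi h_1}(z)V_{\alpha_0}(0)V_{\kappa\omega_2}(1)V_{\alpha_\infty}(\infty)}$: the hypergeometric expansion coming from the BPZ-type equation (Theorem~\ref{thm:BPZ}), and the probabilistic OPE around $z=0$ (Lemmas~\ref{lemma:OPE1}--\ref{lemma:OPE3}), in which the reflection coefficients emerge as output of the Williams path-decomposition asymptotics (see for instance Equation~\eqref{eq:reste_B2}). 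Identifying coefficients shows that both $C_\gamma(\beta,\alpha_1,\alpha_\infty)$ for $\beta\in\mathcal{U}_{Id}$ and $R_{s_1}(\hat s_1\beta)C_\gamma(\hat s_1\beta,\alpha_1,\alpha_\infty)$ for $\beta\in\mathcal{U}_{s_1}$ coincide with the single function $F(\beta)\coloneqq C_\gamma(\beta-\gamma e_1,\alpha_1,\alpha_\infty)\,A^{(1)}_\gamma(\cdots)/B^{(1)}_\gamma(\cdots)$; since the shifted argument $\beta-\gamma e_1$ stays inside $Q+\mathcal{C}_-$ while $\beta$ itself straddles the wall, $F$ is analytic across $W_1$ and realizes the continuation, and the cocycle identity $R_{s_i}(\hat s\beta)R_s(\beta)=R_{s_is}(\beta)$ (which you correctly isolated) propagates this to all of $W$. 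Observe that the BPZ input forces $\alpha_1$ to be semi-degenerate, whereas your argument never uses $\alpha_1=\kappa\omega_2$ --- a reliable sign that the mechanism is missing: if a direct wall-crossing argument of the kind you sketch were available, Theorem~\ref{thm:refl} would hold for arbitrary $\alpha_1$, which is neither known nor claimed.
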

	Recall that due to the convention on the coupling constant $\gamma$ the range of values $(0,\sqrt 2)$ is the optimal one to probabilistically make sense of the correlation functions. 
	
	Based on the above, the main statement of the present document is an integrability result for the $\mathfrak{sl}_3$ Toda CFT. Namely we provide, under the assumption on the coupling constant that $\gamma\in[1,\sqrt2)$, a rigorous derivation of the formula~\eqref{eq:fali} proposed for such three-point correlation functions Toda correlation functions, in agreement with predictions from the physics literature:
	\begin{theorem}\label{thm:main_result}
		For $\gamma\in[1,\sqrt2)$ assume that $\alpha_1$ is of the form $\alpha_1=\kappa\omega_2$ with $\kappa\in\R$, and that $(\alpha_0,\alpha_1,\alpha_\infty)$ is such that $\alpha_0\in\mathcal{U}(\alpha_1,\alpha_\infty)$.
		Then 
		\begin{equation}\label{eq:main_result}
			C_\gamma(\alpha_0,\alpha_1,\alpha_\infty) = \mathfrak{C}_\gamma(\alpha_0,\alpha_1,\alpha_\infty).
		\end{equation}
		
		For the general case where $\gamma\in(0,\sqrt2)$, the following shift equations hold true
		\begin{equation}\label{eq:shift_main}
			\frac{C_\gamma(\alpha_0,\alpha_1,\alpha_\infty)}{\mathfrak{C}_\gamma(\alpha_0,\alpha_1,\alpha_\infty)}=\frac{C_\gamma(\alpha_0+\gamma e,\alpha_1,\alpha_\infty)}{\mathfrak{C}_\gamma(\alpha_0+\gamma e,\alpha_1,\alpha_\infty)}
		\end{equation}
		for every $e\in\Phi^+$.
	\end{theorem}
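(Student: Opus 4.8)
The plan is to follow the Teschner-style \emph{shift-equation} strategy, transported to the $W_3$ setting. I would first establish the shift equations \eqref{eq:shift_main} for all $\gamma\in(0,\sqrt2)$, and then upgrade them to the full identity \eqref{eq:main_result} on the range $\gamma\in[1,\sqrt2)$ by combining them with a second, dual family of shift equations together with a normalization argument. The central auxiliary object is a degenerate four-point function obtained by inserting a degenerate vertex operator $V_{\chi\omega_1}$, with $\chi$ a suitable negative multiple of $\gamma$ (the $\mathfrak{sl}_3$ analogue of the Liouville degenerate insertion $V_{-\gamma/2}$), alongside $V_{\alpha_0}(0)$, $V_{\alpha_1}(1)$, $V_{\alpha_\infty}(\infty)$; the standing assumption $\alpha_1=\kappa\omega_2$ is exactly the semi-degeneracy that renders this object solvable in closed form.

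To produce the shift equations for $C_\gamma$, I would argue that, by the $W_3$-Ward identities of \cite{Toda_OPEWV} together with the null-vector relation obeyed by $V_{\chi\omega_1}$, this degenerate four-point function solves a third-order Fuchsian ODE in the position of the degenerate insertion, with regular singular points at $0,1,\infty$ and solutions given by generalized hypergeometric functions of type ${}_3F_2$. The three local exponents at $z\to0$ encode the three fusion channels, labelled by the weights $h_1,h_2,h_3$, and a probabilistic operator product expansion for the correlated GMC measures ---made rigorous through Girsanov/Cameron--Martin shifts and, in the regimes where the GMC moment diverges, through analytic continuation governed by the reflection coefficients of \cite{Toda_correl1}--- identifies the amplitude of each channel as a shifted structure constant $C_\gamma(\alpha_0-\gamma h_j,\alpha_1,\alpha_\infty)$ times an explicit prefactor. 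Performing the same expansion at $z\to1$ and equating the two decompositions through the explicit ${}_3F_2$ connection matrix (whose entries are ratios of Gamma functions) yields, after simplification and relabelling, the relations between $C_\gamma(\alpha_0,\cdots)$ and $C_\gamma(\alpha_0+\gamma e,\cdots)$ for $e\in\Phi^+$, since the differences $h_j-h_k$ run precisely over $\pm\Phi^+$.

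The matching statement \eqref{eq:shift_main} then reduces to checking that the Fateev--Litvinov expression $\mathfrak C_\gamma$ in \eqref{eq:fali} obeys the very same shift equations. This is a special-function computation: using the functional (shift) equations satisfied by $\Upsilon$ and by $l$, I would compute the ratio $\mathfrak C_\gamma(\alpha_0+\gamma e,\cdots)/\mathfrak C_\gamma(\alpha_0,\cdots)$ and verify term-by-term that it coincides with the multiplicative factor found for $C_\gamma$; the $h_j$ appearing in the denominator of \eqref{eq:fali} are exactly those indexing the three channels, so the bookkeeping closes. Dividing the two relations gives \eqref{eq:shift_main}.

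For the full identity on $\gamma\in[1,\sqrt2)$, I would rerun the construction with the dual degenerate field $V_{\chi'\omega_1}$, $\chi'$ a multiple of $2/\gamma$, whose probabilistic treatment (and the attendant dual reflection coefficients) is available precisely in this range; this furnishes a second family of shift equations, by $\tfrac2\gamma e$. Setting $G(\alpha_0):=C_\gamma(\alpha_0,\alpha_1,\alpha_\infty)/\mathfrak C_\gamma(\alpha_0,\alpha_1,\alpha_\infty)$, the two families show that $G$ is invariant under both $\alpha_0\mapsto\alpha_0+\gamma e$ and $\alpha_0\mapsto\alpha_0+\tfrac2\gamma e$; by Theorem~\ref{thm:refl} the function $G$ is analytic on a complex neighbourhood of $\mathcal U(\alpha_1,\alpha_\infty)$, and for irrational $\gamma^2$ the two incommensurable periods in each root direction force $G$ to be constant, the value of the constant being pinned to $1$ by matching both sides at a distinguished configuration (e.g. a residue/boundary asymptotic inside the Seiberg region where $C_\gamma$ and $\mathfrak C_\gamma$ are already known to agree), with the rational $\gamma^2$ case following by continuity in $\gamma$. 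The main obstacle is the rigorous probabilistic OPE of the second paragraph: controlling the asymptotics of the correlated GMC integrals as the degenerate insertion collides with $V_{\alpha_0}$, including the delicate boundary channels where one must analytically continue through a divergent GMC moment and invoke the reflection coefficients to read off the correct amplitude, and then matching these probabilistic constants against the exact hypergeometric connection coefficients ---this is where the probabilistic and hypergeometric inputs must be reconciled with full precision.
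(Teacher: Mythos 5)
Your overall architecture is the same as the paper's (degenerate insertion $V_{-\chi\omega_1}$ with $\chi\in\{\gamma,\tfrac2\gamma\}$, third-order BPZ/hypergeometric equation from the $W_3$ Ward identities, probabilistic OPE at $z\to0$ made rigorous via Girsanov and the reflection coefficients, then periodicity of $C_\gamma/\mathfrak C_\gamma$ under $\gamma e$ and $\tfrac2\gamma e$ shifts, a density argument for $\gamma^2/2\notin\Q$, normalization through a DOZZ-type degenerate limit, and continuity in $\gamma$; the restriction $\gamma\in[1,\sqrt2)$ indeed comes from the fact that the four-point function with $V_{-\frac2\gamma h_1}$ is probabilistically meaningful only there). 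However, there is a genuine gap in your key step, namely the derivation of the shift equations by ``performing the same expansion at $z\to1$ and equating the two decompositions through the explicit ${}_3F_2$ connection matrix (whose entries are ratios of Gamma functions).'' For the \emph{third-order} hypergeometric equation~\eqref{eq:hypergeometric}, unlike for ${}_2F_1$, the connection problem between $z=0$ and $z=1$ is \emph{not} solvable in terms of Gamma functions: the local exponents at $z=1$ are $\{0,1,B_1+B_2-A_1-A_2-A_3\}$, only the coefficient onto the single non-integer-exponent solution is explicit, and the coefficients onto the two analytic solutions at $z=1$ are given by infinite series (B\"uhring-type expansions), not closed forms. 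Moreover, an OPE around $z=1$ means fusing $V_{-\chi\omega_1}$ with the semi-degenerate field $V_{\kappa\omega_2}(1)$: because two of the exponents there are integers, this expansion exhibits essentially one non-trivial channel (with potential logarithms in the degenerate directions), and the three-point functions it produces carry shifts of $\alpha_1$, not of $\alpha_0$. It therefore cannot determine the ratios of the three $z=0$ channel amplitudes $C_\gamma(\alpha_0-\chi h_j,\alpha_1,\alpha_\infty)$, which is exactly what the shift equations in $\alpha_0$ require.

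What the paper does instead, and what you would need to substitute for that step, is Proposition~\ref{prop:hyper_sol}: one works with the basis of solutions at $z=\infty$, for which the connection to the $z=0$ basis \emph{is} explicit in Gamma functions (Thomae's relations, \cite{Thomae,Smith}), and one imposes that the real-valued solution $\mathcal H$ together with its derivatives be continuous across both rays $(-\infty,0)$ and $(1,\infty)$. This single-valuedness (``crossing symmetry'') argument pins down $\mathcal H=\mathcal H(0)\bigl(\norm{\mathcal H_0}^2+\sum_{i=1}^2A^{(i)}_\gamma\norm{\mathcal H_i}^2\bigr)$ with explicit ratios $A^{(i)}_\gamma$ (Theorem~\ref{thm:BPZ}), so that a \emph{single} probabilistic OPE at $z\to0$ (Theorem~\ref{thm:OPE}) suffices to identify the coefficients and produce the shift equations; the rest of your plan (checking that $\mathfrak C_\gamma$ satisfies the same equations via the $\Upsilon$ shift relations, incommensurable periods, and evaluation of the constant) then goes through as you describe.
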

	The main obstruction at the time being that prevents one from deriving the equality~\eqref{eq:main_result} for the whole range of values for $\gamma$ is to prove that the shift equation dual to Equation~\eqref{eq:shift_main} holds true:
	\begin{equation}\label{eq:shift_demain}
		\frac{C_\gamma(\alpha_0,\alpha_1,\alpha_\infty)}{\mathfrak{C}_\gamma(\alpha_0,\alpha_1,\alpha_\infty)}=\frac{C_\gamma(\alpha_0+\frac2\gamma e,\alpha_1,\alpha_\infty)}{\mathfrak{C}_\gamma(\alpha_0+\frac2\gamma e,\alpha_1,\alpha_\infty)}\cdot
	\end{equation}
	Using the standard conventions for the coupling constant in Liouville theory, the above statements would correspond to a computation of a family of three-point correlation functions for either $\gamma\in(0,2)$ for Theorem~\ref{thm:refl} and Equation~\eqref{eq:shift_main}, and $\gamma\in(\sqrt 2,2)$ for Theorem~\ref{thm:main_result}, which is the most interesting range of values from the point of view of statistical physics.
	
	
	\subsection{Overview of the article and method of proof}
	In order to prove our main statement on the $\mathfrak{sl}_3$ Toda three-point correlation functions, our goal is to show that both shift equations~\eqref{eq:shift_main} and~\eqref{eq:shift_demain} hold true. For this purpose we will actually study certain \emph{four-point} correlation functions and prove that they admit two alternative expressions. The condition that these two expressions coincide will account for both shift equations~\eqref{eq:shift_main} and~\eqref{eq:shift_demain}. However and as opposed to Liouville theory making sense of such correlation functions  is far from obvious and requires some care. We summarize below the different steps of the proof that will allow to infer Theorem~\ref{thm:main_result}.
	
	\subsubsection{Definition and analytic continuation of Toda correlation functions}The first part of this document, Section~\ref{sec:toda}, will be devoted to recalling the probabilistic framework introduced in~\cite{Toda_construction} to provide a rigorous meaning to the Toda correlation functions formally defined by Equation~\eqref{eq:formal_correl}. In this section we also unveil a probabilistic expression that allows to analytically extend the probabilistic definition of the correlation functions beyond the bounds on the weights $\alpha_1,\cdots,\alpha_N$ that are usually assumed to hold so as to define them~\cite[Theorem 3.1]{Toda_construction}. Providing such an extension of the range of values required to make sense of the correlation functions is a key step in the derivation of Theorem~\ref{thm:main_result} and a major technical difficulty compared to the proof of the DOZZ formula~\cite{KRV_DOZZ}, where defining such an extension was actually unnecessary.
	
	The method developed to define such an analytic continuation of the correlation functions is based on a new analytic continuation of moments of GMC measures $M^\gamma(d^2x)$. Namely if we set for $\frac2\gamma<\alpha<Q$ and $p<1$
	\begin{equation}
		\begin{split}
			&F(p)\coloneqq \frac{1}{\Gamma(-p)}\int_{\R}e^{-pc}\expect{e^{-e^ cI(\alpha)}-\mathds 1_{p>0}\left(1+\mathds{1}_{p>\frac{2}{\gamma}(Q-\alpha)}e^{\frac{2}{\gamma}(Q-\alpha)c}R(\alpha)\right)}dc\\
			&\text{with}\quad I(\alpha)\coloneqq \int_\D\norm{x}^{-\gamma\alpha}M^\gamma(d^2x)
		\end{split}
	\end{equation}
	then when $p<\frac{2}{\gamma}(Q-\alpha)$ we have $F(p)=\expect{I(\alpha)^p}$. Moreover we will justify that $F$ can be analytically extended in a complex neighborhood of $(-\infty,1)$ despite $\expect{I(\alpha)^p}$ being infinite for $\frac{2}{\gamma}(Q-\alpha)<p<1$: in that sense $F$ provides an analytic continuation of moments of the GMC measure $I(\alpha)$ beyond the range of values for which they are well-defined.
	An additional consequence of this method that we develop in the present document is the following probabilistic representation of the DOZZ formula~\cite{DO94, ZZ96} for \emph{Liouville CFT} beyond the assumptions made in~\cite{KRV_DOZZ}:
	\begin{corollary}\label{cor:DOZZ}
		Given $\gamma\in(0,2)$, assume that $\alpha_1,\alpha_2,\alpha_3\in\R$ are such that $\alpha_k<Q\coloneqq\frac\gamma2+\frac2\gamma$ for $1\leq k\leq 3$ with the assumption that $s\coloneqq\alpha_1+\alpha_2+\alpha_3-2Q$ satisfies $s>-\gamma$, and set
		\[
		C_\gamma(\alpha_1,\alpha_2,\alpha_3)\coloneqq \int_\R e^{sc}\expect{e^{-\mu e^{\gamma c}\rho(\alpha_1,\alpha_2,\alpha_3)}-\mathcal{R}_{\bm\alpha}(c)}dc
		\]
		where $\rho(\alpha_1,\alpha_2,\alpha_3)$ has been introduced in~\cite[Equation (2.17)]{KRV_DOZZ} and is defined using the GMC measure $M^\gamma(d^2x)$:
		\begin{align*}
			\rho(\alpha_1,\alpha_2,\alpha_3)&=\int_\C \frac{\norm{x}_+^{\gamma(\alpha_1+\alpha_2+\alpha_3)}}{\norm{x}^{\gamma\alpha_1}\norm{x-1}^{\gamma\alpha_2}}M^\gamma(d^2x),\quad\text{while}\\
			\mathcal{R}_{\bm\alpha}(c)&\coloneqq \sum_{\mathcal{U}\subset\{1,2,3\}}\mathds{1}_{s<\sum_{k\in\mathcal{U}}2(\alpha_k-Q)}\prod_{k\in\mathcal{U}}R(\alpha_k)e^{2(Q-\alpha_k)c}\quad\text{with}\\
			R(\alpha)&=-\left(\pi\mu l\left(\frac{\gamma^2}{4}\right)\right)^{\frac{2(Q-\alpha)}\gamma}\frac{\Gamma\left(\frac{2(\alpha-Q)}{\gamma}\right)\Gamma\Big(\frac{\gamma}2(\alpha-Q)\Big)}{\Gamma\left(\frac{2(Q-\alpha)}{\gamma}\right)\Gamma\Big(\frac\gamma2(Q-\alpha)\Big)}
		\end{align*}
		the Liouville reflection coefficient. Then $C_\gamma(\alpha_1,\alpha_2,\alpha_3)$ is analytic in a complex neighborhood of $\{(\alpha_1,\alpha_2,\alpha_3)\in(-\infty,Q),\quad s>-\gamma\}$. In particular
		\begin{equation}
			C_\gamma(\alpha_1,\alpha_2,\alpha_3)=C_\gamma^{DOZZ}(\alpha_1,\alpha_2,\alpha_3)
		\end{equation}
		where $C_\gamma^{DOZZ}(\alpha_1,\alpha_2,\alpha_3)$ is given by the DOZZ formula~\cite{DO94,ZZ96}.
	\end{corollary}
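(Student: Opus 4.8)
The plan is to view $C_\gamma(\alpha_1,\alpha_2,\alpha_3)$ as the analytic continuation, in the variable $s$, of the probabilistic representation available under the Seiberg bounds, and then to invoke the DOZZ formula of~\cite{KRV_DOZZ} together with the uniqueness of analytic continuation. Set $u(c)\coloneqq\expect{e^{-\mu e^{\gamma c}\rho(\alpha_1,\alpha_2,\alpha_3)}}$. When $\alpha_k<Q$ and $s>0$ every indicator in $\mathcal{R}_{\bm\alpha}$ vanishes, so $\mathcal{R}_{\bm\alpha}\equiv0$ and $C_\gamma(\alpha_1,\alpha_2,\alpha_3)=\int_\R e^{sc}u(c)\,dc$, which equals $C_\gamma^{DOZZ}(\alpha_1,\alpha_2,\alpha_3)$ by~\cite{KRV_DOZZ}. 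The whole point is therefore to push this identity from the Seiberg chamber $\{s>0\}$ down to $\{s>-\gamma\}$, across the thresholds at which the bare integral stops converging.

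I would first record the behaviour of $u$ at the two ends of the line: as $c\to+\infty$ the factor $e^{-\mu e^{\gamma c}\rho}$ decays faster than any exponential, so integrability there is automatic for real $s$; as $c\to-\infty$ one has $u(c)\to1$, the rate being dictated by the heavy tail of the chaos integral $\rho$. The heart of the matter is then the asymptotic expansion
\[
u(c)=\sum_{\substack{\mathcal{U}\subseteq\{1,2,3\}\\\sum_{k\in\mathcal{U}}2(Q-\alpha_k)<\gamma}}\ \prod_{k\in\mathcal{U}}R(\alpha_k)\,e^{2(Q-\alpha_k)c}\ +\ O\!\big(e^{\gamma c}\big),\qquad c\to-\infty,
\]
in which the constant carried by a subset $\mathcal{U}$ factorises into the product of the one-insertion reflection coefficients $R(\alpha_k)$. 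This reflects that the atypically large values of $\rho$ are created by the chaos mass concentrating near the singularities $0,1,\infty$, each contributing the tail exponent $2(Q-\alpha_k)/\gamma$ and the prefactor $R(\alpha_k)$, and that contributions localised near disjoint subsets of insertions asymptotically decouple. Proving this expansion—identifying the constants with the explicit reflection coefficients and controlling the remainder uniformly in a complex neighbourhood of the weights—is the step I expect to be the main obstacle. It is precisely the tail and reflection analysis carried out in the body of the paper and in~\cite{Toda_correl1}, specialised to the rank-one situation, and it requires in particular ruling out logarithmic corrections at the resonant configurations where two exponents collide or where one meets $\gamma$.

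Granting the expansion, $\mathcal{R}_{\bm\alpha}(c)$ subtracts from $u(c)$ exactly those summands whose product with $e^{sc}$ is not integrable at $-\infty$: the indicator $\mathds{1}_{s<\sum_{k\in\mathcal{U}}2(\alpha_k-Q)}$ switches on precisely when the exponent $s+\sum_{k\in\mathcal{U}}2(Q-\alpha_k)$ of the corresponding term turns negative. Because $s>-\gamma$, only summands with $\sum_{k\in\mathcal{U}}2(Q-\alpha_k)<\gamma$ are ever removed, so that after subtraction the integrand is $O\big(e^{(s+\gamma)c}\big)$ at $-\infty$ and integrable exactly on the range $s>-\gamma$; integrability at $+\infty$ is preserved since a term is subtracted only when $s<\sum_{k\in\mathcal{U}}2(\alpha_k-Q)$, which is what makes it decay there. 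Uniform control of the expansion in $\bm\alpha$ then shows that, between two consecutive thresholds, $C_\gamma(\alpha_1,\alpha_2,\alpha_3)$ is a convergent integral depending analytically on the weights.

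Finally I would glue the chambers and identify the outcome. The mechanism is that each subtracted summand is a pure exponential in $c$, and for any exponent $\beta\neq0$ the integral $\int_\R e^{\beta c}\,dc$ continues analytically to $0$, its two half-line pieces $\beta^{-1}$ and $-\beta^{-1}$ cancelling. Hence, although the indicators jump across the hyperplanes $s=\sum_{k\in\mathcal{U}}2(\alpha_k-Q)$, inserting or removing the associated summand modifies $C_\gamma$ only by the analytic continuation of a pure-exponential integral, that is, not at all: the map $(\alpha_1,\alpha_2,\alpha_3)\mapsto C_\gamma(\alpha_1,\alpha_2,\alpha_3)$ is the genuine analytic continuation across these thresholds of the Seiberg-chamber integral, its only singularities being the simple poles at the resonances that are already present in the DOZZ formula. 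As the region $\{\alpha_k<Q,\ s>-\gamma\}$ is convex, hence connected, and contains the nonempty open Seiberg chamber on which $C_\gamma=C_\gamma^{DOZZ}$, uniqueness of analytic continuation forces $C_\gamma=C_\gamma^{DOZZ}$ throughout, as asserted.
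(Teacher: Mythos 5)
Your proposal is correct and follows essentially the same route as the paper: Corollary~\ref{cor:DOZZ} is obtained there by specializing to the rank-one (Liouville) setting the proof of Theorem~\ref{thm:analycity}, whose two pillars are precisely your main steps --- the tail expansion of $\expect{e^{-\mu e^{\gamma c}\rho}}$ as $c\to-\infty$ into products of reflection coefficients with exponentially small remainder (Lemma~\ref{lemma:anabis}/Proposition~\ref{prop:asymptot_many}, proved via Williams' path decomposition, Girsanov and fusion estimates, following~\cite{Toda_correl1}), and the meromorphic gluing across the threshold hyperplanes, which the paper implements by subtracting on half-lines and adding back the explicitly integrated exponentials (the terms carrying factors $1/\ps{\bm s(w),\omega_i}$) before invoking Morera's theorem, exactly the cancellation mechanism you describe. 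The conclusion via agreement with~\cite{KRV_DOZZ} on the Seiberg chamber and uniqueness of analytic continuation, including your remark that the continuation keeps simple poles at the resonances (consistent with the pole set $\mathcal{P}_N$ in Theorem~\ref{thm:analycity}), also matches the paper's argument.
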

	The proof of this statement is based on the method introduced in the article~\cite{Toda_correl1}, based on a generalized Brownian path decomposition, to study the asymptotic expansion of Toda Vertex Operators. Compared to the bounds from~\cite{KRV_DOZZ}, the main improvement lies in the case where $\max\limits_{1\leq k\leq3}2(\alpha_k-Q)>s>-\gamma$. Under the assumption that $s>\max\limits_{1\leq k\leq3}2(\alpha_k-Q)$ the above probabilistic representation agrees with that from~\cite{KRV_DOZZ}.

	\subsubsection{Four-point correlation functions from BPZ differential equations}Having properly introduced Toda correlation functions and proved that they are analytic in the weights $\alpha_1,\cdots,\alpha_N$, we show in Section~\ref{sec:BPZ} and based on the main statements from ~\cite{Toda_OPEWV}, that certain four-point correlation functions can be explicitly computed up to a prefactor given by a three-point correlation function. Namely, the statement of Theorem~\ref{thm:BPZ} shows that (we defer to Section~\ref{sec:toda} for more details on the notations) if $\alpha=-\chi\omega_1$ with $\chi\in\{\gamma,\frac2\gamma\}$ and $\alpha_1=\kappa\omega_2$ for $\kappa<q$, then as soon as it makes sense
	\begin{equation}\label{eq:BPZ_intro}
		\begin{split}
			&\ps{V_{\alpha}(z)V_{\alpha_0}(0)V_{\alpha_1}(1)V_{\alpha_\infty}(\infty)}=\norm{z}^{\chi\ps{\omega_1,\alpha_0}}\norm{z-1}^{\frac{\chi\kappa}3}\mathcal H(z),\quad\text{where}\\
			&\mathcal H(z)=C_\gamma(\alpha_0+\alpha,\alpha_1,\alpha_\infty)\left(\norm{\mathcal H_0(z)}^2+\sum_{i=1}^2A_\gamma^{(i)}(\alpha,\alpha_0,\alpha_1,\alpha_\infty)\norm{\mathcal H_i(z)}^2\right).
		\end{split}
	\end{equation}
	The constants $A_\gamma^{(i)}(\alpha,\alpha_0,\alpha_1,\alpha_\infty)$ that appear are given by
	\begin{align}
		A_\gamma^{(i)}(\alpha,\alpha_0,\alpha_1,\alpha_\infty)\coloneqq \frac{\prod_{j=1}^3l(A_j)l(B_i-A_j)}{l(B_1)l(B_2)}\frac{l(1+B_1+B_2-2B_i)}{l(B_i-1)},
	\end{align}
	while the functions $\mathcal H_i(z)$ are linearly independent hypergeometric functions of the third order. The proof of this statement is based on the fact that, thanks to~\cite[Theorem 1.3]{Toda_OPEWV}, such four-point correlation functions are solutions of a differential equation in the $z$ variable of the third order, which we refer to as a BPZ-type (for Belavin-Polyakov-Zamolodchikov) differential equation.
	
	\subsubsection{Four-point correlation functions from Operator Product Expansions}
	In Section~\ref{sec:OPE}, we show that an alternative expansion for such four-point correlation functions can be derived based on the probabilistic framework. Namely by studying the behavior of the probabilistically defined expression $\mathcal H(z)$ around $z=0$ (which somehow corresponds to the so-called \emph{Operator Product Expansions} in the physics literature), we show along the proof of Theorem~\ref{thm:OPE} that 
	\begin{equation}\label{eq:OPE_intro}
		\begin{split}
			\mathcal H(z)=\sum_{i=0}^2B_\gamma^{(i)}(\alpha_0,\chi)C_\gamma(\alpha_0-\chi h_{i+1},\alpha_1,\alpha_\infty)\norm{\mathcal H_{i}(z)}^2
		\end{split}
	\end{equation}
	as soon as it makes sense, where the coefficients $B_\gamma^{(i)}(\alpha_0,\chi)$ admit the explicit expression
	\begin{equation}
		B_\gamma^{(i)}(\alpha_0,\chi)=\prod_{j=1}^{i}\left(\pi\mu l\left(\frac{\gamma^2}{2}\right)\right)^{\frac{\chi}{\gamma}}\left(\frac{\chi^2}{2}\right)^2\frac{ l(\frac\chi2\ps{\alpha_0-Q,h_{j}-h_{i+1}})}{l(1+\frac{\chi^2}2+\frac\chi2\ps{\alpha_0-Q,h_j-h_{i+1}})}\cdot
	\end{equation}
	The three-point correlation functions that appear in these expansions may not make sense probabilistically speaking but are rather defined thanks to Theorem~\ref{thm:refl} that we show along the proof of Theorem~\ref{thm:OPE} (and to which we refer for additional precisions). 
	Showing that the above holds is purely probabilistic and strongly relies on the method introduced in the first document~\cite{Toda_correl1} of this two-part series, based on a general path decomposition for diffusion processes. Proving that such an equation does indeed hold is more demanding than its counterpart statement for Liouville theory~\cite{KRV_DOZZ}, not only because we need to rely on the generalized path decomposition unveiled in~\cite{Toda_correl1} but also due to the additional technicality of the computations involved.
	
	\subsubsection{Conclusion} 
	Eventually in Section~\ref{sec:toda_end} we combine the two above expansions to show that our main statement, Theorem~\ref{thm:main_result}, does indeed hold. Namely by linear independence of the $\mathcal H_{i}$ we infer the \emph{shift equations} for the three-point correlation functions:
	\begin{equation}
		\begin{split}
			&\frac{C_\gamma(\alpha_0-\chi h_{i+1},\alpha_1,\alpha_\infty)}{C_\gamma(\alpha_0-\chi h_1,\alpha_1,\alpha_\infty)}=\frac{A_\gamma^{(i)}(-\chi h_1,\alpha_0,\alpha_1,\alpha_\infty)}{B^{(i)}(\alpha_0)}
		\end{split}
	\end{equation}
	which are valid for $i=1,2$ and $\chi\in\{\gamma,\frac2\gamma\}$, and as soon as the four-point correlation functions 
	\[
	\ps{V_{-\chi h_1}(z)V_{\alpha_0}(0)V_{\alpha_1}(1)V_{\alpha_\infty}(\infty)}
	\]
	make sense. The assumption that such correlation functions are probabilistically well-defined when $\chi=\frac2\gamma$ stems from the fact that Equation~\eqref{eq:main_result} holds true only under the assumption that $\gamma\in[1,\sqrt2)$. 
	
	Eventually we will see that this set of shift equations characterizes, up to a multiplicative constant, the expression proposed in~\cite{FaLi1} for Toda three-point correlation functions. One can evaluate the value of this multiplicative constant, which allows us to conclude for the proof of Theorem~\ref{thm:main_result}. 
	
	\subsection{Some outlooks}\label{subsec:perspectives}
	\subsubsection{Integrability of Toda CFTs}
	The key arguments used to derive formula~\eqref{eq:fali} actually extend to the case where the underlying Lie algebra is $\mathfrak{g}=\mathfrak{sl}_n$ for $n\geq 2$. This approach allowed Fateev-Litvinov to propose a general expression for a family of three-point correlation functions in this setting too~\cite{FaLi0, FaLi1}. At the time being, the main obstruction preventing a mathematically rigorous derivation of such formulas for the general case of $\mathfrak{sl}_n$ is merely of technical order and would rely on a derivation of the Ward identities within this framework, which is feasible but requires rather tedious algebraic computations. On a similar perspective, extending the range of validity for $\gamma$ in Theorem~\ref{thm:main_result} would involve a more precise description of the asymptotic expansion of Toda correlation functions, which is mostly prevented by technical issues too.
	
	Likewise, the probabilistic method considered in this document to prove Theorem~\ref{thm:main_result} may be applied to the case of boundary Toda CFTs, which would hopefully allow to obtain formulas analogous to the Fyodorov-Bouchaud formula~\cite{Rem17} to describe the law of correlated GMC measures on the circle.
	
	\subsubsection{Study of the Toda Hamiltonian and conformal bootstrap}
	Related to the program of integrability of the $\mathfrak{sl}_3$ Toda CFT is the implementation of the \emph{conformal bootstrap} procedure within this setting. This would require a refined study of the Toda Hamiltonian that, building on the study of the Liouville Hamiltonian conducted in~\cite{GKRV}, would in turn involve conjugation of scattering theory in symmetric spaces~\cite{MV_sym} together with techniques developed in~\cite{BaD, MV_prod} for tensor products of self-adjoint operators. 
	We would like to stress that from the viewpoint of the physics literature, it does not seem clear at this stage under which form the conformal bootstrap should hold.
	
	\subsection*{Acknowledgments} The author would like to thank C. Guillarmou and V. Vargas for fruitful discussions as well as for comments made on a prior version of this manuscript, and is grateful to the anonymous referee for their very careful reading and useful comments. The author acknowledges that this project has received funding from the European Research Council (ERC) under the European Union’s Horizon 2020 research and innovation programme, grant agreement No 725967.
	
	\section{Correlation functions in the $\mathfrak{sl}_3$ Toda conformal field theory}\label{sec:toda}
	In this first section we provide the necessary background to make sense of Toda correlation functions. Namely we will first recall the probabilistic setting in which Toda CFTs have been introduced in~\cite{Toda_construction} and then show that the definition proposed there can actually be analytically extended beyond the bounds imposed by~\cite[Theorem 3.1]{Toda_construction}.
	
	\subsection{A probabilistic definition of Toda correlation functions}
	\subsubsection{Conformal geometry on the Riemann sphere and reminders on the $\mathfrak{sl}_3$ Lie algebra}\label{subsec:lie}
	The underlying surface for the correlation functions we consider is the two-dimensional sphere $\mathbb{S}^2$ which we identify with the Riemann sphere $\C\cup\lbrace\infty\rbrace$. It is equipped with the Riemannian metric
	\[
	g(z)\coloneqq\norm{z}^{-4}_+|dz|^2
	\]
	where $\norm{z}_+\coloneqq\max(\norm{z},1)$. This metric is the most natural both from the probabilistic viewpoint (via the radial/angular decomposition of the GFF) and in the context of reflection (or Osterwalder-Schrader) positivity. The Green kernel associated to this metric is given by
	\begin{equation}\label{Green_kernel}
		G(x,y)\coloneqq\ln\frac{1}{\norm{x-y}}+\ln\norm{x}_++\ln\norm{y}_+.
	\end{equation}  
	
	Along the proof of Theorem~\ref{thm:main_result}, we will also need additional information on the $\mathfrak{sl}_3$ Lie algebra. We defer for instance to~\cite{humphreys_reflection} for additional details. The Toda field defined via the path integral~\eqref{eq:path_integral} is then a random map $\varphi:\C\cup\lbrace\infty\rbrace\to \R^2$, where $\R^2$ comes equipped with a basis $(e_1,e_2)$ of so-called \emph{simple roots} such that \begin{equation}
		\left(\ps{e_i,e_j}\right)_{i,j}\coloneqq A=\begin{pmatrix}
			2 & -1\\
			-1 & 2
		\end{pmatrix}.
	\end{equation}
	The dual basis is $(\omega_1,\omega_2)$, defined by
	\begin{equation}
		\omega_1=\frac{2e_1+e_2}3\quad\text{and}\quad\omega_2=\frac{e_1+2e_2}{3}\cdot
	\end{equation}
	For $\mathfrak g=\mathfrak{sl}_3$, the Weyl vector takes the form
	\begin{equation}
		\rho= \omega_1+\omega_2=e_1+e_2
	\end{equation}
	while the fundamental weights in the first fundamental representation $\pi_1$ of $\mathfrak{sl}_3$ with the highest weight $\omega_1$ are vectors in $\R^2$ defined by setting:
	\begin{equation}\label{eq:definition_hi}
		h_1\coloneqq \frac{2e_1+e_2}{3},\quad h_2\coloneqq \frac{-e_1+e_2}{3}, \quad h_3\coloneqq  -\frac{e_1+2e_2}{3}\cdot
	\end{equation}
	
	In this context, there is a natural group that acts on $\R^2$: the Weyl group $W$ which is the group generated by $s_1,s_2$ the reflections with respect to the hyperplanes orthogonal to the simple roots. Put differently $W$ is the six-element group generated by the
	\[
	s_i:\alpha\mapsto \alpha-\ps{\alpha,e_i}e_i, \quad\text{for }i=1,2.
	\]
	The action of $W$ divides $\R^2$ into six components called \emph{Weyl chambers} on which it acts freely and transitively. Among these Weyl chambers we denote by 
	\begin{equation}
		\mathcal{C}\coloneqq \left\{x\in\V, \ps{x,e_i}>0\quad\text{for all }1\leq i\leq r\right\}
	\end{equation}
	the \emph{fundamental Weyl chamber} and set $\mathcal{C}_-\coloneqq -\mathcal{C}$ (which is also a Weyl chamber). The boundary of the Weyl chamber $\mathcal{C}_-$ is made of two components $\partial\mathcal{C}_i\coloneqq\partial\mathcal{C}_-\cap\{\ps{x,e_i}=0\}$. 
	
	We will also need to consider the Weyl group which is not centered at the origin of $\R^2$. Namely for $s\in W$ we set
	\[
	\hat s(\alpha)\coloneqq Q+s(\alpha-Q)
	\]
	where $Q$ is some vector in $\R^2$ (which will be fixed later on, see Equation~\eqref{eq:definition_Q} below). Finally by analogy with a group of permutations (note that $W\simeq \mathfrak{S}_3$) we denote $\epsilon(s)\coloneqq \det(s)$. 
	
	\subsubsection{Gaussian Free Fields and Gaussian Multiplicative Chaos}\label{subsec:GFF_GMC}
	The Toda field formally defined via the path integral~\eqref{eq:path_integral} can actually be interpreted using Gaussian Free Fields (GFFs hereafter). In the present context, these are therefore random distributions defined over $\C$ and with values in $\R^2$. This Gaussian random distribution is characterized by its covariance kernel: 
	\begin{equation}
		\E[\ps{u,\X(x)}\ps{v,\X(y)}]=\ps{u,v}  G(x,y),\quad x\neq y\in\C
	\end{equation}
	for any vectors $u,v\in\R^2$, with $G$ as defined in Equation~\eqref{Green_kernel} above. We defer to \emph{e.g.} in~\cite{dubedat, She07} for additional details on GFFs. This random generalized function does not make sense pointwise but rather belongs to the distributional space $\mathrm{H}^{-1}(\C\to\R^2,g)$.
	
	In order to provide a rigorous meaning to the path integral defining Toda CFTs~\eqref{eq:path_integral}, we need to make sense of the exponential of the GFF $\X$. However the latter being only defined as a random distribution we need to rely on a regularization procedure to define it properly. 
	For this purpose we first regularize the GFF at mesh $\eps$ by setting
	\begin{equation}\label{regularization}
		\X_\eps\coloneqq \X*\eta_\eps=\int_\C X(\cdot-z)\eta_\eps(z)d^2z
	\end{equation}
	with $\eta_\eps\coloneqq \frac1{\eps^2}\eta(\frac{\cdot}{\eps})$ a smooth, compactly supported mollifier. Then the field $\X_\eps$ defined in such a way is a smooth function. As a consequence its exponential is well-defined so that we can define a random measure over $\C$ by setting
	\begin{equation}
		M_\eps^{\gamma e_i}(d^2x)\coloneqq \:  e^{  \langle\gamma e_i,  \X_\eps(x) \rangle-\frac12\expect{\langle\gamma e_i, \X_\eps(x) \rangle^2}} \norm{x}^{-4}_+\norm{d^2x}.
	\end{equation}
	The limit as $\eps\rightarrow0$ of this random measure is well-defined and is called a \emph{Gaussian Multiplicative Chaos measure} (GMC hereafter). More precisely under the assumption that $\gamma<\sqrt 2$ and for $1\leq i\leq r$, the limit 
	\begin{equation}
		M^{\gamma e_i}(d^2x)\coloneqq \underset{\eps \to 0}{\lim} M_\eps^{\gamma e_i}(d^2x)
	\end{equation} 
	holds in probability within the space of Radon measures equipped with the weak topology and defines a non-trivial random measure~\cite{Ber,Sha}.
	
	\subsubsection{Correlation functions and the path integral}
	Toda CFTs depend on a coupling constant $\gamma\in(0,\sqrt 2)$\footnote{The range of values for the coupling constant is to ensure that the probabilistic construction does make sense. Note that it differs from the usual range of values due to the fact that the longest roots have length $\sqrt 2$.} as well as cosmological constants $\mu_1,\mu_2>0$. The background charge is the vector in $\R^2$ defined by
	\begin{equation}\label{eq:definition_Q}
		Q\coloneqq  q \rho,\quad\text{with } q= \gamma+\frac{2}{\gamma}\cdot
	\end{equation}
	In order to give a probabilistic meaning to the path integral~\eqref{eq:path_integral}, we define the Toda field as the random distribution
	\begin{equation}
		\varphi\coloneqq \X+\frac Q2\ln g+\bm c
	\end{equation}
	where $\bm{c}$ is uniformly distributed according to the Lebesgue measure on $\R^2$ and $\X$ is the GFF from Subsection~\ref{subsec:GFF_GMC}. This allows to derive the probabilistic interpretation of the Toda path integral~\eqref{eq:path_integral} by setting for $F$ positive and continuous on $\mathrm{H}^{-1}(\C\to\R^2,g)$ (see~\cite[Equation (2.32)]{Toda_construction}):
	\begin{equation}\label{eq:proba_TPI}
		\ps{F}\coloneqq \int_{\R^2}e^{-2\ps{Q,\bm c}}\E\left[F\left( \X+\frac Q2\ln g+\bm c \right)e^{- \sum_{i=1}^{2} \mu_i e^{\gamma  \langle e_i, \bm c\rangle }M^{ \gamma e_i}(\C) }\right]d\bm c
	\end{equation}
	where the terms $M^{ \gamma e_i}(\C)$ that appear in the exponential correspond to the total mass of the GMC measures introduced in Subsection~\ref{subsec:GFF_GMC}:
	\[
	M^{ \gamma e_i}(\C)=\int_{\C}M^{\gamma e_i}(d^2x).
	\]
	The correlation functions of \emph{Vertex Operators} formally correspond to taking $F=e^{\ps{\alpha,\X(z)}}$ in Equation~\eqref{eq:proba_TPI}, where $z$ is an insertion point in $\C$ and $\alpha$ is a weight in $\R^2$. They can be rigorously defined using a regularization procedure by setting 
	\[
	V_{\alpha,\eps}(z)\coloneqq \norm{z}_+^{-4\Delta_\alpha}e^{\ps{\alpha,\bm c}}e^{\ps{\alpha,\X_\eps(z)}-\frac{\expect{\ps{\alpha,\X_\eps(z)}^2}}2}
	\]
	with $\Delta_\alpha=\ps{\frac\alpha2,Q-\frac\alpha2}$. The exponential term can be interpreted thanks to the Girsanov (or Cameron-Martin) theorem:
	\begin{thmcite}\label{thm:girsanov}
		Let $D$ be a subdomain of $\C$ and \[
		(\bm Y(x))_{x\in D}\coloneqq (Y_1(x),\cdots,Y_{n}(x))_{x\in D}\]
		be a family of smooth centered Gaussian field. Also consider $Z$ any Gaussian variable belonging to the $L^2$ closure of the subspace spanned by $(\bm Y(x))_{x\in D}$. Then, for any bounded functional $F$ over the space of continuous functions, one has that
		\[
		\expect{e^{Z-\frac{\expect{Z^2}}{2}}F(\bm Y(x))_{x\in D}}=\expect{F\left(\bm Y(x)+\expect{Z\bm Y(x)}\right)_{x\in D}}.
		\]
	\end{thmcite}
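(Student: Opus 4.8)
This is the Cameron–Martin/Girsanov theorem for a jointly Gaussian family, and the plan is to exploit two elementary facts: that $e^{Z-\frac12\expect{Z^2}}$ is a probability density (since $\expect{e^Z}=e^{\frac12\expect{Z^2}}$ for a centered Gaussian $Z$), and that exponentially tilting a jointly Gaussian family by this density shifts its mean by its covariance with $Z$. First I would reduce the statement to \emph{cylinder} functionals. Using that the fields $Y_j$ have continuous sample paths and that $F$ is bounded on the space of continuous functions, I would approximate $F$ by functionals $F_N$ that depend only on finitely many evaluations $\bm Y(x_1),\dots,\bm Y(x_N)$ along a countable dense set of points, chosen so that $F_N\to F$ pointwise. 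Since $\lvert F\rvert$ is bounded and the density $e^{Z-\frac12\expect{Z^2}}$ is integrable with unit mass, dominated convergence passes to the limit on the left-hand side; on the right-hand side the same argument applies once one notes that $x\mapsto\expect{Z\bm Y(x)}$ is a deterministic continuous function, so that $\bm Y(x)+\expect{Z\bm Y(x)}$ is again a continuous field.

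It then suffices to prove the identity for $F=f(\bm V)$, where $\bm V=(Y_j(x_k))$ is a finite centered Gaussian vector jointly Gaussian with $Z$ and $f$ is bounded continuous. Here the hypothesis that $Z$ lies in the $L^2$-closure of the span of the $Y_j(x)$ enters: it guarantees that $(Z,\bm V)$ is jointly Gaussian, which is all the finite-dimensional computation needs. I would verify the identity on the exponential test functions $f(\bm v)=e^{i\ps{t,\bm v}}$. Setting $W\coloneqq Z+i\ps{t,\bm V}$ and using $\expect{e^W}=e^{\frac12\expect{W^2}}$ (valid for complex linear combinations of a jointly Gaussian family by analytic continuation of the Laplace transform in the coefficient of $Z$), a direct expansion of $\expect{W^2}$ gives
\[
\expect{e^{Z-\frac12\expect{Z^2}}\,e^{i\ps{t,\bm V}}}=e^{\,i\ps{t,\bm a}-\frac12\expect{\ps{t,\bm V}^2}},\qquad \bm a\coloneqq\expect{Z\bm V},
\]
which is exactly the characteristic function of $\bm V+\bm a$. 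Since complex exponentials are measure-determining, Fourier inversion upgrades this to all bounded continuous $f$; and because the entries of $\bm a$ are $\expect{Z\,Y_j(x_k)}$, the induced shift is precisely $\bm Y(x)\mapsto\bm Y(x)+\expect{Z\bm Y(x)}$.

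The finite-dimensional shift identity at the heart of the argument is elementary and purely computational, so I expect the only genuine obstacle to be the first step: making the passage from cylinder functionals to an arbitrary bounded functional on continuous-function space fully rigorous. This requires the sample-path regularity of $\bm Y$ together with a clean refining approximation scheme (for instance conditioning on, or interpolating between, finitely many coordinates along a mesh that refines to a dense set), and care that the approximation converges simultaneously for both the tilted and the shifted expectations. Once this is in place, the two limits match and the theorem follows.
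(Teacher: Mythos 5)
The paper offers no proof of this statement: it is quoted (as a lettered, cited theorem) as the classical Girsanov/Cameron--Martin theorem and simply used, so there is nothing internal to compare your argument against; it must be judged on its own merits. On those merits it is correct, and it is the standard proof. The finite-dimensional core is exactly right: the hypothesis that $Z$ lies in the $L^2$ closure of the span of the $Y_j(x)$ is used precisely where it must be (to guarantee that $(Z,\bm V)$ is jointly Gaussian), and the characteristic-function identity $\expect{e^{Z-\frac12\expect{Z^2}}e^{i\ps{t,\bm V}}}=e^{i\ps{t,\expect{Z\bm V}}-\frac12\expect{\ps{t,\bm V}^2}}$, obtained from $\expect{e^W}=e^{\frac12\expect{W^2}}$ by analytic continuation in the coefficient of $Z$, identifies the tilted finite-dimensional law with that of $\bm V+\expect{Z\bm V}$. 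The one step you flag as delicate --- passing from cylinder functionals to an arbitrary bounded $F$ --- is indeed where your scheme as written is weakest (pointwise approximation $F_N\to F$ controlled simultaneously under two different measures), but it can be bypassed entirely: both sides of the identity are integrals of $F$ against Borel probability measures on $C(D,\R^n)$, the left-hand side against the law of $\bm Y$ under the tilted measure $e^{Z-\frac12\expect{Z^2}}\,d\P$, the right-hand side against the law of the shifted field, which is again a continuous field since $x\mapsto\expect{Z\bm Y(x)}$ is deterministic and continuous. Your cylinder computation says precisely that these two measures have the same finite-dimensional distributions; since the Borel sigma-algebra of $C(D,\R^n)$ with the separable metrizable compact-open topology is generated by the evaluation maps, the two measures are equal, and the identity for every bounded measurable $F$ follows at once --- no refining mesh, no domination argument, and no simultaneous control of two approximations is needed. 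With that replacement your proof is complete and is the standard route to this theorem.
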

	Based on Theorem~\ref{thm:girsanov} and Equation~\eqref{eq:proba_TPI} above, it was shown in~\cite{Toda_construction} that the correlation functions then take the form, for $z_1,\cdots,z_N$ in $\C$ all distinct,
	\begin{equation}\label{eq:correl_1}
		\Big\langle V_{\alpha_1}(z_1) \cdots V_{\alpha_N}(z_N) \Big\rangle= \prod_{ j < k} \norm{z_j-z_k}^{-\langle\alpha_j ,\alpha_k \rangle }  \int_{\R^2}e^{\ps{\bm s,\bm c}}\expect{e^{-\sum_{i=1}^2\mu_ie^{\ps{\gamma e_i,\bm c}}I^i(\C)}}d\bm c
	\end{equation}
	where $\bm s\coloneqq \sum_{k=1}^N\alpha_k-2Q$ and $I^i(\C)$ is a random variable defined using the GMC:
	\begin{equation}\label{eq:def_I}
		I^i(d^2x_i)\coloneqq \prod_{k=1}^N\left(\frac{\norm{x_i}_+}{\norm{z_k-x_i}}\right)^{\gamma\ps{\alpha_k,e_i}}M^{\gamma e_i}(d^2x_i),\quad I^i(\C)=\int_\C I^i(d^2x_i).
	\end{equation}
	The whole integral over the variable $\bm c\in\R^2$ was shown in~\cite{Toda_construction} to be meaningful as soon as the weights $\alpha$ satisfy certain conditions, referred to as \emph{Seiberg bounds}~\cite{Sei90}:
	\begin{equation}\label{eq:seiberg}
		\text{For }i=1,2\quad \ps{\alpha-Q,e_i}<0\text{ and }\ps{\bm s,\omega_i}>0.
	\end{equation}
	The first of these conditions corresponds to ensuring that the GMC integrals $I^i(\C)$ are well-defined random variables, which is the case only under the assumption that $\ps{\alpha-Q,e_i}<0$, while the other condition entering the definition of the Seiberg bounds merely amounts to saying that the integrand in Equation~\eqref{eq:correl_1} is integrable when $\bm c$ diverges in $\R^2$. In the case where $\ps{\bm s,\omega_i}<0$ for $i=1,2$, we can still define them by setting
	\begin{equation}\label{eq:correl_1bis}
		\Big\langle V_{\alpha_1}(z_1) \cdots V_{\alpha_N}(z_N) \Big\rangle= \prod_{ j < k} \norm{z_j-z_k}^{-\langle\alpha_j ,\alpha_k \rangle }  \prod_{i=1}^2\left(\frac{\Gamma\left(\frac{\ps{\bm s,\omega_i}}\gamma\right)\mu_i^{-\frac{\ps{\bm s,\omega_i}}\gamma}}{\gamma}\right)\expect{\prod_{i=1}^2 I^i(\C)^{-\frac{\ps{\bm s,\omega_i}}\gamma}}
	\end{equation}
	and in that case the assumption under which the correlation functions are well-defined is that $\ps{\bm s,\omega_i}>-\frac{2}{\gamma}\vee\max\limits_{1\leq k\leq N}\ps{\alpha_k-Q,e_i}$, which corresponds to demanding that the random variable $I^i$ has a finite moment of order $-\frac1\gamma\ps{\bm s,\omega_i}$ (see~\cite{Toda_construction} for more details).
	
	\subsection{Toda correlation functions beyond the Seiberg bounds}
	However in order to make sense of certain correlation functions we will need to go beyond these bounds and extend the definition of the correlation functions for $\bm s$ no longer satisfying the above assumptions. To do so we describe a probabilistic representation of Toda correlation functions in the case where the above assumptions on $\bm s$ are no longer satisfied, and show that by doing so the map thus defined is indeed analytic in the weights $\alpha_1,\cdots,\alpha_N$.
	
	\subsubsection{Asymptotic expansion and reflection coefficients}
	As we will see, a first step toward this extension is the description of the asymptotic expansion of the integrand in Equation~\eqref{eq:correl_1} when $\bm c$ diverges in $\R^2$. It was shown in~\cite{Toda_correl1} that Toda reflection coefficients naturally show up in this expansion. These reflection coefficients are real numbers defined by setting (in the $\mathfrak{sl}_3$ case considered in the present article) for $\alpha\in\R^2$ and $s$ an element of the Weyl group $W$
	\begin{equation}\label{eq:refl}
		\begin{split}
			R_s(\alpha)&\coloneqq\epsilon(s)\frac{A\left(s(\alpha-Q)\right)}{A(\alpha-Q)},\quad\text{where}\\
			A(\alpha)&=\prod_{i=1}^2\left(\mu_i\pi l\left(\frac{\gamma^2\ps{e_i,e_i}}{4}\right)\right)^{\frac{\ps{\alpha,\omega_i}}\gamma}\prod_{e\in\Phi^+}\Gamma\left(1-\frac{\gamma}2\ps{\alpha,e}\right)\Gamma\left(1-\frac{1}\gamma\ps{\alpha,e}\right).
		\end{split}
	\end{equation}
	Here the product runs over the set of positive roots $\Phi^+=\{e_1,e_2,\rho\}$. These reflection coefficients satisfy the remarkable property that $R_{s\tau}(\alpha)=R_s(\tau\alpha)R_\tau(\alpha)$ for $s,\tau\in W$, property crucial in the derivation of their probabilistic representation as described in~\cite{Toda_correl1}. 
	
	Now given any map $w:\{1,\cdots,N\}\to W$ and $\bm\alpha\in(\C^2)^N$let us introduce:
	\[
	\bm s(w)\coloneqq \sum_{k=1}^N \hat w(k)\alpha_k-2Q
	\]
	where recall that $\hat w\alpha=Q+w(\alpha-Q)$.
	We will prove in Subsection~\ref{subsec:asym} below that the following asymptotic holds for the correlation functions: 
	\begin{proposition}\label{prop:asymptot_many}
		For $N$ a positive integer, denote by $\mathcal{A}_N$ the subset of $(Q+\mathcal{C}_-)^N$ defined by 
		\begin{equation}
			\begin{split}
				\mathcal{A}_N\coloneqq \Big\{(\alpha_1,\cdots,\alpha_N)&\in\left(Q+\mathcal{C}_-\right)^N\text{ s.t. for }i=1,2,\quad \ps{\sum_{k=1}^N\alpha_k-2Q,\omega_i}>-\gamma\\
				&\text{and for any } 1\leq k\leq N,\quad \min\limits_{i=1,2}\ps{\alpha_k-Q,e_i}<-\gamma\Big\}.
			\end{split}
		\end{equation}
		Then for any $z_1,\cdots,z_N\in\C$ distinct there exists a positive $\xi$ such that the map
		\begin{equation}
			e^{\xi\norm{\bm c}}e^{-2\ps{Q,\bm c}}\expect{\prod_{k=1}^NV_{\alpha_k}(z_k)e^{-\sum_{i=1}^2\mu_ie^{\ps{\gamma e_i,\bm c}}M^{\gamma e_i}(\C)}-\mathcal{R}_{\bm\alpha}(\bm c)}
		\end{equation}
		is uniformly bounded in $\bm c$, where the remainder term is defined by setting:
		\begin{equation}
			\begin{split}
				\mathcal{R}_{\bm\alpha}(\bm c)\coloneqq  \mathcal{R}^1_{\bm\alpha}(\bm c)+\mathcal{R}^2_{\bm\alpha}(\bm c)&+\mathcal{R}^{1,2}_{\bm\alpha}(\bm c),\quad\text{with}\\
				\mathcal{R}^1_{\bm\alpha}(\bm c)\coloneqq \sum_{w:\{1,\cdots,N\}\to\{Id,s_2\}}&\mathds{1}_{\ps{\bm s(w),\omega_1}<0}\prod_{k=1}^NR_{w(k)}(\alpha_k)V_{\hat w(k)\alpha_k}(z_k)e^{-\mu_1e^{\gamma\ps{\bm c,e_1}}M^{\gamma e_1}(\C)}\\
				-\sum_{w:\{1,\cdots,N\}\to\{Id,s_1,s_2,s_2s_1\}}&\mathds{1}_{\ps{\bm s(w),\omega_1}<0}\prod_{k=1}^NR_{w(k)}(\alpha_k)V_{\hat w(k)\alpha_k}(z_k),\\
				\mathcal{R}^{1,2}_{\bm\alpha}(\bm c)\coloneqq  \sum_{w:\{1,\cdots,N\}\to W}&\mathds{1}_{\max_{j=1,2}\ps{\bm s(w),\omega_j}<0}\prod_{k=1}^NR_{w(k)}(\alpha_k)V_{\hat w(k)\alpha_k}(z_k).
			\end{split}
		\end{equation}
	\end{proposition}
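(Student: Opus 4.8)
The plan is to establish a precise large-$\bm c$ expansion of the integrand
\[
\Phi(\bm c)\coloneqq e^{-2\ps{Q,\bm c}}\expect{\prod_{k=1}^N V_{\alpha_k}(z_k)\,e^{-\sum_{i=1}^2\mu_i e^{\ps{\gamma e_i,\bm c}}M^{\gamma e_i}(\C)}}
\]
and to show that $e^{-2\ps{Q,\bm c}}\expect{\mathcal R_{\bm\alpha}(\bm c)}$ collects exactly those contributions to $\Phi(\bm c)$ that decay no faster than some $e^{-\xi\norm{\bm c}}$; everything left over is then exponentially small, which is the asserted uniform bound. The behaviour of $\Phi$ is dictated by the signs of $\ps{\bm c,e_1}$ and $\ps{\bm c,e_2}$, since the factor $e^{-\mu_i e^{\ps{\gamma e_i,\bm c}}M^{\gamma e_i}(\C)}$ tends to $1$ when $\ps{\bm c,e_i}\to-\infty$ and kills the integrand when $\ps{\bm c,e_i}\to+\infty$. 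First I would partition $\R^2$ according to these signs, and more finely according to the Weyl chamber containing $\bm c$, and treat each sector separately.

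In the sector where both walls are damped ($\bm c$ deep inside $\mathcal C$) the integrand is super-exponentially small and contributes nothing. In a sector where exactly one direction, say $\ps{\bm c,e_i}\to-\infty$, is active while the other is damped, I would reduce, up to super-exponentially small errors, to the one-dimensional analysis along the active direction, using the Girsanov theorem (Theorem~\ref{thm:girsanov}) to absorb the interaction between the insertions and the Gaussian field. This is precisely the situation treated in~\cite{Toda_correl1}: the asymptotics of the resulting exponential functional of the radial diffusion produce, for each vertex operator, a leading term together with a reflected term $R_{s_i}(\alpha_k)V_{\hat s_i\alpha_k}(z_k)$. Multiplying over $k$ yields a sum over maps $w$ valued in a single reflection, and the indicator $\mathds{1}_{\ps{\bm s(w),\omega_j}<0}$ records exactly when the reflected configuration still fails the integrability threshold and hence must be subtracted; this accounts for the single-wall pieces $\mathcal R^1$ and $\mathcal R^2$.

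The remaining sector is the one approaching $\mathcal C_-$ (both $\ps{\bm c,e_i}\to-\infty$), where no damping survives and the growth is governed by $e^{\ps{\bm s,\bm c}}$. Here a single reflection does not suffice and one must iterate reflections across the two root hyperplanes, which is where I expect the main difficulty to lie. The tool is the generalized path decomposition for diffusions from~\cite{Toda_correl1}, together with the cocycle identity $R_{s\tau}(\alpha)=R_s(\tau\alpha)R_\tau(\alpha)$, which guarantees that composing a reflection by $s_i$ with a subsequent simple reflection produces the correct coefficient $R_{w(k)}(\alpha_k)$ for an arbitrary $w(k)\in W$. Summing over all $w:\{1,\dots,N\}\to W$ with the indicator $\mathds{1}_{\max_{j}\ps{\bm s(w),\omega_j}<0}$ then gives $\mathcal R^{1,2}$. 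The genuine obstacle is that the two measures $M^{\gamma e_1},M^{\gamma e_2}$ are correlated, so the two successive reflections do not decouple and the radial process is truly two-dimensional; controlling the error uniformly, in particular across the hypersurfaces where the indicator functions switch on and off, is what requires the careful joint diffusion estimates of~\cite{Toda_correl1}.

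Finally I would extract the exponential rate $\xi$ from the spectral gap between the exponents retained in $\mathcal R_{\bm\alpha}$ and the next ones in the expansion. Here the two hypotheses defining $\mathcal A_N$ are used: the bound $\ps{\bm s,\omega_i}>-\gamma$ controls the overall growth so that the retained terms are the only slowly decaying ones, while $\min_{i}\ps{\alpha_k-Q,e_i}<-\gamma$ forces each single reflection to land strictly past the threshold, ensuring that the gap is strictly positive and uniform in $\bm c$.
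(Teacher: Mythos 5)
Your sector decomposition and your treatment of the single-wall regimes match the paper's proof: there too one localizes the GMC integrals in $\bm c$-dependent balls around the insertions, applies the radial-angular decomposition, Williams' path decomposition and Girsanov, and controls the bulk error with the fusion (freezing) asymptotics, producing exactly the single-reflection terms and the indicator bookkeeping you describe. The divergence --- and the gap --- is in the regime where $\bm c\to\infty$ inside $\mathcal{C}_-$. You propose to actually \emph{derive} the coefficients $R_{w(k)}(\alpha_k)$ for $w(k)\in W\setminus\{Id,s_1,s_2\}$ by iterating reflections through the two-dimensional path decomposition and the cocycle identity, deferring the uniform error control to ``the careful joint diffusion estimates of [Toda\_correl1]''. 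The paper does nothing of the sort, and for a reason: the hypothesis $\min_{i=1,2}\ps{\alpha_k-Q,e_i}<-\gamma$ in the definition of $\mathcal{A}_N$ is there precisely so that every double-reflection term is harmless. Indeed, for $s\notin\{Id,s_1,s_2\}$ and $\alpha_k-Q\in\mathcal{C}_-$ satisfying this hypothesis one checks that $\max_{i=1,2}\ps{\hat s\alpha_k-\alpha_k,\omega_i}>\gamma$ with both coefficients nonnegative, hence $V_{\hat s\alpha_k-\alpha_k}(z_k)=\mathcal{O}\bigl(\sum_{i=1}^2 e^{\gamma\ps{\bm c,e_i}}\bigr)$ in $\mathcal{C}_-$; consequently both the probabilistic contribution of insertions where the two walls are simultaneously active and the corresponding subtracted terms in $\mathcal{R}^{1,2}_{\bm\alpha}$ are individually exponentially small, and they never need to be matched against each other. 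This is also why your last paragraph misreads the hypothesis: its role is not to make single reflections ``land strictly past the threshold'', it is to make the genuinely two-dimensional iterated-reflection analysis unnecessary.

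As written, then, your argument in the $\mathcal{C}_-$ regime is not closed. The full two-dimensional expansion with reflection coefficients indexed by all of $W$, uniformly in $\bm c$, is not a quotable statement: what is available from [Toda\_correl1] (and recalled in the paper) is the path decomposition itself and the asymptotic used for a single reflection, and the paper's own Remark following the proof states that carrying out this finer analysis --- which would in fact yield the proposition on a strictly larger set $\mathcal{A}'_N$ --- is technical and deliberately omitted. So you must either invoke the hypothesis $\min_i\ps{\alpha_k-Q,e_i}<-\gamma$ to discard all double-reflection contributions as above (thereby recovering the paper's argument), or actually supply the joint two-dimensional estimates you allude to; assuming them is the missing step.
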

	
	\subsubsection{Analytic continuation of the correlation functions}
	Based on Proposition~\ref{prop:asymptot_many} we are able to extend the definition of the correlation functions by relaxing the assumption that $\ps{\bm s,\omega_i}>\max\limits_{1\leq k\leq N}\ps{\alpha_k-Q,e_i}$. Our main statement in this perspective is the following:
	\begin{theorem}\label{thm:analycity}
		For $\bm\alpha=(\alpha_1,\cdots,\alpha_N)\in\mathcal{A}_N$, and $z_1,\cdots,z_N$ in $\C$ all distinct, set 
		\begin{equation}\label{eq:correl_2}
			\begin{split}
				\Big\langle V_{\alpha_1}(z_1) \cdots V_{\alpha_N}(z_N) \Big\rangle&= \int_{\R^2}e^{-2\ps{Q,\bm c}}\expect{\prod_{k=1}^NV_{\alpha_k}(z_k)e^{-\sum_{i=1}^2\mu_ie^{\gamma \ps{\bm c,e_i}}M^{\gamma e_i}(\C)}-\mathcal{R}_{\bm\alpha}(\bm c)}d\bm c.
			\end{split}
		\end{equation}
		Then the map 
		\[
		\bm\alpha\mapsto \Big\langle V_{\alpha_1}(z_1) \cdots V_{\alpha_N}(z_N) \Big\rangle
		\]
		is analytic in a complex neighborhood of $\mathcal{A}_N$ seen as a subset of $(\C^2)^N$\footnote{That is to say, for any $u\in\R^2$ and $1\leq k\leq N$, the map $\ps{u,\alpha_k}\mapsto \Big\langle V_{\alpha_1}(z_1) \cdots V_{\alpha_N}(z_N) \Big\rangle$ is analytic.}. 
		
		Moreover its poles over this domain are given by the set 
		\begin{equation}
			\begin{split}
				\mathcal{P}_{N}\coloneqq& \{\bm\alpha\in\mathcal{A}_N,\quad\text{$\ps{\alpha_k-Q,e}\in-\gamma\N^*\cup-\frac2\gamma\N^*$ for some $1\leq k\leq N$ and $e\in\Phi^+$}\}\\
				\bigcup&\{\bm\alpha\in\mathcal{A}_N,\quad\text{$\ps{\bm s(w),\omega_i}=0$ for some $w:\{1,\cdots,N\}\to W$ and $i=1$, or $2$}\}.
			\end{split}
		\end{equation}
	\end{theorem}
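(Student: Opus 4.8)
The plan is to base everything on the uniform decay estimate of Proposition~\ref{prop:asymptot_many}. First I would record that, for $\bm\alpha\in\mathcal{A}_N$, that proposition furnishes a positive $\xi$ for which the integrand
\[
\Phi(\bm\alpha,\bm c)\coloneqq e^{-2\ps{Q,\bm c}}\expect{\prod_{k=1}^NV_{\alpha_k}(z_k)e^{-\sum_{i=1}^2\mu_ie^{\gamma\ps{\bm c,e_i}}M^{\gamma e_i}(\C)}-\mathcal{R}_{\bm\alpha}(\bm c)}
\]
satisfies $\norm{\Phi(\bm\alpha,\bm c)}\leq Ce^{-\xi\norm{\bm c}}$; integrating this bound over $\R^2$ shows at once that the right-hand side of~\eqref{eq:correl_2} converges absolutely, so that the correlation function is well defined on $\mathcal{A}_N$.

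Next I turn to analyticity, using the classical criterion for integrals depending holomorphically on a parameter: one shows that $\bm\alpha\mapsto\Phi(\bm\alpha,\bm c)$ extends holomorphically to a complex neighbourhood of $\mathcal{A}_N$ for almost every $\bm c$, and that the bound of Proposition~\ref{prop:asymptot_many} can be taken locally uniform in the complex weights, so that Morera's theorem applied slice by slice in each variable $\ps{u,\alpha_k}$ (together with Fubini to interchange the contour and the $\bm c$-integral, and then Hartogs' theorem to pass to joint analyticity) is justified. For fixed $\bm c$, the holomorphy of $\Phi$ reduces to holomorphy of the GMC expectation in the exponents $\gamma\ps{\alpha_k,e_i}$: as long as the weights stay in the region where $\min_i\ps{\alpha_k-Q,e_i}<-\gamma$, so that the relevant GMC masses admit finite moments of the required order, this Laplace-transform-type expectation depends holomorphically on the $\alpha_k$ by the standard analyticity of negative GMC moments, while the remainder $\mathcal{R}_{\bm\alpha}(\bm c)$ is holomorphic away from the poles of the reflection coefficients $R_{w(k)}(\alpha_k)$.

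The crux, and the step I expect to be the main obstacle, is upgrading the decay estimate of Proposition~\ref{prop:asymptot_many} to a bound that is uniform over a complex neighbourhood of a given compact $K\subset\mathcal{A}_N$. The subtraction $\mathcal{R}_{\bm\alpha}(\bm c)$ is tuned so as to cancel exactly the leading terms of the real asymptotic expansion of the integrand as $\bm c$ diverges, so one must revisit the derivation of that expansion and check that the error terms degrade controllably when $\bm\alpha$ is perturbed into $\C^2$, keeping $\xi>0$ and $C$ locally bounded. This is where the delicate interplay between the reflection-coefficient subtractions and the direction in which $\bm c\to\infty$ enters, and where the generalized path-decomposition estimates of~\cite{Toda_correl1} have to be made quantitative in the weights.

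Finally, to locate the poles I would read them off the two mechanisms through which meromorphy can arise. On the one hand, the reflection coefficients entering $\mathcal{R}_{\bm\alpha}(\bm c)$ carry the Gamma factors of $A(w(k)(\alpha_k-Q))$; for a simple reflection one has $\ps{s_i(\alpha-Q),e_i}=-\ps{\alpha-Q,e_i}$, so these factors produce poles precisely at $\ps{\alpha_k-Q,e}\in-\gamma\N^*\cup-\frac2\gamma\N^*$, the first family in $\mathcal{P}_N$, while the denominator $A(\alpha_k-Q)$ is pole- and zero-free on $Q+\mathcal{C}_-$. On the other hand, the indicator thresholds $\ps{\bm s(w),\omega_i}=0$ are exactly the values at which one of the subtracted power-law tails $e^{\ps{\bm s(w),\bm c}}$ becomes marginally non-integrable along the corresponding direction, and integrating such a tail produces a simple pole in $\ps{\bm s(w),\omega_i}$, giving the second family in $\mathcal{P}_N$. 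It then remains to verify that the unsubtracted part of the integral contributes no further singularities on $\mathcal{A}_N$, which follows once the locally uniform bound above is in hand.
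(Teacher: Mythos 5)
There is a genuine gap, and it sits at the core of your Morera argument. The remainder $\mathcal{R}_{\bm\alpha}(\bm c)$ of Proposition~\ref{prop:asymptot_many} contains the indicator functions $\mathds{1}_{\ps{\bm s(w),\omega_i}<0}$ and $\mathds{1}_{\max_j\ps{\bm s(w),\omega_j}<0}$, whose arguments depend on $\bm\alpha$, not on $\bm c$. Consequently, for fixed $\bm c$ the map $\bm\alpha\mapsto\Phi(\bm\alpha,\bm c)$ is not holomorphic --- it is not even continuous across the hyperplanes $\ps{\bm s(w),\omega_i}=0$, which do intersect $\mathcal{A}_N$ (the definition of $\mathcal{A}_N$ only forces $\ps{\bm s,\omega_i}>-\gamma$). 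So your claim that ``the remainder $\mathcal{R}_{\bm\alpha}(\bm c)$ is holomorphic away from the poles of the reflection coefficients'' is false, and the slice-by-slice Morera argument cannot be run on the integrand of~\eqref{eq:correl_2} as written. The same obstruction kills the hoped-for locally uniform bound: if the constant $C$ and the rate $\xi$ of Proposition~\ref{prop:asymptot_many} could be taken uniform on a complex neighbourhood of a compact set crossing one of these hyperplanes, the integral would be holomorphic there, contradicting the fact (which you yourself invoke at the end) that the correlation function has genuine poles at $\ps{\bm s(w),\omega_i}=0$. In reality, as $\ps{\bm s(w),\omega_i}\to0$ the decay of the subtracted tail degenerates and the bound blows up like $1/\ps{\bm s(w),\omega_i}$; your plan is therefore internally inconsistent, since it would simultaneously prove analyticity across these loci and produce poles on them.

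What is missing is precisely the reformulation the paper performs before any estimate is made. One replaces $\mathcal{R}_{\bm\alpha}(\bm c)$ by the modified remainder $\bm{\mathrm{R}}_{\bm\alpha}(\bm c)$ of Equation~\eqref{eq:defRrm}, whose cut-offs are the $\bm c$-dependent indicators $\mathds{1}_{\ps{\bm c,e_i}<0}$, and compensates by adding back the $\bm c$-integrals of the subtracted tails computed in closed form: these are the terms $E^i_{\bm\alpha}$ and $E^{1,2}_{\bm\alpha}$ of Equation~\eqref{eq:ana_F}, which carry the explicit meromorphic factors $1/\ps{\bm s(w),\omega_i}$. With this decomposition, for fixed $\bm c$ the integrand $E_{\bm\alpha}(\bm c)$ genuinely is holomorphic in $\bm\alpha$ (Lemma~\ref{lemma:analycity}), the uniform exponential decay of Lemma~\ref{lemma:anabis} holds locally in $\bm\alpha$ including across the hyperplanes, and your Fubini--Morera scheme then applies verbatim; the pole set $\mathcal{P}_N$ is read off from the explicit algebraic factors together with the Gamma factors of the reflection coefficients, exactly as in your last paragraph. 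Your remaining ingredients --- holomorphy of the regularized GMC expectation and the pole bookkeeping for $R_s(\alpha_k)$ --- do match the paper's proof, but without this restructuring of the subtraction the argument does not go through.
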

	This statement allows to provide an analytic continuation of the correlation functions beyond the Seiberg bounds and will be of crucial importance in the derivation of Theorem~\ref{thm:main_result}.
	
	Following the same reasoning as the one developed in the proof of~\cite[Theorem 3.1]{Toda_construction}, the defining expression for the correlation functions (Equation~\eqref{eq:correl_2}) admits an alternative representation based on the property that
	\begin{equation}\label{eq:correl_3}
		\begin{split}
			&e^{-2\ps{Q,\bm c}}\expect{\prod_{k=1}^NV_{\alpha_k}(z_k)\prod_{i=1}^2e^{-\mu_ie^{\gamma \ps{\bm c,e_i}}M^{\gamma e_i}(\C)}}=\\
			&\prod_{j<k}\norm{z_j-z_k}^{-\ps{\alpha_j,\alpha_k}}e^{\ps{\bm s,\bm c}}\expect{\prod_{i=1}^2e^{-\mu_ie^{\gamma \ps{\bm c,e_i}}I^i(\C)}}.
		\end{split}
	\end{equation}

	In the sequel we will consider three and four-point correlation functions with an insertion point at $z=\infty$. They are defined as the limit
	\[
	\Big\langle V_{\alpha_1}(z_1) \cdots V_{\alpha_N}(z_N)V_{\alpha_\infty}(\infty) \Big\rangle\coloneqq \lim\limits_{z'\to\infty}\norm{z'}^{4\Delta_{\infty}}\Big\langle V_{\alpha_1}(z_1) \cdots V_{\alpha_N}(z_N)V_{\alpha_\infty}(z') \Big\rangle,
	\]
	which translates as 
	\begin{equation}\label{eq:def_correl_infty}
		\begin{split}
			&\expect{\prod_{k=1}^NV_{\alpha_k}(z_k)V_{\alpha_\infty}(\infty)\prod_{i=1}^2e^{-\mu_ie^{\gamma \ps{\bm c,e_i}}M^{\gamma e_i}(\C)}}\coloneqq \\
			&\prod_{1\leq j<k\leq N}\norm{z_j-z_k}^{-\ps{\alpha_j,\alpha_k}}e^{\ps{\sum_{k=1}^N\alpha_k+\alpha_\infty,\bm c}}\expect{\prod_{i=1}^2e^{-\mu_ie^{\gamma \ps{\bm c,e_i}}\tilde I_i(\C)}}\quad\text{with}\\
			& \tilde I_i(d^2x_i)\coloneqq \prod_{k=1}^N\left(\frac{\norm{x_i}_+}{\norm{z_k-x_i}}\right)^{\gamma\ps{\alpha_k,e_i}}\norm{x_i}_+^{\gamma\ps{\alpha_\infty, e_i}}M^{\gamma e_i}(d^2x_i).
		\end{split}
	\end{equation}
	The same applies for terms containing expressions of the form $R_s(\alpha_\infty)V_{\hat s\alpha_\infty}(\infty)$ by using the fact that the conformal weight $\Delta_{\alpha}$ is invariant under the action of the Weyl group:
	\[
	\Delta_{\hat s\alpha}=\Delta_{\alpha}\quad\text{for all }s\in W.
	\]
	The Toda three-point correlation functions that enter Theorem~\ref{thm:main_result} are defined by setting for $(\alpha_0,\alpha_1,\alpha_\infty)\in\mathcal{A}_3$ \begin{equation}
		C_\gamma(\alpha_0,\alpha_1,\alpha_\infty)\coloneqq \Big\langle V_{\alpha_0}(0)V_{\alpha_1}(1)V_{\alpha_\infty}(\infty) \Big\rangle
	\end{equation}
	with the right-hand side as above. 
	
	
	\subsection{Additional properties of Gaussian Free Fields and Gaussian Multiplicative Chaos}
	Before actually moving on to the proof of Theorem~\ref{thm:analycity}, we provide additional properties of the GFFs and GMC measures considered, and that will be crucial in the rest of the document.
	
	\subsubsection{Radial-angular decomposition}
	The GFFs introduced in Subsection~\ref{subsec:GFF_GMC} have strong interplays with Brownian motions, since the latter naturally arise when writing GFFs in polar coordinates. Namely, let us introduce the process $B_t\coloneqq \X_t(z)$, where for any $z\in\C$ and $t\in\R$
	\begin{equation}
		\X_t(z)\coloneqq \frac{1}{2\pi}\int_0^{2\pi}\X(z+e^{-t+i\theta})d\theta.
	\end{equation}
	Then we can write the GFF $\X$ under the form
	\begin{equation}\label{eq:rad_ang_dec}
		\X(z+e^{-t+i\theta})= B_t+Y(t,\theta)
	\end{equation}
	where $Y(t,\theta)\coloneqq \X(z+e^{-t+i\theta})- B_t$.
	
	One easily checks using the covariance kernel of $\X$ that the process $(B_t)_{t\geq0}$ is a planar Brownian motion started from the origin. Besides $Y$ is a Gaussian field with covariance kernel given by 
	\begin{equation}\label{eq:cov_Y}
		\E[\ps{u,Y(t,\theta)}\ps{v,Y(t',\theta')}]=\ps{u,v}  \ln\frac{e^{-t}\vee e^{-t'}}{\norm{e^{-t+i\theta}-e^{-t'+i\theta'}}}
	\end{equation}
	for any $u,v\in\V$, with $B$ and $Y$ independent.
	
	Thanks to Equation~\eqref{eq:rad_ang_dec} above the GMC defined from the GFF $\X$ admits the alternative representation
	\begin{equation}\label{equ:radial_angular}
		M^{\gamma e_i}(d^2x)=e^{\gamma\ps{B_t-Qt,e_i}} M_Y^{\gamma e_i}(dt,d\theta),
	\end{equation}
	where $M_Y$ is the GMC measure defined from $Y$. Put differently, for any bounded map $F$ and $\mathcal{O}$ a measurable subset of $\C$ 
	\[
	\expect{F\left(\int_{\mathcal{O}}M^{\gamma e_i}(d^2x)\right)}=\expect{F\left(\int_{\R}e^{\gamma\ps{B_t-Qt,e_i}}\int_0^{2\pi}\mathds1_{e^{-t+i\theta}\in\mathcal{O}}M^{\gamma e_i}_Y(dt,d\theta)\right)}.
	\]
	
	\subsubsection{Path decompositions for Brownian motions}\label{subsec:brown_cond_neg}
	When considering the asymptotic of the correlation functions, we will need to understand the asymptotic properties of (correlated) GMC measures. In this regime we will need to study the behavior of the process $B_t$ introduced above, and more precisely we will need to know the law of this process under a certain conditioning. For this purpose, we introduce for negative $\nu$ the diffusion process $\mathcal{B}^\nu$, defined by joining
	\begin{itemize}
		\item a one-dimensional Brownian motion with positive drift $-\nu$, started from $x<0$ and run upon hitting $0$;
		\item an independent one-dimensional Brownian motion with negative drift $\nu$ and conditioned to stay negative, that is to say the diffusion process with generator
		\[
		\frac12\frac{d^2}{dx^2}+\nu\text{coth}(\nu x)\frac{d}{dx}\cdot
		\]
	\end{itemize}
	Then it follows from a celebrated result of Williams~\cite{williams} that the law of a one-dimensional Brownian motion $B^\nu$ with drift $\nu$ and started from the origin can be realized by sampling:
	\begin{itemize}
		\item an exponential random variable $\M$ with parameter $-2\nu$ and that represents $\sup_{t\geq 0} B^\nu_t$;
		\item the process $\M+\mathcal{B}^\nu$ described above, where $\mathcal{B}^\nu$ is started from $-\M$.
	\end{itemize}
	In particular this allows to give a meaning to the process whose law is that of $B^\nu$ conditionally on the value of its maximum.
	
	In the planar case, a similar picture also holds as proved in the first document of this two-part series~\cite{Toda_correl1}. Namely, we can define a process $\B^\nu$ with $\nu\in\mathcal{C}_-$, started from $x\in\mathcal{C}_-$, by joining:
	\begin{itemize}
		\item a diffusion process $\X^1$ with infinitesimal generator 
		\[
		\frac12\Delta+\bigtriangledown \log \partial_{12}h\cdot\bigtriangledown,
		\] run until hitting $\partial\mathcal{C}_-$, say at $z_1\in \partial\mathcal{C}_1$. 
		\item an independent diffusion process $\X^2$ started from $z_1$ and with infinitesimal generator 
		\[
		\frac12\Delta+\bigtriangledown \log \partial_{2}h\cdot\bigtriangledown,
		\] upon hitting $\partial\mathcal{C}_2$ at $z_2$.
		\item an independent process $\X^3$ started from $z_2$, whose law is that of $B^\nu$ conditioned on staying inside $\mathcal{C}_-$, that is to say the diffusion with generator
		\[
		\frac12\Delta+\bigtriangledown \log h\cdot\bigtriangledown.
		\]
	\end{itemize}
	Here we have denoted
	\begin{equation}\label{equ:max_drift}
		h(x)\coloneqq\sum_{s\in W}\epsilon(s)e^{\ps{s\nu-\nu,x}}\quad\text{and}\quad\partial_i\text{ is a shorthand for }\partial_{\ps{x,e_i}}.
	\end{equation}
	With this process at hand, a planar Brownian motion $B^\nu$ with drift $\nu\in\mathcal{C}_-$ and started from the origin can be realized by sampling:
	\begin{itemize}
		\item a random variable $\M$ defined by 
		\[
		\P\left(\ps{\M,e_i}\leq \bm m_i\quad\forall1\leq i\leq r\right)=h(-\bm m)\mathds1_{\bm m\in\mathcal{C}},
		\]
		and which is such that $\ps{\M,e_i}=\sup_{t\geq 0} \ps{B^\nu_t,e_i}$ for $i=1,2$;
		\item the process $\M+\B^\nu$ described above, where $\B^\nu$ is started from $-\M$.
	\end{itemize}
	In particular this decomposition allows to write that for any bounded map $F:\R^2\to\R$ and $B_r(z)=B(z,e^{-r})$ an Euclidean ball centered at some point $z\in\C$ with radius $e^{-r}$
	\begin{equation}\label{eq:williams}
		\begin{split}
			&\expect{F\left(\int_{B_r(z)}\norm{x-z}^{-\gamma\ps{\alpha,e_i}}M^{\gamma e_i}(d^2x)\right)_{i=1,2}}=\\
			&\int_{\mathcal{C}} \partial_{12}h(\M)\E_{-\M}\left[F\left(e^{\gamma (\X_r(z)+\nu r+\M)}\int_{r}^{+\infty}e^{\gamma\ps{\B^\nu_t,e_i}}\int_0^{2\pi}\mathds1_{e^{-t+i\theta}\in B_r(z)}M^{\gamma e_i}_Y(dt,d\theta)\right)\right]d\M
		\end{split}	
	\end{equation}
	where under $\E_{-\M}$ the process $\B^\nu$ is started from $-\M$, and with $\nu=\alpha-Q$. For future reference let us stress that 
	\[
	\partial_{12}h(\M)=\sum_{s\in W_{1,2}}\lambda_se^{\ps{s\nu-\nu,\M}}\quad\text{where}\quad \lambda_s=\ps{s\nu-\nu,\omega_1}\ps{s\nu-\nu,\omega_2}
	\]
	and $W_{1,2}=\{s_1s_2,s_2s_1,s_1s_2s_1\}$.
	
	\subsubsection{Moments of Gaussian Multiplicative Chaos measures}
	Recall the notation introduced in Equation~\eqref{eq:def_I}:
	\begin{equation*}
		I^i(d^2x_i)= \prod_{k=1}^N\left(\frac{\norm{x_i}_+}{\norm{z_k-x_i}}\right)^{\gamma\ps{\alpha_k,e_i}}M^{\gamma e_i}(d^2x_i).
	\end{equation*}
	The statement of~\cite[Lemma 4.1]{Toda_construction} shows that the random variable $I^i(\C)$ is well defined as soon as $\ps{\alpha_k-Q,e_i}<0$ for all $1\leq k\leq N$ and $i=1,2$, and satisfies the property that
	\begin{equation}
		\expect{I^i(\C)^p}<\infty\quad\text{as soon as $p<\frac{2}{\gamma^2}\vee \max_{1\leq k\leq N}\frac{\ps{Q-\alpha_k,e_i}}\gamma\cdot$}
	\end{equation} More precisely and using the same notations as above 
	\begin{align}\label{eq:moments_I}
		&\expect{\prod_{i=1}^2\left(\int_{B_r(z)}\norm{x-z}^{-\gamma\ps{\alpha,e_i}}M^{\gamma e_i}(d^2x)\right)^{p_i}}<\infty\quad\text{for $p_i<\frac{2}{\gamma^2}\vee \frac{\ps{Q-\alpha,e_i}}\gamma$, $i=1,2$.}
	\end{align}
	In particular this defines $L^1$ random variables as soon as $\ps{Q-\alpha,e_i}>\gamma$.
	
	In order to go beyond these bounds we will use the radial-angular decomposition from Equation~\eqref{eq:rad_ang_dec}, which is such that
	\begin{align}\label{eq:moments_J}
		&\expect{\prod_{i=1}^2\left(\int_{r}^{+\infty}e^{\gamma\ps{\B^\nu_t,e_i}}\int_0^{2\pi}M^{\gamma e_i}_Y(dt,d\theta)\right)^{p_i}}<\infty\quad\text{for $p_i<\frac{2}{\gamma^2},\quad i=1,2$.}
	\end{align}
	
	\subsubsection{Fusion asymptotics}\label{subsec:fusion}
	Another key question is to investigate what happens when two insertion points collide, that is to say for instance when $z_2\to z_1$ in a correlation function of the form
	\[
	\Big\langle V_{\alpha_1}(z_1)V_{\alpha_2}(z_2) \prod_{k=3}^N V_{\alpha_k}(z_k) \Big\rangle\cdot
	\]
	We may then distinguish between several cases:
	\begin{itemize}
		\item First of all, if $\ps{\alpha_1+\alpha_2-Q,e_i}<0$ then the limiting GMC measure still makes sense and is given by
		\begin{equation*}
			I^i(d^2x_i)= \left(\frac{\norm{x_i}_+}{\norm{z_1-x_i}}\right)^{\gamma\ps{\alpha_1+\alpha_2,e_i}}\prod_{k=3}^N\left(\frac{\norm{x_i}_+}{\norm{z_k-x_i}}\right)^{\gamma\ps{\alpha_k,e_i}}M^{\gamma e_i}(d^2x_i).
		\end{equation*}
		\item Conversely if $\ps{\alpha_1+\alpha_2-Q,e_1}\geq0$ then the limiting GMC measure is not well-defined. However we can estimate the rate of divergence of the random variable by means of the so-called \emph{fusion asymptotics} (see~\cite[Lemma 3.2]{Toda_OPEWV} or the freezing estimate~\cite[Lemma 6.5]{KRV_loc}\footnote{The factor $\frac14$ which appears in the exponent $\frac{\ps{\alpha_1+\alpha_2-Q,e_1}^2}{4}$ stems from the fact that $\ps{e_1,e_1}=2$.}), which imply that as $z_2\to z_1$:
		\begin{equation}\label{eq:fusion}
			\begin{split}
				&\expect{\exp\left(-\int_{B_r(z_1)}\norm{x-z_1}^{-\gamma\ps{\alpha_1,e_1}}\norm{x-z_2}^{-\gamma\ps{\alpha_2,e_1}}M^{\gamma e_1}(d^2x)\right)}\\
				&=o\left(\norm{z_1-z_2}^{\frac{\ps{\alpha_1+\alpha_2-Q,e_1}^2}{4}-\eps}\right)\quad\text{for any positive }\eps\text{ and fixed }r.
			\end{split}
		\end{equation}
	\end{itemize}
	
	
	\subsection{Analytic continuation of Toda correlation functions: proof of Theorem~\ref{thm:analycity}}
	This subsection is dedicated to proving Theorem~\ref{thm:analycity}. As we will see, the method developed in what follows can readily be implemented within the framework of Liouville CFT, so that the proof of Theorem~\ref{thm:analycity} can be adapted in a straightforward way to prove Corollary~\ref{cor:DOZZ}. 
	
	\subsubsection{Reducing the proof} 
	The key idea behind the continuation of the Toda correlation functions is that the quantity entering that definition of the correlation functions:
	\[
	e^{-\ps{2Q,\bm c}}\expect{\prod_{k=1}^NV_{\alpha_k}(z_k)e^{-\sum_{i=1}^r\mu_ie^{\gamma\ps{\bm c,e_i}}M^{\gamma e_i}(\C)}}
	\] 
	is integrable in $\bm c$ under the assumption that $\ps{\bm s,\omega_1}>0$, but in the case where $\ps{\bm s,\omega_1}<0$ it behaves like $e^{\ps{\bm s,\omega_1}c_1}$ as $c_1\coloneqq\ps{\bm c,e_1}$ goes to $-\infty$ and as such is no longer integrable. To remedy this issue we need to add a correction term $\mathcal R_{\bm\alpha}(\bm c)$ that will be such that the quantity
	\begin{equation}\label{eq:encoreune}
		e^{-\ps{2Q,\bm c}}\expect{\prod_{k=1}^NV_{\alpha_k}(z_k)e^{-\sum_{i=1}^r\mu_ie^{\gamma\ps{\bm c,e_i}}M^{\gamma e_i}(\C)}-\mathcal{R}_{\bm\alpha}(\bm c)}
	\end{equation}
	is no longer divergent as $c_1\to-\infty$. As discussed above it is natural to expect that reflection coefficients will enter the definition of this extra term since they encode the asymptotic behavior of such GMC measures in this asymptotic.
	
	To settle the ideas, let us first consider the subset $\mathcal{A}_N^{1;0}$ of $\mathcal{A}_N$ defined by assuming that for $2\leq k\leq N$ and $i=1,2$, $\ps{\alpha_k-Q,e_i}<-\gamma$, and that the weight $\alpha_1$ is such that $\ps{\alpha_1-Q,e_1}>-\gamma$ while $\ps{\alpha_1-Q,e_2}<-\gamma$. In that case the remainder term $\mathcal R_{\bm\alpha}(\bm c)$ will feature only the reflection coefficient $R_{s_1}(\alpha_1)$, and is given by
	\begin{align*}
		\mathcal{R}_{\bm\alpha}(\bm c)&=0\quad\text{if }\ps{\bm s,\omega_1}>0,\\
		\mathcal{R}_{\bm\alpha}(\bm c)&=\prod_{k=1}^NV_{\alpha_k}(z_k)e^{-\mu_2e^{\gamma\ps{\bm c,e_2}}M^{\gamma e_2}(\C)}\quad\text{if }\ps{\alpha_1-Q,e_1}<\ps{\bm s,\omega_1}<0\quad\text{and}\\
		\mathcal{R}_{\bm\alpha}(\bm c)&=\left(\prod_{k=1}^NV_{\alpha_k}(z_k)+R_{s_1}(\alpha_1)V_{\hat s_1\alpha_1}(z_1)\prod_{k=2}^NV_{\alpha_k}(z_k)\right)e^{-\mu_2e^{\gamma\ps{\bm c,e_2}}M^{\gamma e_2}(\C)}\quad
	\end{align*}
	if $-\gamma<\ps{\bm s,\omega_1}<\ps{\alpha-Q,e_1}$. As we now explain, these terms will compensate the possible divergence at $\bm c$ diverges in such a way that Equation~\eqref{eq:encoreune} will become indeed integrable. But even more, we will see that by doing so we provide an analytic continuation of the correlation functions beyond the Seiberg bounds.
	To see why this is the case let us introduce 
	\[
	\bm{\mathrm{R}_{\alpha}}(\bm c)\coloneqq \mathds1_{\ps{\bm c,e_1}<0}\prod_{k=2}^NV_{\alpha_k}(z_k)\left(V_{\alpha_1}(z_1)+R_{s_1}(\alpha_1)V_{\hat s_1\alpha_1}(z_1)\right)e^{-\mu_2e^{\gamma\ps{\bm c,e_2}}M^{\gamma e_2}(\C)}.
	\] 
	We aim to prove that the map $F:(\C^2)^N\to\R$ defined by setting
	\begin{equation}\label{eq:an_toprove} 
		\begin{split}
			&F(\bm\alpha)\coloneqq \int_{\R^2}e^{-2\ps{Q,\bm c}}\expect{\prod_{k=1}^NV_{\alpha_k}(z_k)e^{-\sum_{i=1}^2\mu_ie^{\gamma \ps{\bm c,e_i}}M^{\gamma e_i}(\C)}-\bm{\mathrm R}_{\bm\alpha}(\bm c)}d\bm c\\
			&+\frac{1}{\ps{\bm s,\omega_1}}\int_{\R}\prod_{j<k}\norm{z_j-z_k}^{-\ps{\alpha_j,\alpha_k}}e^{\ps{\bm s,\omega_2}c_2}\expect{e^{-\mu_2e^{\gamma c_2}I^2(\C)}}d c_2\\
			&+\frac{R_{s_1}(\alpha_1)}{\ps{\bm s+\hat s_1\alpha_1-\alpha_1,\omega_1}}\int_{\R}\prod_{j<k}\norm{z_j-z_k}^{-\ps{\hat\alpha_j,\hat\alpha_k}}e^{\ps{\bm{\hat s},\omega_2}c_2}\expect{e^{-\mu_2e^{\gamma c_1}\hat I^2(\C)}}dc_2
		\end{split}
	\end{equation}
	is analytic in a complex neighborhood of $\mathcal{A}_N^{1;0}$, where the quantities denoted with a $\lq\lq$ ${}^\wedge$ " sign are defined by replacing $\alpha_1$ with $\hat s_1\alpha_1$. 
	
	To start with, in the case where $\ps{\bm s,\omega_i}>0$ for $i=1,2$ we have that 
	\begin{align*}
		&\int_{\R^2}e^{-2\ps{Q,\bm c}}\expect{\mathds1_{\ps{\bm c,e_1}<0}\prod_{k=1}^NV_{\alpha_k}(z_k)e^{-\mu_2e^{\gamma\ps{\bm c,e_2}}M^{\gamma e_2}(\C)}}d\bm c\\
		&=\int^0_{-\infty}e^{\ps{\bm s,\omega_1}c_1}dc_1\int_{\R}\prod_{j<k}\norm{z_j-z_k}^{-\ps{\alpha_j,\alpha_k}}e^{\ps{\bm s,\omega_2}c_2}\expect{\prod_{k=1}^NV_{\alpha_k}(z_k)e^{-\mu_2e^{\gamma\ps{\bm c,e_2}}I^2(\C)}}dc_2\\
		&=\frac{1}{\ps{\bm s,\omega_1}}\int_{\R}\prod_{j<k}\norm{z_j-z_k}^{-\ps{\alpha_j,\alpha_k}}e^{\ps{\bm s,\omega_2}c_2}\expect{e^{-\mu_2e^{\gamma c_2}I^2(\C)}}d c_2\quad\text{while}
	\end{align*}
	\begin{align*}
		&\int_{\R^2}e^{-2\ps{Q,\bm c}}\expect{\mathds1_{\ps{\bm c,e_1}<0}R_{s_1}(\alpha_1)V_{\hat s_1\alpha_1}(z_1)\prod_{k=2}^NV_{\alpha_k}(z_k)e^{-\mu_2e^{\gamma\ps{\bm c,e_2}}M^{\gamma e_2}(\C)}}d\bm c\\
		&=R_{s_1}(\alpha_1)\int^0_{-\infty}e^{\ps{\bm s+\hat s_1\alpha_1-\alpha_1,\omega_1}c_1}dc_1\int_{\R}\prod_{j<k}\norm{z_j-z_k}^{-\ps{\hat\alpha_j,\hat\alpha_k}}e^{\ps{\bm{\hat s},\omega_2}c_2}\expect{e^{-\mu_2e^{\gamma\ps{\bm c,e_2}}\hat I^2(\C)}}dc_2\\
		&=\frac{R_{s_1}(\alpha_1)}{\ps{\bm s+\hat s_1\alpha_1-\alpha_1,\omega_1}}\int_{\R}\prod_{j<k}\norm{z_j-z_k}^{-\ps{\hat\alpha_j,\hat\alpha_k}}e^{\ps{\bm{\hat s},\omega_2}c_2}\expect{e^{-\mu_2e^{\gamma c_1}\hat I^2(\C)}}dc_2.
	\end{align*}
	This shows that when $\ps{\bm s,\omega_i}>0$ for $i=1,2$ the map $F$ is given by 
	\[
	\int_{\R^2}e^{-2\ps{Q,\bm c}}\expect{\prod_{k=1}^NV_{\alpha_k}(z_k)e^{-\sum_{i=1}^2\mu_ie^{\gamma \ps{\bm c,e_i}}M^{\gamma e_i}(\C)}}d\bm c,
	\]
	which is nothing but the defining expression for the correlation functions under the assumptions made.
	The same reasoning shows that when $\ps{\bm s,\omega_2}>0$ with $0>\ps{\bm s,\omega_1}>\ps{\alpha_1-Q,e_1}$:
	\begin{align*}
		&\frac{1}{\ps{\bm s,\omega_1}}\int_{\R}\prod_{j<k}\norm{z_j-z_k}^{-\ps{\alpha_j,\alpha_k}}e^{\ps{\bm s,\omega_2}c_2}\expect{e^{-\mu_2e^{\gamma c_2}I^2(\C)}}d c_2\\
		&=\int_{\R^2}e^{-2\ps{Q,\bm c}}\expect{\mathds1_{\ps{\bm c,e_1}>0}\prod_{k=1}^NV_{\alpha_k}(z_k)e^{-\mu_2e^{\gamma\ps{\bm c,e_2}}M^{\gamma e_2}(\C)}}d\bm c\quad\text{and }
	\end{align*}
	\begin{align*}
		&\frac{R_{s_1}(\alpha_1)}{\ps{\bm s+\hat s_1\alpha_1-\alpha_1,\omega_1}}\int_{\R}\prod_{j<k}\norm{z_j-z_k}^{-\ps{\hat\alpha_j,\hat\alpha_k}}e^{\ps{\bm{\hat s},\omega_2}c_2}\expect{e^{-\mu_2e^{\gamma c_1}\hat I^2(\C)}}dc_2\\
		&=\int_{\R^2}e^{-2\ps{Q,\bm c}}\expect{\mathds1_{\ps{\bm c,e_1}<0}R_{s_1}(\alpha_1)V_{\hat s_1\alpha_1}(z_1)\prod_{k=2}^NV_{\alpha_k}(z_k)e^{-\mu_2e^{\gamma\ps{\bm c,e_2}}M^{\gamma e_2}(\C)}}d\bm c
	\end{align*}
	so that if $\ps{\bm s,\omega_2}>0$ and $0>\ps{\bm s,\omega_1}>\ps{\alpha_1-Q,e_1}$ $F$ coincides with 
	\[
	\int_{\R^2}e^{-2\ps{Q,\bm c}}\expect{\prod_{k=1}^NV_{\alpha_k}(z_k)e^{-\sum_{i=1}^2\mu_ie^{\gamma \ps{\bm c,e_i}}M^{\gamma e_i}(\C)}-\mathcal{R}_{\bm \alpha}(\bm c)}d\bm c
	\]
	as well. The same applies if we now assume that $\ps{\bm s,\omega_2}>0$ with $\ps{\alpha_1-Q,e_1}>\ps{\bm s,\omega_1}>-\gamma$.
	We infer that the map defined by Equation~\eqref{eq:correl_2} is indeed analytic over an open complex neighborhood of $\mathcal{A}_N^{1;0}$. Furthermore as soon as every integral term is holomorphic over $\mathcal{A}_N^{1;0}$, we see that the poles of $F$ are given by $\ps{\bm s,\omega_1}=0$ and $\ps{\bm s+\hat s_1\alpha_1-\alpha_1,\omega_1}=0$. Therefore this claim allows to conclude for the proof of Theorem~\ref{thm:analycity} in this very case. 
	
	In general, the same reasoning still works but we now need to take into account the possible reflection coefficients stemming for all other insertions. For instance let us explain this in the case where we assume that $\ps{\alpha_1-Q,e_1}>-\gamma$ and $\ps{\alpha_2-Q,e_2}>-\gamma$ but $\ps{\alpha-Q,e}<-\gamma$ otherwise, so that we need to incorporate the two reflection coefficients $R_{s_1}(\alpha_1)$ and $R_{s_2}(\alpha_2)$. Let us detail how to do so on this specific example and for this study the asymptotics of the term $e^{-2\ps{Q,\bm c}}\expect{\prod_{k=1}^NV_{\alpha_k}(z_k)e^{-\sum_{i=1}^2\mu_ie^{\gamma \ps{\bm c,e_i}}M^{\gamma e_i}(\C)}}$.
	To start with as $c_1=\ps{\bm c,e_1}\to-\infty$ we can write an expansion of the form (up to lower order terms)
	\begin{align*}
		&\prod_{j<k}	\norm{z_j-z_k}^{-\ps{\alpha_j,\alpha_k}}e^{\ps{\bm s,\bm c}}\expect{e^{-\mu_2e^{\gamma c_2}I^2(\C)}}+R_{s_1}(\alpha_1)\prod_{j<k}\norm{z_j-z_k}^{-\ps{\hat \alpha_j,\hat \alpha_k}}e^{\ps{\bm{\hat s},\bm c}}\expect{e^{-\mu_2e^{\gamma c_2}\hat I^2(\C)}}\\
		&=e^{-2\ps{Q,\bm c}}\expect{\left(V_{\alpha_1}(z_1)+R_{s_1}(\alpha_1)V_{\hat s_1\alpha_1}(z_1)\right)\prod_{k=2}^NV_{\alpha_k}(z_k)e^{-\mu_2e^{\gamma c_2}M^{\gamma e_2}(\C)}}+l.o.t.
	\end{align*}
	But this term may diverge as $c_2\to-\infty$; to remedy this issue we need to add a corrective term, leading us to considering
	\begin{align*}
		&\bm{\mathrm R}^2(\bm c)\coloneqq\mathds 1_{c_1<0}e^{-2\ps{Q,\bm c}}\times\\
		&\expect{\left(e^{-\mu_2e^{\gamma c_2}M^{\gamma e_2}(\C)}-\mathds 1_{c_2<0}\left(V_{\alpha_2}(z_2)+R_{s_2}(\alpha_2)V_{\hat s_2\alpha_2}(z_2)\right)\right)\left(V_{\alpha_1}(z_1)+R_{s_1}(\alpha_1)V_{\hat s_1\alpha_1}(z_1)\right)\prod_{k=3}^NV_{\alpha_k}(z_k)}.
	\end{align*}
	Now if we proceed in the same way by first looking at what happens as $c_2\to-\infty$ we see that we need to incorporate an extra term $\bm{\mathrm R}^1(\bm c)$ defined analogously to $\bm{\mathrm R}^2(\bm c)$ by interchanging the roles of $\alpha_1$ and $\alpha_2$. We thus get the tentative expression $\bm{\mathrm R}^1(\bm c)+\bm{\mathrm R}^2(\bm c)$ for the remainder term. However by doing so there is a redundancy in the cross terms that appear when both $c_1$ and $c_2$ diverge to $-\infty$. To take care of this issue we improve the previous heuristic by defining a remainder term
	\begin{align*}
		&\bm{\mathrm R}(\bm c)=\bm{\mathrm R}^1(\bm c)+\bm{\mathrm R}^2(\bm c)+\bm{\mathrm R}^{12}(\bm c),\quad\text{with}\\
		&\bm{\mathrm R}^{12}(\bm c)\coloneqq \mathds 1_{c_1,c_2<0}e^{-2\ps{Q,\bm c}}\expect{\left(V_{\alpha_2}(z_2)+R_{s_2}(\alpha_2)V_{\hat s_2\alpha_2}(z_2)\right)\left(V_{\alpha_1}(z_1)+R_{s_1}(\alpha_1)V_{\hat s_1\alpha_1}(z_1)\right)\prod_{k=3}^NV_{\alpha_k}(z_k)}
	\end{align*}
	where $\bm{\mathrm R}^{12}(\bm c)$ allows to avoid such redundancies.
	Now we see that under the assumption that $\ps{\bm s,\omega_1}>0$, $\int_{\R^2}\bm{\mathrm R}^1(\bm c)d\bm c$ is equal to
	\begin{align*}
		&\frac1{\ps{\bm s,\omega_1}}\int_\R \scalebox{0.95}{$e^{\ps{\bm s,\omega_2}c_2}\expect{\left(e^{-\mu_2e^{\gamma c_2}M^{\gamma e_2}(\C)}-\mathds 1_{c_2<0}\left(V_{\alpha_2}(z_2)+R_{s_2}(\alpha_2)V_{\hat s_2\alpha_2}(z_2)\right)\right)\prod_{k=1}^NV_{\alpha_k}(z_k)}$}dc_2+\\
		&\scalebox{0.95}{$\frac{R_{s_1}(\alpha_1)}{\ps{\bm s+\hat s_1\alpha_1-\alpha_1,\omega_1}}\int_{\R}e^{\ps{\bm {\hat s},\omega_2}c_2}\expect{\left(e^{-\mu_2e^{\gamma c_2}M^{\gamma e_2}(\C)}-\mathds 1_{c_2<0}\left(V_{\alpha_2}(z_2)+R_{s_2}(\alpha_2)V_{\hat s_2\alpha_2}(z_2)\right)\right)V_{\hat s_1\alpha_1}(z_1)\prod_{k=2}^NV_{\alpha_k}(z_k)}dc_2$}
	\end{align*}
	and under the additional assumption that $\ps{\bm s,\omega_2}>0$:
	\begin{align*}
		&\int_{\R^2}\bm{\mathrm R}^{12}(\bm c)d\bm c=\frac{\expect{\prod_{k=1}^NV_{\alpha_k}(z_k)}}{\ps{\bm s,\omega_1}\ps{\bm s,\omega_2}}+\frac{\expect{R_{s_1}(\alpha_1)V_{\hat s_1\alpha_1}(z_1)\prod_{k=2}^NV_{\alpha_k}(z_k)}}{\ps{\bm s+\hat s_1\alpha_1-\alpha_1,\omega_1}\ps{\bm s,\omega_2}}\\
		&+\frac{\expect{R_{s_2}(\alpha_2)V_{\hat s_2\alpha_2}(z_2)\prod_{k\neq 2}V_{\alpha_k}(z_k)}}{\ps{\bm s,\omega_1}\ps{\bm s+\hat s_2\alpha_2-\alpha_2,\omega_2}}+\frac{\expect{R_{s_1}(\alpha_1)V_{\hat s_1\alpha_1}(z_1)R_{s_2}(\alpha_2)V_{\hat s_2\alpha_2}(z_2)\prod_{k=3}^NV_{\alpha_k}(z_k)}}{\ps{\bm s+\hat s_1\alpha_1-\alpha_1,\omega_1}\ps{\bm s+\hat s_2\alpha_2-\alpha_2,\omega_2}}\cdot
	\end{align*}
	As a consequence we can conclude that if $\ps{\bm s,\omega_i}>0$ for $i=1,2$ then $F$ is equal to 
	\begin{align*}
		\int_{\R^2}e^{-2\ps{Q,\bm c}}\expect{\prod_{k=1}^NV_{\alpha_k}(z_k)e^{-\sum_{i=1}^2\mu_ie^{\gamma \ps{\bm c,e_i}}M^{\gamma e_i}(\C)}}d\bm c
	\end{align*}
	while it is readily seen that $\mathcal{R}_{\bm \alpha}(\bm c)=0$.
	In the same fashion as before we see that in all the possible cases for $\ps{\bm s,\omega}$, the map $F(\bm\alpha)$ coincides with 
	$$\int_{\R^2}e^{-2\ps{Q,\bm c}}\expect{\prod_{k=1}^NV_{\alpha_k}(z_k)e^{-\sum_{i=1}^2\mu_ie^{\gamma \ps{\bm c,e_i}}M^{\gamma e_i}(\C)}-\mathcal R_{\bm\alpha}(\bm c)}d\bm c$$ and is analytic in some complex neighborhood provided that Proposition~\ref{prop:asymptot_many} holds true.

	In the general setting of Theorem~\ref{thm:analycity} we proceed in the same fashion but this time we need to take into account all the possible way to make appear reflection coefficients. For this purpose we introduce for $V$ a subset of the Weyl group $W$ the notation
	\[
	\mathcal{R}_V\prod_{k=1}^NV_{\alpha_k(z_k)}\coloneqq \prod_{k=1}^N\left(\sum_{s\in V}R_s(\alpha_k)V_{\hat s\alpha_k}(z_k)\right)
	\]
	and also set $\mathcal{R}_{V_1}\mathcal{R}_{V_2}\coloneqq \mathcal{R}_{V_1V_2}$ where $V_1V_2=\left\{v_1v_2,\text{ }v_i\in V_i\text{ for }i=1,2\right\}$. We further define for $\bm\alpha\in(Q+\mathcal{C}_-)^N$ the remainder terms that will allow to compensate the divergence in the constant mode $\bm c$
	\begin{equation}\label{eq:defRrm}
		\begin{split}
			\bm{\mathrm{R}}_{\bm\alpha}(\bm c)&\coloneqq\bm{\mathrm{R}}_{\bm\alpha}^1(\bm c)+\bm{\mathrm{R}}_{\bm\alpha}^2(\bm c)+\bm{\mathrm{R}}_{\bm\alpha}^{1,2}(\bm c),\quad\text{with}\\
			\bm{\mathrm{R}}_{\bm\alpha}^1(\bm c)&\coloneqq \mathds 1_{\ps{\bm c,e_2}<0}\left(e^{-\mu_{1}e^{\gamma\ps{\bm c,e_{1}}}M^{\gamma e_{1}}(\C)}-\mathds 1_{\ps{\bm c,e_1}<0}\mathcal{R}_{Id,s_1}\right)\mathcal{R}_{Id,s_2}\prod_{k=1}^NV_{\alpha_k}(z_k),\\
			\bm{\mathrm{R}}_{\bm\alpha}^{1,2}(\bm c)&\coloneqq \mathds 1_{\max\limits_{i=1,2}\ps{\bm c,e_i}<0}\mathcal R_{W}\prod_{k=1}^NV_{\alpha_k}(z_k).
		\end{split}
	\end{equation} Then we may consider the map $F$ defined by
	\begin{equation}\label{eq:ana_F}
		F(\bm\alpha)\coloneqq  \int_{\R^2}E_{\bm\alpha}(\bm c)d\bm c+\sum_{i=1}^2\int_{\R}E^i_{\bm\alpha}(c_i)dc_i+E^{1,2}_{\bm\alpha},\quad\text{where we have set}
	\end{equation}
	\begin{equation*}
		E_{\bm\alpha}(\bm c)=e^{-2\ps{Q,\bm c}}\E\Big[\prod_{k=1}^NV_{\alpha_k}(z_k)e^{-\sum_{i=1}^2\mu_ie^{\gamma \ps{\bm c,e_i}}M^{\gamma e_i}(\C)}-\bm{\mathrm{R}}_{\bm\alpha}(\bm c)\Big], 
	\end{equation*}
	\begin{equation*}    
		\begin{split}
			E^1_{\bm\alpha}(\ps{\bm c,e_1})=&\sum_{w:\{1,\cdots,N\}\to\{Id,s_2\}}\frac{e^{-\ps{\bm s(w),\omega_2}\ps{\bm c,e_2}}}{\ps{\bm s(w),\omega_2}}\\
			&e^{-2\ps{Q,\bm c}}\expect{\left(e^{-\mu_1e^{\gamma\ps{\bm c,e_1}}M^{\gamma e_1}(\C)}-\mathds 1_{\ps{\bm c,e_1}<0}\left(1+\mathcal R_{s_1}\right)\right)\prod_{k=1}^NR_{w(k)}(\alpha_k)V_{\hat w(k)\alpha_k}(z_k)}
		\end{split}
	\end{equation*}
	which indeed depends only on $\bm c$ through $\ps{\bm c,e_1}$, and
	\begin{equation*}
		\begin{split}
			E^{1,2}_{\bm\alpha}= &\sum_{w:\{1,\cdots,N\}\to W}\frac{e^{-\ps{\bm s(w),\bm c}}}{\ps{\bm s(w),\omega_1}\ps{\bm s(w),\omega_2}}e^{-2\ps{Q,\bm c}}\expect{\prod_{k=1}^NR_{w(k)}(\alpha_k)V_{\hat w(k)\alpha_k}(z_k)}\\
			-&\sum_{i=1}^2\sum_{w:\{1,\cdots,N\}\to \{Id,s_i\}}\frac{e^{-\ps{\bm s(w),\bm c}}}{\ps{\bm s(w),\omega_i}}e^{-2\ps{Q,\bm c}}\expect{\prod_{k=1}^NR_{w(k)}(\alpha_k)V_{\hat w(k)\alpha_k}(z_k)}
		\end{split}
	\end{equation*}
	which is independent of $\bm c$.
	The reason for introducing such a map $F$ follows from the observation that $F$ is seen to coincide with 
	\[
	\int_{\R^2}e^{-2\ps{Q,\bm c}}\expect{\prod_{k=1}^NV_{\alpha_k}(z_k)e^{-\sum_{i=1}^2\mu_ie^{\gamma \ps{\bm c,e_i}}M^{\gamma e_i}(\C)}-\mathcal{R}_{\bm \alpha}(\bm c)}d\bm c
	\]
	as soon as every term makes sense.
	Therefore proving Theorem~\ref{thm:analycity} under its most general assumptions boils down to showing that the following holds true:
	\begin{lemma}\label{lemma:analycity}
		Let us denote by $\mathcal{A}_N^{0}$ the subset  of $\mathcal{A}_N$ defined by the condition that $\ps{\alpha_k-Q,e}\not\in-\gamma\N^*\cup-\frac2\gamma\N^*$ for all $1\leq k\leq N$ and $e\in\Phi^+$. 
		Then the following map is holomorphic in a complex neighborhood of $\mathcal{A}_N^{0}$: 
		\begin{equation}
			\begin{split}
				G(\bm\alpha)\coloneqq \int_{\R^2}e^{-2\ps{Q,\bm c}}\E\Big[\prod_{k=1}^NV_{\alpha_k}(z_k)&e^{-\sum_{i=1}^2\mu_ie^{\gamma \ps{\bm c,e_i}}M^{\gamma e_i}(\C)}-\bm{\mathrm{R}}_{\bm\alpha}(\bm c)\Big]d\bm c.
			\end{split}
		\end{equation} 
	\end{lemma}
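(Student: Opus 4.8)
The plan is to prove holomorphicity of $G$ by showing that the integrand
\[
\mathcal{E}_{\bm\alpha}(\bm c)\coloneqq e^{-2\ps{Q,\bm c}}\E\Big[\prod_{k=1}^NV_{\alpha_k}(z_k)e^{-\sum_{i=1}^2\mu_ie^{\gamma \ps{\bm c,e_i}}M^{\gamma e_i}(\C)}-\bm{\mathrm{R}}_{\bm\alpha}(\bm c)\Big]
\]
is, for each fixed $\bm c$, holomorphic in $\bm\alpha$ on a complex neighbourhood of $\mathcal{A}_N^0$, and then to invoke Morera's and Fubini's theorems to transfer this to $G$. The crux is therefore a \emph{uniform integrable domination}: I would establish that there exists $\xi>0$ such that $e^{\xi\norm{\bm c}}\mathcal{E}_{\bm\alpha}(\bm c)$ stays bounded, uniformly for $\bm\alpha$ in a compact complex neighbourhood of $\mathcal{A}_N^0$. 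Since the exponential weight $e^{\xi\norm{\bm c}}$ is then integrable against $d\bm c$ over $\R^2$, dominated convergence yields continuity and Morera (applied slice-by-slice in each complex coordinate $\ps{u,\alpha_k}$, using the definition of analyticity in the footnote to Theorem~\ref{thm:analycity}) upgrades this to holomorphicity.

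The construction of $\bm{\mathrm{R}}_{\bm\alpha}$ is exactly designed to produce this decay, so the heart of the argument is to verify the asymptotic cancellation carefully in each regime of $\bm c$. I would decompose $\R^2$ according to the signs of $\ps{\bm c,e_1}$ and $\ps{\bm c,e_2}$, and in each region match the leading behaviour of the GMC exponential term against the corresponding piece of $\bm{\mathrm{R}}_{\bm\alpha}$. Concretely, when both $\ps{\bm c,e_i}$ are large and positive the GMC masses force the expectation to decay superexponentially and no counterterm is needed; when one or both are large and negative, the reflection-coefficient subtractions $\bm{\mathrm{R}}^1_{\bm\alpha},\bm{\mathrm{R}}^2_{\bm\alpha},\bm{\mathrm{R}}^{1,2}_{\bm\alpha}$ are tailored to cancel the diverging constant and first-order reflected terms, leaving a remainder that decays like $e^{-\xi\norm{\bm c}}$. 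This is precisely the content of Proposition~\ref{prop:asymptot_many}, which I would invoke to control the subtracted integrand; the role of the condition $\min_i\ps{\alpha_k-Q,e_i}<-\gamma$ defining $\mathcal{A}_N$ is to guarantee the GMC moments in Equation~\eqref{eq:moments_I} and the radial-angular estimate Equation~\eqref{eq:moments_J} exist at the orders needed to justify the expansion.

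For the uniformity in $\bm\alpha$ that Morera requires, I would check that the GMC random variables $M^{\gamma e_i}(\C)$ and the vertex factors $\prod_k V_{\alpha_k}(z_k)$ depend holomorphically and locally boundedly on $\bm\alpha$: the prefactors $\prod_{j<k}\norm{z_j-z_k}^{-\ps{\alpha_j,\alpha_k}}$ and $e^{\ps{\bm s,\bm c}}$ are entire in $\bm\alpha$, and the moment bounds of Equations~\eqref{eq:moments_I}–\eqref{eq:moments_J} hold uniformly over a small complex ball since the relevant exponents vary continuously. The reflection coefficients $R_s(\alpha_k)$ are holomorphic away from the poles of the Gamma factors, which is exactly why $\mathcal{A}_N^0$ excludes $\ps{\alpha_k-Q,e}\in-\gamma\N^*\cup-\tfrac2\gamma\N^*$; on $\mathcal{A}_N^0$ the subtracted terms are therefore locally bounded holomorphic functions of $\bm\alpha$, and their $\bm c$-decay is uniform on compacts.

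The main obstacle I anticipate is the regime where $\ps{\bm c,e_1}$ and $\ps{\bm c,e_2}$ diverge at comparable negative rates, where the full Weyl-group sum $\mathcal{R}_W$ enters through $\bm{\mathrm{R}}^{1,2}_{\bm\alpha}$. Here one cannot reduce to a one-dimensional Williams-type decomposition and must instead use the planar path decomposition of Subsubsection~\ref{subsec:brown_cond_neg} for the Brownian motion $B^\nu$ with drift $\nu=\alpha-Q\in\mathcal{C}_-$ conditioned through its maximum $\M$, together with the identity for $\partial_{12}h(\M)$. Establishing that the conditioned-diffusion representation produces \emph{exactly} the linear combination of reflected vertex operators appearing in $\mathcal{R}_W$, with the correct coefficients $R_s(\alpha_k)$ and the correct indicator thresholds $\mathds{1}_{\max_i\ps{\bm s(w),\omega_i}<0}$, is the delicate computation; it is where the cocycle relation $R_{s\tau}(\alpha)=R_s(\tau\alpha)R_\tau(\alpha)$ and the sign $\epsilon(s)=\det(s)$ must be matched against the signs $\lambda_s=\ps{s\nu-\nu,\omega_1}\ps{s\nu-\nu,\omega_2}$ coming from $\partial_{12}h$. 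Once this matching is verified, the uniform bound follows and the proof concludes.
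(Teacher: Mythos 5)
Your proposal follows the same route as the paper: holomorphy of the integrand for each fixed $\bm c$, a uniform exponential bound $\norm{E_{\bm\alpha}(\bm c)}\leq Ce^{-\xi\norm{\bm c}}$ (the paper's Lemma~\ref{lemma:anabis}), and then Fubini--Tonelli plus Morera applied slice-by-slice in the coordinates $\ps{u,\alpha_k}$. Two points of execution deserve comment, because in both cases you misplace where the actual work lies.

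First, the regime where $\bm c\to\infty$ inside $\mathcal{C}_-$ does \emph{not} require the planar path decomposition of Subsubsection~\ref{subsec:brown_cond_neg}, and the paper does not use it there. The defining condition of $\mathcal{A}_N$, namely $\min_{i=1,2}\ps{\alpha_k-Q,e_i}<-\gamma$ for every $k$, guarantees that for each insertion at most one direction produces a genuine reflection contribution, and that the doubly-reflected vertex operators $V_{\hat s\alpha_k-\alpha_k}$ with $s\notin\{Id,s_1,s_2\}$ are already $\mathcal{O}\big(\sum_{i=1}^2 e^{\gamma\ps{\bm c,e_i}}\big)$, hence absorbed in the error. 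Consequently one-dimensional Williams decompositions applied to $\ps{\X_{t+r}(z_k)-\X_r(z_k)+(\alpha_k-Q)t,e_i}$ suffice; the full planar decomposition, the matching against $\partial_{12}h$ and the cocycle relation are only needed for the sharper statement on the larger set $\mathcal{A}'_N$, which the paper explicitly relegates to a remark as being substantially more technical. Your route is not wrong, but it is the harder one and is unnecessary under the stated hypotheses.

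Second, you treat the fixed-$\bm c$ holomorphy as essentially automatic (``the relevant exponents vary continuously''), whereas this is where the paper spends genuine effort. For complex weights the expectation must be controlled through the circle-average regularization $F_r(\bm\alpha;\bm c)$, and holomorphy of the limit rests on the uniform Cauchy estimate $\norm{F_{r+1}(\bm\alpha+i\bm\beta;\bm c)-F_r(\bm\alpha+i\bm\beta;\bm c)}\leq Ce^{-\eta r}$, which holds only for small imaginary parts $\bm\beta$: the Girsanov step produces a growth factor $e^{\sum_k\norm{\beta_k}^2 r/2}$ that must be beaten by the moment estimates on the annuli $A_k(r)$, and this competition is precisely what fixes the size of the admissible complex neighbourhood. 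Relatedly, Proposition~\ref{prop:asymptot_many} concerns the remainder $\mathcal{R}_{\bm\alpha}(\bm c)$, whose indicators bear on the weights through $\ps{\bm s(w),\omega_i}$, while $G$ is defined with $\bm{\mathrm{R}}_{\bm\alpha}(\bm c)$, whose indicators bear on $\bm c$; the bound you need is the one for $\bm{\mathrm{R}}_{\bm\alpha}$, made uniform over a complex neighbourhood of $\bm\alpha$, which the paper obtains by combining Lemma~\ref{lemma:anabis} with a continuity-and-compactness argument rather than by citing the Proposition as a black box.
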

	The rest of this Subsection is dedicated to proving Lemma~\ref{lemma:analycity}. Proving that terms of the form $\int_R E^i_{\bm \alpha}(c_i)dc_i$ are also holomorphic follows from the very same arguments. This shows that as soon as $\ps{\alpha_k-Q,e}\not\in-\gamma\N^*\cup-\frac2\gamma\N^*$ for all $1\leq k\leq N$ and $e\in\Phi^+$, the poles of the map $F(\bm\alpha)$ defined by Equation~\eqref{eq:ana_F} are given by the $\ps{\bm s(w),\omega_i}=0$ for some $s\in W$ and $i\in\{1,2\}$. When $\ps{\alpha_k-Q,e}\in-\gamma\N^*\cup-\frac2\gamma\N^*$ for some $1\leq k\leq N$ and $e\in\Phi^+$ then the reflection coefficients have a pole (which may still be removable in some cases) so that $F$ does too. This shows that the statement of Theorem~\ref{thm:analycity} holds true as soon as Lemma~\ref{lemma:analycity} does.

	\subsubsection{Analycity of the expectation term}
	To prove Lemma~\ref{lemma:analycity} we start by showing that for fixed $\bm c$, the expectation term that enters the definition of the correlation functions~\eqref{eq:correl_2} is holomorphic. The proof of this statement is similar to that of~\cite[Theorem 6.1]{KRV_DOZZ} to which we refer for additional details. The basic idea is to rely on the fact that, thanks to Girsanov theorem~\ref{thm:girsanov}, this expectation term can be defined by regularizing the GFF and then taking a limit of this regularization. Namely let us consider the circle average regularization considered before and defined by setting for positive $r$
	\begin{equation}\label{regularization_bis}
		\X_r(z)\coloneqq \frac{1}{2\pi}\oint_{\partial B_r(z)}\X(w)\frac{dw}{w}
	\end{equation}
	where $B_r(z)=B(z,e^{-r})$ is the Euclidean ball centered at $z$ with radius $e^{-r}$. 
	Then it follows from the proof of ~\cite[Theorem 3.1]{Toda_construction} that the expectation term that enters the definition of $G$ can be defined using the property that
	\begin{equation}
		\begin{split}
			&\E\Big[\prod_{k=1}^NV_{\alpha_k}(z_k)e^{-\sum_{i=1}^2\mu_ie^{\gamma \ps{\bm c,e_i}}M^{\gamma e_i}(\C)}\Big]=\lim\limits_{r\to+\infty}\prod_{k=1}^N\norm{z_k}_+^{-4\Delta_{\alpha_k}}F_r(\bm\alpha;\bm c),\\
			&\text{with} \quad F_r(\bm\alpha;\bm c)\coloneqq\E\Big[\prod_{k=1}^Ne^{\ps{\alpha_k,\X_r(z_k)+\bm c}-\frac{\expect{\ps{\alpha_k,\X_r(z_k)}^2}}{2}}e^{-\sum_{i=1}^2\mu_ie^{\gamma \ps{\bm c,e_i}}M^{\gamma e_i}(\C_r)}\Big],
		\end{split}
	\end{equation}
	and where $\C_r\coloneqq\C\setminus\left(\cup_{k=1}^NB_r(z_k)\right)$. 
	
	From its expression it is readily seen that $\bm\alpha\to F_r(\bm\alpha;\bm c)$ is holomorphic over $(\C^2)^N$. Moreover, we will show that as $r\to\infty$ for any compact set $K$ of $\left(Q+\mathcal{C}_-\right)^N$ and $\bm\alpha\in K$, 
	\begin{equation}\label{eq:ana_incr}
		\norm{F_{r+1}(\bm\alpha+i\bm \beta;\bm c)-F_r(\bm\alpha+i\bm \beta;\bm c)}\leq Ce^{-\eta r}
	\end{equation}
	for some positive $C$ and $\eta$ uniform over $K$, as soon as $\bm\beta\in(\R^2)^N$ is taken sufficiently close to $0$. Therefore assuming that Equation~\eqref{eq:ana_incr} holds true, we can conclude that for any such $K$ one can find an open complex neighborhood of $K$ inside $(\C^2)^N$ over which $\left(F_r(\bm\alpha;\bm c)\right)_r$ converges uniformly (in $\bm\alpha$) as $r\to\infty$. Since the $F_r(\cdot;\bm c)$ are holomorphic for any $r$ this shows that the limit is also holomorphic. The explicit expression of the reflection coefficients $R_s(\alpha)$ given in Equation~\eqref{eq:refl} shows that such coefficients and therefore the quantities $\bm{\mathrm R}^i_{\bm\alpha}$ are holomorphic in $\bm\alpha$ as soon as $\ps{\alpha_k-Q,e}\not\in -\frac2\gamma\N^*\cup-\gamma\N^*$ for all $e\in\Phi^+$, which we assumed to hold. This shows that for any fixed $\bm c$, 
	\[
	\bm\alpha\mapsto\expect{\prod_{k=1}^NV_{\alpha_k}(z_k)\prod_{i=1}^2\left(e^{-\mu_ie^{\gamma \ps{\bm c,e_i}}M^{\gamma e_i}(\C)}-\mathds 1_{\ps{\bm c,e_i}<0}\bm{\mathrm{R}}_{\bm\alpha}^i\right)}
	\]
	is holomorphic in a complex neighborhood of $\mathcal{A}_N^0$. Therefore proving analycity of the expectation term boils down to proving that Equation~\eqref{eq:ana_incr} does indeed hold.
	
	To show that this is the case we rely on the fact that the increments $\X_{r+1}-\X_r$ are independent of the sigma-algebra generated by the $(\X(z))_{z\in\C_r}$, so that
	\begin{align*}
		&F_r(\bm\alpha+i\bm\beta;\bm c)=\expect{\prod_{k=1}^N e^{\ps{\alpha_k+i\beta_k,\X_{r+1}(z_k)}-\frac{\expect{\ps{\alpha_k+i\beta_k,\X_{r+1}(z_k)}^2}}{2}}e^{-\sum_{i=1}^2\mu_ie^{\gamma \ps{\bm c,e_i}}M^{\gamma e_i}(\C_r)}}.
	\end{align*}
	Interpreting the first term as a Girsanov transform we see that the latter is given by
	\begin{align*}
		&\prod_{l<k}e^{\expect{\ps{\X_r(z_k),\alpha_k}\ps{\X_r(z_l),\alpha_l}}}\times\\
		&\expect{\prod_{k=1}^N e^{\ps{i\beta_k,\X_{r+1}(z_k)}+\frac{\expect{\ps{\beta_k,\X_{r+1}(z_k)}^2}}{2}-i\expect{\ps{\alpha_k,\X_{r+1}(z_k)}\ps{\beta_k,\X_{r+1}(z_k)}}}e^{-\sum_{i=1}^2\mu_ie^{\gamma \ps{\bm c,e_i}}I^i_{r+1}(\C_r)}}
	\end{align*}
	with $I_r^i(d^2x_i)=\prod_{k=1}^Ne^{\gamma\ps{\alpha_k,e_i}\expect{\X(x_i),\X_{r+1}(z_k))}}M^{\gamma e_i}(d^2x_i)$. Therefore
	\begin{align*}
		&\norm{F_{r+1}(\bm\alpha+i\bm\beta;\bm c)-F_r(\bm\alpha+i\bm\beta;\bm c)}\\
		&\leq Ce^{\sum_{k=1}^N\frac{\norm{\beta_k}^2}{2}r}\expect{\norm{e^{-\sum_{i=1}^2\mu_ie^{\gamma \ps{\bm c,e_i}}I_{r+1}^i(\C_r)}-e^{-\sum_{i=1}^2\mu_ie^{\gamma \ps{\bm c,e_i}}I_{r+1}^i(\C_{r+1})}}}.
	\end{align*}
	The expectation term can then be rewritten as
	\begin{align*}
		&\expect{e^{-\sum_{i=1}^2\mu_ie^{\gamma \ps{\bm c,e_i}}I^i_{r+1}(\C_{r})}\left(1-e^{-\sum_{i=1}^2\mu_ie^{\gamma \ps{\bm c,e_i}}I^i_{r+1}(\C_{r+1}\setminus\C_r)}\right)}\\
		&=\expect{e^{-\sum_{i=1}^2\mu_ie^{\gamma \ps{\bm c,e_i}}I^i_{r+1}(\C_{r})}\left(1-e^{-\mu_1e^{\gamma \ps{\bm c,e_1}}I^1_{r+1}(\C_{r+1}\setminus\C_r)}+1-e^{-\mu_2e^{\gamma \ps{\bm c,e_2}}I^2_{r+1}(\C_{r+1}\setminus\C_r)}\right)}\\
		&-\expect{e^{-\sum_{i=1}^2\mu_ie^{\gamma \ps{\bm c,e_i}}I^i_{r+1}(\C_{r})}\left(1-e^{-\mu_1e^{\gamma \ps{\bm c,e_1}}I^1_{r+1}(\C_{r+1}\setminus\C_r)}\right)\left(1-e^{-\mu_2e^{\gamma \ps{\bm c,e_2}}I^2_{r+1}(\C_{r+1}\setminus\C_r)}\right)}.
	\end{align*}
	The set $\C_{r+1}\setminus\C_r$ is the disjoint union of annuli $A_k(r)$ centered at $z_k$ and with radii $(e^{-(r+1)}, e^{-r})$, so that
	\[
	I_{r+1}^i(\C_{r+1}\setminus\C_r)=\sum_{k=1}^N I_{r+1}^i(A_k(r)).
	\]
	This means that we can further decompose the above expectation to reduce the problem to that of showing that 
	\[
	\expect{e^{-\sum_{i=1}^2\mu_ie^{\gamma \ps{\bm c,e_i}}I^i_{r+1}(\C_{r})}\left(1-e^{-\mu_1e^{\gamma \ps{\bm c,e_1}}I^i_{r+1}(A_k(r))}\right)}\leq Ce^{-\eta' r}
	\]
	for positive $C$ and $\eta'$, uniformly on $1\leq k\leq N$ and $i=1,2$. 
	Put differently using H\"older inequality as well as the inequality $1-e^{-x}<x$ for positive $x$ we only need to bound $I_{r+1}^i(A_k(r))$, which has been done along the proof of~\cite[Theorem 6.1]{KRV_DOZZ}:
	\[
	\P(I_{r+1}^i(A_k(r))\geq\eps)\leq \eps^{-m}\expect{I_{r+1}^i(A_k(r))^m}\leq C \eps^{-m}e^{-r\theta_m}
	\]
	for any $m>0$ and with $\theta^k_m=m\gamma\ps{Q-\alpha_k,e_1}-\gamma^2m^2$. Therefore choosing $\eps=e^{-\frac{\theta_m}{p_1+m}r}$ with $\theta_m=\sup\limits_{1\leq k\leq N}\theta_m^k$ we have that $\P(I_{r+1}^i(A_k(r))\geq\eps)^{\frac1{p_1}}\leq e^{-\frac{\theta_m}{p_1+m}r}$, so collecting terms we end up with the bound
	\[
	\norm{F_{r+1}(\bm{\alpha+i\beta})-F_{r}(\bm{\alpha+i\beta})}\leq Ce^{-\eta r}
	\]
	where $\eta= \frac{\theta_m}{p_1+m}-\sum_{k=1}^N\frac{\norm{\beta_k}^2}{2}$. We can conclude provided that $\eta>0$, that is as soon as $\bm\beta$ and $m$ are chosen small enough. 
	
	\subsubsection{Asymptotics of the expectation term: proof of Proposition~\ref{prop:asymptot_many}}\label{subsec:asym}
	The second step in proving Lemma~\ref{lemma:analycity} is the study of the asymptotics of the expectation term that arises in the definition of $G$. Namely  we prove here the following:
	\begin{lemma}\label{lemma:anabis}
		For $\bm\alpha\in\mathcal{A}_N^0$,  there exists a positive constant $C$ and $\xi>0$ such that, uniformly on $\bm c$,
		\[
		\norm{E_{\bm\alpha}(\bm c)}\leq Ce^{-\xi\norm{\bm c}}.
		\]
	\end{lemma}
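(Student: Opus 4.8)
The plan is to reduce, via the identity~\eqref{eq:correl_3}, the study of the first (non-remainder) term of $E_{\bm\alpha}(\bm c)$ to that of
\[
\prod_{j<k}\norm{z_j-z_k}^{-\ps{\alpha_j,\alpha_k}}e^{\ps{\bm s,\bm c}}\E\Big[\prod_{i=1}^2e^{-\mu_ie^{\gamma\ps{\bm c,e_i}}I^i(\C)}\Big],
\]
and then to partition $\R^2$ according to the signs of $\ps{\bm c,e_1}$ and $\ps{\bm c,e_2}$, which is exactly the partition built into the indicators defining $\bm{\mathrm{R}}_{\bm\alpha}$ in~\eqref{eq:defRrm}. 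I distinguish three regimes: both $\ps{\bm c,e_i}\geq0$, exactly one negative, and both negative. In the first regime $\bm{\mathrm{R}}_{\bm\alpha}(\bm c)=0$, and since $\max_i\ps{\bm c,e_i}\gtrsim\norm{\bm c}$ the factor $e^{-\mu_ie^{\gamma\ps{\bm c,e_i}}I^i(\C)}$ forces decay: using $e^{-x}\leq C_px^{-p}$ together with the finiteness of all negative moments of the GMC mass, one has $\E[e^{-uI^i(\C)}]\leq C_pu^{-p}$ for every $p>0$, whence with $u=\mu_ie^{\gamma\ps{\bm c,e_i}}$ one obtains decay at rate $e^{-p\gamma\ps{\bm c,e_i}}$ for arbitrarily large $p$, dominating the at-most-exponential prefactor $e^{\ps{\bm s,\bm c}}$.

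The remaining two regimes are where the reflection subtraction is needed, the whole point being that $\bm{\mathrm{R}}_{\bm\alpha}(\bm c)$ is precisely the leading asymptotic expansion of the main term as $\bm c$ diverges into $\mathcal{C}_-$. Consider first the one-wall regime, say $\ps{\bm c,e_1}<0\leq\ps{\bm c,e_2}$, so that the $e_2$-channel is doubly-exponentially suppressed while $u_1\coloneqq\mu_1e^{\gamma\ps{\bm c,e_1}}\to0$. Freezing the $e_2$-channel and writing the relevant quantity as $\E[\,\cdot\,(1-e^{-u_1I^1(\C)})]$, its asymptotics as $u_1\to0$ is governed by the power-law tail of $I^1(\C)$. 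I would extract the leading coefficient by the one-dimensional Williams decomposition recalled in Subsection~\ref{subsec:brown_cond_neg} (following~\cite{williams,Toda_correl1}), which identifies the tail exponent and matches the associated coefficient with the reflection coefficient $R_{s_1}$ appearing in $\bm{\mathrm{R}}^1_{\bm\alpha}$ (resp. in the symmetric $\bm{\mathrm{R}}^2_{\bm\alpha}$). Subtracting this term leaves an error of size $u_1^{p}$ with $p$ strictly larger than the resonant exponent; the non-resonance condition $\ps{\alpha_k-Q,e}\notin-\gamma\N^*\cup-\frac2\gamma\N^*$ built into $\mathcal{A}_N^0$ guarantees that the exponents are simple (no logarithmic corrections) and that $R_{s_1}$ is finite, so this error is $O(e^{-\xi\norm{\bm c}})$.

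The crux is the two-wall regime $\ps{\bm c,e_1},\ps{\bm c,e_2}<0$, where both $u_i\to0$ simultaneously and one must expand the joint Laplace transform $\E[e^{-u_1I^1(\C)-u_2I^2(\C)}]$. Because the two GMC measures are driven by the same underlying GFF they are correlated, so their joint tail cannot be factorised and the scalar Williams result no longer suffices; instead I would invoke the planar path decomposition of Subsection~\ref{subsec:brown_cond_neg}. Writing the radial process with drift $\nu=\alpha-Q\in\mathcal{C}_-$ through its running maximum $\M$, whose law carries the density $\partial_{12}h(\M)=\sum_{s\in W_{1,2}}\lambda_se^{\ps{s\nu-\nu,\M}}$ with $h$ as in~\eqref{equ:max_drift}, the joint tail of $(I^1(\C),I^2(\C))$ decomposes into contributions indexed by $s\in W$. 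Upon integrating against $e^{\ps{\bm s,\bm c}}$ each such contribution produces exactly one reflection coefficient $R_s$ and one reflected vertex product, and these are precisely the terms collected in $\mathcal{R}_W$ inside $\bm{\mathrm{R}}^{1,2}_{\bm\alpha}$. The main obstacle is to make this matching exact and uniform: one must check that subtracting $\mathcal{R}_W$ cancels every non-decaying exponential simultaneously for all rays $\bm c\to\infty$ inside $\mathcal{C}_-$, which forces a careful bookkeeping of the six Weyl-group terms and of the subdominant part of $\partial_{12}h$.

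Finally, in every regime the error terms left after subtraction are controlled by the elementary bound $1-e^{-x}\leq x$ together with H\"older's inequality and the moment estimates~\eqref{eq:moments_I}--\eqref{eq:moments_J}, the latter being valid for exponents $p_i<\frac{2}{\gamma^2}$. The defining conditions of $\mathcal{A}_N^0$ --- in particular $\min_i\ps{\alpha_k-Q,e_i}<-\gamma$ for each $k$ and $\ps{\bm s,\omega_i}>-\gamma$ --- keep all the relevant exponents strictly inside the convergent range and away from the resonant values, so that a single $\xi>0$ can be chosen uniformly in the direction of $\bm c$. Combining the three regimes then yields the uniform bound $\norm{E_{\bm\alpha}(\bm c)}\leq Ce^{-\xi\norm{\bm c}}$, which is the content of the lemma and, after integration in $\bm c$, of Proposition~\ref{prop:asymptot_many}.
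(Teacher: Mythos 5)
Your three-regime skeleton (both $\ps{\bm c,e_i}\geq 0$; one wall; two walls) matches the paper's, and your treatment of the first regime --- negative moments of the GMC masses beating the exponential prefactor $e^{\ps{\bm s,\bm c}}$ --- is exactly the paper's opening observation. In the one-wall regime your direction is also right (one-dimensional Williams decomposition producing $R_{s_1}$), but the mechanism you invoke is too coarse: you attribute the subtraction to ``the power-law tail of $I^1(\C)$,'' which only captures the single heaviest insertion, whereas the term $\mathcal{R}_{Id,s_1}\prod_k V_{\alpha_k}$ being subtracted in $\bm{\mathrm{R}}^2_{\bm\alpha}$ is a product over \emph{all} insertions, i.e.\ $2^N$ terms, one reflection factor for each $z_k$ with $\ps{\alpha_k-Q,e_1}>-\gamma$. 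Extracting that product structure is where the paper's work actually lies: it decomposes $M^{\gamma e_1}(\C)$ over $\bm c$-dependent balls $B_{r_k}(z_k)$, applies Williams' decomposition around each insertion separately, interprets the resulting drift terms as a Girsanov shift, and controls the bulk and annulus contributions via the fusion asymptotics~\eqref{eq:fusion}. None of this machinery appears in your sketch, and a global tail estimate cannot replace it.

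The genuine gap is in the two-wall regime, and it is twofold. First, the step you yourself identify as ``the main obstacle'' --- checking that the planar path decomposition of Subsection~\ref{subsec:brown_cond_neg} produces \emph{exactly} the six Weyl-group terms of $\mathcal{R}_W$, uniformly over all rays $\bm c\to\infty$ in $\mathcal{C}_-$ --- is asserted and then deferred to ``careful bookkeeping''; that bookkeeping is precisely the content a proof would have to supply, and the paper's concluding Remark explicitly flags this two-dimensional route as feasible but ``rather technical'' and avoids it. Second, and more importantly, your claim that ``the scalar Williams result no longer suffices'' is the opposite of what the paper exploits: under the hypothesis $\min_{i=1,2}\ps{\alpha_k-Q,e_i}<-\gamma$ built into $\mathcal{A}_N$, every vertex ratio $V_{\hat s\alpha_k-\alpha_k}$ with $s\notin\{Id,s_1,s_2\}$ is itself a $\mathcal{O}\bigl(\sum_{i=1}^2e^{\gamma\ps{\bm c,e_i}}\bigr)$, so the deep Weyl terms inside $\mathcal{R}_W$ require no matching at all --- they are error terms on both sides of~\eqref{eq:asymptot_2term}. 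The paper therefore only ever runs the one-dimensional decomposition channel by channel (the subsets $\mathcal{U}_1,\mathcal{U}_2$), and disposes of the cross terms ($\mathcal{U}_3$) by this same observation. By missing this simplification you have routed the proof through its hardest possible version and left that version unproved.
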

	The statement of Proposition~\ref{prop:asymptot_many} is then easily recovered from this statement via the assumptions on $\bm s$ made there, which ensure that the terms entering the definition of $\mathcal{R}_{\bm\alpha}(\bm c)$ are integrable in the region where $\ps{\bm c,e_1}$ or $\ps{\bm c,e_2}$ is positive.
	\begin{proof}
		To start with note that since the random variables $I^i(\C)$ have negative moments of any order, we know that for any positive $R>0$
		\[
		e^{R\ps{\bm c,e_1}}\expect{\prod_{k=1}^Ne^{-\sum_{i=1}^2\mu_ie^{\gamma \ps{\bm c,e_i}}I^i(\C)}}\to 0\quad\text{as }\ps{\bm c,e_1}\to+\infty.
		\]  
		Of course the same applies in the region where $\ps{\bm c,e_2}\to +\infty$ and for the three other terms. Therefore we focus on what happens as $\ps{\bm c,e_1}\to-\infty$ with $\ps{\bm c,e_2}$ bounded below, and when both $\ps{\bm c,e_1}$ and $\ps{\bm c,e_2}$ diverge to $-\infty$. To do so our strategy is to show that in these asymptotics the integrals that appear in the expectation term concentrate around the singular points $x=z_k$. Put differently we will see that the behavior of the expectation term is governed by the asymptotics of 
		\[
		\expect{\prod_{k=1}^Ne^{-\sum_{i=1}^2\mu_ie^{\gamma \ps{\bm c,e_i}}I^i(B_{r_k}(z_k))}}
		\]
		for some well-chosen $r_k$.
		
		\subsubsection{The case where $\ps{\bm c,e_2}$ is bounded below}
		We first consider the regime where $\ps{\bm c,e_2}$ is bounded below. Our goal is to prove that there exists a positive $\xi>0$ such that, as $\ps{\bm c,e_1}\to -\infty$ with $\ps{\bm c,e_2}$ bounded below:
		\begin{equation}\label{eq:asymptot_1term}
			e^{-2\ps{Q,c}}\expect{e^{-\mu_2e^{\gamma \ps{\bm c,e_2}}M^{\gamma e_2}(\C)}\left(e^{-\mu_1e^{\gamma \ps{\bm c,e_1}}M^{\gamma e_1}(\C)}-\mathcal{R}_{Id,s_1}\right)\prod_{k=1}^NV_{\alpha_k}(z_k)}=\mathcal{O}\left(e^{\xi\ps{\bm c,e_1}}\right).
		\end{equation}
		
		To start with we choose $\eps>0$ small enough and consider for $\bm c\in\R^2$ the radii $r_{k}=r_{k}(\bm c)$ such that \[
		(1+\eps)\left(\frac{\ps{\bm c,e_1}_-}{2\ps{\alpha_k-Q,\rho}}\vee\frac{\ps{\bm c,e_1}_-}{\ps{Q-\alpha_k,e_1}-2\gamma}\mathds 1_{\ps{\alpha_k-Q,e_1}>-\gamma}\right)\leq r_{k}\leq (1-\eps)\frac{\ps{\bm c,e_1}_-}{\ps{\alpha_k-Q,e_1}}
		\] and assume that $\ps{\bm c,e_1}$ is negative enough so that the balls $B_{r_k}(z_k)=B(z_k,e^{-r_k})$ remain disjoint. Note that the above bounds can be satisfied as soon as $\ps{\alpha_k-Q,e_1}\neq -\gamma$ (which we assumed to hold) and for $\eps$ small enough. Then the integral $M^{\gamma e_1}(\C)$ can be decomposed as $M^{\gamma e_1}(\C)=M^{\gamma e_1}(\C_{r})+\sum_{k=1}^NM^{\gamma e_1}(B_{r_{k}}(z_k))$ with $\C_{r}\coloneqq\C\setminus\left(\cup_{k=1}^NB_{r_k}(z_k)\right)$. This allows to write that for any bounded continuous map $F$ over $H^{-1}(\C\to\R^2,g)$
		\begin{align*}
			&e^{-2\ps{Q,\bm c}}\expect{\prod_{k=1}^NV_{\alpha_k}(z_k)e^{-\mu_1e^{\gamma \ps{\bm c,e_1}}M^{\gamma e_1}(\C)}F(\X)}=\prod_{j<k}\norm{z_j-z_k}^{-\ps{\alpha_k,\alpha_j}}e^{\ps{\bm s,\bm c}}\expect{e^{-\mu_1e^{\gamma \ps{\bm c,e_1}}I^1(\C)}G(\X)},
		\end{align*}
		where $G(\X)\coloneqq F\left(\X+\sum_{k=1}^N\alpha_k G(z_k,\cdot)\right)$ and with \begin{align*}
			\expect{e^{-\mu_1e^{\gamma \ps{\bm c,e_1}}I^1(\C)}G(\X)}&=\expect{\prod_{k=1}^Ne^{-\mu_1e^{\gamma \ps{\bm c,e_1}}I^1(B_{r_{k}}(z_k))}G(\X)}\\
			&+\expect{\left(e^{-\mu_1e^{\gamma \ps{\bm c,e_1}}I^1(\C_r)}-1\right)\prod_{k=1}^Ne^{-\mu_1e^{\gamma \ps{\bm c,e_1}}I^1(B_{r_{k}}(z_k))}G(\X)}.
		\end{align*}
		
		Let us start by considering the first expectation term, which we rewrite under the form
		\begin{align*}
			\sum_{\mathcal{U}\subset\{1,\cdots,N\}}\expect{\prod_{k\in\mathcal{U}}\left(e^{-\mu_1e^{\gamma \ps{\bm c,e_1}}I^1(B_{r_{k}}(z_k))}-1\right)G(\X)}
		\end{align*}
		where the sum ranges over subsets $\mathcal{U}$ of $\{1,\cdots,N\}$. 
		In the case where $\ps{\alpha_1,e_1}<\frac2\gamma$, we have $1<\frac{\ps{Q-\alpha_1,e_1}}\gamma$ and therefore the term
		\begin{align*}
			\expect{I^1(B_{r_{1}}(z_1))\prod_{k\neq 1\in\mathcal{U}}\left(e^{-\mu_1e^{\gamma \ps{\bm c,e_1}}I^1(B_{r_{k}}(z_k))}-1\right)G(\X)}
		\end{align*} 
		is well defined thanks to Equation~\eqref{eq:moments_I}, which allows to write that
		\begin{align*}
			\sum_{\mathcal{U}\subset\{1,\cdots,N\}}\expect{\left(e^{-\mu_1e^{\gamma \ps{\bm c,e_1}}I^1(B_{r_{1}}(z_1))}-1\right)\prod_{k\neq1\in\mathcal{U}}\left(e^{-\mu_1e^{\gamma \ps{\bm c,e_1}}I^1(B_{r_{k}}(z_k))}-1\right)G(\X)}=\mathcal{O}\left(e^{\gamma\ps{\bm c,e_1}}\right).
		\end{align*}
		
		When $\ps{\alpha_k-Q,e_1}>-\gamma$ for some $k\in\mathcal{U}$, the analysis is more subtle. Namely one needs to use the radial-angular decomposition~\eqref{eq:rad_ang_dec} around each insertion $z_k$  to put the integrals involved  under the form
		\begin{align*}
			I^1(B_{r_k}(z_k))&=\int_{r_k}^{+\infty}e^{\gamma \ps{ B^{k}_t,e_1}}\int_0^{2\pi}F^k_1(t,\theta)M^{\gamma e_1}_{\Y^k}(dt,d\theta)\\
			&=e^{\gamma \ps{B^{k}_{r_k},e_1}}\int_0^{+\infty}e^{\gamma \tilde B^{k}_t}\int_0^{2\pi}F^k_1(t+r_k,\theta)M^{\gamma e_1}_{\Y^k}(dt+r_k,d\theta)
		\end{align*}
		where $B^{k}_t\coloneqq \X_t(z_k)+(\alpha_k-Q)t$ and $\Y^{k}(t,\theta)\coloneqq \X(z_k+e^{-t+i\theta})-\X_t(z_k)$ have the law of the pair described in Equation~\eqref{eq:rad_ang_dec}, while $F^k_1(t,\theta)\coloneqq \prod_{l\neq k}\left(\frac{\norm{z_k+e^{-t+i\theta}}_+}{\norm{z_k-z_l+e^{-t+i\theta}}}\right)^{\gamma\ps{\alpha_k,e_1}}$. In the last equation $\tilde B^k$ is a one-dimensional Brownian motion with drift $\nu_k\coloneqq\ps{\alpha_k-Q,e_1}$ and variance $2$, started from the origin and independent of the sigma algebra generated by the $(\X(z))_{z\not\in B_{r_k}(z_k)}$. We can apply Williams path decomposition~\cite{williams} to this Brownian motion, which allows to write that 
		\begin{align*}
			I^1(B_{r_k}(z_k))&=e^{\gamma \ps{B^{k}_{r_k}+\M_k,e_1}}\int_0^{+\infty}e^{\gamma \mathcal B^{k}_t}\int_0^{2\pi}F^k_1(t+{r_k},\theta)M^{\gamma e_1}_{\Y^k}(dt+{r_k},d\theta)
		\end{align*}
		where $\mathcal{B}^k$ is the one-dimensional process described in Subsection~\ref{subsec:brown_cond_neg}, started from $-\ps{\M_k,e_1}$ which is is an exponential variable of parameter $-\nu_k$ independent of everything. Therefore, with $q=\norm{\mathcal{U}}$ and for any $G_{r}$ that only depends on $(\X_z)_{z\in \C_{r}}$,  
		\begin{align*}
			&\expect{\prod_{k\in\mathcal{U}}\left(e^{-\mu_1e^{\gamma \ps{\bm c,e_1}}I^1(B_{r_k}(z_k))}-1\right)G_{r}(\X)}\\
			&=\int_{(0,+\infty)^{q}}\prod_{k\in\mathcal{U}} (-\nu_k)e^{\nu_k\ps{\M_k,e_1}}\mathrm d\ps{\M_k,e_1}\expect{\prod_{k=1}^p\left(e^{-\mu_1e^{\gamma \ps{\bm c+B^k_{r_k}+\M_k,e_1}}J_k^r(-\ps{\M_k,e_1})}-1\right)G_{r}(\X)}\\
			&\text{with}\quad J_k^r(-\ps{\M_k,e_1})\coloneqq \int_0^{+\infty}e^{\gamma \mathcal B^{k}_t}\int_0^{2\pi}F^k_1(t+r_k,\theta)M^{\gamma e_1}_{\Y^k}(dt+r_k,d\theta)
		\end{align*}
		and where we use the notation $J_k^r(-\ps{\M_k,e_1})$ to stress that the process $\mathcal{B}^k$ is started from $-\ps{\M,e_1}$.
		We can now make a change of variable $\ps{\M_k,e_1}\leftrightarrow\ps{\M_k+\bm c+B^k_{r_k},e_1}$ to end up with
		\begin{align*}
			\expect{\int_{\R^q}\prod_{k\in\mathcal{U}} (-\nu_k)e^{\nu_k\ps{\M_k-\bm c-B^k_{r_k},e_1}}\mathrm d\ps{\M_k,e_1}\left(e^{-\mu_1e^{\gamma \ps{\M_k,e_1}}J_k^r(\ps{\bm c+B^k_{r_k}-\M_k,e_1})}-1\right)\mathds 1_{\ps{\M_k-\bm c-B^k_{r_k},e_1}>0}G_r(\X)}.
		\end{align*}
		We can interpret the exponential term $\prod_{k\in\mathcal{U}} e^{-\nu_k\ps{B^k_{r_k},e_1}}$ as a Girsanov transform. Namely recalling that $B^k_{r_k}=\X_{r_k}(z_k)$, we see that
		\begin{align*}
			&-\sum_{k\in\mathcal{U}}\nu_k\ps{B^k_{r_k},e_1}=\sum_{k\in\mathcal{U}}\ps{Q-\alpha_k,e_1}\ps{\X_{r_k}(z_k),e_1}-\ps{Q-\alpha_k,e_1}^2r_k\\
			&=\sum_{k\in\mathcal{U}}\ps{Q-\alpha_k,e_1}\ps{\X_{r_k}(z_k),e_1}-\frac{\expect{\left(\sum_{k\in\mathcal{U}}\ps{Q-\alpha_k,e_1}\ps{\X_{r_k}(z_k),e_1}\right)^2}}{2}\\
			&+\sum_{k< l\in\mathcal{U}}\ps{Q-\alpha_k,e_1}\ps{Q-\alpha_l,e_1}G_r'(z_k,z_l),\quad\text{where}\\
			&G_r'(z_k,z_l)\coloneqq \frac1{2\pi}\oint_{\partial B_{r_l}(z_l)}G_{r_k}(z_k,w)\frac{dw}{w}\quad\text{and}\quad G_{r_k}(z_k,\cdot)\coloneqq \frac1{2\pi}\oint_{\partial B_{r_k}(z_k)}G(w,\cdot)\frac{dw}{w}\cdot
		\end{align*}
		Note that if we take $\ps{\bm c,e_1}_-$ large enough, then $G'_{r_k} (z_k,z_l)=G(z_k,z_l)$ for $k\neq l$. Likewise explicit computations show that $G'_r(z_k,z_k)=r+2\ln\norm{z_k}_+$ for $r$ large enough. Therefore without loss of generality we can assume that both assumptions hold in the sequel. 
		
		Now in virtue of Theorem~\ref{thm:girsanov} this exponential term has the effect of shifting the law of $\X$ by $\sum_{k}\ps{Q-\alpha_k,e_1}G_{r_k}(z_k,\cdot)e_1$, and in particular shifts the law of $B^k_r$ by 
		\[
		\lambda_k(r_k)\coloneqq\ps{Q-\alpha_k,e_1}(r_k+2\ln\norm{z_k}_+)e_1+\sum_{l\neq k}\ps{Q-\alpha_k,e_1}G(z_k,z_l)e_1.
		\]
		This shows that the above expectation term is equal to
		\begin{align*}
			&=\prod_{k\neq l\in\mathcal{U}}e^{\ps{Q-\alpha_k,e_1}\ps{Q-\alpha_l,e_1}G(z_k,z_l)}\prod_{k\in\mathcal{U}} (-\nu_k)e^{-\nu_k\ps{\bm c,e_1}}\int_{\R^q}\prod_{k\in\mathcal{U}} (-\nu_k)e^{\nu_k\ps{\M_k,e_1}}\mathrm d\ps{\M_k,e_1}\\\
			&\expect{\left(e^{-\mu_1e^{\gamma \ps{\M_k,e_1}}J_k^r(\ps{\lambda_k(r_k)-\M_k,e_1})}-1\right)\mathds 1_{\ps{\M_k-\lambda_k(r_k),e_1}>0}G_r\left(\X+\sum_{k\in\mathcal{U}}\ps{Q-\alpha_k,e_1}G(z_k,\cdot)e_1\right)}
		\end{align*}
		where on the last line we have used that for $z\in\C_r$, $G_{r_k}(z_k,z)=G(z_k,z)$ for all $1\leq k\leq N$, and with
		\[
		\lambda_k(r_k)\coloneqq \bm c+B^k_{r_k}+\ps{Q-\alpha_k,e_1}(r_k+2\ln\norm{z_k}_+)e_1+\sum_{l\neq k}\ps{Q-\alpha_k,e_1}G(z_k,z_l)e_1.
		\]
		Now we have assumed that for some positive $\eps$ we have $r_{k,1}<(1-\eps)\frac{\ps{\bm c,e_1}}{\ps{\alpha_k-Q,e_1}}$ for all $1\leq k\leq N$. As a consequence we see that $\ps{\lambda_k(r),e_1}\to-\infty$ almost surely, so that along the same lines as in the proof of~\cite[Proposition 4.10]{Toda_correl1} the latter will be asymptotically equivalent to 
		\begin{align*}
			\prod_{k\neq l\in\mathcal{U}}e^{\ps{Q-\alpha_k,e_1}\ps{Q-\alpha_l,e_1}G(z_k,z_l)}\prod_{k\in\mathcal{U}} R_{s_1}(\alpha_k)e^{\ps{Q-\alpha_k,e_1}\ps{\bm c,e_1}}\expect{G_r\left(\X+\sum_{k\in\mathcal{U}}\ps{Q-\alpha_k,e_1}G(z_k,\cdot)e_1\right)}.
		\end{align*} 
		More precisely the reasoning developed in the proof of~\cite[Proposition 4.10]{Toda_correl1}, based on the Markov property for the process entering the definition of $J_k^r$ (see also the proof of Equation~\eqref{eq:asymptot_2term} below), allows to write that 
		\begin{align*}
			&\expect{\prod_{k\in\mathcal{U}}\left(e^{-\mu_1e^{\gamma \ps{\bm c,e_1}}I^1(B_r(z_k))}-1\right)G_r(\X)}=\\
			&\prod_{k\neq l\in\mathcal{U}}e^{\ps{Q-\alpha_k,e_1}\ps{Q-\alpha_l,e_1}G(z_k,z_l)}\prod_{k\in\mathcal{U}} R_{s_1}(\alpha_k)e^{\ps{Q-\alpha_k,e_1}\ps{\bm c,e_1}}\expect{G_r\left(\X+\sum_{k\in\mathcal{U}}\ps{Q-\alpha_k,e_1}G(z_k,\cdot)e_1\right)}\\
			&+\mathcal{O}\left(e^{(1-\eta)\ps{\bm c,\gamma e_1}}\right)
		\end{align*}
		as soon as $\norm{\mathcal{U}}\geq1$.
		
		As a consequence recollecting terms yields
		\begin{align*}
			&e^{-2\ps{Q,\bm c}}\expect{\prod_{k=1}^NV_{\alpha_k}(z_k)e^{-\mu_1e^{\gamma \ps{\bm c,e_1}}M^{\gamma e_1}(B_{r_k}(z_k))}F_r(\X)}\\
			&=\sum_{\mathcal{U}\subset \{1,\cdots,N\}}e^{-2\ps{Q,\bm c}}\expect{\prod_{k\not\in\mathcal{U}}V_{\alpha_k}(z_k)\prod_{k\in\mathcal{U}}R_{s_1}(\alpha_k)V_{\hat s_1\alpha_k}(z_k) F_r(\X)}+\mathcal{O}\left(e^{\ps{\bm s,\bm c}+(1-\eta)\ps{\bm c,\gamma e_1}}\right)\\
			&=e^{-2\ps{Q,\bm c}}\expect{\prod_{k=1}^N\left(V_{\alpha_k}(z_k)+R_{s_1}(\alpha_k)V_{\hat s_1\alpha_k}(z_k)\right) F_r(\X)}+\mathcal{O}\left(e^{\ps{\bm s,\bm c}+(1-\eta)\ps{\bm c,\gamma e_1}}\right).
		\end{align*}
		Under the assumption that $\ps{\bm s,\omega_1}>-\gamma$ (which we assumed to hold since $\bm\alpha\in\mathcal{A}_N$) the remainder term is as desired, a term with asymptotic bounded by a 
		$\mathcal{O}\left(e^{\xi\ps{\bm c, e_1}}\right)$ for some positive $\xi$ as $\ps{\bm c,e_1}\to-\infty$ with $\ps{\bm c,e_2}$ bounded below. By choosing $F_r(\X)=M^{\gamma e_2}(\C_r)$ we have therefore proved that
		\begin{align*}
			e^{-2\ps{Q,c}}\expect{\left(e^{-\mu_1e^{\gamma \ps{\bm c,e_1}}M^{\gamma e_1}(\C\setminus\C_r)}-\mathcal{R}_{Id,s_1}\right)\prod_{k=1}^NV_{\alpha_k}(z_k)e^{-\mu_2e^{\gamma \ps{\bm c,e_2}}M^{\gamma e_2}(\C_{r})}}
		\end{align*}
		is a $\mathcal{O}\left(e^{\xi\ps{\bm c, e_1}}\right)$
		in this asymptotic. As a consequence it remains to control the terms
		\begin{align*}
			&e^{-2\ps{Q,\bm c}}\expect{\prod_{k=1}^NV_{\alpha_k}(z_k)\left(e^{\mu_1e^{\gamma \ps{\bm c,e_1}}M^{\gamma e_1}(\C_r)}-1\right)e^{-\sum_{i=1}^2\mu_ie^{\gamma \ps{\bm c,e_i}}M^{\gamma e_i}(\C)}}\quad\text{and}\\
			&e^{-2\ps{Q,\bm c}}\expect{\left(e^{-\mu_1e^{\gamma \ps{\bm c,e_1}}M^{\gamma e_1}(\C\setminus\C_r)}-\mathcal{R}_{Id,s_1}\right)\left(e^{\mu_2e^{\gamma \ps{\bm c,e_2}}M^{\gamma e_2}(\C\setminus\C_{r})}-1\right)\prod_{k=1}^NV_{\alpha_k}(z_k)}.
		\end{align*}
		
		We start with the second term. Along the same lines as above it suffices to show that
		\begin{align*}
			\expect{\left(e^{-\mu_1e^{\gamma\ps{\bm c,e_1}}I^1(B_{r_{1}}(z_1))}-1-R_{s_1}(\alpha_1)V_{\hat s_1\alpha_1-\alpha_1}(z_1)\right)\left(e^{\mu_2e^{\gamma \ps{\bm c,e_2}}I^2(B_{r_{1}}(z_1))}-1\right)}
		\end{align*}
		is a $\mathcal{O}\left(e^{(1-\eta)\gamma\ps{\bm c,e_1}}\right)$. 
		Now like before we see that the law of the planar, drifted Brownian motion $X_{t+r_1}(z_1)-\X_{r_1}(z_1)+(\alpha_1-Q)t$ can be realized by sampling $\ps{\M_1,e_1}$ according to its marginal law and then sampling the process $\bm{\mathcal{B}}^1$, whose $e_1$ component is the process $\mathcal{B}^1_t$ used above (and described in Subsection~\ref{subsec:brown_cond_neg}) while its $\omega_2$ component is an independent Brownian motion $\tilde B^1$ with drift $\ps{\alpha_1-Q,\omega_2}$ and variance $\norm{\omega_2}^2=\frac23$. This shows that its $e_2$ component is the independent sum of $-\frac12\mathcal{B}^1$  and $\frac32\tilde B^1$, so that 
		\[
		I^2(B_{r_{1}}(z_1))=e^{\gamma\left(\ps{\X_{r_1}(z_1)+(\alpha_0-Q)r_1,e_2}\right)}\int_0^{+\infty}e^{\gamma \left(-\frac12\mathcal{B}^1+\frac32\tilde B^1_t\right)}\int_0^{2\pi}F^1_2(t+r_1,\theta)M^{\gamma e_2}_{\Y^1}(dt+r_1,d\theta).
		\]
		The Girsanov term $e^{\ps{Q-\alpha_1,e_1}B^1_{r_1}}$ has the effect of changing $I^2$ into $\hat I^2=e^{\ps{\alpha_0-Q,e_1}r_1}I^2$. Thanks to our assumption that $r_{1}\geq (1+\eps)\frac{\ps{\bm c,e_1}}{2\ps{\alpha_0-Q,\rho}}$, which ensures that $\ps{\alpha_0-Q,\rho}r_1+\frac{\ps{\M_1,e_1}}{2}\to-\infty$ almost surely for $\ps{\M_1,e_1}=-\ps{\lambda_1(r_1),e_1}$, we see that for $\xi>0$ small enough
		\[
		e^{\xi r_1}\times e^{\gamma\left(\ps{\X_{_1}(z_1),e_2}+\ps{\alpha_0-Q,\rho}r_1\right)}\int_0^{+\infty}e^{\gamma \left(-\frac12\mathcal{B}^1+\frac32\tilde B^1_t\right)}\int_0^{2\pi}F^1_2(t+r_1,\theta)M^{\gamma e_2}_{\Y^1}(dt+r_1,d\theta)\to 0
		\]
		as $\ps{\bm c,e_1}\to-\infty$, almost surely. Therefore 
		\begin{align*}
			\expect{\left(e^{-\mu_1e^{\gamma\ps{\bm c,e_1}}I^1(B_{r_{1}}(z_1))}-1-R_{s_1}(\alpha_1)V_{\hat s_1\alpha_1-\alpha_1}(z_1)\right)\left(e^{\mu_2e^{\gamma \ps{\bm c,e_2}}I^2(B_{r_{1}}(z_1))}-1\right)}
		\end{align*}
		is a lower order term compared to 
		\begin{align*}
			\expect{\left(e^{-\mu_1e^{\gamma\ps{\bm c,e_1}}I^1(B_{r_{1}}(z_1))}-1-R_{s_1}(\alpha_1)V_{\hat s_1\alpha_1-\alpha_1}(z_1)\right)e^{\mu_2e^{\gamma \ps{\bm c,e_2}}I^2(\C\setminus B_{r_{1}}(z_1))}}
		\end{align*}
		which was already shown to be a $\mathcal{O}\left(e^{(1-\eta)\gamma\ps{\bm c,e_1}}\right)$. As a consequence to show that Equation~\eqref{eq:asymptot_1term} does indeed hold it only remains to prove that
		\begin{align*}
			&e^{-2\ps{Q,\bm c}}\expect{\prod_{k=1}^NV_{\alpha_k}(z_k)\left(e^{\mu_1e^{\gamma \ps{\bm c,e_1}}M^{\gamma e_1}(\C_r)}-1\right)e^{-\sum_{i=1}^2\mu_ie^{\gamma \ps{\bm c,e_i}}M^{\gamma e_i}(\C)}}=\mathcal{O}\left(e^{\gamma\ps{\bm c,e_1}}\right).
		\end{align*}
		
		For this we can write that
		\begin{align*}
			&\expect{I^1(\C_{r})e^{-\sum_{i=1}^2\mu_ie^{\gamma \ps{\bm c,e_i}}I^i(\C)}}=\int_{\C_{r}}\prod_{k=1}^N\left(\frac{\norm{x}_+}{\norm{z_k-x}}\right)^{\gamma\ps{\alpha_k,e_1}} \expect{V_{\gamma e_1}(x)e^{-\sum_{i=1}^2\mu_ie^{\gamma \ps{\bm c,e_i}}I^i(\C)}}d^2x\\
			&=\int_{\C_{r}}\prod_{k=1}^N\left(\frac{\norm{x}_+}{\norm{z_k-x}}\right)^{\gamma\ps{\alpha_k,e_1}}\expect{e^{-\sum_{i=1}^2\mu_ie^{\gamma \ps{\bm c,e_i}}\tilde I^i(x)}}d^2x,\\
			&\text{where}\quad\tilde I^i(x)=\int_{\C}\prod_{k=1}^N\left(\frac{\norm{y}_+}{\norm{z_k-y}}\right)^{\gamma\ps{\alpha_k,e_1}}\left(\frac{\norm{y}_+}{\norm{y-x}}\right)^{2\gamma^2}M^{\gamma e_1}(d^2y).
		\end{align*}
		When $r\to+\infty$, the singularities around $x=z_k$ are integrable if $\ps{\alpha_k,e_1}<\frac2\gamma$, so that in that case the remainder term is a $\mathcal{O}\left(e^{\gamma\ps{\bm c,e_1}}\right)$. On the contrary if $\ps{\alpha_k,e_1}>\frac2\gamma$ then around $x=z_k$ (say in an annulus $A_k(r)$ centered at $z_k$ and radii $e^{-r_{k}}$ and $2e^{-r_{k}}$) we can make the change of variable $x\leftrightarrow z_k+e^{-r_{k}}(x-z_k)$ to see that 
		\begin{align*}
			&\expect{I^1(A_k(r))e^{-\sum_{i=1}^2\mu_ie^{\gamma \ps{\bm c,e_i}}I^i(\C)}}\\
			&\sim e^{-r_{k}\left(2-\gamma\ps{\alpha_k,e_1}\right)}\int_{A_k(1)}\prod_{k=1}^N\left(\frac{\norm{z_k}_+}{\norm{z_k-x}}\right)^{\gamma\ps{\alpha_k,e_1}}\expect{e^{-\sum_{i=1}^2\mu_ie^{\gamma \ps{\bm c,e_i}}\tilde I^i\left(z_k+e^{-r_k}(x-z_k)\right)}}d^2x.
		\end{align*}
		Now when $r_k\to+\infty$, we can use the fusion asymptotics~\eqref{eq:fusion} from Subsection~\ref{subsec:fusion} to see that for any positive $\eps$: 
		\begin{align*}
			&\expect{e^{-\sum_{i=1}^2\mu_ie^{\gamma \ps{\bm c,e_i}}\tilde I^i\left(z_k+e^{-r_k}(x-z_k)\right)}}=o\left(e^{-r_k(\frac{\ps{\alpha_k+\gamma e_1-Q,e_1}^2}4-\eps)}\right).
		\end{align*}
		Therefore we end up with the bound
		\[
		\expect{\left(e^{\mu_1e^{\gamma \ps{\bm c,e_1}}I^1(\C_r)}-1\right)e^{-\sum_{i=1}^2\mu_ie^{\gamma \ps{\bm c,e_i}}M^{\gamma e_i}(\C)}}\leq Ce^{\gamma\ps{\bm c,e_1}}e^{-r_k\left(2-\gamma\ps{\alpha_k,e_1}+\frac{\ps{\alpha_k+\gamma e_1-Q,e_1}^2}4-\eps\right)}
		\]
		which shows that this term is a lower order term too since the exponent $2-\gamma\ps{\alpha_k,e_1}+\frac{\ps{\alpha_k+\gamma e_1-Q,e_1}^2}4=\frac{\ps{\alpha_k-Q,e_1}^2}4$ is positive. 
		
		This concludes for the proof of Equation~\eqref{eq:asymptot_1term}.
		
		\subsubsection{The case where $\bm c\to\infty$ inside $\mathcal{C}_-$}
		We can proceed in a similar fashion for the asymptotic where both $\ps{\bm c,e_1}$ and $\ps{\bm c,e_2}$ diverge to $-\infty$. In that case we aim to prove that
		\begin{equation}\label{eq:asymptot_2term}
			\begin{split}
				&\expect{\prod_{k=1}^NV_{\alpha_k}(z_k)e^{-\sum_{i=1}^2\mu_ie^{\gamma \ps{\bm c,e_i}}M^{\gamma e_i}(\C)}}\\
				=&\expect{\prod_{k=1}^N\left(\sum_{s\in W}R_s(\alpha_{k})V_{\hat s\alpha_{k}}(z_k)\right)}+\mathcal{O}\left(\sum_{i=1}^2e^{(1-\eta)\gamma\ps{\bm c,e_i}}\right).
			\end{split}
		\end{equation}
		
		In order to prove Equation~\eqref{eq:asymptot_2term} we first pick $r_k$ like before and write
		\begin{align*}
			&\expect{\prod_{k=1}^Ne^{-\sum_{i=1}^2\mu_ie^{\gamma \ps{\bm c,e_i}}I^i(\C)}}=\sum_{\mathcal{U}_i\subset \{1,\cdots,N\}}\expect{\prod_{i=1}^2e^{-\mu_ie^{\gamma \ps{\bm c,e_i}}I^i(\C_r)}\prod_{k\in\mathcal{U}_i}\left(e^{-\mu_ie^{\gamma \ps{\bm c,e_i}}I^i(B_{r_k}(z_k))}-1\right)},\end{align*}
		which we can further put under the form 
		\begin{align*}
			\sum_{\stackrel{\mathcal{U}_1,\mathcal{U}_2,\mathcal{U}_3\subset \{1,\cdots,N\}}{disjoint}}\E\Big[\prod_{i=1}^2&e^{-\mu_ie^{\gamma \ps{\bm c,e_i}}I^i(\C_r)}\\
			&\prod_{k_i\in\mathcal{U}_i}\left(e^{-\mu_ie^{\gamma \ps{\bm c,e_i}}I^i(B_{r_k}(z_{k_i}))}-1\right)\prod_{k_3\in\mathcal{U}_3}\left(e^{-\mu_ie^{\gamma \ps{\bm c,e_i}}I^i(B_{r_k}(z_{k_3}))}-1\right)\Big].
		\end{align*}
		We now distinguish on whether $k\in\mathcal{U}_1$, $\mathcal{U}_2$ or $\mathcal{U}_3$. If $k$ belongs to $\mathcal{U}_i$ for $i=1$ or $i=2$ then the only integral surrounding the singular point $z=k$ is given by $I^i(B_{r_k}(z_k))$. This term can be processed along the same lines as above by applying Williams path decomposition to the one-dimensional Brownian motion $\ps{\X_{t+r}(z_k)-\X_r(z_k)+\ps{\alpha_k-Q}t,e_i}$. Hence 
		\begin{align*}
			&\expect{\prod_{k=1}^Ne^{-\sum_{i=1}^2\mu_ie^{\gamma \ps{\bm c,e_i}}I^i(\C)}}=\\
			&\sum_{\stackrel{\mathcal{U}_1,\mathcal{U}_2,\mathcal{U}_3\subset \{1,\cdots,N\}}{disjoint}}\expect{\prod_{i=1}^2e^{-\mu_ie^{\gamma \ps{\bm c,e_i}}I^i(\C_r)}\prod_{k_i\in\mathcal{U}_i}R_{s_i}(\hat s_i\alpha_{k_i})V_{\hat s_i\alpha_{k_i}-\alpha_{k_i}}(z_{k_i})\prod_{k_3\in\mathcal{U}_3}\left(e^{-\mu_ie^{\gamma \ps{\bm c,e_i}}I^i(B_{r_k}(z_{k_3}))}-1\right)}\\
			&+\mathcal{O}\left(\sum_{i=1}^2e^{(1-\eta)\gamma\ps{\bm c,e_i}}\right).
		\end{align*}
		
		For terms that correspond to $k_3\in\mathcal{U}_3$, because of our assumptions on the weights we know that $\ps{s\nu_k-\nu_k,\omega_i}<-\gamma$ for $i=1$ or $i=2$; without loss of generality we can assume that $\ps{s_1s_2\nu_k-\nu_k,\omega_1}<-\gamma$. The asymptotic of the corresponding term is then governed  by
		\begin{align*}
			-\mu_1e^{\gamma \ps{\bm c,e_1}}\expect{I^1(B_{r_k}(z_{k_3}))\prod_{i=1}^2e^{-\mu_ie^{\gamma \ps{\bm c,e_i}}I^i(\C_r)}\prod_{k_i\in\mathcal{U}_i}R_{s_i}(\hat s_i\alpha_{k_i})V_{\hat s_i\alpha_{k_i}-\alpha_{k_i}}(z_{k_i})}
		\end{align*}
		which is a $\mathcal{O}\left(e^{\gamma\ps{\bm c,e_1}}\right)$ via the same analysis as before. Thanks to the fact that for $s\not\in\{Id,s_1,s_2\}$, $V_{\hat s\alpha_k-\alpha_k}$ is a $\mathcal{O}\left(\sum_{i=1}^2e^{\gamma\ps{\bm c,e_i}}\right)$, this shows that Equation~\eqref{eq:asymptot_2term} does indeed hold.
		
		\subsubsection{Recollecting terms}
		Recall that we have defined $E_{\bm\alpha}(\bm c)$ via the expression
		\[
		E_{\bm\alpha}(\bm c)=e^{-2\ps{Q,\bm c}}\E\Big[\prod_{k=1}^NV_{\alpha_k}(z_k)e^{-\sum_{i=1}^2\mu_ie^{\gamma \ps{\bm c,e_i}}M^{\gamma e_i}(\C)}-\bm{\mathrm{R}}^1_{\bm\alpha}(\bm c)-\bm{\mathrm{R}}^2_{\bm\alpha}(\bm c)-\bm{\mathrm{R}}^{1,2}_{\bm\alpha}(\bm c)\Big],
		\]
		where the terms that appear are defined in Equation~\eqref{eq:defRrm}.
		
		With the notations introduced there, another way of formulating Equation~\eqref{eq:asymptot_1term} is to write that as $\ps{\bm c,e_1}\to-\infty$ with $\ps{\bm c,_2}$ bounded below
		\begin{align*}
			e^{-2\ps{Q,c}}\expect{\left(e^{-\mu_1e^{\gamma \ps{\bm c,e_1}}M^{\gamma e_1}(\C)}-\mathcal{R}_{Id,s_1}\right)e^{-\mu_2e^{\gamma \ps{\bm c,e_2}}M^{\gamma e_2}(\C)}\prod_{k=1}^NV_{\alpha_k}(z_k)}=\mathcal{O}\left(e^{\xi\ps{\bm c, e_1}}\right),
		\end{align*}
		while the same reasoning shows that in this regime too $e^{-2\ps{Q,\bm c}}\expect{\bm{\mathrm R}_{\bm\alpha}^1(\bm c)}=\mathcal{O}\left(e^{\xi\ps{\bm c, e_1}}\right)$. 
		As a consequence in this asymptotic we can write that
		\begin{align*}
			E_{\bm\alpha}(\bm c)&=e^{-2\ps{Q,\bm c}}\mathds1_{\ps{\bm c,e_2}<0}\expect{\mathcal{R}_{Id,s_1,s_2,s_2s_1}\prod_{k=1}^NV_{\alpha_k}(z_k)-\bm{\mathrm R}_{\bm\alpha}^{1,2}(\bm c)}+\mathcal{O}\left(e^{\xi\ps{\bm c, e_1}}\right).
		\end{align*}
		Now from its explicit expression we readily see that the above expectation term is asymptotically equivalent to
		\[
		\max_{1\leq k \leq N}e^{-2\ps{Q,\bm c}}\expect{R_{s_1s_2}(\alpha_k)V_{\hat s_1\hat s_2\alpha_k}(z_k)\prod_{l\neq k}V_{\alpha_l}(z_l)}
		\]
		which scales like $\max_{1\leq k \leq N}e^{\ps{\bm s+ \hat s_1\hat s_2\alpha_k-\alpha_k,\bm c}}$. As a consequence this remainder term is integrable as soon as $\max_{1\leq k \leq N}\ps{\bm s+ \hat s_1\hat s_2\alpha_k-\alpha_k,\omega_1}>0$, which follows from the fact that $\ps{\bm s,\omega_1}>-\gamma$ while $\ps{\hat s_1\hat s_2\alpha_k-\alpha_k,\omega_1}>\gamma$ because of our assumptions on $\bm\alpha$ for it to belong to $\mathcal{A}_N$. This shows that $e^{\xi\norm{\bm c}}E_{\bm\alpha}(\bm c)$ remains bounded over the domain where $\ps{\bm c,e_2}$ is bounded from below. Of course the same applies for $\ps{\bm c,e_1}$ bounded below.
		
		Likewise if $\bm c\to\infty$ inside $\mathcal{C}_-$ the same reasoning shows that 
		\begin{align*}
			e^{-2\ps{Q,c}}\expect{\bm{\mathrm R}_{\bm\alpha}^1(\bm c)+\bm{\mathrm R}_{\bm\alpha}^2(\bm c)}=\mathcal{O}\left(e^{\xi\ps{\bm c, e_1}}\right)+\mathcal{O}\left(e^{\xi\ps{\bm c, e_2}}\right).
		\end{align*}
		Now we have also proved that as soon as $\ps{\bm s+\gamma e_i,\bm c}\leq -\xi\norm{\bm c}$,
		\[
		e^{-2\ps{Q,\bm c}}\expect{\prod_{k=1}^NV_{\alpha_k}(z_k)e^{-\sum_{i=1}^2\mu_ie^{\gamma\ps{\bm c,e_i}}M^{\gamma e_i}(\C)}-\bm{\mathrm{R}}_{\bm\alpha}^{1,2}(\bm c)}=\mathcal{O}\left(e^{-\xi\norm{\bm c}}\right).
		\]
		This hypothesis follows from the assumption that $\ps{\bm s,\omega_i}>-\gamma$ made in the definition of $\mathcal{A}_N$. Therefore we see that for $\xi>0$ small enough, $e^{\xi\norm{\bm c}}E_{\bm\alpha}(\bm c)$ remains bounded when $\bm c\to\infty$ inside $\mathcal{C}_-$.
		All in all, this shows that as desired, 
		\[
		\norm{E_{\bm\alpha}(\bm c)}\leq C^{-\xi\norm{\bm c}}.
		\] 
		This concludes for the proof of Lemma~\ref{lemma:anabis} and Proposition~\ref{prop:asymptot_many}.
	\end{proof}
	\begin{remark}
		We could have been more precise in the expansion of the expectation term in the regime when $\bm c$ diverges inside $\mathcal{C}_-$, based on the two-dimensional path decomposition for the Brownian motion $\X_{r+t}(z_k)-\X_r(z_k)$, and with a choice of $r_k$ such that
		\[
		(1+\eps)\frac{\norm{\ps{\nu,e_1}\ps{\bm c,e_2}-\ps{\nu,e_2}\ps{\bm c,e_1}}}{\ps{\nu_k,e_1}^2+\ps{\nu_k,e_1}\ps{\nu_k,e_2}+\ps{\nu_k,e_2}^2} \leq r_k\leq (1-\eps)\frac{\ps{\bm c,e_1}\vee\ps{\bm c,e_2}}{\ps{\nu_k,\rho}}\cdot
		\]
		This would allow us to extend the validity of Proposition~\ref{prop:asymptot_many} and Theorem~\ref{thm:analycity} to the set
		\begin{align*}
			\mathcal{A}'_N\coloneqq \Big\{(\alpha_1,\cdots,\alpha_N)&\in\left(Q+\mathcal{C}_-\right)^N,\\
			&\text{for }i=1,2,\quad \ps{\sum_{k=1}^N\alpha_k-2Q,\omega_i}>-\gamma\vee \max\limits_{1\leq k\leq N}\ps{\alpha_k-Q,\rho}\Big\}.
		\end{align*}
		However the analysis to be carried out is rather technical and would not have improved the range of values for $\gamma$ in the statement of Theorem~\ref{thm:main_result}, so we have omitted it to keep the document readable.
	\end{remark}

	\subsubsection{Concluding the proof of Theorem~\ref{thm:analycity}}
	Let us consider $\mathcal{O}$ an open subset of $\mathcal{A}_N$ and $K$ a compact subset of $\mathcal{A}_N$ that contains $\mathcal{O}$. We have seen above that the expression $E_{\bm\alpha}(\bm c)$ was holomorphic in a complex neighborhood $K'$ of $K$ for any fixed $\bm c$. Moreover thanks to Lemma~\ref{lemma:anabis} we know that the family of holomorphic functions $\left(E_{\bm\alpha}(\bm c)e^{\xi\norm{\bm c}}\right)_{\bm c}$ is uniformly bounded in $\bm c$ as soon as $\alpha\in \mathcal{O}$. It is also readily seen to depend continuously in $\bm c$. This implies that in a small complex neighborhood $\mathcal{O}'\subset K'$ of $\mathcal{O}$ the family of maps $\left(E_{\bm\alpha}(\bm c)e^{\xi\norm{\bm c}}\right)_{\bm c\in\R^2}$ remains bounded uniformly in $\bm c$. In particular we see that for $\bm\alpha\in\mathcal{O}'$ the map $\bm c\mapsto E_{\bm\alpha}(\bm c)$ is absolutely integrable over $\R^2$. 
	As a consequence if we take $\bm\alpha\in\mathcal{O}'$ and $\Gamma$ any closed and piecewise $C^1$ curve (say a triangle) surrounding $\ps{\alpha_1,e_1}$ on which $(\beta\omega_1+\ps{\alpha,e_2}\omega_2,\alpha_2,\cdots,\alpha_N)$ stays inside $\mathcal{O}'$ if $\beta\in\Gamma$ we see that 
	\begin{align*}
		&\int_{\Gamma}\left(\int_{\R^2}E_{(\beta\omega_1+\ps{\alpha,e_2}\omega_2,\alpha_2,\cdots,\alpha_N)}(\bm c)d\bm c\right)d\beta\\
		&=\int_{\R^2}\left(\int_{\Gamma}E_{(\beta\omega_1+\ps{\alpha,e_2}\omega_2,\alpha_2,\cdots,\alpha_N)}(\bm c)d\beta\right)d\bm c
	\end{align*}
	using Fubini-Tonelli theorem. 
	
	Now for fixed $\bm c$ the map $\beta\mapsto E_{(\beta\omega_1+\ps{\alpha,e_2}\omega_2,\alpha_2,\cdots,\alpha_N)}(\bm c)$ is holomorphic; therefore the integral over $\Gamma$ vanishes. This shows that for any such $\Gamma$
	\[
	\int_{\Gamma}\left(\int_{\R^2}E_{(\beta\omega_1+\ps{\alpha,e_2}\omega_2,\alpha_2,\cdots,\alpha_N)}(\bm c)d\bm c\right)d\beta=0.
	\]
	By Morera's theorem this implies that $\beta\mapsto\int_{\R^2}E_{(\beta\omega_1+\ps{\alpha,e_2}\omega_2,\alpha_2,\cdots,\alpha_N)}(\bm c)d\bm c$ is holomorphic in a neighborhood of $\ps{\alpha_1,e_1}$. The same reasoning being of course valid if we replace $\ps{\alpha_1,e_1}$ by any of the $\ps{\alpha_k,e_i}$ for $1\leq k\leq N$ and $i=1,2$ we see that the map considered in Lemma~\ref{lemma:analycity} is indeed holomorphic in a complex neighborhood of $\mathcal{A}_N$. This wraps up the proof of Theorem~\ref{thm:analycity}.

	
	
	
	
	
	\section{Four-point correlation functions and BPZ-type differential equations}\label{sec:BPZ}
	One of the achievements of~\cite{Toda_OPEWV} is to shed light on the presence of $W$-symmetry within the probabilistic framework for Toda CFTs proposed in~\cite{Toda_construction}. This symmetry manifests itself for instance via Ward identities, and one of its consequences is the derivation of a BPZ-type differential equation for some four-point correlation functions of the form, for $\bm\alpha$ satisfying the Seiberg bounds~\eqref{eq:seiberg},
	\[
	\ps{V_{\alpha}(z)V_{\alpha_0}(0)V_{\alpha_1}(1)V_{\alpha_\infty}(\infty)}.
	\]
	The assumptions made on these correlation functions are the presence of one fully degenerate field $V_\alpha$ ---that is a Vertex Operator $V_\alpha=V_{-\chi\omega_1}$ with $\chi\in\{\gamma,\frac2\gamma\}$--- as well as a semi-degenerate field, by which is meant a Vertex Operator $V_{\alpha_1}$ with weight $\alpha_1\coloneqq\kappa\omega_2$ where $\kappa$ is a real number.
	The latter assumption on the weight of the Vertex Operator $V_{\alpha_1}$ stems from the fact that, without this semi-degenerate field and unlike what happens in Liouville theory, in Toda CFTs $W$-symmetry does not sufficiently constrains the system to obtain a differential equation on four-point correlation functions with one fully degenerate field. This is the reason why, for the time being, we are only able to compute three-point correlation functions with a semi-degenerate field. 
	
	Based on the results of~\cite{Toda_OPEWV}, our goal in this section is to prove that such four-point correlation functions
	can actually be computed completely explicitly up to an explicit multiplicative constant, constant given by a three-point correlation function. 
	For this purpose, let us introduce for $\alpha=-\chi\omega_1$ and $\alpha_1=\kappa\omega_2$
	\begin{equation}
		\begin{split}
			A_i&\coloneqq\frac{\chi}2\ps{\alpha_0+\alpha_1+\alpha-Q,h_1}+\frac{\chi}2\ps{\alpha_\infty-Q,h_i}\\
			B_i&\coloneqq1+\frac{\chi}2\ps{\alpha_0-Q,h_1-h_{i+1}}.
		\end{split}
	\end{equation}
	To these quantities one can associate the ${}_3F_2$ hypergeometric function
	\begin{equation}
		\mathcal{H}_0(z)\coloneqq\hyper{A_1,A_2,A_3}{B_1,B_2}{z} = \sum_{n\in\mathbb{N}}\frac{(A_1)_n(A_2)_n(A_3)_n}{(B_1)_n(B_2)_n}\frac{z^n}{n!},
	\end{equation}
	where $(a)_n$ denotes the Pochhammer symbol $(a)_n=(a)(a+1)\cdots(a+n-1)$. Such a function is well-defined over the unit disc $\mathbb{D}$, and is a solution of an hypergeometric differential equation of order three:
	\begin{equation}\label{eq:hypergeometric}
		\begin{split}
			&\Big[z\left(A_1+z\partial_z\right)\left(A_2+z\partial_z\right)\left(A_3+z\partial_z\right)-\left(B_1-1+z\partial_z\right)\left(B_2-1+z\partial_z\right)z\partial_z\Big]\mathcal H=0.
		\end{split}
	\end{equation}
	Other complex-valued solutions of Equation~\eqref{eq:hypergeometric} are given by 
	\begin{align}
		\mathcal{H}_1(z)&\coloneqq z^{1-B_1}\hyper{1-B_1+A_1,1-B_1+A_2,1-B_1+A_3}{2-B_1,1-B_1+B_2}{z}\quad\text{and}\\
		\mathcal{H}_2(z)&\coloneqq z^{1-B_2}\hyper{1-B_2+A_1,1-B_2+A_2,1-B_2+A_3}{1-B_2+B_1,2-B_2}{z}
	\end{align}
	where we have set a branch cut for the logarithm to be the negative real axis $(-\infty,0]$. The hypergeometric functions admit an analytic continuation outside of the interval $(1,\infty)$ that we will work with in the sequel and denote in the same way. 
	
	With all these objects at hand we are in position to state the main result of the present section:
	\begin{theorem}\label{thm:BPZ}
		Assume that $\alpha=-\chi\omega_1$ with $\chi\in\{\gamma,\frac2\gamma\}$ and that $\alpha_1=\kappa\omega_2$ for $\kappa<q$.
		Further assume that $\alpha_0$, $\alpha_\infty$ are such that $(\alpha,\alpha_0,\alpha_1,\alpha_\infty)$ belongs to $\mathcal{A}_4$. Then 
		\begin{equation}\label{eq:BPZ}
			\begin{split}
				&\ps{V_{\alpha}(z)V_{\alpha_0}(0)V_{\alpha_1}(1)V_{\alpha_\infty}(\infty)}=\norm{z}^{\chi\ps{h_1,\alpha_0}}\norm{z-1}^{\frac{\chi\kappa}3}\mathcal H(z),\quad\text{where}\\
				&\mathcal H(z)=C_\gamma(\alpha_0+\alpha,\alpha_1,\alpha_\infty)\left(\norm{\mathcal H_0(z)}^2+\sum_{i=1}^2A_\gamma^{(i)}(\alpha,\alpha_0,\alpha_1,\alpha_\infty)\norm{\mathcal H_i(z)}^2\right).
			\end{split}
		\end{equation}
		The constants $A_\gamma^{(i)}(\alpha,\alpha_0,\alpha_1,\alpha_\infty)$, $i=1,2$, are given by
		\begin{align}
			A_\gamma^{(i)}(\alpha,\alpha_0,\alpha_1,\alpha_\infty)\coloneqq \frac{\prod_{j=1}^3l(A_j)l(B_i-A_j)}{l(B_1)l(B_2)}\frac{l(1+B_1+B_2-2B_i)}{l(B_i-1)}.
		\end{align}
	\end{theorem}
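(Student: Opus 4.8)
The plan is to combine the $W$-symmetry input of \cite{Toda_OPEWV} with a monodromy analysis of the resulting hypergeometric equation, using the \emph{single-valuedness} of the probabilistically defined correlation function as the extra ingredient that selects the physical solution. First I would invoke \cite[Theorem 1.3]{Toda_OPEWV}, which guarantees that for $\bm\alpha$ in the Seiberg range the map $z\mapsto\ps{V_\alpha(z)V_{\alpha_0}(0)V_{\alpha_1}(1)V_{\alpha_\infty}(\infty)}$ solves a third-order BPZ differential equation in $z$. Factoring out the prefactor $\norm{z}^{\chi\ps{h_1,\alpha_0}}\norm{z-1}^{\frac{\chi\kappa}3}$ recasts this equation, in the holomorphic variable, as the hypergeometric equation \eqref{eq:hypergeometric} with the exponents $A_i,B_i$ read off from the indicial equation. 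Since the expectation is real-analytic on $\C\setminus\{0,1\}$ and the coefficients of \eqref{eq:hypergeometric} are holomorphic, $\mathcal H$ solves \eqref{eq:hypergeometric} in $z$ and its conjugate in $\bar z$; hence near $z=0$ it admits a bilinear representation $\mathcal H(z)=\sum_{0\le i,j\le 2}c_{ij}\mathcal H_i(z)\overline{\mathcal H_j(z)}$ against the holomorphic solution basis $\mathcal H_0,\mathcal H_1,\mathcal H_2$.

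The decisive input not visible at the level of the ODE is that the probabilistic $\mathcal H$ is single-valued in $z$, because it is an expectation of GMC quantities depending on $z$ only through single-valued moduli. The local exponents of \eqref{eq:hypergeometric} at $0$ are $\lambda_0=0$, $\lambda_1=1-B_1$, $\lambda_2=1-B_2$, so the monodromy around $z=0$ acts diagonally and sends $\mathcal H_i\overline{\mathcal H_j}$ to $e^{2\pi i(\lambda_i-\lambda_j)}\mathcal H_i\overline{\mathcal H_j}$. Single-valuedness forces $c_{ij}(e^{2\pi i(\lambda_i-\lambda_j)}-1)=0$, and since for generic weights the $\lambda_i$ are distinct modulo $\Z$ this kills the off-diagonal terms, leaving $\mathcal H(z)=\sum_{i=0}^2 c_i\norm{\mathcal H_i(z)}^2$. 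To pin down $c_0$ I would analyse the limit $z\to 0$: as $\mathcal H_0(0)=1$ while $\mathcal H_1,\mathcal H_2$ vanish like $z^{1-B_i}$, the constant $c_0$ is governed by the leading fusion of $V_\alpha(z)V_{\alpha_0}(0)$. Since $\alpha=-\chi\omega_1=-\chi h_1$, the fusion asymptotics recalled in Subsection~\ref{subsec:fusion} single out the channel $V_{\alpha_0+\alpha}$, whence $c_0=C_\gamma(\alpha_0+\alpha,\alpha_1,\alpha_\infty)$.

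It then remains to compute the ratios $c_i/c_0=:A_\gamma^{(i)}$, and \textbf{this connection computation is the main obstacle}. These ratios are fixed by the second single-valuedness requirement, around $z=1$ (single-valuedness at the third singular point $z=\infty$ being then automatic, as the monodromies multiply to the identity). The argument is purely one of special functions: I would express the basis $\{\mathcal H_i\}$ adapted to $z=0$ in terms of the solution basis adapted to $z=1$ via the known connection coefficients of the ${}_3F_2$ equation, which are ratios of Gamma functions. Imposing that $\sum_i c_i\norm{\mathcal H_i}^2$ be single-valued at $z=1$ forces the off-diagonal entries of the connection-transformed bilinear form to cancel; this is a linear system in the $c_i$ whose solution yields the $A_\gamma^{(i)}$. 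The difficulty is carrying out this connection analysis for a \emph{third}-order hypergeometric equation, keeping track of the $3\times 3$ monodromy data, and then using the requisite Gamma-function identities to show that the ratios collapse to the closed product of $l$-functions displayed in the statement.

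Finally I would remove the restriction to the Seiberg range in which \cite{Toda_OPEWV} operates. Both sides of \eqref{eq:BPZ} are, by Theorem~\ref{thm:analycity}, holomorphic in the weights over a complex neighbourhood of $\mathcal A_4$, while the coefficients $A_\gamma^{(i)}$ and the functions $\mathcal H_i$ depend holomorphically on $\alpha_0,\alpha_\infty,\kappa$ through $A_i,B_i$; hence the identity, established on the open Seiberg subset, propagates to all $(\alpha,\alpha_0,\alpha_1,\alpha_\infty)\in\mathcal A_4$ by analytic continuation. This also justifies having argued the diagonalization step only for generic weights, the conclusion extending by continuity.
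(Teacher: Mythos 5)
Your plan is anchored on invoking \cite[Theorem 1.3]{Toda_OPEWV} on the Seiberg region \emph{with the semi-degenerate insertion already in place}, and then propagating the identity to $\mathcal{A}_4$ by analyticity in the weights. This fails at the starting line for $\chi=\frac2\gamma$: with $\alpha=-\chi\omega_1$, $\alpha_1=\kappa\omega_2$ and $\alpha_0,\alpha_\infty\in Q+\mathcal{C}_-$, one has
\[
\ps{\bm s,\omega_1}=\frac{\kappa-2\chi}{3}+\ps{\alpha_0-Q,\omega_1}+\ps{\alpha_\infty-Q,\omega_1}<\frac{\kappa-2\chi}{3},
\]
since $\ps{x,\omega_1}<0$ on $\mathcal{C}_-$, so the Seiberg bound $\ps{\bm s,\omega_1}>0$ forces $\kappa>2\chi=\frac4\gamma$, which is incompatible with $\kappa<q=\gamma+\frac2\gamma$ when $\gamma<\sqrt2$. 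Hence for the dual screening $\chi=\frac2\gamma$ (half of the theorem, and precisely the half needed later for the dual shift equations) the set on which you propose to "establish" the identity is empty, and there is nothing to continue. Even for $\chi=\gamma$ there is a domain problem: membership of $\alpha_0,\alpha_\infty$ in $\mathcal{A}_4$ forces $\ps{\alpha_k-Q,\omega_1}<-\frac\gamma3$, so the Seiberg set and $\mathcal{A}_4$ are disjoint as soon as $3\gamma^2\geq 2$, and Theorem~\ref{thm:analycity} only gives analyticity on a neighbourhood of $\mathcal{A}_4$ --- it does not by itself supply a connected domain joining the two regions. The paper's proof avoids this by reversing the order of operations: it first extends the \emph{general} equation~\eqref{eq:4pts_gen} --- valid for arbitrary $\alpha_1$, a setting in which the Seiberg region is non-empty and does overlap the continuation domain --- to all of $\mathcal{A}_4$, continuing both the correlation function and the descendant term via~\eqref{eq:desc_refl2}, and only \emph{then} specializes $\alpha_1=\kappa\omega_2$. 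That specialization is itself not free after continuation: the reflection terms $R_s(\alpha_1)V_{\hat s\alpha_1}$ built into the continued correlator must remain compatible with semi-degeneracy, which is exactly the verification $u(\hat s_2\alpha_1)=\frac{3w(\alpha_1)}{2\Delta_{\alpha_1}}\hat s_2\alpha_1$ carried out in the paper. None of this appears in your proposal, and without it the statement for $\chi=\frac2\gamma$ is out of reach.

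A secondary problem is your connection analysis at $z=1$. For the third-order hypergeometric equation the exponents at $z=1$ are $0,1$ and $B_1+B_2-A_1-A_2-A_3$, and the $0\to1$ connection coefficients of ${}_3F_2$ are \emph{not} ratios of Gamma functions in general (unlike the ${}_2F_1$ case familiar from the DOZZ proof); this is why the paper, following Fateev--Litvinov, instead expands in the basis $G_i$ adapted to $z=\infty$ and uses Thomae's relation, whose coefficients are explicit Gamma ratios, imposing regularity across $(1,\infty)$. So the step you flag as "the main obstacle" is not merely laborious: as set up, it targets the wrong singular point. The remaining ingredients of your argument do track the paper's: your monodromy diagonalization at $z=0$ is equivalent to the paper's continuity-across-the-cut argument (though the bilinear ansatz you assume is what the paper actually proves, via the dimension-nine count for real-valued solutions of~\eqref{eq:hypergeometric}), and the evaluation $\mathcal H(0)=C_\gamma(\alpha_0+\alpha,\alpha_1,\alpha_\infty)$ together with the final analyticity step to dispose of non-generic exponents coincide with the paper's conclusion.
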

	The condition imposed on the form of the solution $\mathcal H$ is a consequence of the so-called \textit{crossing symmetry} assumption made in the physics literature and that states that both the \textit{$s$-channel} (which correspond to a basis of solutions around the point $z=0$) and \textit{$t$-channel decompositions} (a basis around $z=\infty$) of the solutions of Equation~\eqref{eq:hypergeometric} must be valid and consistent. For the time being the prefactor $C_\gamma(\alpha_0+\alpha,\alpha_1,\alpha_\infty)$ is given by a probabilistic expression, but will be evaluated later on thanks to Theorem~\ref{thm:main_result}. 
	
	
	\subsection{A BPZ equation for four-point correlation functions}
	Let us start by recalling from~\cite[Theorem 1.3]{Toda_OPEWV} that, under the assumptions that $\bm\alpha\coloneqq(\alpha,\alpha_0,\alpha_1,\alpha_\infty)$ satisfies the generalized Seiberg bounds~\eqref{eq:correl_1bis}, we have the equality
	\begin{equation}\label{eq:edp_to_extend}
		\mathcal D_0(z)\ps{V_{-\chi\omega_1}(z)V_{\alpha_1}(0)V_{\alpha_2}(1)V_{\alpha_3}(\infty)}=\frac{\bm{\mathcal W_{-1}^{(2)}}}{z-1}\ps{V_{-\chi\omega_1}(z)V_{\alpha_1}(0)V_{\alpha_2}(1)V_{\alpha_3}(\infty)}
	\end{equation}
	where on the left-hand side we have considered the differential operator 
	\begin{equation}
		\begin{split}
			&\mathcal D_0(z)\coloneqq \frac{8z^2(z-1)}{\chi^3}\partial_z^3+\frac{4}{\chi}z(3-5z)\partial^2_z\\
			&+\Big[\frac{2}{\chi}\left(\frac{4z^2-5z+2}{z-1}+2z(\Delta_3-\Delta)-2\Delta_1+2\Delta_2\frac z{z-1}\right)+\chi\frac{12z^2-15z+4}{z-1}\Big]\partial_z\\
			&+\frac{2}{\chi}\left((1-2z)\frac{\Delta_3-\Delta}{z-1}+(3z-2)\frac{\Delta_1}{z(z-1)}-(3z-1)\frac{\Delta_2}{(z-1)^2}\right)\\
			&+\frac{\chi}{3}\left((7-10z)\frac{\Delta_3-\Delta}{z-1}+(9z-6)\frac{\Delta_1}{z(z-1)}+(7-9z)\frac{\Delta_2}{(z-1)^2}\right)\\
			&+w+w_3+\frac{w_1}{z}-\frac{w_2}{(z-1)^2}\left(2z-1\right).
		\end{split}
	\end{equation}
	In order to derive a differential equation for four-point correlation functions we first need to extend the validity of the above equation to the whole range of values prescribed by Theorem~\ref{thm:analycity}, that is when $\bm\alpha\in\mathcal{A}_4$. 
	
	For this purpose we will consider a weak formulation of the above problem since correlation functions may not be differentiable if the weights no longer satisfy the generalized Seiberg bounds. However the last quantity involving a $W_{-1}$ descendant is not directly a differential operator applied to the correlation functions if the Vertex Operator $V_{\alpha_2}$ is not a semi-degenerate field. To remedy this issue we will be a bit astute and rely on the fact that we can actually express the $W_{-1}$ descendant as a differential operator but acting on a larger space. To be more specific we note that 
	\[
	\bm{\mathrm W}_{-1}(z,\alpha)=\left(q-2\ps{\alpha,\omega_2}\right)\ps{\alpha,e_1}\ps{\partial \Phi(z),\omega_1}-\left(q-2\ps{\alpha,\omega_1}\right)\ps{\alpha,e_2}\ps{\partial \Phi(z),\omega_2}
	\]
	so that we can formally write that
	\begin{align*}
		&\bm{\mathcal W}_{-1}\ps{V_{\alpha}(z)\V}=\Big[\left(q-2\ps{\alpha,\omega_2})\right)\partial_{x_1}-\left(q-2\ps{\alpha,\omega_1})\right)\partial_{x_2}\Big]\ps{V_{\ps{\alpha,e_1}\omega_1}(x_1)V_{\ps{\alpha,e_2}\omega_2}(x_2)\V}
	\end{align*}
	evaluated at $x_1=x_2=z$. However due to the fact that correlation functions are defined based on a regularization procedure this is no longer true and we need to be more precise. This is done by writing at the regularized level that
	\begin{align*}
		\bm{\mathcal W}_{-1}\ps{V_{\alpha,\eps}(z)\V_\eps}&=\hat g(z)^{\Delta_{\alpha}}\ps{\bm{\mathrm W}_{-1,\eps}(z,\alpha)e^{\ps{\alpha,\X^{\hat g}_\eps(z)+\bm c}-\frac12\expect{\ps{\alpha,\X^{\hat g}_\eps(z)}^2}}V_\eps}\\
		&=\hat g(z)^{\Delta_{\alpha}}\Big[\left(q-2\ps{\alpha,\omega_2})\right)\partial_{x_1}-\left(q-2\ps{\alpha,\omega_1})\right)\partial_{x_2}\Big]\vert_{x_1=x_2=z}\\
		&\ps{e^{\ps{\alpha,e_1}\ps{\X^{\hat g}_\eps(x_1)+\bm c,\omega_1}}e^{\ps{\alpha,e_2}\ps{\X^{\hat g}_\eps(x_2)+\bm c,\omega_2}}e^{-\frac12\expect{\ps{\alpha,\X^{\hat g}_\eps(z)}^2}}V_\eps}
	\end{align*}
	up to metric-dependent term that vanish thanks to the KPZ identity~\cite[Lemma 3.3]{Toda_OPEWV} in the same fashion as in the proof of ~\cite[Proposition 3.4]{Toda_OPEWV}.
	Now we note that $\expect{\ps{\X^{\hat g}_\eps(z),\omega_2}^2}$ is given by
	\[
	\expect{\left(\ps{\alpha,e_1}\ps{\X^{\hat g}_\eps(z),\omega_1}\right)^2}+\expect{\left(\ps{\alpha,e_2}\ps{\X^{\hat g}_\eps(z),\omega_2}\right)^2}+2\expect{\ps{\alpha,e_1}\ps{\alpha,e_2}\ps{\X^{\hat g}_\eps(z),\omega_1}\ps{\X^{\hat g}_\eps(z),\omega_2}}.
	\]
	Recollecting terms this shows that
	\begin{equation}\label{eq:W1_ext}
		\begin{split}
			\bm{\mathcal W}_{-1}\ps{V_{\alpha,\eps}(z)\V_\eps}&=\mathcal T\vert_{x_1=x_2=z}
			\left(\norm{x_1-x_2}^{\ps{\alpha,e_1}\ps{\alpha,e_2}\ps{\omega_1,\omega_2}}\ps{V_{\ps{\alpha,e_1}\omega_1,\eps}(x_1)V_{\ps{\alpha,e_2}\omega_2,\eps}(x_2)\V_\eps}\right)\\
			\text{with}\quad\mathcal T&\coloneqq \left(q-2\ps{\alpha,\omega_2})\right)\partial_{x_1}-\left(q-2\ps{\alpha,\omega_1})\right)\partial_{x_2}.
		\end{split}
	\end{equation}
	Note that straightforward computations show that the limit\\ $\lim\limits_{x_1=x_2=z}\norm{x_1-x_2}^{\ps{\alpha,e_1}\ps{\alpha,e_2}\ps{\omega_1,\omega_2}}\ps{V_{\ps{\alpha,e_1}\omega_1}(x_1)V_{\ps{\alpha,e_2}\omega_2}(x_2)\V}$ exists and is given by $\ps{V_{\alpha}(z)\V}$.
	
	We can now define the weak formulation of the above problem that will allow us to extend the range of validity of the above differential equation. Namely let us denote 
	\[
	F_{\bm\alpha}(z,x_1,x_2)\coloneqq \norm{x_1-x_2}^{\ps{\alpha,e_1}\ps{\alpha,e_2}\ps{\omega_1,\omega_2}}\ps{V_{\ps{\alpha,e_1}\omega_1}(x_1)V_{\ps{\alpha,e_2}\omega_2}(x_2)\V}
	\]
	and consider test functions $\phi:\C^3\to \R$ that are smooth, bounded and compactly supported, with $\phi(z,x_1,x_2)=0$ if $\norm{z-x_i}<\delta$ or $\norm{z}<\delta$ for some positive $\delta$.
	Then for any such function the quantity 
	\begin{align*}
		&\int_{\C^3}\mathcal D_0^*(z)\phi(z,x_1,x_2)F_{\bm\alpha}(z,x_1,x_2)d^2zd^2x_1d^2x_2
	\end{align*}
	with $\mathcal D_0^*(z)$ the adjoint operator of $\mathcal D_0(z)$ can analytically continued  over the whole $\mathcal A_4$. 
	Likewise we can extend analytically over $\mathcal A_4$ the quantity
	\begin{align*}
		&\int_{\C^3}\mathcal T\phi(z,x_1,x_2)F_{\bm\alpha}(z,x_1,x_2)d^2zd^2x_1d^2x_2.
	\end{align*}
	
	If we now consider $\phi=\phi_\eps$ to be a sequence of such test functions with $\phi_\eps(z,x_1,x_2)=0$ as soon as $\norm{x_1-1}>\eps$ or $\norm{x_2-1}>\eps$, we see that Equation~\eqref{eq:edp_to_extend} shows that as soon as $\bm\alpha$ satisfies the generalized Seiberg bounds, 
	\begin{equation}\label{eq:weak_edp}
		\lim\limits_{\eps\to0}\int_{\C^3}\left(\mathcal D_0^*(z)+\mathcal T\right)\phi_\eps(z,x_1,x_2)F_{\bm\alpha}(z,x_1,x_2)d^2zd^2x_1d^2x_2=0.
	\end{equation}
	Now the sequence of integrals is meromorphic in $\bm\alpha\in\mathcal A_4$ since the integral are absolutely convergent. Moreover the limit is also uniformly convergent in a complex neighborhood of $\mathcal A_4$ since we avoid the singularities when points may merge. This shows that for any $\bm\alpha\in\mathcal A_4$ Equation~\eqref{eq:weak_edp} remains valid too.
	
	Now it was shown in~\cite{Toda_OPEWV} that a consequence of this fact was that the four-point correlation being considered considered in Theorem~\ref{thm:BPZ} was a strong solution of the hypergeometric differential equation~\eqref{eq:hypergeometric}. This allows to extend the range of validity of this equation to the whole $\mathcal A_4$.
	
	
	\subsection{Proof of Theorem~\ref{thm:BPZ}}
	We have seen above that under the assumption that $\bm\alpha\in\mathcal A_4$, the four-point correlation functions considered in Theorem~\ref{thm:BPZ} are solutions of an hypergeometric differential equation of the third order. This completely determines them up to a global constant, thanks to the following:
	\begin{proposition}\label{prop:hyper_sol}
		Assume that for any two distinct elements $U, V$ in the set $\{0,B_1,B_2,A_1,A_2,A_3\}$, the quantity $U-V$ is a non-integer real number.
		Then real-valued solutions of the hypergeometric differential equation of the third order~\eqref{eq:hypergeometric} in $\C\setminus\lbrace0,1\}$ are of the form 
		\begin{equation}
			\mathcal H(z)=\mathcal H(0)\left(\norm{\mathcal{H}_0(z)}^2+\sum_{i=1}^2A_\gamma^{(i)}(\alpha,\alpha_0,\alpha_1,\alpha_\infty)\norm{\mathcal H_i(z)}^2\right).
		\end{equation}
	\end{proposition}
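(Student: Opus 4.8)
The plan is to turn the single real constraint carried by a solution of~\eqref{eq:hypergeometric} into two complex factorization statements and then to read off the coefficients from monodromy. The starting observation is that if $\mathcal H$ is real-valued and solves the holomorphic third-order equation $L_z\mathcal H=0$ in the variable $z$, then taking complex conjugates shows it also solves the conjugate (antiholomorphic) equation $\overline{L}_{\bar z}\mathcal H=0$ in $\bar z$. First I would use this to show that $\mathcal H$ is a Hermitian bilinear combination of the $s$-channel solutions. Freezing $\bar z$ and viewing $\mathcal H$ as a function of $z$, it lies in the three-dimensional solution space spanned by $\mathcal H_0,\mathcal H_1,\mathcal H_2$, so $\mathcal H(z,\bar z)=\sum_{i=0}^2 f_i(\bar z)\mathcal H_i(z)$; substituting into the conjugate equation and invoking the linear independence of the $\mathcal H_i$ forces each $f_i$ to solve the conjugate equation, whence $f_i(\bar z)=\sum_{j}c_{ij}\overline{\mathcal H_j(z)}$ and $\mathcal H=\sum_{i,j}c_{ij}\mathcal H_i(z)\overline{\mathcal H_j(z)}$. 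Since the $A_j,B_i$ are real the $\mathcal H_i$ have real Taylor coefficients, so $\overline{\mathcal H_j(z)}=\mathcal H_j(\bar z)$ and the reality of $\mathcal H$ imposes that the matrix $C=(c_{ij})$ is Hermitian.

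Next I would impose single-valuedness on $\C\setminus\{0,1\}$, starting at $z=0$. The non-resonance hypothesis guarantees in particular that $B_1,B_2,B_1-B_2\notin\Z$, so the local exponents $0,\,1-B_1,\,1-B_2$ at the origin are pairwise distinct modulo $\Z$ and the monodromy around $0$ acts diagonally on the basis, $\mathcal H_i\mapsto\lambda_i\mathcal H_i$ with pairwise distinct $\lambda_0=1$, $\lambda_1=e^{-2\pi i B_1}$, $\lambda_2=e^{-2\pi i B_2}$ on the unit circle. Under this monodromy the term $c_{ij}\mathcal H_i\overline{\mathcal H_j}$ is multiplied by $\lambda_i\overline{\lambda_j}=\lambda_i\lambda_j^{-1}$, so single-valuedness forces $c_{ij}=0$ whenever $i\neq j$. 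Hence $\mathcal H=\sum_{i=0}^2 c_i\norm{\mathcal H_i(z)}^2$ with real $c_i$, and isolating the $z$-independent leading behaviour at the origin (only $\mathcal H_0(0)=1$ contributes, the other two carrying the non-integer powers $z^{1-B_i}$) identifies the normalization $c_0=\mathcal H(0)$.

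The genuinely delicate step, and the one I expect to be the main obstacle, is pinning down the ratios $c_1/c_0$ and $c_2/c_0$ by requiring single-valuedness around $z=1$. Let $(\tilde{\mathcal H}_j)$ be a basis diagonalizing the monodromy at $1$, with connection matrix $M=(M_{ij})$ defined by $\mathcal H_i=\sum_j M_{ij}\tilde{\mathcal H}_j$; then $\mathcal H=\sum_{j,k}\big(\sum_i c_i M_{ij}\overline{M_{ik}}\big)\tilde{\mathcal H}_j\overline{\tilde{\mathcal H}_k}$, and the same diagonalization argument applied at $z=1$ requires the off-diagonal coefficients $\sum_i c_i M_{ij}\overline{M_{ik}}$, $j\neq k$, to vanish. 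These linear relations, together with the reality and distinctness already established, determine the projective triple $(c_0:c_1:c_2)$. The remaining work is to compute the connection coefficients $M_{ij}$ for the third-order hypergeometric equation explicitly---they are products of ratios of Gamma functions, i.e. of the function $l$---and to carry out the linear algebra showing that $c_i/c_0$ equals exactly the stated $A_\gamma^{(i)}$. This is the analogue, one order higher, of the computation carried out for the Gauss equation in the proof of the DOZZ formula~\cite{KRV_DOZZ}, and the non-integrality hypothesis is precisely what keeps all the relevant Gamma factors finite and the monodromy eigenvalues distinct, so that the constraint system is consistent with a one-parameter (overall scale $\mathcal H(0)$) solution space.
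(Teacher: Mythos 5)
Your first two steps are correct, and they are in fact cleaner than the paper's own treatment: where you obtain the Hermitian decomposition $\mathcal H=\sum_{i,j}c_{ij}\,\mathcal H_i(z)\overline{\mathcal H_j(z)}$ by polarization and separation of the holomorphic and antiholomorphic equations, the paper instead bounds the dimension of the space of real-valued solutions by nine through a hyperbolic-PDE argument; and your monodromy-at-the-origin argument killing the off-diagonal $c_{ij}$ (using $B_1,B_2,B_1-B_2\notin\Z$) is equivalent to the paper's verification that the mixed terms $\mathfrak{Re}\left(F_iF_j(\bar z)\right)$, $\mathfrak{Im}\left(F_iF_j(\bar z)\right)$ fail to be continuous, together with their derivatives, across $(-\infty,0)$.

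The gap is in your third step. For the third-order equation~\eqref{eq:hypergeometric} the point $z=1$ is \emph{not} analogous to the Gauss case: its local exponents are $0$, $1$ and $s=B_1+B_2-A_1-A_2-A_3$, so the local monodromy is a pseudo-reflection, with eigenvalue $1$ of multiplicity two (there is a two-dimensional space of solutions of ${}_3F_2$ type holomorphic at $z=1$). Consequently ``the same diagonalization argument'' does not transfer: if $\tilde{\mathcal H}_j,\tilde{\mathcal H}_k$ are the two unit-eigenvalue solutions, the cross term $\tilde{\mathcal H}_j(z)\overline{\tilde{\mathcal H}_k(z)}$ is already single-valued around $z=1$, so single-valuedness cannot force $\sum_i c_iM_{ij}\overline{M_{ik}}=0$ for that pair, and your full system (all off-diagonal entries vanish, i.e.\ six real conditions on the two ratios $c_1/c_0,c_2/c_0$) is over-determined and is not what holds for the actual solution. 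Only the two conditions coupling the exponent-$s$ direction to the holomorphic ones are forced; these do suffice to fix the ratios (the connection matrix is real since the parameters are), but seeing this requires exactly the pseudo-reflection structure that your argument glosses over. A second, related problem is that the hypothesis of the proposition controls the differences of $\{0,A_1,A_2,A_3,B_1,B_2\}$ but says nothing about $s$ modulo $\Z$, so the local frame at $z=1$ you propose to diagonalize in may even be logarithmic. This is precisely why the paper works at $z=\infty$ instead: there the exponents are $A_1,A_2,A_3$, pairwise non-congruent mod $\Z$ by hypothesis, the monodromy is honestly diagonalizable with distinct eigenvalues, and Thomae's connection formula gives the transition coefficients explicitly; the resulting vanishing of the cross terms is the computation that produces the constants $A_\gamma^{(i)}$ --- the very computation your proposal defers as ``remaining work.''
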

	As a consequence in order to prove that Theorem~\ref{thm:BPZ} does indeed hold we only need to check that $\mathcal{H}$ defined via $\ps{V_{-\chi\omega_1}(z)V_{\alpha_0}(0)V_{\kappa\omega_2}(1)V_{\alpha_\infty}(\infty)}=\norm{z}^{\chi\ps{h_1,\alpha_0}}\norm{z-1}^{\frac{\chi\kappa}3}\mathcal H(z)$ is real-valued, satisfies $\mathcal{H}(0)=C_\gamma(\alpha_0-\chi\omega_1,\kappa\omega_2,\alpha_\infty)$ and that the coefficients $A_i$, $1\leq i\leq3$ and $B_j$, $j=1,2$ satisfy the assumptions of Proposition~\ref{prop:hyper_sol}. 
	
	The first point is straightforward, while the second one follows from the probabilistic representation of the correlation functions, which allows to evaluate 
	\begin{equation*}
		\mathcal H(0)=C_\gamma(\alpha_0-\chi \omega_1,\alpha_1,\alpha_\infty).
	\end{equation*}
	Therefore the result holds true as soon as the coefficients $A$ and $B$ meet the requirements of Proposition~\ref{prop:hyper_sol}. 
	Because both $\mathcal{H}$ and the right-hand side in Equation~\eqref{eq:BPZ} depend analytically on the weights $\alpha$ via Theorem~\ref{thm:analycity}, the statement extends to the whole range of values of Theorem~\ref{thm:BPZ}.
	
	Before actually proving Proposition~\ref{prop:hyper_sol}, we start with the general description of the set of real-valued solutions of Equation~\eqref{eq:hypergeometric} on the subset $\C\setminus\{(-\infty,0]\cup[1,+\infty)\}$ of $\C\setminus\{0,1\}$.
	\begin{lemma}
		Real-valued solutions of Equation~\eqref{eq:hypergeometric} on $\C\setminus\{(-\infty,0]\cup[1,+\infty)\}$ are linear combinations of 
		\[
		\norm{F_i(z)}^2\text{ for }i=0,1,2;\quad\mathfrak{Re}\left(F_i(z)F_j(\bar z)\right)\text{ for } 0\leq i\neq j\leq 2;\quad\mathfrak{Im}\left(F_i(z)F_j(\bar z)\right)\text{ for } 0\leq i\neq j\leq 2.
		\] 
	\end{lemma}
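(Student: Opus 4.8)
The plan is to identify the space of real solutions with the ``diagonal'' real form of the tensor product of the solution space of the holomorphic operator in \eqref{eq:hypergeometric} with its conjugate. Write $L_z$ for the third-order operator in \eqref{eq:hypergeometric}, so that a solution satisfies $L_z\mathcal H=0$, and note that $\partial_z$ is the holomorphic (Wirtinger) derivative and that the coefficients of $L_z$ are polynomials in $z$ and $z\partial_z$ with \emph{real} coefficients, the $A_i,B_j$ being real under the hypotheses of Theorem~\ref{thm:BPZ}. Let $L_{\bar z}$ be obtained from $L_z$ by replacing $z,\partial_z$ with $\bar z,\partial_{\bar z}$. Since $\overline{z\partial_z f}=\bar z\,\partial_{\bar z}\bar f$, complex conjugation carries $L_z$ into $L_{\bar z}$, so that for a real-valued $\mathcal H$ one has $\overline{L_z\mathcal H}=L_{\bar z}\mathcal H$; hence $L_z\mathcal H=0$ automatically forces $L_{\bar z}\mathcal H=0$. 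The two operators act on independent variables and therefore commute.

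First I would fix a fundamental system $F_0=\mathcal H_0,\,F_1=\mathcal H_1,\,F_2=\mathcal H_2$ of $L_z u=0$ on the simply connected domain $\Omega\coloneqq\C\setminus\{(-\infty,0]\cup[1,+\infty)\}$ (the distinctness of the indicial exponents $0,1-B_1,1-B_2$ ensures this is a genuine basis). The heart of the proof is the factorization. Since $\mathcal H$ is real-analytic on $\Omega$ --- which one checks from the overdetermined system $L_z\mathcal H=L_{\bar z}\mathcal H=0$, expressing $\partial_z^3\mathcal H$ and $\partial_{\bar z}^3\mathcal H$ through lower-order derivatives with analytic coefficients --- it extends to a holomorphic function $\widetilde{\mathcal H}(z,w)$ of two independent complex variables near the anti-diagonal $w=\bar z$, annihilated by both $L_z$ and $L_w$. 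Freezing $w$ and solving the genuine third-order ODE $L_z u=0$ gives $\widetilde{\mathcal H}(z,w)=\sum_{i=0}^2 a_i(w)F_i(z)$ with $a_i$ holomorphic; applying $L_w$ and using the linear independence of the $F_i(z)$ yields $L_w a_i=0$, whence $a_i(w)=\sum_{j=0}^2 c_{ij}F_j(w)$. Restricting to $w=\bar z$ produces
\begin{equation*}
\mathcal H(z)=\sum_{i,j=0}^2 c_{ij}\,F_i(z)F_j(\bar z).
\end{equation*}

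It remains to impose reality. Each $F_i$ has real series coefficients and $\overline{z^a}=\bar z^{a}$ for real $a$ on $\Omega$, so $\overline{F_i(z)}=F_i(\bar z)$; conjugating the factorization and relabelling indices gives $\overline{\mathcal H}=\sum_{i,j}\overline{c_{ji}}\,F_i(z)F_j(\bar z)$. Writing $\mathcal H=\tfrac12(\mathcal H+\overline{\mathcal H})$ therefore replaces $(c_{ij})$ by its Hermitian part $d_{ij}\coloneqq\tfrac12(c_{ij}+\overline{c_{ji}})$, so that $\mathcal H=\sum_{i,j}d_{ij}F_i(z)F_j(\bar z)$ with $d_{ij}=\overline{d_{ji}}$ (no linear independence of the products is needed at this stage). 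Splitting this Hermitian sum, the diagonal contributes $d_{ii}\lvert F_i(z)\rvert^2$ while each off-diagonal conjugate pair equals $2\,\mathfrak{Re}\!\big(d_{ij}F_i(z)F_j(\bar z)\big)$, an $\R$-linear combination of $\mathfrak{Re}(F_i(z)F_j(\bar z))$ and $\mathfrak{Im}(F_i(z)F_j(\bar z))$. This exhibits every real-valued solution as a real linear combination of the listed functions.

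The step I expect to be the main obstacle is the rigorous passage to the factorized form: justifying that $\mathcal H$ may be treated as a function of two independent complex variables (through its real-analyticity on $\Omega$), so that ``freezing $\bar z$ and solving the ODE in $z$'' is legitimate and the antiholomorphic operator then acts on the coefficient functions alone. Everything downstream --- the Hermitian reduction and the explicit real basis --- is elementary once this factorization is in place.
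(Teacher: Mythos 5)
Your proof is correct, and it takes a genuinely different route from the paper's. The paper takes real and imaginary parts of Equation~\eqref{eq:hypergeometric}, views $\mathcal H$ as a function of $(x,y)$ solving a pair of third-order hyperbolic PDEs, and bounds the real dimension of the solution space by nine: a solution is determined by the Cauchy data $u_0=\mathcal H(\cdot,0)$, $u_1=\partial_y\mathcal H(\cdot,0)$, $u_2=\partial_{yy}\mathcal H(\cdot,0)$, each of which is constrained by an ODE to a three-dimensional space. You instead polarize: after establishing real-analyticity you treat $z$ and $\bar z$ as independent complex variables and use the ODE structure twice to obtain the sesquilinear factorization $\mathcal H(z)=\sum_{i,j}c_{ij}F_i(z)F_j(\bar z)$, then impose reality through Hermitian symmetry of $(c_{ij})$. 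Your route buys two things: it produces the factorization explicitly rather than through a dimension count, and it never needs the nine listed functions to be linearly independent --- a point on which the paper's argument implicitly relies, since a dimension bound of nine only yields the conclusion if the nine candidate solutions span a nine-dimensional space. The cost is the complexification step you flag, and here both proofs in fact rest on real-analyticity (the paper cites~\cite[Theorem 1.3]{Toda_OPEWV}; your sketch can be made rigorous by combining $L_z\mathcal H=0$ and $L_{\bar z}\mathcal H=0$ into an elliptic scalar equation $\Delta^3\mathcal H=\text{lower order}$, using $\Delta^3=64\,\partial_z^3\partial_{\bar z}^3$, and invoking analytic hypoellipticity of elliptic operators with real-analytic coefficients). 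Note, however, that you can bypass real-analyticity and the two-variable extension entirely: writing the equation as a first-order system $\partial_z U=A(z)U$ with $U=(\mathcal H,\partial_z\mathcal H,\partial_z^2\mathcal H)^T$, $A$ holomorphic on $\Omega$, and $\Phi(z)$ the holomorphic fundamental matrix built from the $F_i$, the vector $V\coloneqq\Phi(z)^{-1}U$ satisfies $\partial_z V=0$, so its entries are antiholomorphic; this gives $\mathcal H=\sum_i F_i(z)g_i(\bar z)$ for \emph{any} smooth solution, and applying $L_{\bar z}$ together with the invertibility of the Wronskian matrix $\big(F_i^{(k)}(z)\big)_{i,k}$ then forces each $g_i$ to solve the ODE in $\bar z$, yielding your factorization with no regularity theory at all.
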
 
	\begin{proof}
		The functions proposed in the statement being real-valued solutions of Equation~\eqref{eq:hypergeometric}, it is enough to bound the dimension of the set of such functions by nine. Now the statement of~\cite[Theorem 1.3]{Toda_OPEWV} also implies that such a function $\mathcal{H}$ is actually real analytic; therefore it suffices to bound this dimension on some open subset of $\C\setminus\{0,1\}$.
		
		To do so we view $\mathcal{H}$ as a function of two real variables $x,y$ with $x$ (resp. $y$) being the real (resp. imaginary) part of $z$. Then, by taking the real and imaginary parts of Equation~\eqref{eq:hypergeometric}, $\mathcal{H}$ is a solution of
		\begin{align*}
			\partial_{xxx}\mathcal H-3\partial_{xyy}\mathcal H+\text{lower derivatives}=0\quad\quad\partial_{yyy}\mathcal H-3\partial_{xxy}\mathcal H+\text{lower derivatives}=0.
		\end{align*}
		The characteristic equation $\tau^3-3\tau\xi^2=0$ being of the hyperbolic type we see that $\mathcal H$, viewed as a function in the variables $x,y$, is a solution of a pair of hyperbolic partial differential equations of the third order. As a consequence it is completely determined in a complex neighborhood $\mathcal O$ of $-1$ by the data of $u_0(x)\coloneqq \mathcal{H}(x,0)$, $u_1(x)\coloneqq \partial_y\mathcal{H}(x,0)$ and $u_2(x)\coloneqq \partial_{yy}\mathcal{H}(x,0)$.
		Using the explicit expression of the differential operator~\eqref{eq:hypergeometric} these real valued functions are solutions of
		\begin{align*}
			\Big[x\left(A_1+x\partial\right)\left(A_2+x\partial\right)\left(A_3+x\partial\right)&-\left(B_1-1+x\partial\right)\left(B_2-1+x\partial\right)x\partial\Big]u_0\\
			&=3x^3(x-1)u_2'+x^2\big(x(A_1+A_2+A_3+3)-(B_1+B_2+1)\big)u_2,
		\end{align*}
		which corresponds to taking the real part of~\eqref{eq:hypergeometric}, and, by taking $\partial_y$ derivatives of the previous expression combined with the imaginary part of~\eqref{eq:hypergeometric},
		\begin{align*}
			\Big[x\left(A_1+x\partial\right)\left(A_2+x\partial\right)\left(A_3+x\partial\right)&-\left(B_1-1+x\partial\right)\left(B_2-1+x\partial\right)x\partial\Big]u_1\\
			&=3x^3(x-1)u_3'+x^2\left(x(A_1+A_2+A_3+3)-(B_1+B_2+1)\right)u_3,
		\end{align*}
		\begin{align*}
			\Big[x\left(A_1+x\partial\right)\left(A_2+x\partial\right)\left(A_3+x\partial\right)&-\left(B_1-1+x\partial\right)\left(B_2-1+x\partial\right)x\partial\Big]u_2\\
			&=3x^3(x-1)u_4'+x^2\left(x(A_1+A_2+A_3+3)-(B_1+B_2+1)\right)u_4,
		\end{align*}
		\begin{align*}
			\text{with}\quad u_3=3u_1^{(2)}&+2\frac{x(A_1+A_2+A_3+3)-(B_1+B_2+1)}{x(x-1)}u_1'\\
			&+\frac{x(1+A_1+A_2+A_3+A_1A_2+A_1A_3+A_2A_3)-B_1B_2}{x^2(x-1)}u_1, \\
			\text{and}\quad u_4=3u_2^{(2)}&+2\frac{x(A_1+A_2+A_3+3)-(B_1+B_2+1)}{x(x-1)}u_2'\\
			&+\frac{x(1+A_1+A_2+A_3+A_1A_2+A_1A_3+A_2A_3)-B_1B_2}{x^2(x-1)}u_2.
		\end{align*}  
		Therefore $u_1$ and $u_2$ both live in a three-dimensional space; similarly $u_0$ lives in a three-dimensional space determined by $u_2$. This implies that $\mathcal{H}$ lives in a set of dimension at most nine as expected.  
	\end{proof}
	
	We are now in position to address the proof of Proposition~\ref{prop:hyper_sol}.
	\begin{proof}[Proof of Proposition~\ref{prop:hyper_sol}]
		We rely on the fact that any solution $\mathcal H$, along with its derivatives, must be regular when crossing the real axis. This fact will imply that, among the solutions of Equation~\eqref{eq:hypergeometric} on $\C\setminus\{(-\infty,0]\cup[1,+\infty)\}$ the only ones that meet this requirement will be those of the form $\norm{F_i(z)}^2\text{ for }i=0,1,2$. In addition to ensure this condition one must further assume that $\mathcal H$ has the form prescribed by Proposition~\ref{prop:hyper_sol}.
		
		To see why, let us first consider what happens when a solution crosses the negative real axis $(-\infty,0)$. Because the hypergeometric functions ${}_3F_2$ are continuous over $\C\setminus(1,+\infty)$, the lack of continuity of the solutions across the negative real axis would come from the fractional power of $z$. Indeed, let us take $r$ to be some positive real number for which the hypergeometric functions evaluated at $z=-r$ are non-zero. Since we have defined the branch cut of the logarithm to be on $(-\infty,0)$, we can write that for $z_{\pm,\eps}\coloneqq re^{\pm i(\pi-\eps)}$, $F_0(z_{+,\eps})F_1(z_{+,\eps})-F_0(z_{-,\eps})F_1(z_{-,\eps})$ will converge as $\eps\rightarrow0$ to $\left(e^{i(1-B_1)\pi}-e^{-i(1-B_1)\pi}\right)$ up to a non-zero (real) multiplicative constant. Since we have assumed $B_1$ not to be an integer, this means that $\mathfrak{Im}\left(F_0(z)F_1(\bar z)\right)$ will not be continuous across $(-\infty,0)$. Similarly, when considering the partial derivatives of $(x,y)\mapsto F_0(x+iy)F_1(x-iy)$ we see that, expanding around $z=0$,
		\[
		\partial_y F_0(z)F_1(\bar z)=\bar z^{1-B_1}\partial_y\left(F_0(z)\bar z^{B_1-1} F_1 (\bar z)\right)+i(B_1-1)\frac1{\bar z}F_0(z) F_1 (\bar z)= i(B_1-1) \bar z^{-B_1}+o(z^{-B_1}).
		\]
		Because of the branch cut on the negative real axis we see again that for $\mathfrak{Re}\left(F_0(z)F_1(\bar z)\right)$ to be continuous we must assume $B_1$ to be an integer, which it is not. As a consequence for solutions of Equation~\eqref{eq:hypergeometric} to be continuous along with their derivatives we must rule out (under the assumptions that $B_1$, $B_2$, $B_1-B_2$ are not integers) those of the form $\mathfrak{Re}\left(F_i(z)F_j(\bar z)\right)\text{ for } 0\leq i\neq j\leq 2$ or $\mathfrak{Im}\left(F_i(z)F_j(\bar z)\right)\text{ for } 0\leq i\neq j\leq 2$. Since the three hypergeometric functions are linearly independent a linear combination of such solutions will not be continuous either. 
		
		The remaining solutions $\norm{F_i(z)}^2$ are continuous along with their derivatives across the negative real axis; therefore the next step is the investigation of continuity across $(1,\infty)$. To make this explicit let us consider an alternative basis of solutions defined around $z=\infty$. These are defined via the expressions for $i=1,2,3$ (and the convention that $A_{k}=A_{k\text{ mod }3}$):
		\begin{align}
			G_i(z)\coloneqq (-z)^{A_i}\hyper{A_i,1+A_i-B_1,1+A_i-B_2}{1+A_i-A_{i-1},1+A_i-A_{i+1}}{\frac1z}.
		\end{align}
		Again the hypergeometric functions can be continued to analytic functions over $\C\setminus[0,1]$.
		Both basis are related thanks to the remarkable equality (see \cite{Thomae} or \cite[Theorem 1]{Smith}) valid outside of the real axis:
		\begin{equation}
			\frac{\Gamma(A_1)\Gamma(A_2)\Gamma(A_3)}{\Gamma(B_1)\Gamma(B_2)}\hyper{A_1,A_2,A_3}{B_1,B_2}{z}=\sum_{i=1}^3\frac{\Gamma(A_i)\Gamma(A_{i+1}-A_i)\Gamma(A_{i-1}-A_i)}{\Gamma(B_1-A_i)\Gamma(B_2-A_i)} G_i(z).
		\end{equation}
		Whence the general solution $\mathcal H(z)=\lambda_0 \norm{F_0(z)}^2+\lambda_1\norm{F_1(z)}^2+\lambda_2\norm{F_2(z)}^2$ admits the expansion
		\[
		\mathcal H(z)=\sum_{i=0}^2 \alpha_i\norm{G_i(z)}^2+\beta_3\mathfrak{Re}\left(G_0(z)G_1(\bar z)\right)+\beta_0\mathfrak{Re}\left(G_1(z)G_2(\bar z)\right)+\beta_{1}\mathfrak{Re}\left(G_2(z)G_0(\bar z)\right)
		\]
		where the coefficients $\beta_{i}$ are given by $\frac{\Gamma(A_i-A_{i-1})\Gamma(A_i-A_{i+1})\Gamma(A_{i-1}-A_{i+1})\Gamma(A_{i+1}-A_{i-1})l(B_1)l(B_2)}{\Gamma(B_1-A_{i+1})\Gamma(B_2-A_{i+1})\Gamma(A_{i+1})\Gamma(B_1-A_{i-1})\Gamma(B_2-A_{i-1})\Gamma(A_{i-1})l(A_i)}\times$
		\begin{align*}
			&\lambda_0 \frac{\Gamma(B_1)\Gamma(1-B_1)\Gamma(B_2)\Gamma(1-B_2)}{\Gamma(A_i)\Gamma(1-A_i)}\\
			&+\lambda_1 \frac{\Gamma(B_1-1)\Gamma(2-B_1)\Gamma(B_2-B_1)\Gamma(1-B_1+B_2)} {\Gamma(B_1-A_i)\Gamma(1-B_1+A_i)}\frac{\prod_{j=1}^3l(A_j)l(B_1-A_j)}{l(B_1)l(B_2)}\frac{l(1-B_1+B_2)}{l(B_1-1)}\\
			&+\lambda_2 \frac{\Gamma(B_2-1)\Gamma(2-B_2)\Gamma(B_1-B_2)\Gamma(1-B_2+B_1)}{\Gamma(B_2-A_i)\Gamma(1-B_2+A_i)}\frac{\prod_{j=1}^3l(A_j)l(B_2-A_j)}{l(B_1)l(B_2)}\frac{l(1-B_2+B_1)}{l(B_2-1)}\cdot
		\end{align*}
		This can be further reduced (up to a global multiplicative factor) to the form 
		\begin{align*}
			&\lambda_0 \sin(\pi A_i)\sin\pi(B_1-B_2)-\tilde{\lambda_1} \sin\pi(B_1-A_i)\sin(\pi B_2)+\tilde{\lambda_2} \sin\pi(B_2-A_i)\sin(\pi B_1)
		\end{align*}
		with $\tilde{\lambda_i}\coloneqq \lambda_i A_\gamma^{(i)}(\alpha,\alpha_0,\alpha_1,\alpha_\infty)$, and where the identity $\Gamma(z)\Gamma(1-z)=\frac\pi{\sin(\pi z)}$, valid for $z\not\in\Z$, has been used.
		Since we have assumed the $A_j-A_l$ not to be integers for $j\neq l$ along the same lines as above continuity across the $(1,\infty)$ axis of (derivatives of) $\mathcal H$ implies that the $\beta_{i}$ are zero. After a little algebra this implies that either $\lambda_0=\tilde\lambda_1=\tilde{\lambda}_2$ or the factor in front of the $\lambda$ vanishes, which won't occur since it is assumed that none of the $A_i$ and $B_j-A_i$ are integers.  
		Whence the desired relations between the $\lambda$: this wraps up the proof of Proposition~\ref{prop:hyper_sol}.
	\end{proof}

	
	
	
	\section{Four-point correlation functions and Operator Product Expansions}\label{sec:OPE}
	We have proved in the previous section that certain four-point correlation functions in the $\mathfrak{sl}_3$ Toda theory can be expressed as a sum of hypergeometric functions, and to do so we relied on the fact that they are solutions of a BPZ-type differential equation. Our goal here is to provide an alternative way of deriving such an expansion, based on the probabilistic representation of these correlation functions. More precisely this expansion will be obtained using \textit{Operator Product Expansions} (OPEs hereafter), which are based on an asymptotic expansion of the correlation functions when two insertion points collide (here $z\to0$). This alternative expansion will allow to obtain shift equations for three-point correlation functions, which as we will see in Section~\ref{sec:toda_end}, fully characterize these correlation functions. 
	
	As a byproduct of these OPEs we provide a rigorous meaning to the reflection relation between Vertex Operators from Theorem~\ref{thm:refl} $V_\alpha=R_s(\alpha)V_{s\alpha}$ which we prove to hold for all elements $s$ of the Weyl group $W$ and within certain three-point correlation functions. The expression of the reflection coefficients is in agreement with predictions from the physics literature~\cite{reflection_simplylaced, refl_non_simply_laced, Fat_refl}. Namely, let us recall that Toda reflection coefficients are defined by setting for $\alpha\in\R^2$ and $s\in W$
	\begin{equation}
		\begin{split}
			R_s(\alpha)&=\epsilon(s)\frac{A\left(s(\alpha-Q)\right)}{A(\alpha-Q)},\quad\text{with}\\
			A(\alpha)&=\prod_{i=1}^r\left(\mu\pi l\left(\frac{\gamma^2}{2}\right)\right)^{\frac{\ps{\alpha,\omega_i}}\gamma}\prod_{e\in\Phi^+}\Gamma\left(1-\frac{\gamma}2\ps{\alpha,e}\right)\Gamma\left(1-\frac{1}\gamma\ps{\alpha,e}\right)\cdot
		\end{split}
	\end{equation}
	We introduce for fixed $\alpha_1,\alpha_\infty\in\R^2$ the set $\mathcal{U}(\alpha_1,\alpha_\infty)$ defined via
	\begin{equation}
		\mathcal{U}(\alpha_1,\alpha_\infty)\coloneqq\left\{\alpha_0\in\R^2, (\hat s\alpha_0,\alpha_1,\alpha_\infty)\in\mathcal{A}_3\text{ for some }s\in W\right\}.
	\end{equation}
	\begin{theorem}\label{thm:OPE}
		Given $\alpha_1,\alpha_\infty\in\R^2$, extend the function $C_\gamma(\alpha_0,\alpha_1,\alpha_\infty)$ over $\mathcal{U}(\alpha_1,\alpha_\infty)$ by setting
		\begin{equation}\label{eq:ext_3ptsbis}
			C_\gamma(\alpha_0,\alpha_1,\alpha_\infty)\coloneqq R_s(\alpha_0)C_\gamma(\hat s\alpha_0,\alpha_1,\alpha_\infty)\text{ where $s\in W$ is such that }s(\alpha_0-Q)\in\mathcal{C}_-.
		\end{equation}
		Then the map thus defined is analytic in a complex neighborhood of $\mathcal{U}(\alpha_1,\alpha_\infty)$. 
		
		Moreover under the same assumptions as in Theorem~\ref{thm:BPZ}, 
		\begin{equation}\label{eq:OPE}
			\begin{split}
				&\ps{V_{\alpha}(z)V_{\alpha_0}(0)V_{\alpha_1}(1)V_{\alpha_\infty}(\infty)}=\norm{z}^{\chi\ps{h_1,\alpha_0}}\norm{z-1}^{\frac{\chi\kappa}3}\mathcal H(z),\quad\text{where}\\
				&\mathcal H(z)=\sum_{i=0}^2B_\gamma^{(i)}(\alpha_0,\chi)C_\gamma(\alpha_0-\chi h_{i+1},\alpha_1,\alpha_\infty)\norm{\mathcal H_{i}(z)}^2
			\end{split}
		\end{equation}
		as soon as $\alpha_0-\chi h_i\in\mathcal{U}(\alpha_1,\alpha_\infty)$ for $1\leq i\leq 3$.
		The coefficients $B_\gamma^{(i)}(\alpha_0,\chi)$ admit the explicit expression
		\begin{equation}
			B_\gamma^{(i)}(\alpha_0,\chi)=\prod_{j=1}^{i}\left(\pi\mu l\left(\frac{\gamma^2}{2}\right)\right)^{\frac{\chi}{\gamma}}\left(\frac{\chi^2}{2}\right)^2\frac{ l(\frac\chi2\ps{\alpha_0-Q,h_{j}-h_{i+1}})}{l(1+\frac{\chi^2}2+\frac\chi2\ps{\alpha_0-Q,h_j-h_{i+1}})}\cdot
		\end{equation}
	\end{theorem}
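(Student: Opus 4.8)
The plan is to treat the two assertions separately: first the analytic-continuation statement for the extended $C_\gamma$, and then the probabilistic derivation of the OPE expansion. For analyticity I would start from the fact that $C_\gamma(\alpha_0,\alpha_1,\alpha_\infty)$ is already analytic on a complex neighbourhood of $\mathcal{A}_3$ by Theorem~\ref{thm:analycity}, and that the reflection coefficients $R_s$ are meromorphic with poles confined to the hyperplanes $\ps{\alpha_0-Q,e}\in-\gamma\N^*\cup-\frac2\gamma\N^*$, $e\in\Phi^+$. Since $W$ acts simply transitively on the Weyl chambers, for $\alpha_0\in\mathcal{U}(\alpha_1,\alpha_\infty)$ lying in the interior of a chamber there is a unique $s$ with $s(\alpha_0-Q)\in\mathcal{C}_-$, so Equation~\eqref{eq:ext_3ptsbis} defines $C_\gamma$ unambiguously there and the map is analytic on each chamber. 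The cocycle relation $R_{s\tau}(\alpha)=R_s(\tau\alpha)R_\tau(\alpha)$ then guarantees that across a common wall the two candidate definitions coincide, so the chamberwise pieces glue into a single map; any apparent singularity remaining on a wall is of lower dimension and removable by Riemann's theorem, yielding analyticity on a full complex neighbourhood of $\mathcal{U}(\alpha_1,\alpha_\infty)$.

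For the OPE expansion I would exploit that, by Theorem~\ref{thm:BPZ} together with Proposition~\ref{prop:hyper_sol}, we already know $\mathcal H(z)=\sum_{i=0}^2\lambda_i\norm{\mathcal H_i(z)}^2$ for some real coefficients $\lambda_i$, and that near $z=0$ one has $\mathcal H_0(z)\to1$, $\mathcal H_1(z)\sim z^{1-B_1}$, $\mathcal H_2(z)\sim z^{1-B_2}$ with pairwise distinct exponents. Consequently each $\lambda_i$ is simply the coefficient of $\norm{z}^{2(1-B_i)}$ (with the convention $B_0\coloneqq1$) in the small-$z$ asymptotic expansion of $\mathcal H(z)$, so the whole task reduces to computing the three leading terms of $\mathcal H(z)$ as $z\to0$ directly from the probabilistic representation and checking that $\lambda_i=B_\gamma^{(i)}(\alpha_0,\chi)\,C_\gamma(\alpha_0-\chi h_{i+1},\alpha_1,\alpha_\infty)$. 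Here the matching with $\norm{\mathcal H_i(z)}^2$ is automatic once the distinct exponents are in hand, and the channel $i=0$ reproduces the prefactor $C_\gamma(\alpha_0+\alpha,\alpha_1,\alpha_\infty)=C_\gamma(\alpha_0-\chi h_1,\alpha_1,\alpha_\infty)$ of Theorem~\ref{thm:BPZ}, which forces $B_\gamma^{(0)}=1$ as an empty product.

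To extract the three leading terms I would expand the GFF around the origin in the radial–angular coordinates of Equation~\eqref{eq:rad_ang_dec}, so that the collision of $V_{-\chi h_1}(z)$ with $V_{\alpha_0}(0)$ is governed by the planar drifted Brownian motion attached to the insertion at $0$, with drift in $\mathcal{C}_-$ fixed by $\alpha_0-Q$. Applying the generalized Williams-type path decomposition of Subsection~\ref{subsec:brown_cond_neg}, established in~\cite{Toda_correl1}, splits the contribution according to the wall through which the conditioned diffusion $\B^\nu$ exits $\mathcal{C}_-$, producing exactly three channels indexed by the weights $h_1,h_2,h_3$ of the first fundamental representation. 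The direct channel yields the fused operator $V_{\alpha_0-\chi h_1}$, while the reflected channels $i=1,2$ produce $V_{\alpha_0-\chi h_{i+1}}$ together with the reflection factors accumulated when the maximum $\M$ and the successive wall-crossings are integrated out; this is precisely the origin of the product form of $B_\gamma^{(i)}$, and the explicit $l$-function ratios together with the prefactor $\left(\frac{\chi^2}2\right)^2$ emerge from the Laplace-transform asymptotics of the exponential GMC functionals along the conditioned diffusion, exactly as in the reflection computations of~\cite{Toda_correl1}. The fusion estimate~\eqref{eq:fusion} of Subsection~\ref{subsec:fusion} controls the error so that only the three advertised powers of $\norm{z}$ survive; and because the reflected weights $\alpha_0-\chi h_2,\alpha_0-\chi h_3$ generically violate the Seiberg bounds, the three-point functions in those channels make sense only through the analytic continuation of the first part of the theorem, which is why the hypothesis $\alpha_0-\chi h_i\in\mathcal{U}(\alpha_1,\alpha_\infty)$ is imposed.

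The hardest step will be the precise bookkeeping of the constants in the reflected channels: isolating the exact $l$-function ratios from the conditioned-diffusion asymptotics, and rigorously justifying that the subleading channels $i=1,2$ — which probabilistically correspond to GMC moments outside their range of convergence — are genuinely recovered by the analytic continuation of Theorem~\ref{thm:analycity} and the reflection relation rather than merely formally. Ensuring uniform control of the remainders so that the $z\to0$ limit leaves only the three terms $\norm{z}^{2(1-B_i)}$, and then verifying that the constants produced by the path decomposition agree on the nose with the closed form of $B_\gamma^{(i)}(\alpha_0,\chi)$, is where essentially all of the technical effort of the proof is concentrated.
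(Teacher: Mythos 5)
Your strategy for the OPE expansion itself (radial--angular decomposition around $0$, the generalized Williams path decomposition of Subsection~\ref{subsec:brown_cond_neg}, fusion estimates to control remainders, then identification of coefficients against Theorem~\ref{thm:BPZ}) is essentially the paper's, at least for the case $\chi=\frac2\gamma$; note however that for $\chi=\gamma$ the paper does \emph{not} obtain the middle channel from a reflection/exit-wall event but from the elementary change of variables $x\leftrightarrow zx$ and the explicit integral of Lemma~\ref{lemma:evaluate_B}, and it must split into three parameter regimes (Lemmas~\ref{lemma:OPE1}, \ref{lemma:OPE2}, \ref{lemma:OPE3}) precisely because which channels carry reflection coefficients depends on where $\alpha_0-\chi h_{i+1}$ lands. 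The genuine gap is in your first part, and it propagates: your proof of the analyticity of the extension is circular. The sets $\mathcal{U}_s(\alpha_1,\alpha_\infty)=\{\alpha_0:\ (\hat s\alpha_0,\alpha_1,\alpha_\infty)\in\mathcal{A}_3\}$ are pairwise disjoint open sets (for $\alpha_0-Q$ off the walls exactly one $s\in W$ sends it into $\mathcal{C}_-$), so there is no overlap on which two chamber-wise definitions can be compared; what has to be shown at a wall separating $\mathcal{U}_s$ from $\mathcal{U}_{s_is}$ is that $C_\gamma(\hat s\alpha_0,\alpha_1,\alpha_\infty)$ and $R_{s_i}(\hat s\alpha_0)C_\gamma(\hat s_i\hat s\alpha_0,\alpha_1,\alpha_\infty)$ are branches of one analytic function. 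That is exactly the reflection identity (Theorem~\ref{thm:refl}) which is being proven; the cocycle relation $R_{s\tau}(\alpha)=R_s(\tau\alpha)R_\tau(\alpha)$ only ensures consistency of labels, not analytic matching, and Riemann removability is unavailable since you have neither agreement of the one-sided boundary values nor boundedness near the walls (the continued correlation functions have poles on the loci $\ps{\bm s(w),\omega_i}=0$, which meet the walls).

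The paper's architecture is the reverse of yours, and that reversal is what makes the argument non-circular: it first proves the OPE expansions in restricted, probabilistically meaningful regions of weights, compares them with Theorem~\ref{thm:BPZ}, and extracts shift equations such as $R_{s_1}(\hat s_1(\alpha_0-\gamma h_2))\,C_\gamma(\hat s_1(\alpha_0-\gamma h_2),\alpha_1,\alpha_\infty)=C_\gamma(\alpha_0-\gamma h_1,\alpha_1,\alpha_\infty)\,A^{(1)}_\gamma/B^{(1)}_\gamma$. The right-hand side is analytic on a \emph{connected} neighbourhood straddling adjacent chambers, because it involves $C_\gamma$ only at weights inside $\mathcal{A}_3$ (Theorem~\ref{thm:analycity}) together with explicit meromorphic coefficients; it is this function that bridges the disjoint pieces $\mathcal{U}_s$ and $\mathcal{U}_{s_is}$, after which the cocycle relation propagates analyticity along all of $W$. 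In other words, the analyticity of the extension in Equation~\eqref{eq:ext_3ptsbis} is an \emph{output} of the OPE computation, not an input to it; your plan, which uses the extension as a prerequisite for making sense of the reflected channels, cannot be run in the order you propose.
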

	\subsection{Method of proof}
	Let us briefly recall the framework introduced in the proof of Theorem~\ref{thm:analycity}. We have seen there that the four-point correlation functions we consider could be defined using Equation~\eqref{eq:ana_F} via 
	\[
	\langle V_{\alpha_0}(0)V_{-\chi h_1}(z)V_{\alpha_1}(1)V_{\alpha_\infty}(\infty)\rangle=  \int_{\R^2}E_{\bm\alpha}(\bm c)d\bm c+\sum_{i=1}^2\int_{\R}E^i_{\bm\alpha}(c_i)dc_i+E^{1,2}_{\bm\alpha},
	\]
	where the quantities $E_{\bm\alpha}^{\cdot}(\bm c)$ are expectation terms. In order to highlight the dependency in $z$ of these quantities, let us introduce the notation
	\begin{equation}\label{eq:last_label?}
		\begin{split}
			\Phi_{\alpha_0,\alpha}(z;\bm c)&\coloneqq e^{\ps{\bm s,\bm c}}\expect{\exp\left(-\sum_{i=1}^2\mu_i e^{\ps{\gamma e_i,\bm c}}I_{\alpha_0,\alpha}^i(z)\right)},\quad\text{with}\\
			I_{\alpha_0,\alpha}^i(z)&\coloneqq \int_\C \frac{\norm{x-z}^{-\gamma\ps{\alpha,e_i}}}{\norm{x}^{\gamma\ps{\alpha_0,e_i}}}F_i(x)M^{\gamma e_i}(d^2x)\text{ and } F_i(x)\coloneqq\frac{\norm x_+^{\gamma\ps{\alpha+\alpha_0+\alpha_1+\alpha_\infty,e_i}}}{\norm{x-1}^{\gamma\ps{\alpha_1,e_i}}}
		\end{split}
	\end{equation}
	so that 
	\[
	E_{\bm\alpha}(\bm c)=\norm{z}^{\chi\ps{\alpha_0,h_1}}\norm{z-1}^{\chi\ps{h_1,\alpha_1}}\left(\Phi_{\alpha_0,\alpha}(z;\bm c)-\mathfrak R_{\alpha_0,\alpha}(z;\bm c)\right),
	\] where the remainder term is given using Equation~\eqref{eq:defRrm} by $$\mathfrak R_{\alpha_0,\alpha}(z;\bm c)\coloneqq\norm{z}^{-\chi\ps{\alpha_0,h_1}}\norm{z-1}^{-\chi\ps{h_1,\alpha_1}}e^{-2\ps{Q,\bm c}}\expect{\bm{\mathrm{R}}_{\bm\alpha}(\bm c)}.$$ 
	Likewise we put $E_{\bm\alpha}^i(c_1)$ under the form $$E_{\bm\alpha}^i(c_1)=  \norm{z}^{\chi\ps{\alpha_0,h_1}}\norm{z-1}^{\chi\ps{h_1,\alpha_1}}\left( \Phi^i_{\alpha_0,\alpha}(z;c_1)-\mathfrak R_{\alpha_0,\alpha}^i(z;c_1)\right)\quad\text{where}$$ 
	\begin{align*}
		&\norm{z}^{\chi\ps{\alpha_0,h_1}}\norm{z-1}^{\chi\ps{h_1,\alpha_1}}\Phi^1_{\alpha_0,\alpha}(z;\ps{\bm c,e_1})\coloneqq\\
		&\sum_{w:\{0,1,\infty\}\to\{Id,s_2\}}\frac{e^{-\ps{\bm s(w),\omega_2}\ps{\bm c,e_2}}}{\ps{\bm s(w),\omega_2}}e^{-2\ps{Q,\bm c}}\expect{V_{-\chi h_1}(z)\prod_{x\in\{0,1,\infty\}}R_{w(x)}(\alpha_x)V_{\hat w(x)\alpha_x}(x)e^{-\mu_1e^{\gamma\ps{\bm c,e_1}}M^{\gamma e_1}(\C)}}.
	\end{align*}
	Finally we set $\Phi^{1,2}_{\alpha_0,\alpha}(z)\coloneqq \norm{z}^{-\chi\ps{\alpha_0,h_1}}\norm{z-1}^{-\chi\ps{h_1,\alpha_1}}E_{\bm\alpha}^{1,2}$. Note that reflection terms corresponding to the fully degenerate field $V_{-\chi h_1}$ do not show up in the expression of $\bm{\mathrm{R}_\alpha}(\bm c)$ since $\ps{-\chi h_1-Q,e_i}<-\gamma$ for $i=1,2$. 
	With these notations at hand we can write down the four-point correlation functions considered in Theorems~\ref{thm:BPZ} and~\ref{thm:OPE} via
	\begin{align*}
		\mathcal{H}(z)=\int_{\R^2}\left(\Phi_{\alpha_0,\alpha}(z;\bm c)-\mathfrak R_{\alpha_0,\alpha}(z;\bm c)\right)d\bm c+\sum_{i=1}^2\int_\R \left(  \Phi^i_{\alpha_0,\alpha}(z;c_i)-\mathfrak R_{\alpha_0,\alpha}^i(z;c_i)\right)dc_i+\Phi^{1,2}_{\alpha_0,\alpha}(z).
	\end{align*}
	
	Now a consequence of Theorem~\ref{thm:BPZ} is that these four-point correlation functions have an explicit expansion as $z\to0$. For instance in the case where $2(1-B_2)<1$ we can write that $\mathcal{H}(z)$ is given around $z=0$ by
	\begin{align*}
		C_\gamma(\alpha_0-\chi h_1,\alpha_1,\alpha_\infty)\Big(1+A_\gamma^{(1)}(-\chi h_1,\alpha_0,\alpha_1,\alpha_\infty)&\norm{z}^{\chi\ps{Q-\alpha_0,e_1}}\\
		+&A_\gamma^{(2)}(-\chi h_1,\alpha_0,\alpha_1,\alpha_\infty)\norm{z}^{\chi\ps{Q-\alpha_0,\rho}}\Big)
	\end{align*}
	up to lower order terms. The three-point correlation functions that appear in this expansion are defined analogously to $\mathcal{H}(z)$. Namely we will write that 
	\begin{align*}
		C_\gamma(\alpha_0,\alpha_1,\alpha_\infty)=\int_{\R^2}\left(\Phi_{\alpha_0}(\bm c)-\mathfrak R_{\alpha_0}(\bm c)\right)d\bm c+\sum_{i=1}^2\int_\R \left(  \Phi^1_{\alpha_0}(c_1)-\mathfrak R_{\alpha_0}^i(c_1)\right)dc_i+\Phi^{1,2}_{\alpha_0}(\bm c)
	\end{align*}
	where the quantities that appear in this expression are defined like above.

	In order to prove Theorem~\ref{thm:OPE} we will study the asymptotic of terms of the form $\Phi^\cdot_{\alpha_0,\alpha}(z;\bm c)$ and $\mathfrak{R}^\cdot_{\alpha_0,\alpha}(z;\bm c)$ around $z=0$ and prove that we end up with an expansion of the correlation functions similar to that from Theorem~\ref{thm:BPZ} but with the coefficients given by the ones in Equation~\eqref{eq:OPE}. Theorem~\ref{thm:OPE} then follows by identifying these coefficients with the ones from Equation~\eqref{eq:BPZ}.
	
	To do so we first show that Equation~\eqref{eq:OPE} holds true under certain different set of assumptions on the weights $\bm\alpha$ depending on the values of $\chi$, $\ps{\alpha_0,e_1}$ and $\ps{\alpha_0,e_2}$, and then use the results of the previous Section~\ref{sec:BPZ} to infer that the extension defined by Equation~\eqref{eq:ext_3ptsbis} is analytic. We will then recover the whole range of values for $\bm\alpha$ prescribed by Theorem~\ref{thm:OPE} using analycity of the correlation functions (Theorem~\ref{thm:analycity}). Throughout the rest of this Section we assume (up to shifting $\bm c$ by $\sum_{i}\frac{\ln\mu_i}{\gamma}\omega_i$) that $\mu_i=1$ for $i=1,2$.

	\subsection{The case where $\chi=\gamma$, $\ps{\alpha_0,e_1}<\frac2\gamma$ and $\ps{\alpha_0,e_2}>\frac2\gamma$}\label{subsec:OPE_1}
	To start with we consider in this subsection the case where the weight $\alpha_0$ is such that where $\ps{\alpha_0,e_1}<\frac2\gamma$ and $\ps{\alpha_0,e_2}>\frac2\gamma$, with $\chi=\gamma$.  
	Our goal is then to prove that under these assumptions the following holds true:
	\begin{lemma}\label{lemma:OPE1}
		Assume that $\ps{\alpha_0,e_1}<\frac2\gamma$ and $\frac2\gamma<\ps{\alpha_0,e_2}<q$. Then  
		\begin{equation}\label{eq:OPE11}
			\begin{split}
				\mathcal{H}(z)=C_\gamma(&\alpha_0-\gamma h_1,\alpha_1,\alpha_\infty)\norm{\mathcal{H}_0(z)}^2+B^{(1)}_\gamma(\alpha_0,\gamma)C_\gamma(\alpha_0-\gamma h_2,\alpha_1,\alpha_\infty)\norm{\mathcal{H}_1(z)}^2\\
				&+B^{(2)}_\gamma(\alpha_0,\gamma)R_{s_2}(\alpha_0-\gamma h_3)C_\gamma(\hat s_2(\alpha_0-\gamma h_3),\alpha_1,\alpha_\infty)\norm{\mathcal{H}_2(z)}^2
			\end{split}
		\end{equation}
		as soon as $(\alpha_0,-\gamma h_1,\alpha_1,\alpha_\infty)\in\mathcal{A}_4$ with $\ps{\bm s,\omega_2}>0$.
	\end{lemma}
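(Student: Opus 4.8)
The plan is to determine $\mathcal H(z)$ by combining the functional form supplied by Theorem~\ref{thm:BPZ} with a direct probabilistic computation of its leading coefficients as $z\to 0$. By Theorem~\ref{thm:BPZ} (through Proposition~\ref{prop:hyper_sol}) there exist real constants $\lambda_0,\lambda_1,\lambda_2$ with $\mathcal H(z)=\lambda_0\norm{\mathcal H_0(z)}^2+\lambda_1\norm{\mathcal H_1(z)}^2+\lambda_2\norm{\mathcal H_2(z)}^2$. Since $\norm{\mathcal H_i(z)}^2$ has leading behaviour $\norm{z}^{2(1-B_i)}$ at the origin, with $2(1-B_1)=\gamma\ps{Q-\alpha_0,e_1}$ and $2(1-B_2)=\gamma\ps{Q-\alpha_0,\rho}$ pairwise distinct under the genericity assumptions, each $\lambda_i$ is read off as the coefficient of $\norm{z}^{2(1-B_i)}$ in the expansion of $\mathcal H(z)$ as $z\to 0$. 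The whole task therefore reduces to computing this $z\to 0$ asymptotic from the probabilistic representation $\mathcal H(z)=\int_{\R^2}\big(\Phi_{\alpha_0,\alpha}(z;\bm c)-\mathfrak R_{\alpha_0,\alpha}(z;\bm c)\big)d\bm c+\cdots$.

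For the leading coefficient $\lambda_0$ I would split each integral $I^i_{\alpha_0,\alpha}(z)$ into its restriction to a small ball $B_r(0)$ and its complement. Away from the origin, as $z\to 0$ the two singularities at $0$ and $z$ merge into a single one of strength $\gamma\ps{\alpha_0-\gamma h_1,e_i}$; the hypotheses $\ps{\alpha_0,e_1}<\frac2\gamma$ and $\ps{\alpha_0,e_2}<q$ are exactly what guarantees $\ps{\alpha_0-\gamma h_1-Q,e_i}<0$ for $i=1,2$, so that the limiting GMC is integrable. Dominated convergence together with the fusion estimate~\eqref{eq:fusion} then gives $\Phi_{\alpha_0,\alpha}(z;\bm c)\to\Phi_{\alpha_0-\gamma h_1}(\bm c)$, with the $B_r(0)$ contribution of lower order, and integrating over $\bm c$ yields $\lambda_0=C_\gamma(\alpha_0-\gamma h_1,\alpha_1,\alpha_\infty)$.

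The corrections $\lambda_1,\lambda_2$ are where the real work lies, the dominant contribution now coming from the ball $B_r(0)$. I would analyse it through the radial-angular decomposition~\eqref{equ:radial_angular} centred at $0$ together with the planar Williams path decomposition of Subsection~\ref{subsec:brown_cond_neg}, applied to the drift $\nu=\alpha_0-\gamma h_1-Q\in\mathcal C_-$. The tail of the maximum $\M$ of this diffusion, whose density is governed by $\partial_{12}h(\M)=\sum_{s\in W_{1,2}}\lambda_s e^{\ps{s\nu-\nu,\M}}$, generates precisely the two subleading powers $\norm{z}^{\gamma\ps{Q-\alpha_0,e_1}}$ and $\norm{z}^{\gamma\ps{Q-\alpha_0,\rho}}$, matching $2(1-B_1)$ and $2(1-B_2)$. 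Tracking the normalisations of the exponential maxima and of the conditioned diffusion exactly as in~\cite[Proposition 4.10]{Toda_correl1}, and carrying out the attendant Girsanov shifts, assembles the prefactors into the constants $B_\gamma^{(i)}(\alpha_0,\gamma)$ and leaves behind the residual three-point functions $C_\gamma(\alpha_0-\gamma h_{i+1},\alpha_1,\alpha_\infty)$.

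The main obstacle, and the step demanding the most care, is the $i=2$ channel. The shifted weight $\alpha_0-\gamma h_3=\alpha_0+\gamma\omega_2$ violates the Seiberg bound, since $\ps{\alpha_0+\gamma\omega_2-Q,e_2}=\ps{\alpha_0,e_2}-\frac2\gamma>0$ under the standing hypothesis $\ps{\alpha_0,e_2}>\frac2\gamma$, so the corresponding three-point function is only meaningful through the reflection of Theorem~\ref{thm:refl}. The path decomposition produces this reflected object automatically: the branch in which the relevant coordinate of the diffusion is conditioned to stay negative contributes exactly the factor $R_{s_2}(\alpha_0-\gamma h_3)$ and replaces the weight by $\hat s_2(\alpha_0-\gamma h_3)$, giving $\lambda_2=B_\gamma^{(2)}(\alpha_0,\gamma)R_{s_2}(\alpha_0-\gamma h_3)C_\gamma(\hat s_2(\alpha_0-\gamma h_3),\alpha_1,\alpha_\infty)$. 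Throughout, the delicate points are to obtain all these asymptotics with error terms uniform in $\bm c$, so that they survive the integration over $\bm c\in\R^2$, and to keep exact track of the multiplicative constants; both rest on the generalized path decomposition and the moment bounds of~\cite{Toda_correl1}.
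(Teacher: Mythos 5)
Your global strategy (use Theorem~\ref{thm:BPZ} to know that $\mathcal H=\sum_i\lambda_i\norm{\mathcal H_i}^2$, then read each $\lambda_i$ off the $z\to0$ expansion of the probabilistic representation) is indeed the paper's strategy, and your treatment of $\lambda_0$ is correct. The gap is in the mechanism you propose for $\lambda_1$ and $\lambda_2$: you want both subleading powers to come from the tail of the maximum $\M$ in the planar Williams decomposition with drift $\nu=\alpha_0-\gamma h_1-Q$. That mechanism is the wrong one in the regime of this lemma. The reflection (maximum-tail) asymptotic, which replaces $\int(-\nu_1)e^{\nu_1\M}\bigl(e^{-e^{\gamma(\lambda+\M)}J}-1\bigr)d\M$ by $Ce^{-\nu_1\lambda}$ as $\lambda\to-\infty$ (with $\nu_1=\ps{\alpha_0-Q,e_1}$), requires the limiting integral to converge at $\M\to-\infty$, where the integrand behaves like $e^{(\nu_1+\gamma)\M}J$; this forces $\ps{\alpha_0,e_1}>\frac2\gamma$, the exact opposite of the standing hypothesis $\ps{\alpha_0,e_1}<\frac2\gamma$. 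That complementary regime is Lemma~\ref{lemma:OPE2}, and there the channel-1 coefficient does carry a reflection factor $R_{s_1}$ --- whereas in the present statement it is $B^{(1)}_\gamma(\alpha_0,\gamma)C_\gamma(\alpha_0-\gamma h_2,\alpha_1,\alpha_\infty)$ with no reflection, because $\alpha_0-\gamma h_2$ satisfies the $e_1$-Seiberg bound. The correct mechanism here is entirely different: Taylor-expand the exponential of the perturbation $\delta I(z)=\int_\C\frac{\norm{x-z}^{\gamma^2}-\norm x^{\gamma^2}}{\norm x^{\gamma\ps{\alpha_0,e_1}}}F_1(x)M^{\gamma e_1}(d^2x)$, apply Girsanov, and change variables $x\leftrightarrow zx$, which produces $\norm z^{\gamma\ps{Q-\alpha_0,e_1}}$ times the convergent Coulomb-gas integral $B^{(1)}_\gamma(\alpha_0,\gamma)=\int_\C\frac{\norm x^{\gamma^2}-\norm{x-1}^{\gamma^2}}{\norm x^{\gamma\ps{\alpha_0,e_1}}}d^2x$ of Lemma~\ref{lemma:evaluate_B}; that integral converges precisely when $\ps{\alpha_0,e_1}<\frac2\gamma$, so the screening and reflection mechanisms live in complementary regimes and cannot be interchanged.

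The planar decomposition is also not the right tool for $\lambda_2$: the density $\partial_{12}h(\M)=\sum_{s\in W_{1,2}}\lambda_s e^{\ps{s\nu-\nu,\M}}$ only carries exponents indexed by $W_{1,2}=\{s_1s_2,s_2s_1,s_1s_2s_1\}$, so its tail generates double-reflection terms (this is what actually happens in the $\chi=\frac2\gamma$ case, Lemma~\ref{lemma:OPE3}, where the coefficient involves $R_{s_1s_2}$), whereas here one needs the single reflection $R_{s_2}(\alpha_0-\gamma h_3)$. The paper obtains it by combining the $e_1$-screening step above with a \emph{one-dimensional} Williams decomposition in the $e_2$-direction alone, applied to the near-origin part $J^2_{\alpha_0-\gamma h_1,B_r}$ of the second GMC integral (legitimate since only the $e_2$-Seiberg bound is violated), followed by the identity $B^{(1)}_\gamma(\hat s_2\alpha_0,\gamma)R_{s_2}(\alpha_0)=B^{(2)}_\gamma(\alpha_0,\gamma)R_{s_2}(\alpha_0-\gamma h_3)$. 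Finally, your sketch leaves out two ingredients the paper cannot do without: the expansion and mutual cancellation of the counterterms $\mathfrak R_{\alpha_0,\alpha}$, $\mathfrak R^i_{\alpha_0,\alpha}$, $\mathfrak R^{1,2}_{\alpha_0,\alpha}$ (without which the pointwise-in-$\bm c$ asymptotics cannot be integrated over $\bm c$), and the fact that the expansion is first established only for $\alpha_0$ near the corner $\ps{\alpha_0,e_1}\approx\frac2\gamma$, $\ps{\alpha_0,e_2}\approx q$ (with separate arguments for $\gamma<1$ and $\gamma>1$, the latter requiring extra $Cz+\bar C\bar z$ terms) and then extended to the stated range by analyticity in the weights.
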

	\begin{proof}
		In order to prove this statement we will first study the asymptotics of the expression $\mathcal{H}(z)$ when $z\to0$ under some additional assumptions on the weight $\alpha_0$. Namely we wish to prove that there exists a positive $\eps$ such that if $\ps{\alpha_0,e_1}>\frac2\gamma-\eps$ and $\ps{\alpha_0,e_2}>q-\eps$ then as $z\to0$ 
		\begin{equation*}
			\begin{split}
				\mathcal{H}(z)&=C_\gamma(\alpha_0-\gamma h_1,\alpha_1,\alpha_\infty)+\norm{z}^{\gamma\ps{Q-\alpha_0,e_1}}B^{(1)}_\gamma(\alpha_0,\gamma)C_\gamma(\alpha_0-\gamma h_2,\alpha_1,\alpha_\infty)\\
				&+\norm{z}^{\gamma\ps{Q-\alpha_0,\rho}}B^{(2)}_\gamma(\alpha_0,\gamma)R_{s_2}(\alpha_0-\gamma h_3)C_\gamma(\hat s_2(\alpha_0-\gamma h_3),\alpha_1,\alpha_\infty)+l.o.t.
			\end{split}
		\end{equation*}
		if $\gamma<1$, while if  $1<\gamma<\sqrt 2$:
		\begin{equation*}
			\begin{split}
				\mathcal{H}(z)&=C_\gamma(\alpha_0-\gamma h_1,\alpha_1,\alpha_\infty)+zC+\bar z\bar C+\norm{z}^{\gamma\ps{Q-\alpha_0,e_1}}B^{(1)}_\gamma(\alpha_0,\gamma)C_\gamma(\alpha_0-\gamma h_2,\alpha_1,\alpha_\infty)\\
				&+\norm{z}^{\gamma\ps{Q-\alpha_0,\rho}}B^{(2)}_\gamma(\alpha_0,\gamma)R_{s_2}(\alpha_0-\gamma h_3)C_\gamma(\hat s_2(\alpha_0-\gamma h_3),\alpha_1,\alpha_\infty)+l.o.t.
			\end{split}
		\end{equation*}
		where $C$ is some complex constant. The linear term $Cz+\bar C\bar z$ comes from expanding $\mathcal{H}_0(z)$ around $z=0$: in the case where $\gamma<1$ this term is of lower order compared to $\norm{z}^{\gamma\ps{Q-\alpha_0,e_1}}$ and $\norm{z}^{\gamma\ps{Q-\alpha_0,\rho}}$. However this is no longer true when $1<\gamma<\sqrt 2$ since in that case we have $1<\gamma\ps{Q-\alpha_0,e_1}<\gamma\ps{Q-\alpha_0,\rho}<2$ and therefore this term must be taken into account in the expansion.
		
		\subsubsection{The first expectation term}
		So as to prove such an expansion for $\mathcal{H}$ we start by providing an expansion of the term $\Phi_{\alpha_0,-\gamma h_1}(z;\bm c)$ that enters the expression of $\mathcal{H}$ and defined via Equation~\eqref{eq:last_label?}. For this purpose around $z=0$ we can write $I_{\alpha,\alpha_0}^1(z)$ under the form
		\begin{equation}\label{eq:deltaI}
			I_{\alpha_0,-\gamma h_1}^1(z)= I_{\alpha_0-\gamma h_1}^1 +\delta I(z),\quad  \delta I(z)\coloneqq \int_\C \frac{\norm{x-z}^{\gamma^2}-\norm{x}^{\gamma^2}}{\norm{x}^{\gamma\ps{\alpha_0,e_1}}}F_1(x)M^{\gamma e_1}(d^2x).
		\end{equation}
		Note that the integrals $I_{\alpha_0-\gamma h_1}^i$ are well-defined since $\ps{\alpha_0-\gamma h_1-Q,e_i}\leq\ps{\alpha_0-Q,e_i}<0$; also note that since $\ps{h_1,e_2}=0$ we have $I_{\alpha_0,-\gamma h_1}^2(z)=I_{\alpha_0-\gamma h_1}^2$. This allows to write 
		\begin{align*}
			&\Phi_{\alpha_0,-\gamma h_1}(z;\bm c)= e^{\ps{\bm s,\bm c}}\E\left[e^{-e^{\gamma\ps{\bm c,e_1}}\delta I(z)}\prod_{i=1}^2e^{-e^{\gamma\ps{\bm c,e_i}}I_{\alpha_0-\gamma h_1}^i}\right] \\
			&=\Phi_{\alpha_0-\gamma h_1}(\bm c)+e^{\ps{\bm s,\bm c}}\E\left[\left(\exp\left(-e^{\gamma\ps{\bm c,e_{1}}}\delta I(z)\right)-1\right)\prod_{i=1}^2e^{-e^{\gamma\ps{\bm c,e_i}}I_{\alpha_0-\gamma h_1}^i}\right].
		\end{align*}
		We focus on the expectation term. To this end, we rewrite
		\begin{align*}
			&\exp\left(-e^{\gamma\ps{\bm c,e_{1}}}\delta I(z)\right)-1=-e^{\gamma\ps{\bm c,e_{1}}}\delta I(z)\int_0^1 \exp\left(-te^{\gamma\ps{\bm c,e_{1}}}\delta I(z)\right)dt\\
			&=-e^{\gamma\ps{\bm c,e_{1}}}\int_\C \frac{\norm{x-z}^{\gamma^2}-\norm{x}^{\gamma^2}}{\norm{x}^{\gamma\ps{\alpha_0,e_1}}}F_1(x)M^{\gamma e_1}(d^2x)\int_0^1 \exp\left(-te^{\gamma\ps{\bm c,e_{1}}}\delta I(z)\right)dt.
		\end{align*}
		Moreover by the Girsanov (Cameron-Martin) theorem~\ref{thm:girsanov} we can write that
		\begin{align*}
			&\E\left[M^{\gamma e_i}(d^2x)\exp\left(-te^{\gamma\ps{\bm c,e_{1}}}\delta I(z)\right)\prod_{i=1}^2e^{-e^{\gamma\ps{\bm c,e_i}}I_{\alpha_0-\gamma h_1}^i}\right]\\
			=&\E\left[\exp\left(-te^{\gamma\ps{\bm c,e_{1}}}\delta J(z,x)\right)\prod_{i=1}^2e^{-e^{\gamma\ps{\bm c,e_i}}J_{\alpha_0-\gamma h_1}^i(x)}\right]d^2x,\quad\text{with}\\
			\delta& J(z,x)=\int_\C \frac{\norm{y-z}^{\gamma^2}-\norm{y}^{\gamma^2}}{\norm{y}^{\gamma\ps{\alpha_0,e_1}}\norm{y-x}^{2\gamma^2}}\tilde F_1(y)M^{\gamma e_1}(d^2y),\quad \tilde F_i(y)\coloneqq F_i(y)\norm{y}_+^{\gamma^2\ps{e_1,e_i}},\quad\text{and}\\
			&J_{\alpha_0-\gamma h_1}^i(x)=\int_\C \frac{\norm{y-x}^{-\gamma^2\ps{e_1,e_i}}}{\norm{y}^{\gamma\ps{\alpha_0-\gamma h_1,e_i}}}\tilde F_i(y)M^{\gamma e_i}(d^2y).
		\end{align*}
		Therefore $\Phi_{\alpha_0,-\gamma h_1}(z;\bm c)-\Phi_{\alpha_0-\gamma h_1}(\bm c)$ is equal to
		\begin{align*}
			\delta\Phi(z)\coloneqq-e^{\ps{\bm s+\gamma e_1,\bm c}}\int_\C \frac{\norm{x-z}^{\gamma^2}-\norm{x}^{\gamma^2}}{\norm{x}^{\gamma\ps{\alpha_0,e_1}}}&F_1(x)\times\\
			&\E\left[\int_0^1e^{-te^{\gamma\ps{\bm c,e_{1}}}\delta J(z,x)}dt\prod_{i=1}^2e^{-e^{\gamma\ps{\bm c,e_i}}J_{\alpha_0-\gamma h_1}^i(x)}\right]d^2x.
		\end{align*}
		
		\textbf{The case $\gamma<1$.}
		If we assume that $\gamma<1$, then under the additional assumption that $\gamma+\frac1\gamma<\ps{\alpha_0,e_1}<\frac2\gamma$ we can make the change of variable $x\leftrightarrow zx$ in the integral to get
		\begin{align*}
			\delta\Phi(z)=-e^{\ps{\bm s+\gamma e_1,\bm c}}\norm{z}^{\gamma\ps{Q-\alpha_0,e_1}}&\int_\C \frac{\norm{x-1}^{\gamma^2}-\norm{x}^{\gamma^2}}{\norm{x}^{\gamma\ps{\alpha_0,e_1}}}\times\\
			&\E\left[F_1(zx)\int_0^1e^{-te^{\gamma\ps{\bm c,e_{1}}}\delta J(z,zx)}dt\prod_{i=1}^2e^{-e^{\gamma\ps{\bm c,e_i}}J_{\alpha_0-\gamma h_1}^i(zx)}\right]d^2x.
		\end{align*}
		As $z\to0$, $\delta J(z,zx)\to0$ while $J_{\alpha_0-\gamma h_1}^i(zx)\to I_{\alpha_0-\gamma h_2}^i$ since $\alpha_0-\gamma h_1+\gamma e_1=\alpha_0-\gamma h_2$. Therefore in the $z\to0$ limit
		\begin{align*}
			\delta\Phi(z)=\norm{z}^{\gamma\ps{Q-\alpha_0,e_1}}\int_\C \frac{\norm{x}^{\gamma^2}-\norm{x-1}^{\gamma^2}}{\norm{x}^{\gamma\ps{\alpha_0,e_1}}}d^2x \times e^{\ps{\bm s+\gamma e_1,\bm c}}\E\left[\prod_{i=1}^2e^{-e^{\gamma\ps{\bm c,e_i}}I_{\alpha_0-\gamma h_2}^i}\right]+l.o.t.
		\end{align*}
		The last term is nothing but $\Phi_{\alpha_0-\gamma h_2}$: we conclude that when $z\to 0$ $\delta\Phi(z)$ is asymptotically equivalent to 
		\begin{align}\label{eq:B1}
			B^{(1)}_\gamma(\alpha_0,\gamma)\norm{z}^{\gamma\ps{Q-\alpha_0,e_1}}\Phi_{\alpha_0-\gamma h_2}(\bm c),\quad B^{(1)}_\gamma(\alpha_0,\gamma)=\int_\C \frac{\norm{x}^{\gamma^2}-\norm{x-1}^{\gamma^2}}{\norm{x}^{\gamma\ps{\alpha_0,e_1}}}d^2x.
		\end{align}
		The factor $B^{(1)}_\gamma(\alpha_0,\gamma)$ is evaluated in Lemma~\ref{lemma:evaluate_B} below, whose assumptions are satisfied as soon as $\gamma+\frac1\gamma<\ps{\alpha_0,e_1}<q$. It is found there to be equal to the factor from Theorem~\ref{thm:OPE}, so that we have obtained the first two terms in the expansion. However in order to get the third term in the expansion we have to be more precise. 
		
		For this purpose, let us decompose $J^2_{\alpha_0-\gamma h_1}(zx)$ as
		\begin{align*}
			J_{\alpha_0-\gamma h_1,B_r}^2(zx)+J_{\alpha_0-\gamma h_1,\C_r}^2(zx),\quad\text{where}\quad J_{\alpha_0-\gamma h_1,B_r}^2(zx)\coloneqq\int_{B_r} \frac{\norm{y-zx}^{\gamma^2}}{\norm{y}^{\gamma\ps{\alpha_0,e_2}}}\tilde F_2(y)M^{\gamma e_2}(d^2y)
		\end{align*}
		and $B_r=B_{e^{-r}}(0)$ with $r=-(1+\eps)\ln\norm{zx}$ for some positive $\eps$ small enough. Then
		\begin{align*}
			\delta\Phi(z)=&B^{(1)}_\gamma(\alpha_0,\gamma)\norm{z}^{\gamma\ps{Q-\alpha_0,e_1}}\Phi_{\alpha_0-\gamma h_2}(\bm c)+\delta\Phi^1(z)+\mathfrak{R}(z),\quad\text{where}\\
			\delta\Phi^1(z)\coloneqq&-e^{\ps{\bm s+\gamma e_1,\bm c}}\norm{z}^{\gamma\ps{Q-\alpha_0,e_1}}\int_\C \frac{\norm{x-1}^{\gamma^2}-\norm{x}^{\gamma^2}}{\norm{x}^{\gamma\ps{\alpha_0,e_1}}}\times\\
			&\E\left[\left(e^{-e^{\gamma\ps{\bm c,e_2}}J_{\alpha_0-\gamma h_1,B_r}^2(zx)}-1\right)e^{-\sum_{i=1}^2e^{\gamma\ps{\bm c,e_i}}I^i_{\alpha_0-\gamma h_2}}\right]d^2x \quad\text{and}\\
			\mathfrak{R}(z)\coloneqq&-e^{\ps{\bm s+\gamma e_1,\bm c}}\int_\C \frac{\norm{x-z}^{\gamma^2}-\norm{x}^{\gamma^2}}{\norm{x}^{\gamma\ps{\alpha_0,e_1}}}\E\Big[e^{-e^{\gamma\ps{\bm c,e_2}}J_{\alpha_0-\gamma h_1,B_r}^2(x)}e^{-\sum_{i=1}^2e^{\gamma\ps{\bm c,e_i}}I^i_{\alpha_0-\gamma h_2}}\\
			&\left(F_1(x)\int_0^1e^{-e^{\gamma\ps{\bm c,e_{1}}}\left(t\delta J(z,x)+\delta J_{\alpha_0-\gamma h_1}^i(x)\right)}dte^{-e^{\gamma\ps{\bm c,e_2}} J_{\alpha_0-\gamma h_1,\C_r}^2(x)}-1\right)\Big]d^2x.
		\end{align*}
		where we have denoted 
		\begin{align*}
			\delta J_{\alpha_0-\gamma h_1}^i(zx)=\int_\C \frac{\norm{y-zx}^{-\gamma^2\ps{e_1,e_i}}-\norm{y}^{-\gamma^2\ps{e_1,e_i}}}{\norm{y}^{\gamma\ps{\alpha_0-\gamma h_1,e_i}}}\tilde F_i(y)M^{\gamma e_i}(d^2y),\quad \tilde F_i(y)=F_i(y)\norm{y}_+^{\gamma^2\ps{e_1,e_i}}.
		\end{align*}
		
		The second remainder term $\mathfrak{R}(z)$ is a lower order term in the asymptotic studied. To be more specific it is a $o\left(\norm{z}^{\gamma\ps{Q-\alpha_0,\rho}}\right)$ as soon as $\ps{\alpha_0,e_2}$ is close enough to $q$. To see why, we first show that terms of the form
		\begin{align*}
			&\int_\C \frac{\norm{x-z}^{\gamma^2}-\norm{x}^{\gamma^2}}{\norm{x}^{\gamma\ps{\alpha_0,e_1}}}\E\left[\delta J_{\alpha_0-\gamma h_1}^1(x)\prod_{i=1}^2e^{-e^{\gamma\ps{\bm c,e_i}}I_{\alpha_0-\gamma h_2}^i}\right]d^2x\quad\text{and}\\
			&\int_\C \frac{\norm{x-z}^{\gamma^2}-\norm{x}^{\gamma^2}}{\norm{x}^{\gamma\ps{\alpha_0,e_1}}}\E\left[\delta J(z,x)\prod_{i=1}^2e^{-e^{\gamma\ps{\bm c,e_i}}I_{\alpha_0-\gamma h_2}^i}\right]d^2x
		\end{align*}
		are $o\left(\norm{z}^{\gamma\ps{Q-\alpha_0,\rho}}\right)$. To this end let us write that
		\begin{align*}
			&\E\left[\delta J_{\alpha_0-\gamma h_1}^1(x)\prod_{i=1}^2e^{-e^{\gamma\ps{\bm c,e_i}}I_{\alpha_0-\gamma h_2}^i}\right]=\int_\C \frac{\norm{y-x}^{-2\gamma^2}-\norm{y}^{-2\gamma^2}}{\norm{y}^{\gamma\ps{\alpha_0-\gamma h_1,e_1}}}\E\left[\prod_{i=1}^2e^{-e^{\gamma\ps{\bm c,e_i}}\tilde I_{\alpha_0-\gamma h_2}^i(y)}\right]\tilde F_1(y)d^2y\\
			&\text{where }\tilde I_{\alpha_0-\gamma h_2}^i(y)=\int_\C \frac{\norm{w-y}^{-\gamma^2\ps{e_1,e_i}}}{\norm{w}^{\gamma\ps{\alpha_0-\gamma h_2,e_i}}}\tilde F_i(w)\norm{w}_+^{\gamma^2\ps{e_1,e_i}}M^{\gamma e_i}(d^2w).
		\end{align*}
		Here we need to distinguish between two cases according to the value of $\ps{\alpha_0,e_1}$. In the case where $\xi\coloneqq \ps{\alpha_0-\gamma h_2+\gamma e_1-Q,e_1}$ is negative, then the integrals $\tilde I_{\alpha_0-\gamma h_2}^i(0)$ do make sense. However if $\xi\geq 0$ then the integral $\tilde I_{\alpha_0-\gamma h_2}^1(0)$ is not well-defined. Still we can proceed along the same lines as in the proof of the fusion asymptotics in~\cite[Lemma 3.2]{Toda_OPEWV} to see that in that case the leading term in the $y\to0$ asymptotic will be given by
		\[
		\E\left[\prod_{i=1}^2e^{-e^{\gamma\ps{\bm c,e_i}} J_{\alpha_0-\gamma h_1}^i(y)}\right]=o\left(\norm{y}^{\frac{\xi^2} 4-\eta}\right)
		\]
		for any positive $\eta$. As a consequence we see that as $z\to0$, for some constant $C$:
		\begin{align*}
			&\E\left[\delta J_{\alpha_0-\gamma h_1}^1(zx)\prod_{i=1}^2e^{-e^{\gamma\ps{\bm c,e_i}}I_{\alpha_0-\gamma h_2}^i}\right]\leq C\int_\C \frac{\norm{y-zx}^{-2\gamma^2}-\norm{y}^{-2\gamma^2}}{\norm{y}^{\gamma\ps{\alpha_0-\gamma h_1,e_1}}}\norm{y}^{\frac{\xi^2}{4}-\eta}\tilde F_1(y)d^2y.
		\end{align*}
		Making the change of variable $y\leftrightarrow zx y$ in the same fashion as before shows that
		\begin{align*}
			\E\left[\delta J_{\alpha_0-\gamma h_1}^1(zx)\prod_{i=1}^2e^{-e^{\gamma\ps{\bm c,e_i}}I_{\alpha_0-\gamma h_2}^i}\right]&=o\left(\norm{zx}^{2-2\gamma^2-\gamma\ps{\alpha_0-\gamma h_1,e_1}+\frac{\xi^2}{4}\mathds 1_{\xi\geq0}-\eta}\right)\\
			&= o\left(\norm{zx}^{2-\gamma^2-\gamma\ps{\alpha_0,e_1}+\frac{\xi^2}{4}\mathds 1_{\xi\geq0}-\eta}\right).
		\end{align*}
		Therefore we end up with the bound
		\begin{align*}
			&\int_\C \frac{\norm{x-z}^{\gamma^2}-\norm{x}^{\gamma^2}}{\norm{x}^{\gamma\ps{\alpha_0,e_1}}}\E\left[\left(e^{-e^{\gamma\ps{\bm c,e_1}}\delta J_{\alpha_0-\gamma h_1}^1(x)}-1\right)\prod_{i=1}^2e^{-e^{\gamma\ps{\bm c,e_i}}I_{\alpha_0-\gamma h_2}^i}\right]d^2x\\
			&\leq C \norm{z}^{\gamma\ps{Q-\alpha_0,e_1}+\frac{\ps{\alpha_0-\gamma h_2-Q,e_1}^2}4}.
		\end{align*}
		This in particular implies that if we take $\ps{\alpha_0,e_2}$ arbitrarily close to $q$ in such a way that $\gamma\ps{Q-\alpha_0,e_1}+\frac{\ps{\alpha_0-\gamma h_2-Q,e_1}^2}4>\gamma\ps{Q-\alpha_0,\rho}$ (this is possible since $\ps{Q-\alpha_0,\rho}=\ps{Q-\alpha_0,e_1}+\ps{Q-\alpha_0,e_2}$) then this term becomes a $o\left(\norm{z}^{\gamma\ps{Q-\alpha_0,\rho}}\right)$, hence a lower order term in the asymptotic studied. The same applies for
		\[
		\int_\C \frac{\norm{x-z}^{\gamma^2}-\norm{x}^{\gamma^2}}{\norm{x}^{\gamma\ps{\alpha_0,e_1}}}\E\left[\left(\int_0^1e^{-te^{\gamma\ps{\bm c,e_{1}}}\delta J(z,x)}dt-1\right)\prod_{i=1}^2e^{-e^{\gamma\ps{\bm c,e_i}}I_{\alpha_0-\gamma h_2}^i}\right]d^2x,
		\]
		which is also seen to be a $o\left(\norm{z}^{\gamma\ps{Q-\alpha_0,\rho}}\right)$. 
		Likewise, the fusion asymptotics from Equation~\eqref{eq:fusion} imply that
		\begin{align*}
			&\E\Big[V_{\gamma e_2}(x)e^{-\sum_{i=1}^2e^{\gamma\ps{\bm c,e_i}}I^i_{\alpha_0-\gamma h_2}}\Big]=o\left(\norm{x}^{\frac{\left(\ps{\alpha_0,e_2}-\frac2\gamma\right)^2}4-\eta}\right)\quad\text{so that}\\
			&\E\Big[\delta J_{\alpha_0-\gamma h_1,\C_r}^2(zx)e^{-\sum_{i=1}^2e^{\gamma\ps{\bm c,e_i}}I^i_{\alpha_0-\gamma h_2}}\Big]=o\left(\norm{zx}^{\gamma ^2}e^{-r\left(2-\gamma\ps{\alpha_0,e_2}+\frac{\left(\ps{\alpha_0,e_2}-\frac2\gamma\right)^2}4-\eta\right)}\right).
		\end{align*}
		As a consequence the associated remainder term is at most a $$o\left(\norm{z}^{\gamma\ps{Q-\alpha_0,e_1}+\gamma^2+(1+\eps)\left(2-\gamma\ps{\alpha_0,e_2}+\frac{\left(\ps{\alpha_0,e_2}-\frac2\gamma\right)^2}4-\eta\right)}\right).$$ 
		This exponent can be rewritten more conveniently as $$\gamma\ps{Q-\alpha_0,\rho}+\eps\left(2-\gamma\ps{\alpha_0,e_2}\right)+(1+\eps)\left(\frac{\left(\ps{\alpha_0,e_2}-\frac2\gamma\right)^2}4-\eta\right)$$ which is strictly larger than $\gamma\ps{Q-\alpha_0,\rho}$ provided that $\eps$ is chosen small enough. Eventually collecting up terms we see that $\mathfrak{R}(z)=o\left(\norm{z}^{\gamma\ps{Q-\alpha_0,\rho}}\right)$
		as expected.
		
		Let us now turn to $\delta\Phi^1(z)$. We wish to prove that
		\begin{align}\label{eq:reste_B2}
			\delta\Phi^1(z)= \norm{z}^{\gamma\ps{Q-\alpha_0,\rho}}B^{(1)}(\hat s_2\alpha_0,\gamma)R_{s_2}(\alpha_0)\Phi_{\hat s_2\alpha_0-\gamma h_2}(\bm c)+l.o.t.
		\end{align}
		where recall that $\hat s_2\alpha_0=\alpha_0+\ps{Q-\alpha_0,e_2}e_2$.
		For this we use the radial-angular decomposition as stated in Equations~\eqref{eq:rad_ang_dec} and~\eqref{equ:radial_angular}: inside $B_r$ we have 
		\begin{align*}
			\norm{y}^{\ps{\alpha_0-\gamma h_1,e_2}}\norm{y-zx}^{\gamma^2}M^{\gamma e_2}(d^2y)=e^{\ps{\X_r(0)+(\alpha_0-Q)r,\gamma e_2}}\norm{e^{-t+i\theta}-zx}^{\gamma^2}e^{\gamma B^\nu_t}M_Y^{\gamma e_2}(dt+r,d\theta)
		\end{align*}
		where we have set $\nu=\ps{\alpha_0,e_2}$ and $B^\nu$ is a one-dimensional Brownian motion with drift $\nu$. Recalling that $r=-(1+\eps)\ln\norm{zx}$ and $\ps{h_2,e_2}=1$, we can thus rewrite $J^2_{\alpha_0-\gamma h_1,B_r}(zx)$ as
		\begin{align*}
			&e^{\gamma\ps{\X_r(0)+(\alpha_0-\frac{\gamma}{1+\eps} h_2-Q)r,e_2}}\int_{0}^{+\infty}e^{\gamma B^\nu_t}\int_0^{2\pi} \norm{1-\norm{zx}^{\eps}e^{-t+i\theta}}^{\gamma ^2}\tilde F_2(e^{-t-r+i\theta})M_Y^{\gamma e_2}(dt+r,d\theta)\\
			=&e^{\gamma\ps{\X_r(0)+(\alpha_0-\frac{\gamma}{1+\eps} h_2-Q)r+\M,e_2}}\int_{0}^{+\infty}e^{\gamma \mathcal B^\nu_t}\int_0^{2\pi} \norm{1-\norm{zx}^{\eps}e^{-t+i\theta}}^{\gamma ^2}\tilde F_2(e^{-t-r+i\theta})M_Y^{\gamma e_2}(dt+r,d\theta)
		\end{align*}
		where $B^\nu$ has been decomposed as $B^\nu=\mathcal B^\nu+\M$ using the (one-dimensional) Brownian path decomposition in the form of Equation~\eqref{eq:williams}. As a consequence we can follow the same lines as in the proof of Lemma~\ref{lemma:anabis} to see that
		\begin{align*}
			&\delta\Phi^1(z)=-e^{\ps{\bm s+\gamma e_1,\bm c}}\norm{z}^{\gamma\ps{Q-\alpha_0,e_1}}\int_\C \frac{\norm{x-1}^{\gamma^2}-\norm{x}^{\gamma^2}}{\norm{x}^{\gamma\ps{\alpha_0,e_1}}}d^2x   \times\\
			&\E\left[\int_{\ps{\bm c+\lambda_r-\frac{\gamma h_2}{1+\eps}r,e_2}}^{+\infty}(-\nu)e^{\nu\ps{\M-\lambda_r-\bm c -\frac{\gamma h_2}{1+\eps}r,e_2}}\mathrm{d}\ps{\M,e_2}\left(e^{-e^{\gamma\ps{\M,e_2}}J_r}-1\right)e^{-\sum_{i=1}^2e^{\gamma\ps{\bm c,e_i}}I^i_{\alpha_0-\gamma h_2}}\right]\\
			&=-e^{\ps{\bm s+\gamma e_1+\hat s_2\alpha_0-\alpha_0,\bm c}}\norm{z}^{\gamma\ps{Q-\alpha_0,\rho}}\int_\C \frac{\norm{x-1}^{\gamma^2}-\norm{x}^{\gamma^2}}{\norm{x}^{\gamma\ps{\alpha_0,e_1}+\gamma\ps{\alpha_0-Q,e_2}}}d^2x   \times\\
			&\E\left[e^{-\nu\ps{\lambda_r,e_2}}\int_{\ps{\bm c+\lambda_r-\frac{\gamma h_2}{1+\eps}r,e_2}}^{+\infty}(-\nu)e^{\nu\ps{\M,e_2}}\mathrm{d}\ps{\M,e_2}\left(e^{-e^{\gamma\ps{\M,e_2}}J_r}-1\right)e^{-\sum_{i=1}^2e^{\gamma\ps{\bm c,e_i}}I^i_{\alpha_0-\gamma h_2}}\right]
		\end{align*}
		where we have set $\lambda_r\coloneqq \X_r(0)+(\alpha_0-Q)r$ and \[
		J_r=\int_{0}^{+\infty}e^{\gamma \mathcal B^\nu_t}\int_0^{2\pi} \norm{1-\norm{zx}^{\eps}e^{-t+i\theta}}^{\gamma ^2}\tilde F_2(e^{-t-r+i\theta})M^{\gamma e_2}(dt+r,d\theta)
		\] with $\mathcal{B}$ started from $\ps{\bm c+\lambda_r-\frac{\gamma h_2}{1+\eps}r,e_2}-\ps{\M,e_2}$. The exponential term $e^{-\nu \ps{\lambda_r,e_2}}$ is a Girsanov transform whose effect is to shift the law of $\X$ by $G_r(\cdot,0)\ps{Q-\alpha_0,e_2}e_2$, which as $r\to 0$ converges towards $G_r(\cdot,0)\ps{Q-\alpha_0,e_2}e_2=G(\cdot,0)(\hat s_2\alpha_0-\alpha_0)$.  Therefore proceeding in the same way as in the proof of Lemma~\ref{lemma:anabis} we see that 
		\begin{align*}
			&\delta\Phi^1(z)=\norm{z}^{\gamma\ps{Q-\alpha_0,e_1}}\int_\C \frac{\norm{x}^{\gamma^2}-\norm{x-1}^{\gamma^2}}{\norm{x}^{\gamma\ps{\hat s_2\alpha_0,e_1}}}d^2x R_{s_2}(\alpha_0)e^{\ps{\bm s+\gamma e_1,\bm c}}\E\left[e^{-\sum_{i=1}^2e^{\gamma\ps{\bm c,e_i}}I^i_{\hat s_2\alpha_0-\gamma h_2}}\right]+l.o.t.
		\end{align*}
		which is nothing but Equation~\eqref{eq:reste_B2}, where the integral over $\C$ is well-defined at least for $\ps{\alpha_0-Q,e_2}$ close enough to $0$ and evaluated using Lemma~\ref{lemma:evaluate_B} below, thanks to which it is found to be equal to $B^{(1)}_\gamma(\hat s_2\alpha_0,\gamma)$.
		
		Now from the explicit expressions of the quantities $B^{(i)}$ and $R_{s_2}$ one can deduce that
		\[
		B^{(1)}_\gamma(\hat s_2\alpha_0,\gamma)R_{s_2}(\alpha_0)=B^{(2)}_\gamma(\alpha_0,\gamma)R_{s_2}(\alpha_0-\gamma h_3).
		\]
		As a consequence we can infer the following expansion for $\Phi_{\alpha_0,-\gamma h_1}(z;\bm c)$ as $z\to 0$:
		\begin{equation}\label{eq:exp_Phi}
			\begin{split}
				\Phi_{\alpha_0,-\gamma h_1}(z;\bm c)=&\Phi_{\alpha_0-\gamma h_1}(\bm c)+\norm{z}^{2(1-B_1)}B^{(1)}_\gamma(\alpha_0,\gamma)\Phi_{\alpha_0-\gamma h_2}(\bm c)\\
				&+\norm{z}^{2(1-B_2)}B^{(2)}_\gamma(\alpha_0,\gamma)R_{s_2}(\alpha_0-\gamma h_3)\Phi_{\hat s_2(\alpha_0-\gamma h_3)}(\bm c)+l.o.t.
			\end{split}
		\end{equation}
		

		\textbf{The case $\gamma>1$.}
		We proceed in the same way for $\gamma >1$. Our assumptions on the weight $\alpha_0$ are then that $\gamma<\ps{\alpha_0,e_1}<\frac2\gamma$ and $q-\eps<\ps{\alpha_0,e_2}<q$. The only difference with the above case is that we write $\delta\Phi(z)$ as
		\begin{align*}
			\frac{\gamma^2}{2}e^{\ps{\bm s+\gamma e_1,\bm c}}\int_\C \frac{\frac{z}{x}+\frac{\bar z}{\bar x}}{\norm{x}^{\gamma\ps{\alpha_0-\gamma h_1,e_1}}}&F_1(x)\E\left[\int_0^1e^{-te^{\gamma\ps{\bm c,e_{1}}}\delta J(z,x)}dt\prod_{i=1}^2e^{-e^{\gamma\ps{\bm c,e_i}}J_{\alpha_0-\gamma h_1}^i(x)}\right]d^2x\\
			-\norm{z}^{\gamma\ps{Q-\alpha_0,e_1}}e^{\ps{\bm s+\gamma e_1,\bm c}}\int_\C &\frac{\norm{1-\frac 1x}^{\gamma^2}-1-\frac{\gamma^2}{2}\left(\frac{1}{x}+\frac{1}{\bar x}\right)}{\norm{x}^{\gamma\ps{\alpha_0-\gamma h_1,e_1}}}F_1(zx)\times\\
			&\E\left[\int_0^1e^{-te^{\gamma\ps{\bm c,e_{1}}}\delta J(z,zx)}dt\prod_{i=1}^2e^{-e^{\gamma\ps{\bm c,e_i}}J_{\alpha_0-\gamma h_1}^i(zx)}\right]d^2x.
		\end{align*}
		Then we can proceed in the very same fashion as above to see that the term on the last two lines is equal to  
		\[
		\norm{z}^{2(1-B_1)}B^{(1)}_\gamma(\alpha_0,\gamma)\Phi_{\alpha_0-\gamma h_2}(\bm c)+\norm{z}^{2(1-B_2)}B^{(2)}_\gamma(\alpha_0,\gamma)R_{s_2}(\alpha_0-\gamma h_3)\Phi_{\hat s_2(\alpha_0-\gamma h_3)}(\bm c)+l.o.t.
		\]
		The only difference lies in the evaluation of the integrals that arise, but they are still found to be given by $B^{(1)}_\gamma(\alpha_0,\gamma)$ in agreement with Lemma~\ref{lemma:evaluate_B}.
		
		This means that all we have to prove is that the term on the first line is of the form $Cz+\bar C\bar z +o\left(\norm{z}^{\gamma\ps{Q-\alpha_0,\rho}}\right)$. To see why this is indeed the case note that
		\begin{align*}
			C\coloneqq e^{\ps{\bm s+\gamma e_1,\bm c}}\int_\C \frac{F_1(x)}{x\norm{x}^{\gamma\ps{\alpha_0-\gamma h_1,e_1}}}&\E\left[\prod_{i=1}^2e^{-e^{\gamma\ps{\bm c,e_i}}J_{\alpha_0-\gamma h_1}^i(x)}\right]d^2x
		\end{align*}
		is well-defined since the singularity around $x=0$ is integrable thanks to the fact that $\ps{\alpha_0,e_1}<\frac2\gamma<\gamma+\frac1\gamma$.
		As a consequence the corresponding term in the expansion of $\delta\Phi(z)$ is given by 
		\begin{align*}
			&\frac{\gamma^2}2\left(Cz+\bar C\bar z\right)\\
			&+e^{\ps{\bm s+\gamma e_1,\bm c}}\int_\C \frac{\frac{\gamma^2}{2}\left(\frac{z}{x}+\frac{\bar z}{\bar x}\right)}{\norm{x}^{\gamma\ps{\alpha_0-\gamma h_1,e_1}}}F_1(x)\E\left[\int_0^1\left(e^{-te^{\gamma\ps{\bm c,e_{1}}}\delta J(z,x)}-1\right)dt\prod_{i=1}^2e^{-e^{\gamma\ps{\bm c,e_i}}J_{\alpha_0-\gamma h_1}^i(x)}\right]d^2x.
		\end{align*}
		To conclude it remains to check that 
		\begin{align*}
			\int_\C \frac{\frac{\gamma^2}{2}\left(\frac{z}{x}+\frac{\bar z}{\bar x}\right)}{\norm{x}^{\gamma\ps{\alpha_0-\gamma h_1,e_1}}}F_1(x)\E\left[\delta J(z,x)\prod_{i=1}^2e^{-e^{\gamma\ps{\bm c,e_i}}J_{\alpha_0-\gamma h_1}^i(x)}\right]d^2x
		\end{align*}
		is a lower order term, which can be seen via the same reasoning:
		\begin{align*}
			\delta J(z,x)\sim \int_\C\frac{\frac{\gamma^2}{2}\left(\frac{z}{y}+\frac{\bar z}{\bar y}\right)}{\norm{y}^{\gamma\ps{\alpha_0-\gamma h_1,e_1}}\norm{x-y}^{2\gamma^2}}\tilde F_1(y)M^{\gamma e_1}(d^2y)
		\end{align*}
		with the integral
		\begin{align*}
			\int_{\D\times\D}\frac{1}{x\norm{x}^{\gamma\ps{\alpha_0-\gamma h_1,e_1}}y\norm{y}^{\gamma\ps{\alpha_0-\gamma h_1,e_1}}\norm{x-y}^{2\gamma^2}}d^2y
		\end{align*}
		absolutely integrable. This implies that this term is at most of order $z^2$, hence is a $o\left(\norm{z}^{\gamma\ps{Q-\alpha_0,\rho}}\right)$ as soon as $\ps{\alpha_0,e_2}$ is close enough to $q$ since $\gamma^2<2$.
		This shows that under the assumptions that $\gamma >1$ with $\gamma<\ps{\alpha_0,e_1}<\frac2\gamma$ and $q-\eps<\ps{\alpha_0,e_2}<q$ then
		\begin{equation}\label{eq:exp_Phibis}
			\begin{split}
				\Phi_{\alpha_0,-\gamma h_1}(z;\bm c)=&\Phi_{\alpha_0-\gamma h_1}(\bm c)+Cz+\bar C\bar z+\norm{z}^{2(1-B_1)}B^{(1)}_\gamma(\alpha_0,\gamma)\Phi_{\alpha_0-\gamma h_2}(\bm c)\\
				&+\norm{z}^{2(1-B_2)}B^{(2)}_\gamma(\alpha_0,\gamma)R_{s_2}(\alpha_0-\gamma h_3)\Phi_{\hat s_2(\alpha_0-\gamma h_3)}(\bm c)+l.o.t.
			\end{split}
		\end{equation}
		
		\subsubsection{The reflection terms}\label{sub:refl1}
		Having described the asymptotic expansion of $\Phi_{\alpha_0,-\gamma h_1}(z;\bm c)$, we now turn to the other terms in the expression of $\mathcal{H}$, where recall that
		\begin{align*}
			&\mathcal{H}(z)=\int_{\R^2}\left(\Phi_{\alpha_0,\alpha}(z;\bm c)-\mathfrak R_{\alpha_0,\alpha}(z;\bm c)\right)d\bm c+\sum_{i=1}^2\int_\R \left(  \Phi^i_{\alpha_0,\alpha}(z;c_i)-\mathfrak R_{\alpha_0,\alpha}^i(z;c_i)\right)dc_i+\Phi^{1,2}_{\alpha_0,\alpha}(z),\\
			&\mathfrak R_{\alpha_0,\alpha}(z;\bm c)=\norm{z}^{-\gamma\ps{\alpha_0,h_1}}\norm{z-1}^{-\gamma\ps{h_1,\alpha_1}}e^{-2\ps{Q,\bm c}}\expect{\bm{\mathrm{R}}_{\bm\alpha}(\bm c)}
		\end{align*} 
		with $\bm{\mathrm{R}}_{\bm\alpha}(\bm c)=\bm{\mathrm{R}}^1_{\bm\alpha}(\bm c)+\bm{\mathrm{R}}^2_{\bm\alpha}(\bm c)+\bm{\mathrm{R}}^{1,2}_{\bm\alpha}(\bm c)$ as defined in Equation~\eqref{eq:defRrm}. Our goal is to show that they satisfy the same type of expansion too: this would in turn allow to derive an asymptotic expansion of $\mathcal{H}(z)$ and thus provide the equality stated in Theorem~\ref{thm:OPE}.
		
		To start with, let us focus on the term $\expect{\bm{\mathrm{R}}_{\bm\alpha}^2(\bm c)}$, which is given by
		\begin{align*}
			&\mathds 1_{\ps{\bm c,e_1}<0}e^{-\mu_{2}e^{\gamma\ps{\bm c,e_2}}M^{\gamma e_{2}}(\C)}\left(V_{\alpha_0}(0)+R_{s_1}(\alpha_0)V_{\hat s_1\alpha_0}(0)\right)V_{-\gamma h_1}(z)\mathcal{R}_{Id,s_1}V_{\alpha_1}(1)V_{\alpha_\infty}(\infty)\\
			&-\mathds 1_{\max\ps{\bm c,e_i}<0}\left(V_{\alpha_0}(0)+R_{s_2}(\alpha_0)V_{\hat s_2\alpha_0}(0)\right)V_{-\gamma h_1}(z)\mathcal{R}_{Id,s_1,s_2,s_2s_1}V_{\alpha_1}(1)V_{\alpha_\infty}(\infty)\\
			&-\mathds 1_{\max\ps{\bm c,e_i}<0}\left(R_{s_1}(\alpha_0)V_{\alpha_0}(0)+R_{s_2s_1}(\alpha_0)V_{\hat s_2\hat s_1\alpha_0}(0)\right)V_{-\gamma h_1}(z)\mathcal{R}_{Id,s_1,s_2,s_2s_1}V_{\alpha_1}(1)V_{\alpha_\infty}(\infty),
		\end{align*}
		Since $\ps{-\gamma h_1,e_2}=0$ $V_{-\gamma h_1}(z)$ does not interact with the potential defined from $M^{\gamma e_2}$ and as a consequence the dependence in $z$ of the first term is actually completely explicit:
		\begin{align*}
			&\expect{e^{-\mu_{2}e^{\gamma\ps{\bm c,e_2}}M^{\gamma e_{2}}(\C)}V_{\alpha_0}(0)V_{-\gamma h_1}(z)V_{\alpha_1}(1)V_{\alpha_\infty}(\infty)}\\
			&=\norm{z}^{\gamma\ps{h_1,\alpha_0}}\norm{z-1}^{\gamma\ps{h_1,\alpha_1}}\expect{e^{-\mu_{2}e^{\gamma\ps{\bm c,e_2}}M^{\gamma e_{2}}(\C)}V_{\alpha_0-\gamma h_1}(0)V_{\alpha_1}(1)V_{\alpha_\infty}(\infty)}\quad\text{and}\\
			&\expect{e^{-\mu_{2}e^{\gamma\ps{\bm c,e_2}}M^{\gamma e_{2}}(\C)}V_{\alpha_0}(0)V_{-\gamma h_1}(z)V_{\alpha_1}(1)V_{\alpha_\infty}(\infty)}\\
			&=\norm{z}^{\gamma\ps{h_1,\hat s_1\alpha_0}}\norm{z-1}^{\gamma\ps{h_1,\alpha_1}}\expect{e^{-\mu_{2}e^{\gamma\ps{\bm c,e_2}}M^{\gamma e_{2}}(\C)}R_{s_1}(\alpha_0)V_{\hat s_1\alpha_0-\gamma h_1}(0)V_{\alpha_1}(1)V_{\alpha_\infty}(\infty)}
		\end{align*}
		with $\norm{z}^{\gamma\ps{h_1,\hat s_1\alpha_0}}=\norm{z}^{\gamma\ps{h_1,\alpha_0}}\norm{z}^{\gamma\ps{Q-\alpha_0,e_1}}$. We can proceed in the same fashion for the two other terms to get,
		for some constant $C$,
		\begin{align*}
			&\norm{z}^{-\gamma\ps{h_1,\alpha_0}}\norm{z-1}^{-\gamma\ps{h_1,\alpha_1}}\expect{\bm{\mathrm{R}}_{\bm\alpha}^{2}(\bm c)}=\\
			&\mathds 1_{\ps{\bm c,e_1}<0}\E\Big[e^{-\mu_{2}e^{\gamma\ps{\bm c,e_{2}}}M^{\gamma e_{2}}(\C)}\left(V_{\alpha_0-\gamma h_1}(0)+\norm{z}^{\gamma\ps{Q-\alpha_0,e_1}}R_{s_1}(\alpha_0)V_{\hat s_1\alpha_0-\gamma h_1}(0)\right)\mathcal{R}_{Id,s_1}V_{\alpha_1}(1)V_{\alpha_\infty}(\infty)\Big]\\
			&-\mathds 1_{\max_i\ps{\bm c,e_i}<0}\expect{\left(V_{\alpha_0-\gamma h_1}(0)+R_{s_2}(\alpha_0)V_{\hat s_2\alpha_0-\gamma h_1}(0)\right)\mathcal{R}_{Id,s_1,s_2,s_2s_1}V_{\alpha_1}(1)V_{\alpha_\infty}(\infty)}\\
			&-\mathds 1_{\max_i\ps{\bm c,e_i}<0}\norm{z}^{\gamma\ps{Q-\alpha_0,e_1}}\expect{\left(R_{s_1}(\alpha_0)V_{\hat s_1\alpha_0-\gamma h_1}(0)+R_{s_2s_1}(\alpha_0)V_{\hat s_2\hat s_1\alpha_0-\gamma h_1}(0)\right)\mathcal{R}_{Id,s_1,s_2,s_2s_1}V_{\alpha_1}(1)V_{\alpha_\infty}(\infty)}\\
			&+Cz+\bar C\bar z + l.o.t.
		\end{align*}
		Here the linear terms in $z$ comes from the fact that when we incorporate a reflection coefficient for $V_{\alpha_1}(1)$, we do not get a prefactor $\norm{z-1}^{\ps{\gamma h_1,\alpha_1}}$ but rather $\norm{z-1}^{\ps{\gamma h_1,\hat s_1\alpha_1}}$. Still the ratio of the two is smooth at $z=0$ so that the error can be incorporated in the $Cz,\bar C\bar z$ and $l.o.t.$ terms.
		
		Now since $\ps{h_1,e_2}=0$ the explicit expression~\eqref{eq:refl} of the reflection coefficients shows that $R_{s_2}(\alpha_0)=R_{s_2}(\alpha_0-\gamma h_1)$. Likewise we have the equality
		\[
		R_{s_1}(\alpha_0)=B^{(1)}_\gamma(\alpha_0,\gamma)R_{s_1}(\alpha_0-\gamma h_2).
		\]
		This allows to rewrite to rewrite $\norm{z}^{-\gamma\ps{h_1,\alpha_0}}\norm{z-1}^{-\gamma\ps{h_1,\alpha_1}}\expect{\bm{\mathrm{R}}_{\bm\alpha}^{2}(\bm c)}$ as
		\begin{align*}
			&\mathds 1_{\ps{\bm c,e_1}<0}\E\Big[\left(e^{-\mu_{2}e^{\gamma\ps{\bm c,e_{2}}}M^{\gamma e_{2}}(\C)}-\mathds 1_{\ps{\bm c,e_2}<0}\mathcal{R}_{Id,s_2}\right)V_{\alpha_0-\gamma h_1}(0)\mathcal{R}_{Id,s_1}V_{\alpha_1}(1)V_{\alpha_\infty}(\infty)\Big]\\
			&+\norm{z}^{\gamma\ps{Q-\alpha_0,e_1}}B^{(1)}_\gamma(\alpha_0,\gamma)\times\\
			&\mathds 1_{\ps{\bm c,e_1}<0}\E\Big[\left(e^{-\mu_{2}e^{\gamma\ps{\bm c,e_{2}}}M^{\gamma e_{2}}(\C)}-\mathds 1_{\ps{\bm c,e_2}<0}\mathcal{R}_{Id,s_2}\right)R_{s_1}(\alpha_0-\gamma h_2)V_{\hat s_1(\alpha_0-\gamma h_2)}(0)\mathcal{R}_{Id,s_1}V_{\alpha_1}(1)V_{\alpha_\infty}(\infty)\Big]\\
			&+Cz+\bar C\bar z + l.o.t.
		\end{align*}
		Recollecting terms we arrive to the following equality
		\begin{align*}
			\expect{\bm{\mathrm{R}}_{\bm\alpha}^{2}(\bm c)}=&\expect{\bm{\mathrm{R}}_{\alpha-\gamma h_1}^{2}(\bm c)}+\norm{z}^{\gamma\ps{Q-\alpha_0,e_1}}B^{(1)}_\gamma(\alpha_0,\gamma)\expect{\bm{\mathrm{R}}_{\alpha-\gamma h_2}^{2}(\bm c)}+Cz+\bar C\bar z + l.o.t.\\
			-\mathds 1_{\ps{\bm c,e_1}<0}\E\Big[&\left(e^{-\mu_{2}e^{\gamma\ps{\bm c,e_{2}}}M^{\gamma e_{2}}(\C)}-\mathds 1_{\ps{\bm c,e_2}<0}\mathcal{R}_{Id,s_2}\right)\mathcal{R}_{s_1}V_{\alpha_0-\gamma h_1}(0)\mathcal{R}_{Id,s_1}V_{\alpha_1}(1)V_{\alpha_\infty}(\infty)\Big]\\
			-\norm{z}^{\gamma\ps{Q-\alpha_0,e_1}}&B^{(1)}_\gamma(\alpha_0,\gamma)\mathds 1_{\ps{\bm c,e_1}<0}\times\\
			&\E\Big[\left(e^{-\mu_{2}e^{\gamma\ps{\bm c,e_{2}}}M^{\gamma e_{2}}(\C)}-\mathds 1_{\ps{\bm c,e_2}<0}\mathcal{R}_{Id,s_2}\right)V_{\alpha_0-\gamma h_2}(0)\mathcal{R}_{Id,s_1}V_{\alpha_1}(1)V_{\alpha_\infty}(\infty)\Big].
		\end{align*}
		
		The first line is what we expect to get to have well-defined three-point correlation functions $\ps{V_{\alpha_0-\gamma h_1}(0)V_{\alpha_1}(1)V_{\alpha_\infty}(\infty)}$ and $\ps{V_{\alpha_0-\gamma h_2}(0)V_{\alpha_1}(1)V_{\alpha_\infty}(\infty)}$.
		As a consequence we want to discard the two terms that appear between the second and fourth lines, and for this purpose we first show that they are integrable in the variable $\bm c$ (since if a remainder term is integrable then it should not appear in the expansion of $\bm{\mathrm{R}}_{\bm\alpha}^{2}(\bm c)$ according to the reasoning conducted in the proof of Theorem~\ref{thm:analycity}). To see why this is true we use that for any $w:\{0,1,\infty\}\to \{Id,s_1,s_2,s_2s_1\}$ such that $w(0)\in\{Id,s_2\}$:
		\[
		\ps{\hat w(0)\hat s_1(\alpha_0-\gamma h_1)+\hat w(1)\alpha_1+\hat w(\infty)\alpha_\infty-2Q,\omega_1}\geq \ps{\hat s_1(\alpha_0-\gamma h_1)+\alpha_1+\alpha_\infty-2Q,\omega_1},
		\]
		the latter being given by $\ps{\bm s,\omega_1}+\ps{Q-\alpha_0,e_1}$. Since $\ps{\alpha_0,e_1}<\frac2\gamma$ and $\ps{\bm s,\omega_1}>-\gamma$, $$\ps{\hat w(0)\hat s_1(\alpha_0-\gamma h_1)+\hat w(1)\alpha_1+\hat w(\infty)\alpha_\infty-2Q,\omega_1}>0.$$ 
		As a consequence the term in the second line is integrable over $\R$ with respect to $\ps{\bm c,e_1}$. The same reasoning shows that the term on the third and fourth lines is integrable too, since $\ps{\alpha_0-\gamma h_2+\alpha_1+\alpha_\infty-2Q,\omega_1}=\ps{\bm s,\omega_1}+\gamma$. 
		As a consequence we have proved that this extra term is indeed absolutely convergent in the $\bm c$ variable. By construction of the analytic continuation of the correlation functions and more precisely the reasoning conducted before Lemma~\ref{lemma:analycity} we see that this term will be compensated by other terms in $\mathfrak R_{\alpha_0,-\gamma h_1}^1(0;\bm c)$. More precisely, in a similar fashion as above we can expand (with the notations from Lemma~\ref{lemma:analycity})
		\begin{align*}
			E_{\bm\alpha}^{2}(\bm c)=E_{\alpha_0-\gamma h_1}^{2}&(\bm c)+\norm{z}^{\gamma\ps{Q-\alpha_0,e_1}}B^{(1)}_\gamma(\alpha_0,\gamma)E_{\alpha-\gamma h_2}^{2}(\bm c)+Cz+\bar C\bar z + l.o.t.\\
			-\sum_{w:\{1,\infty\}\to\{Id,s_1\}}&\frac{e^{-\ps{\bm s'(w),\omega_1}\ps{\bm c,e_1}}}{\ps{\bm s'(w),\omega_1}}\E\Big[\left(e^{-\mu_{2}e^{\gamma\ps{\bm c,e_{2}}}M^{\gamma e_{2}}(\C)}-\mathds 1_{\ps{\bm c,e_2}<0}\mathcal{R}_{Id,s_2}\right)\\
			& R_{s_1}(\alpha_0-\gamma h_1)V_{\hat s_1(\alpha_0-\gamma h_1)}(0)\mathcal{R}_{Id,s_1}R_{w(1)}(\alpha_1)V_{\hat w(1)\alpha_1}(1)R_{w(\infty)}(\alpha_\infty)V_{\hat w(\infty)\alpha_\infty}(\infty)\Big]\\
			-\norm{z}^{\gamma\ps{Q-\alpha_0,e_1}}&B^{(1)}_\gamma(\alpha_0,\gamma)\mathds 1_{\ps{\bm c,e_1}<0}\sum_{w:\{1,\infty\}\to\{Id,s_1\}} \frac{e^{-\ps{\tilde{\bm s}(w),\omega_1}\ps{\bm c,e_1}}}{\ps{\tilde{\bm s}(w),\omega_1}}\times\\
			&\E\Big[\left(e^{-\mu_{2}e^{\gamma\ps{\bm c,e_{2}}}M^{\gamma e_{2}}(\C)}-\mathds 1_{\ps{\bm c,e_2}<0}\mathcal{R}_{Id,s_2}\right)V_{\alpha_0-\gamma h_2}(0)\mathcal{R}_{Id,s_1}V_{\alpha_1}(1)V_{\alpha_\infty}(\infty)\Big]
		\end{align*}
		where $\bm s'(w)=\hat s_1(\alpha_0-\gamma h_1)+\hat w(1)\alpha_1+\hat w(\infty)\alpha_\infty-2Q$ and $\tilde{\bm s}(w)=\alpha_0-\gamma h_2+\hat w(1)\alpha_1+\hat w(\infty)\alpha_\infty-2Q$. Explicit computations show that the terms on the last three lines are given by the integral with respect to $\ps{\bm c,e_1}$ of the extra terms from $\expect{\bm{\mathrm{R}}_{\bm\alpha}^{2}(\bm c)}$, so that both terms will actually compensate and vanish in the expansion. 
		
		We can proceed in the same way for the other terms.
		For $\expect{\bm{\mathrm{R}}_{\bm\alpha}^{1}(\bm c)}$ we have 
		\begin{align*}
			&e^{-2\ps{Q,\bm c}}\expect{\mathcal{R}_{Id,s_1}\mathcal{R}_{Id,s_2}V_{\alpha_0}(0)V_{-\gamma h_1}(z)V_{\alpha_1}(1)V_{\alpha_\infty}(\infty)}\\
			&=\mathds 1_{\max_i\ps{\bm c,e_i}<0}\expect{\left(V_{\alpha_0-\gamma h_1}(0)+R_{s_2}(\alpha_0)V_{\hat s_2\alpha_0-\gamma h_1}(0)\right)\mathcal{R}_{Id,s_1,s_2,s_1s_2}V_{\alpha_1}(1)V_{\alpha_\infty}(\infty)}\\
			&+\mathds 1_{\max_i\ps{\bm c,e_i}<0}\norm{z}^{\gamma\ps{Q-\alpha_0,e_1}}\expect{\left(R_{s_1}(\alpha_0)V_{\hat s_1\alpha_0-\gamma h_1}(0)\right)\mathcal{R}_{Id, s_1,s_2,s_1s_2}V_{\alpha_1}(1)V_{\alpha_\infty}(\infty)}\\
			&+\mathds 1_{\max_i\ps{\bm c,e_i}<0}\norm{z}^{\gamma\ps{Q-\alpha_0,\rho}}\expect{R_{s_1s_2}(\alpha_0)V_{\hat s_1\hat s_2\alpha_0-\gamma h_1}(0)\mathcal{R}_{Id, s_1,s_2,s_1s_2}V_{\alpha_1}(1)V_{\alpha_\infty}(\infty)}\\
			&+Cz+\bar C\bar z + l.o.t.
		\end{align*}   
		while by the same reasoning as the one developed for $\Phi_{\alpha_0,\alpha}(z;\bm c)$, we have the equality
		\begin{align*}
			&\expect{\mathcal{R}_{Id,s_2}V_{\alpha_0}(0)V_{-\gamma h_1}(z)V_{\alpha_1}(1)V_{\alpha_\infty}(\infty)e^{-\mu_{1}e^{\gamma\ps{\bm c,e_{1}}}M^{\gamma e_{1}}(\C)}}\\
			=&\expect{V_{\alpha_0-\gamma h_1}(0)\mathcal{R}_{Id,s_2}V_{\alpha_1}(1)V_{\alpha_\infty}(\infty)e^{-\mu_{1}e^{\gamma\ps{\bm c,e_{1}}}M^{\gamma e_{1}}(\C)}}\\
			&+\norm{z}^{\gamma\ps{Q-\alpha_0,e_1}}B^{(1)}_\gamma(\alpha_0,\gamma)\expect{V_{\alpha_0-\gamma h_2}(0)\mathcal{R}_{Id,s_2}V_{\alpha_1}(1)V_{\alpha_\infty}(\infty)e^{-\mu_{1}e^{\gamma\ps{\bm c,e_{1}}}M^{\gamma e_{1}}(\C)}}\\
			&+\expect{R_{s_2}(\alpha_0)V_{\hat s_2\alpha_0-\gamma h_1}(0)\mathcal{R}_{Id,s_2}V_{\alpha_1}(1)V_{\alpha_\infty}(\infty)e^{-\mu_{1}e^{\gamma\ps{\bm c,e_{1}}}M^{\gamma e_{1}}(\C)}}\\
			&+\norm{z}^{\gamma\ps{Q-\hat s_2\alpha_0,e_1}}B^{(1)}_\gamma(\hat s_2\alpha_0,\gamma)\expect{R_{s_2}(\alpha_0)V_{\hat s_2\alpha_0-\gamma h_2}(0)\mathcal{R}_{Id,s_2}V_{\alpha_1}(1)V_{\alpha_\infty}(\infty)e^{-\mu_{1}e^{\gamma\ps{\bm c,e_{1}}}M^{\gamma e_{1}}(\C)}}\\
			&+Cz+\bar C\bar z+o\left(\norm{z}^{\gamma\ps{Q-\alpha_0,\rho}}\right).
		\end{align*}
		Since $\ps{\hat s_2\alpha_0-Q,e_1}=\ps{\alpha_0-Q,\rho}$ we infer from the above reasoning that
		\begin{align*}
			&\expect{\mathcal{R}_{Id,s_2}V_{\alpha_0-\gamma h_1}(0)V_{\alpha_1}(1)V_{\alpha_\infty}(\infty)e^{-\mu_{1}e^{\gamma\ps{\bm c,e_{1}}}M^{\gamma e_{1}}(\C)}}\\
			&+\norm{z}^{\gamma\ps{Q-\alpha_0,e_1}}B^{(1)}_\gamma(\alpha_0,\gamma)\expect{V_{\alpha_0-\gamma h_2}(0)\mathcal{R}_{Id,s_2}V_{\alpha_1}(1)V_{\alpha_\infty}(\infty)e^{-\mu_{1}e^{\gamma\ps{\bm c,e_{1}}}M^{\gamma e_{1}}(\C)}}\\
			&+\norm{z}^{\gamma\ps{Q- \alpha_0,\rho}}B^{(2)}_\gamma(\alpha_0,\gamma)\expect{R_{s_2}(\alpha_0-\gamma h_3)V_{\hat s_2(\alpha_0-\gamma h_3)}(0)\mathcal{R}_{Id,s_2}V_{\alpha_1}(1)V_{\alpha_\infty}(\infty)e^{-\mu_{1}e^{\gamma\ps{\bm c,e_{1}}}M^{\gamma e_{1}}(\C)}}\\
			&+Cz+\bar C\bar z+o\left(\norm{z}^{\gamma\ps{Q-\alpha_0,\rho}}\right).
		\end{align*}
		Put differently we can write that
		\begin{align*}
			\expect{\bm{\mathrm{R}}_{\bm\alpha}^{1}(\bm c)}=&\expect{\bm{\mathrm{R}}_{\alpha_0-\gamma h_1}^{1}(\bm c)}+\norm{z}^{\gamma\ps{Q-\alpha_0,e_1}}B^{(1)}_\gamma(\alpha_0,\gamma)\expect{(\bm{\mathrm{R}}_{\alpha_0-\gamma h_2}^{1}(\bm c)}\\
			+&\norm{z}^{\gamma\ps{Q- \alpha_0,\rho}}B^{(2)}_\gamma(\alpha_0,\gamma)R_{s_2}(\alpha_0-\gamma h_3)\expect{\bm{\mathrm{R}}_{\hat s_2(\alpha_0-\gamma h_3)}^{1}(\bm c)}+Cz+\bar C\bar z + l.o.t.\\
			+\mathds 1_{\max_i\ps{\bm c,e_i}<0} \E\Big[&\mathcal{R}_{s_1,s_1s_2}V_{\alpha_0-\gamma h_1}(0)\mathcal{R}_{Id,s_1}V_{\alpha_1}(1)V_{\alpha_\infty}(\infty)\Big]\\
			-\norm{z}^{\gamma\ps{Q-\alpha_0,e_1}}&B^{(1)}_\gamma(\alpha_0,\gamma)\mathds 1_{\ps{\bm c,e_2}<0}\times\\
			\Big(\E\Big[&e^{-\mu_{1}e^{\gamma\ps{\bm c,e_{1}}}M^{\gamma e_{1}}(\C)}\mathcal{R}_{s_2}V_{\alpha_0-\gamma h_2}(0)\mathcal{R}_{Id,s_2}V_{\alpha_1}(1)V_{\alpha_\infty}(\infty)\Big]\\
			&-\mathds 1_{\ps{\bm c,e_1}<0}\E\Big[\mathcal{R}_{Id,s_2,s_1s_2}V_{\alpha_0-\gamma h_2}(0)\mathcal{R}_{Id,s_1}V_{\alpha_1}(1)V_{\alpha_\infty}(\infty)\Big]\Big)\\
			-\norm{z}^{\gamma\ps{Q- \alpha_0,\rho}}& B^{(2)}_\gamma(\alpha_0,\gamma)\mathds 1_{\ps{\bm c,e_2}<0}\times\\
			\Big(\E\Big[&\left(e^{-\mu_{1}e^{\gamma\ps{\bm c,e_{1}}}M^{\gamma e_{1}}(\C)}-\mathds 1_{\ps{\bm c,e_1}<0}\mathcal{R}_{Id,s_1}\right)V_{\alpha_0-\gamma h_3}(0)\mathcal{R}_{Id,s_2}V_{\alpha_1}(1)V_{\alpha_\infty}(\infty)\Big]\\
			&-\mathds{1}_{\max _i\ps{\bm c,e_i}<0}\E\Big[\mathcal{R}_{Id,s_1,s_2}V_{\alpha_0-\gamma h_3}(0)\mathcal{R}_{Id,s_2}V_{\alpha_1}(1)V_{\alpha_\infty}(\infty)\Big]\Big).
		\end{align*}
		In analogy with $\expect{\bm{\mathrm{R}}_{\bm\alpha}^{1}(\bm c)}$ we see that the remainder terms are integrable with respect to $\bm c$ and will be compensated by another remainder term from $\mathfrak{R}_{\bm\alpha}^{1}(z;\bm c)$ and  $\mathfrak{R}_{\bm\alpha}^{1,2}(z;\bm c)$. Namely the first remainder term is integrable in $\ps{\bm c,e_1}$ (since $\ps{\hat s_1(\alpha_0-\gamma h_1)+\alpha_1+\alpha_\infty-2Q,\omega_1}>0$) and in $\ps{\bm c,e_2}$ as soon as $\ps{\bm s,\omega_2}>0$ (which we assumed to hold) and will be compensated by a term in $\mathfrak{R}_{\bm\alpha}^{1,2}(z;\bm c)$.
		The second remainder term is integrable with respect to $\ps{\bm c,e_2}$ and $\ps{\bm c,e_1}$ too and will be compensated by terms in $\mathfrak{R}_{\bm\alpha}^{1}(z;\bm c)$ and $\mathfrak{R}_{\bm\alpha}^{1,2}(z;\bm c)$, while the penultimate one is integrable with respect to $\ps{\bm c,e_2}$ and will correspond to a term in $\mathfrak{R}_{\bm\alpha}^{1}(z;\bm c)$. Eventually the last term is also integrable over $\R^2$ and will be annihilated by a term in $\mathfrak{R}_{\bm\alpha}^{1,2}(z;\bm c)$.

		Finally we can expand $\expect{\bm{\mathrm{R}}_{\bm\alpha}^{1,2}(\bm c)}$ in the same way, which yields
		\begin{align*}
			\expect{\bm{\mathrm{R}}_{\bm\alpha}^{1,2}(\bm c)}=&\expect{\bm{\mathrm{R}}_{\alpha_0-\gamma h_1}^{1,2}(\bm c)}+\norm{z}^{\gamma\ps{Q-\alpha_0,e_1}}B^{(1)}_\gamma(\alpha_0,\gamma)\expect{(\bm{\mathrm{R}}_{\alpha_0-\gamma h_2}^{1,2}(\bm c)}\\
			+&\norm{z}^{\gamma\ps{Q- \alpha_0,\rho}}B^{(2)}_\gamma(\alpha_0,\gamma)R_{s_2}(\alpha_0-\gamma h_3)\expect{\bm{\mathrm{R}}_{\hat s_2(\alpha_0-\gamma h_3)}^{1,2}(\bm c)}+Cz+\bar C\bar z + l.o.t.\\
			+\mathds 1_{\max_i\ps{\bm c,e_i}<0} \E\Big[&\mathcal{R}_{s_1,s_1s_2,s_2s_1,s_1s_2s_1}V_{\alpha_0-\gamma h_1}(0)\mathcal{R}_{Id,s_1}V_{\alpha_1}(1)V_{\alpha_\infty}(\infty)\Big]\\
			+\norm{z}^{\gamma\ps{Q-\alpha_0,e_1}}&B^{(1)}_\gamma(\alpha_0,\gamma)\mathds 1_{\max_i\ps{\bm c,e_i}<0} \E\Big[\mathcal{R}_{Id,s_2,s_1s_2,s_1s_2s_1}V_{\alpha_0-\gamma h_2}(0)\mathcal{R}_{Id,s_2}V_{\alpha_1}(1)V_{\alpha_\infty}(\infty)\Big]\\
			+\norm{z}^{\gamma\ps{Q- \alpha_0,\rho}}& B^{(2)}_\gamma(\alpha_0,\gamma)\mathds 1_{\max_i\ps{\bm c,e_i}<0}\times\\
			&R_{s_2}(\alpha_0-\gamma h_3)\E\Big[\mathcal{R}_{Id,s_2,s_1s_2,s_1s_2s_1}V_{\hat s_2(\alpha_0-\gamma h_3)}(0)\mathcal{R}_{Id,s_1}V_{\alpha_1}(1)V_{\alpha_\infty}(\infty)\Big].
		\end{align*}
		
		Putting everything together shows that
		\begin{align*}
			\mathfrak R_{\alpha_0,\alpha}(z;\bm c)&=\mathfrak R_{\alpha_0-\gamma h_1}(\bm c)+\norm{z}^{\gamma\ps{Q-\alpha_0,e_1}}B^{(1)}_\gamma(\alpha_0,\gamma)\mathfrak R_{\alpha_0-\gamma h_2}(\bm c)\\
			&+\norm{z}^{\gamma\ps{Q-\alpha_0,\rho}}B^{(2)}_\gamma(\alpha_0,\gamma)R_{s_2}(\alpha_0-\gamma h_3)\mathfrak R_{\hat s_2(\alpha_0-\gamma h_3)}(\bm c)+Cz+\bar C\bar z+o\left(\norm{z}^{\gamma\ps{Q-\alpha_0,\rho}}\right)\\
			&+R(z;\bm c),
		\end{align*}
		where $R(z;\bm c)$ is not a lower order term but will be compensated by terms appearing in the expansions of  $\mathfrak R_{\alpha_0,\alpha}^{i}(z;\bm c)$, $i=1,2$, and $\mathfrak R_{\alpha_0,\alpha}^{1,2}(z;\bm c)$.
		
		\subsubsection{Finishing up the proof of Lemma~\ref{lemma:OPE1}}
		All in all we see that we can write an expansion of $\mathcal H(z)$ which takes the form
		\begin{align*}
			&\mathcal{H}(z)=\int_{\R^2}\left(\Phi_{\alpha_0-\gamma h_1}(\bm c)-\mathfrak R_{\alpha_0-\gamma h_1}(\bm c)\right)d\bm c+\sum_{i=1}^2\int_\R \left(  \Phi^1_{\alpha_0-\gamma h_1}(c_i)-\mathfrak R_{\alpha_0-\gamma h_1}^i(c_i)\right)dc_i+\Phi^{1,2}_{\alpha_0-\gamma h_1}\\
			&+\norm{z}^{\gamma\ps{Q-\alpha_0,e_1}}B^{(1)}_\gamma(\alpha_0,\gamma)\times\\
			&\int_{\R^2}\left(\Phi_{\alpha_0-\gamma h_2}(\bm c)-\mathfrak R_{\alpha_0-\gamma h_2}(\bm c)\right)d\bm c+\sum_{i=1}^2\int_\R \left(\Phi^1_{\alpha_0-\gamma h_2}(c_i)-\mathfrak R_{\alpha_0-\gamma h_2}^i(c_i)\right)dc_i+\Phi^{1,2}_{\alpha_0-\gamma h_2}\\
			&+\norm{z}^{\gamma\ps{Q-\alpha_0,\rho}}B^{(2)}_\gamma(\alpha_0,\gamma)R_{s_2}(\alpha_0-\gamma h_3)\times\\
			&\int_{\R^2}\left(\Phi_{\hat s_2(\alpha_0-\gamma h_3)}(\bm c)-\mathfrak R_{\hat s_2(\alpha_0-\gamma h_3)}(\bm c)\right)d\bm c+\sum_{i=1}^2\int_\R \left(  \Phi^1_{\hat s_2(\alpha_0-\gamma h_3)}(c_i)-\mathfrak R_{\hat s_2(\alpha_0-\gamma h_3)}^i(c_i)\right)dc_i+\Phi^{1,2}_{\hat s_2(\alpha_0-\gamma h_3)}\\
			&+Cz+\bar C\bar z+o\left(\norm{z}^{\gamma\ps{Q-\alpha_0,\rho}}\right)
		\end{align*}
		as soon as the lower order terms which appear in the expansions of 
		$\Phi_{\alpha_0,-\gamma h_1}(z;\bm c)$ and the $\mathfrak R_{\alpha_0,-\gamma h_1}(z;\bm c)$ are integrable in $\bm c$, uniformly in $z$ in a neighborhood of the origin. 
		
		To see why this is indeed the case, let us for instance consider the remainder term $\mathfrak R(z)$ arising in the expansion of $\Phi_{\alpha_0,-\gamma h_1}(z;\bm c)$:
		\begin{align*}
			\mathfrak{R}(z)&=e^{\ps{\bm s+\gamma e_1,\bm c}}\int_\C \frac{\norm{x-z}^{\gamma^2}-\norm{x}^{\gamma^2}}{\norm{x}^{\gamma\ps{\alpha_0,e_1}}}\E\Big[e^{-e^{\gamma\ps{\bm c,e_2}}J_{\alpha_0-\gamma h_1,B_r}^2(x)}e^{-\sum_{i=1}^2e^{\gamma\ps{\bm c,e_i}}I^i_{\alpha_0-\gamma h_2}}\\
			&\left(F_1(x)\int_0^1e^{-e^{\gamma\ps{\bm c,e_{1}}}\left(t\delta J(z,x)+\delta J_{\alpha_0-\gamma h_1}^i(x)\right)}dte^{-e^{\gamma\ps{\bm c,e_2}}\delta J_{\alpha_0-\gamma h_1,\C_r}^2(x)}-1\right)\Big]d^2x.
		\end{align*}
		Then it is readily seen that the integral remains uniformly bounded when $\bm c$ ranges over $\mathcal C_-$. As a consequence $e^{-\ps{\bm s+\gamma e_1,\bm c}}\mathfrak R(z)$ remains uniformly bounded over $\mathcal C_-$: since we have assumed that $\ps{\bm s,\omega_1}>-\gamma$ and $\ps{\bm s,\omega_2}>0$ we infer that $\mathfrak R(z)$, viewed as a function of $\bm c$, is integrable in $\mathcal C_-$. Likewise when $\ps{\bm c,e_1}\to+\infty$, thanks to the existence of negative moments at all order of the GMC measures considered we can infer that \textit{e.g.} $e^{\ps{\bm s+\gamma e_1,\bm c}}\expect{\exp\left(-e^{\ps{\bm c,\gamma e_1}I^i_{\alpha_0-\gamma h_2}}\right)}$ is integrable for $\bm c$ in the region $\ps{\bm c,e_1}>0$ (see for instance the proof of~\cite[Theorem 3.1]{Toda_construction} for more details). As a consequence this remainder term is indeed integrable. We can proceed in the same ways for the other remainder terms that arise in the expansion of $\Phi_{\alpha_0,-\gamma h_1}(z;\bm c)$ and of $\mathfrak R_{\alpha_0,-\gamma h_1}^1(z;\bm c)$ too. As for $\mathfrak R_{\alpha_0,-\gamma h_1}(z;\bm c)$ the dependence in $\bm c$ of the remainder terms is completely explicit and readily seen to be uniformly integrable too.
		
		From this we infer that the desired asymptotic expansion for $\mathcal{H}$ does indeed hold provided that $\alpha_0$ is taken so that $\ps{\alpha_0,e_1}>\frac2\gamma-\eps$ and $\ps{\alpha_0,e_2}>q-\eps$, where $\eps$ is positive and small enough. As a consequence we can identify the coefficients that appear in this expansion with the ones stemming from the statement of Theorem~\ref{thm:BPZ} in this very case. This shows that we have the equalities
		\begin{equation*}
			\begin{split}
				&\frac{C_\gamma(\alpha_0-\gamma h_2,\alpha_1,\alpha_\infty)}{A_\gamma^{(1)}(-\gamma h_1,\alpha_0,\alpha_1,\alpha_\infty)}=\frac{C_\gamma(\alpha_0-\gamma h_1,\alpha_1,\alpha_\infty)}{B_\gamma^{(1)}(\alpha_0,\gamma)}\quad\text{and}\\
				&\frac{R_{s_2}(\hat s_2(\alpha_0-\gamma h_3))C_\gamma(\hat s_2(\alpha_0-\gamma h_3),\alpha_1,\alpha_\infty)}{A_\gamma^{(2)}(-\gamma h_1,\alpha_0,\alpha_1,\alpha_\infty)}=\frac{C_\gamma(\alpha_0-\gamma h_1,\alpha_1,\alpha_\infty)}{B^{(2)}_\gamma(\alpha_0,\gamma)}\cdot
			\end{split}
		\end{equation*}
		All the quantities that appear in the above expression are analytic in the weight $\alpha_0$ as soon as the probabilistic representation makes sense. Uniqueness of the analytic continuation allows to extend the validity of this equality to the whole range of values prescribed by Lemma~\ref{lemma:OPE1}.

		\subsubsection{Evaluation of some integrals}
		To finish up with the proof of Lemma~\ref{lemma:OPE1} it only remains to check that the expression of the coefficients $B^{(i)}_\gamma$ does coincide with that given by the integrals encountered above: 
		\begin{lemma}\label{lemma:evaluate_B}
			Let $a<2$ and $b>-2$ be two real numbers and set
			\[
			r_{a,b}(x)\coloneqq \norm{x}^b\left(\mathds 1_{a-b<2}+\mathds 1_{a-b<1}\frac b2\left(\frac{1}{x}+\frac1{\bar x}\right)\right).
			\]
			Then as soon as $a-b\in (0,+\infty)\setminus \{1,2\}$:
			\begin{equation}\label{eq:integrals_l}
				\begin{split}
					&\int_\C \frac{\norm{x-1}^b-r_{a,b}(x)}{\norm{x}^{a}}d^2x=\pi\frac{ l(-1+\frac{a-b}2)}{l(-\frac b2)l(\frac{a}{2})}\cdot
				\end{split}
			\end{equation}
		\end{lemma}
		\begin{proof}
			The integral~\eqref{eq:integrals_l} is the analytic continuation in the $a,b$ variables of the integral
			\[
			\int_\C\frac{\norm{x-1}^b}{\norm{x}^{a}}d^2x,
			\]
			from the region $\mathfrak{Re}(a-b)>2$ to the region $2>\mathfrak{Re}(a-b)>1$, the above integral being given by the expected result $\pi\frac{ l(-1+\frac{a-b}2)}{l(-\frac b2)l(\frac{a}{2})}$ (see \cite[p. 504]{Importance}). To see why, set
			\[
			F(a,b)\coloneqq \int_\C \frac{\norm{x-1}^b-\norm{x}_+^b}{\norm{x}^{a}}d^2x-\frac{2\pi}{2+b-a}\cdot
			\]
			Then $F$ is analytic in the domain $\mathfrak{Re}(a,-b)<2$, $\mathfrak{Re}(a-b)>1$, $2+b-a\neq0$. Furthermore over the subdomain where $\mathfrak{Re}(a-b)>2$, it is equal to 
			\[
			\int_\C\norm{x}^{a-b}\norm{x-1}^bd^2x=\pi\frac{ l(-1+\frac{a-b}2)}{l(-\frac b2)l(\frac{a}{2})}\cdot
			\]
			As a consequence by uniqueness of the analytic continuation $F(a,b)$ is also equal to $\pi\frac{ l(-1+\frac{a-b}2)}{l(-\frac b2)l(\frac{a}{2})}$ in the subdomain $2>\mathfrak{Re}(a-b)>1$. But in that case $F(a,b)$ is found to be equal to
			\[
			\int_\C \frac{\norm{x-1}^b-\norm{x}^b}{\norm{x}^{a}}d^2x.
			\]
			
			More generally the same argument shows that 
			\[
			\int_\C \frac{\norm{x-1}^b-r_{a,b}(x)}{\norm{x}^{a}}d^2x
			\]
			is the analytic continuation of \[
			\int_\C \frac{\norm{x-1}^b}{\norm{x}^{a}}d^2x,
			\]
			which allows to conclude that Equation~\eqref{eq:integrals_l} does indeed hold.
		\end{proof}
		With all these different pieces now put together, we are in position to wrap up the proof of Lemma~\ref{lemma:OPE1}. 
	\end{proof}
	
	\subsection{The case where $\chi=\gamma$, $\ps{\alpha_0,e_1}>\frac2\gamma$ and $\ps{\alpha_0,e_2}<\frac2\gamma$}\label{subsec:OPE_2}
	The case we consider next arises when $\ps{\alpha_0,e_1}>\frac2\gamma$ and $\ps{\alpha_0,e_2}<\frac2\gamma$, but still with $\chi=\gamma$. Based on a similar reasoning as the one developed above we can obtain an expansion that closely resembles that of Lemma~\ref{lemma:OPE1}:
	\begin{lemma}\label{lemma:OPE2}
		Under the assumptions that $q-\eps<\ps{\alpha_0,e_1}<q$ and $\frac{2}{\gamma}-\eps<\ps{\alpha_0,e_2}<\frac2\gamma$:
		\begin{equation}\label{eq:OPE_wrefl21}
			\begin{split}
				\mathcal{H}(z)=C_\gamma(&\alpha_0-\gamma h_1,\alpha_1,\alpha_\infty)\norm{\mathcal{H}_0(z)}^2\\
				&+B^{(1)}_\gamma(\alpha_0,\gamma)R_{s_1}(\hat s_1(\alpha_0-\gamma h_2))C_\gamma(\hat s_1(\alpha_0-\gamma h_2),\alpha_1,\alpha_\infty)\norm{\mathcal{H}_1(z)}^2\\
				&+B^{(2)}_\gamma(\alpha_0,\gamma)C_\gamma(\alpha_0-\gamma h_3,\alpha_1,\alpha_\infty)\norm{\mathcal{H}_2(z)}^2
			\end{split}
		\end{equation}
		provided that $(\alpha_0,-\gamma h_1,\alpha_1,\alpha_\infty)\in\mathcal{A}_4$ with $\ps{\bm s,\omega_2}>0$.
	\end{lemma}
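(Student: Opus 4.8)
The proof runs parallel to that of Lemma~\ref{lemma:OPE1}, so I would only indicate where the argument must be adapted; note that the statement is made precisely on the narrow range $q-\eps<\ps{\alpha_0,e_1}<q$, $\frac2\gamma-\eps<\ps{\alpha_0,e_2}<\frac2\gamma$ on which the asymptotic analysis is clean, so that, unlike in Lemma~\ref{lemma:OPE1}, no analytic-continuation step is needed here. Writing $\mathcal H(z)$ as there, that is as $\int_{\R^2}\left(\Phi_{\alpha_0,-\gamma h_1}(z;\bm c)-\mathfrak R_{\alpha_0,-\gamma h_1}(z;\bm c)\right)d\bm c$ plus the lower-dimensional pieces $\sum_{i=1}^2\int_\R\left(\Phi^i_{\alpha_0,-\gamma h_1}(z;c_i)-\mathfrak R^i_{\alpha_0,-\gamma h_1}(z;c_i)\right)dc_i$ and the constant $\Phi^{1,2}_{\alpha_0,-\gamma h_1}(z)$, I would study the $z\to0$ asymptotics of each term. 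The decisive difference with Lemma~\ref{lemma:OPE1} is the \emph{location} of the reflection: since now $\ps{\alpha_0,e_1}>\frac2\gamma$, the intermediate weight $\alpha_0-\gamma h_2$, which governs the channel of order $\norm z^{2(1-B_1)}=\norm z^{\gamma\ps{Q-\alpha_0,e_1}}$, leaves $Q+\mathcal C_-$ across the $e_1$-wall (indeed $\ps{\alpha_0-\gamma h_2-Q,e_1}=\ps{\alpha_0,e_1}-\frac2\gamma>0$), whereas $\alpha_0-\gamma h_3$, governing the channel of order $\norm z^{2(1-B_2)}=\norm z^{\gamma\ps{Q-\alpha_0,\rho}}$, remains inside it because $\ps{\alpha_0,e_2}<\frac2\gamma$. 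Hence the reflection $s_1$ is required for the shallow $h_2$-channel, in contrast with the $s_2$-reflection of the deepest channel in Lemma~\ref{lemma:OPE1}.

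I would then expand $\Phi_{\alpha_0,-\gamma h_1}(z;\bm c)$. Its leading $\norm z^0$ term is $\Phi_{\alpha_0-\gamma h_1}(\bm c)$, legitimate since $\alpha_0-\gamma h_1\in Q+\mathcal C_-$. For the $h_2$-channel the $e_1$-integral concentrating at the origin now diverges, so in place of the change of variables of Lemma~\ref{lemma:OPE1} I would apply Williams' path decomposition~\cite{williams} (Subsection~\ref{subsec:brown_cond_neg}) to the $e_1$-component Brownian motion that carries the $z$-dependence, precisely as in the $\delta\Phi^1$ step of Lemma~\ref{lemma:OPE1} but in the direction $e_1$. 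This yields the factor $R_{s_1}(\alpha_0)$ together with the reflected weight $\hat s_1\alpha_0-\gamma h_1=\hat s_1(\alpha_0-\gamma h_2)$ (using $s_1h_2=h_1$), and the identity $R_{s_1}(\alpha_0)=B^{(1)}_\gamma(\alpha_0,\gamma)R_{s_1}(\alpha_0-\gamma h_2)$ recalled in the proof of Lemma~\ref{lemma:OPE1} recasts the coefficient into the stated form. The $h_3$-channel is obtained by a further refinement of this computation, peeling off the non-reflected $e_2$-mass near the origin through a change of variables $x\mapsto zx$ and a radial cutoff $r=-(1+\eps)\ln\norm{zx}$; since $\alpha_0-\gamma h_3$ stays in $Q+\mathcal C_-$ it is non-reflected and produces $B^{(2)}_\gamma(\alpha_0,\gamma)\Phi_{\alpha_0-\gamma h_3}(\bm c)$, the constants being evaluated by Lemma~\ref{lemma:evaluate_B} and the remainders controlled by the fusion asymptotics~\eqref{eq:fusion}. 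As in Lemma~\ref{lemma:OPE1} the regime $\gamma\geq1$ requires isolating extra analytic-in-$z$ contributions $Cz+\bar C\bar z$ coming from the short-distance expansion of $\norm{x-z}^{\gamma^2}-\norm x^{\gamma^2}$, which stay lower order relative to $\norm z^{\gamma\ps{Q-\alpha_0,\rho}}$ once $\ps{\alpha_0,e_2}$ is close enough to $\frac2\gamma$.

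The reflection terms $\bm{\mathrm R}^1_{\alpha_0,-\gamma h_1}$, $\bm{\mathrm R}^2_{\alpha_0,-\gamma h_1}$ and $\bm{\mathrm R}^{1,2}_{\alpha_0,-\gamma h_1}$ are expanded channel by channel exactly as in Lemma~\ref{lemma:OPE1}: each splits into a part reconstructing the three-point remainders $\mathfrak R_{\alpha_0-\gamma h_1}(\bm c)$, $\mathfrak R_{\hat s_1(\alpha_0-\gamma h_2)}(\bm c)$ and $\mathfrak R_{\alpha_0-\gamma h_3}(\bm c)$ with the coefficients already found in the expansion of $\Phi_{\alpha_0,-\gamma h_1}$, and a residual part which must be shown to be integrable in $\bm c$ and cancelled by matching remainder integrals coming from $\mathfrak R^1_{\alpha_0,-\gamma h_1}$ and $\mathfrak R^{1,2}_{\alpha_0,-\gamma h_1}$. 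The required integrability follows from the same inequalities as in Lemma~\ref{lemma:OPE1}, now using $\ps{\bm s,\omega_2}>0$ to control the $\ps{\bm c,e_2}$-integral and $\ps{\alpha_0,e_1}<q$ the $\ps{\bm c,e_1}$-integral. Collecting the three contributions, matching the coefficients of $\norm{\mathcal H_i(z)}^2$ and using the probabilistic identity $\mathcal H(0)=C_\gamma(\alpha_0-\gamma h_1,\alpha_1,\alpha_\infty)$ yields~\eqref{eq:OPE_wrefl21}.

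I expect the main obstacle to be the reversed nesting of the two operations. In Lemma~\ref{lemma:OPE1} the non-reflected fusion was the \emph{outer} operation and the reflection the \emph{inner} one, so Williams' decomposition acted on the inner $e_2$-integral after the outer change of variables had already extracted the $z$-dependence. Here the reflection sits on the outer $e_1$-integral, which is exactly the one carrying the full $z$-dependence, while the non-reflected $e_2$-fusion generating the deepest channel has to be resolved on top of the already reflected and Williams-decomposed configuration. Re-deriving, in this order, the moment estimates and fusion bounds that control the error terms, and verifying that the reorganised cancellations among the $\bm{\mathrm R}$ and $\mathfrak R$ terms — now triggered already at order $\norm z^{\gamma\ps{Q-\alpha_0,e_1}}$ rather than only at the deepest order — leave no spurious contribution, is the delicate part of the argument.
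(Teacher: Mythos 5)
Your identification of the channel structure agrees with the paper: the $s_1$-reflection is attached to the $h_2$-channel because $\ps{\alpha_0-\gamma h_2-Q,e_1}=\ps{\alpha_0,e_1}-\frac{2}{\gamma}>0$, it is produced by applying Williams' decomposition to the portion of the $e_1$-integral concentrated in a ball $B(0,e^{-r})$ with $\norm{z}e^{r}\to\infty$, and the identities $R_{s_1}(\alpha_0)=B^{(1)}_\gamma(\alpha_0,\gamma)R_{s_1}(\hat s_1(\alpha_0-\gamma h_2))$ and $\hat s_1\alpha_0-\gamma h_1=\hat s_1(\alpha_0-\gamma h_2)$ put the coefficient in the stated form; the unreflected $h_3$-channel, the treatment of the reflection terms under $\ps{\bm s,\omega_2}>0$, and the final identification against Theorem~\ref{thm:BPZ} are likewise as in the paper. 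One structural correction, though: the deepest channel does not sit ``on top of the already reflected and Williams-decomposed configuration''. Since $s_1e_2=\rho$, an $e_2$-fusion performed after the $s_1$-reflection carries the exponent $\gamma\ps{Q-\alpha_0,e_1}+\gamma\ps{Q-\hat s_1\alpha_0,e_2}=\gamma\ps{Q-\alpha_0,e_1}+\gamma\ps{Q-\alpha_0,\rho}$, which is strictly lower order; in the paper the $h_3$-channel is extracted from the complementary, unreflected outer part $\delta I_r(z)$ of the integral, treated as in Lemma~\ref{lemma:OPE1}.

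The genuine gap is your assertion that the stated range is precisely where ``the asymptotic analysis is clean'' so that, unlike Lemma~\ref{lemma:OPE1}, ``no analytic-continuation step is needed''. The truth is the opposite, and that continuation is the crux of the paper's proof. In the stated regime $\ps{Q-\alpha_0,e_1}\approx 0$ and $\ps{Q-\alpha_0,e_2}\approx\gamma$, so the deepest-channel exponent is $\gamma\ps{Q-\alpha_0,\rho}\approx\gamma^2$; but the error bounds a direct analysis can deliver are much weaker: the fusion asymptotics~\eqref{eq:fusion} applied to the outer term give only $o\bigl(\norm{z}^{\gamma\ps{Q-\alpha_0,e_1}+\frac{(\ps{\alpha_0,e_1}-2/\gamma)^2}{4}-\eps}\bigr)\approx o\bigl(\norm{z}^{\gamma^2/4}\bigr)$, and the error left by Williams' decomposition of the inner term is only $\mathcal{O}\bigl(\norm{z}^{\gamma^2}\bigr)$ plus $o\bigl(\norm{z}^{\gamma\ps{Q-\alpha_0,e_1}+\eps'}\bigr)$ with $\eps'$ depending on $\ps{\alpha_0,e_1}$ alone. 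None of these is $o\bigl(\norm{z}^{\gamma\ps{Q-\alpha_0,\rho}}\bigr)$ when $\ps{\alpha_0,e_2}$ is near $\frac{2}{\gamma}$, so the coefficient of $\norm{\mathcal{H}_2(z)}^2$ --- hence the second shift equation --- cannot be read off; re-deriving the moment and fusion estimates in a different order cannot repair this, because the estimates themselves are quantitatively insufficient there. The paper's resolution is a continuation in the weight, not in $z$: it first proves $\mathfrak{R}=\mathfrak{R}^1+\mathfrak{R}^2+\Phi^3=o\bigl(\norm{z}^{\gamma\ps{Q-\alpha_0,\rho}}\bigr)$ in the \emph{auxiliary} regime where $\ps{\alpha_0,e_2}$ is close to $q$ (there the deepest exponent is small and the above bounds do suffice); it then invokes the a priori form from Theorem~\ref{thm:BPZ}, $\mathfrak{R}(z)=\sum_{i=0}^2\norm{z}^{\gamma\ps{Q-\alpha_0,\sum_{j\leq i}e_j}}\mathrm{P}_i(z,\bar z;\alpha_0)$ with $\mathrm{P}_i$ power series, together with analyticity of $\mathfrak{R}$ in $\ps{\alpha_0,e_2}$ on a complex neighbourhood of $(\frac{2}{\gamma}-\eps,q)$ (in the spirit of Theorem~\ref{thm:analycity}), to conclude that the obstructing constants $\mathrm{P}_i(0,0;\alpha_0)$ vanish identically, in particular in the stated regime. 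Without this step your identification of the $B^{(2)}_\gamma$ coefficient, and hence the lemma, does not close.
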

	\begin{proof}
		The arguments developed to prove this statement are similar to the ones presented along the proof of Lemma~\ref{lemma:OPE1} but are slightly more subtle and require additional care. 
		Namely the main difference with the previous case is that the random variable $\delta I(z)$ introduced in Equation~\eqref{eq:deltaI} no longer has a finite $L^1$ moment because of the singularity at $x=0$ in the integrand. In order to understand what happens around this singular point this time we write
		\begin{align*}
			\delta I(z)=\delta I_r(z)+\delta J_r(z),\quad\text{with}\quad\delta J_r(z)\coloneqq\int_{B_r}\frac{\norm{x-z}^{\gamma^2}-\norm{x}^{\gamma^2}}{\norm{x}^{\gamma\ps{\alpha_0,e_1}}}F_1(x)M^{\gamma e_1}(d^2x_1)
		\end{align*}
		where $B_r=B(0,e^{-r})$ for some $r=r(z)$ such that $\norm{z}e^{r}\to+\infty$ to be fixed later on. 
		
		The asymptotic of the expectation term is governed by $\delta J_r(z)$ rather that $\delta I_r(z)$. To see why let us write that
		\begin{align*}
			&\Phi_{\alpha_0,-\gamma h_1}(z;\bm c)=\Phi_{\alpha_0-\gamma h_1}(\bm c)+\sum_{i=1}^3\Phi^i(z;\bm c)\quad\text{where}\\
			&\Phi^1(z;\bm c)\coloneqq e^{\ps{\bm s,\bm c}}\expect{\left(\exp\left(-e^{\gamma\ps{\bm c,e_1}}\delta J_r(z)\right)-1\right)e^{-\sum_{i=1}^2e^{\gamma\ps{\bm c,e_i}}I^i_{\alpha_0-\frac2\gamma h_1}}}\\
			&\Phi^2(z;\bm c)\coloneqq e^{\ps{\bm s,\bm c}}\expect{\left(\exp\left(-e^{\gamma\ps{\bm c,e_1}}\delta I_r(z)\right)-1\right)e^{-\sum_{i=1}^2e^{\gamma\ps{\bm c,e_i}}I^i_{\alpha_0-\frac2\gamma h_1}}}\\
			&\Phi^3(z;\bm c)\coloneqq e^{\ps{\bm s,\bm c}}\expect{\left(\exp\left(-e^{\gamma\ps{\bm c,e_1}}\delta I_r(z)\right)-1\right)\left(\exp\left(-e^{\gamma\ps{\bm c,e_1}}\delta J_r(z)\right)-1\right)e^{-\sum_{i=1}^2e^{\gamma\ps{\bm c,e_i}}I^i_{\alpha_0-\frac2\gamma h_1}}}
		\end{align*}
		and study the behavior of these three terms.
		
		Let us start with $\Phi^1$. This term can be treated in the same fashion as the last order term that arises in the asymptotic of Lemma~\ref{lemma:OPE1}. Namely we can use the radial-angular decomposition~\eqref{eq:rad_ang_dec} for the one-dimensional Brownian motion with drift $\nu=\ps{\alpha_0-Q,e_1}$ $\ps{\X_{t+r}(0)-\X_r(0),e_1}+\nu t$ to write that
		\begin{align*}
			\delta J_r(z)&=e^{\gamma\left(\lambda_r'+\M_1\right)}\mathrm J_r(z; -\M_1),\quad\text{with}\\
			\mathrm J_r(z; -\M_1)&\coloneqq \int_{0}^{+\infty}e^{\gamma \mathcal B^{\nu}_t}\int_0^{2\pi}\left(\norm{\frac{e^{-t-r+i\theta}}z-1}^{\gamma^2}-\norm{\frac{e^{-t-r}}z}^{\gamma^2}\right)F_1(e^{-t-r+i\theta})M^{\gamma e_1}_Y(dt+r,d\theta).
		\end{align*}
		Here $\lambda_r'\coloneqq\ps{\X_r(0)+(\alpha_0-Q)r+\gamma h_1\ln\norm{z},e_1}$ and $(\mathcal B^\nu_t)_{t\geq 0}$ is independent of $(\X(x))_{\norm{x}>e^{-r}}$ and started from $-\M_1$, sampled according to its marginal law. Proceeding along the same lines as above we see that, as soon as $\frac{e^{-r}}z\to 0$ as $z\to 0$ (which we assumed to hold)
		\[
		\Phi^1(z;\bm c)=\norm{z}^{\gamma\ps{Q-\alpha_0,e_1}}R_{s_1}(\alpha_0)\Phi_{\hat s_1\alpha_0-\gamma h_1}(\bm c) + l.o.t.
		\]
		Now from their expressions we know that $R_{s_1}(\alpha_0)=B^{(1)}_\gamma(\alpha_0,\gamma)R_{s_1}(\hat s_1(\alpha_0-\gamma h_2))$ and that $\hat s_1\alpha_0-\gamma h_1=\hat s_1(\alpha_0-\gamma h_2)$. As a consequence we end up with the asymptotic:
		\[
		\Phi^1(z;\bm c)=\norm{z}^{\gamma\ps{Q-\alpha_0,e_1}}R_{s_1}(\hat s_1(\alpha_0-\gamma h_2))\Phi_{\hat s_1(\alpha_0-\gamma h_2)}(\bm c) + \mathfrak{R}^1(z)
		\]
		where $\mathfrak{R}^1(z)$ is a $o\left(\norm{z}^{\gamma\ps{Q-\alpha_0,e_1}}\right)$. We will provide a more precise description of $\mathfrak{R}^1$ below.
		
		Let us now turn to $\Phi^2$. Since the integral now avoids the singular point $x=0$ we can proceed in the same way as for the first term in the expansion of Lemma~\ref{lemma:OPE1} but by replacing $\delta I(z)$ by $\delta I_r(z)$:
		\begin{align*}
			\E\Big[\left(\exp\left(-e^{\gamma\ps{\bm c,e_1}}\delta I_r(z)\right)-1\right)&e^{-\sum_{i=1}^2e^{\gamma\ps{\bm c,e_i}}I^i_{\alpha_0-\frac2\gamma h_1,0}}\Big]\\
			=-e^{\ps{\bm s+\gamma e_1,\bm c}}\int_{\C\setminus B_r} \frac{\norm{x-z}^{\gamma^2}-\norm{x}^{\gamma^2}}{\norm{x}^{\gamma\ps{\alpha_0,e_1}}}&F_1(x)\times\\
			&\E\left[\int_0^1e^{-te^{\gamma\ps{\bm c,e_{1}}}\delta J(z,x)}dt\prod_{i=1}^2e^{-e^{\gamma\ps{\bm c,e_i}}J_{\alpha_0-\gamma h_1}^i(x)}\right]d^2x.
		\end{align*}
		The next step is to write the expectation term as
		\begin{align*}
			&\E\left[\int_0^1e^{-te^{\gamma\ps{\bm c,e_{1}}}\delta J(z,x)}dt e^{-e^{\gamma\ps{\bm c,e_1}}J_{\alpha_0-\gamma h_1}^1(x)}e^{-e^{\gamma\ps{\bm c,e_2}}I_{\alpha_0-\gamma h_2}^2}\right]d^2x\\
			&-e^{\gamma\ps{\bm c,e_2}}\E\left[\delta J_{\alpha_0-\gamma h_1}^2(x)\int_0^1e^{-te^{\gamma\ps{\bm c,e_{1}}}\delta J(z,x)}dt e^{-e^{\gamma\ps{\bm c,e_1}}J_{\alpha_0-\gamma h_1}^1(x)}e^{-e^{\gamma\ps{\bm c,e_2}}I_{\alpha_0-\gamma h_2}^2}\right]d^2x+l.o.t.
		\end{align*}
		Along the same lines as in the proof of Lemma~\ref{lemma:OPE1} we infer that as $z\to0$, the term that appears on the second line is such that
		\begin{align*}
			e^{\ps{\bm s+\gamma \rho,\bm c}}\int_{\C\setminus B_r}& \frac{\norm{x-z}^{\gamma^2}-\norm{x}^{\gamma^2}}{\norm{x}^{\gamma\ps{\alpha_0,e_1}}}F_1(x)\times\\
			&\E\left[\delta J_{\alpha_0-\gamma h_1}^2(x)\int_0^1e^{-te^{\gamma\ps{\bm c,e_{1}}}\delta J(z,x)}dt e^{-e^{\gamma\ps{\bm c,e_1}}J_{\alpha_0-\gamma h_1}^1(x)}e^{-e^{\gamma\ps{\bm c,e_2}}I_{\alpha_0-\gamma h_2}^2}\right]d^2x\\
			&\sim \norm{z}^{\gamma\ps{Q-\alpha_0,\rho}}\Phi_{\alpha_0-\gamma h_3}(\bm c)\int_{\C\setminus B_r} \frac{\norm{x-1}^{\gamma^2}-\norm{x}^{\gamma^2}}{\norm{x}^{\gamma\left(\ps{\alpha_0,e_1+e_2}-q\right)}}d^2x\int_{\C} \frac{\norm{y-1}^{\gamma^2}-\norm{y}^{\gamma^2}}{\norm{y}^{\gamma\ps{\alpha_0,e_2}}}d^2y.
		\end{align*}
		The last integrals are well-defined as $r\to+\infty$, and can be evaluated using Lemma~\ref{lemma:evaluate_B} above, thanks to which we can deduce that this term is asymptotically equivalent to \[
		\norm{z}^{\gamma\ps{Q-\alpha_0,\rho}}B^{(2)}_\gamma(\alpha_0,\gamma)\Phi_{\alpha_0-\gamma h_3}(\bm c)
		\] as soon as $r\to+\infty$ when $z\to 0$. 
		Therefore we can write that 
		\[
		\Phi^2(z;\bm c)=\norm{z}^{\gamma\ps{Q-\alpha_0,\rho}}B^{(2)}_\gamma(\alpha_0,\gamma)\Phi_{\alpha_0-\gamma h_3}(\bm c)+\mathfrak{R}_2(z),
		\]
		where the remainder term is given by 
		\begin{align*}
			\mathfrak{R}^2(z)=-e^{\ps{\bm s+\gamma e_1,\bm c}}\int_{\C\setminus B_r}& \frac{\norm{x-z}^{\gamma^2}-\norm{x}^{\gamma^2}}{\norm{x}^{\gamma\ps{\alpha_0,e_1}}}F_1(x)\times\\
			&\E\left[\int_0^1e^{-te^{\gamma\ps{\bm c,e_{1}}}\delta J(z,x)}dt e^{-e^{\gamma\ps{\bm c,e_1}}J_{\alpha_0-\gamma h_1}^1(x)}e^{-e^{\gamma\ps{\bm c,e_2}}I_{\alpha_0-\gamma h_2}^2}\right]d^2x
		\end{align*}
		up to a $o\left(\norm{z}^{\gamma\ps{Q-\alpha_0,\rho}}\right)$.

		\subsubsection{The reflection terms}
		The reflection terms required to make sense of the four-point correlation functions can be processed in the very same way as in the proof of Lemma~\ref{lemma:OPE1} so we omit the computations of the asymptotic of these terms. We stress that conducting these explicit but tedious computations show that the condition for the reflection terms involved to be integrable is the same as in Lemma~\ref{lemma:OPE1}, that is $\ps{\bm s,\omega_2}>0$. 
		
		\subsubsection{The remainder terms}\label{sub:remainder}
		It remains to consider the remainder term $\mathfrak{R}(z)\coloneqq \mathfrak{R}^1(z)+\mathfrak{R}^2(z)+\Phi^3(z)$ that arises in the above expansions. For this we start by writing that 
		\begin{align*}
			\mathfrak{R}(z)=\Phi_{\alpha_0,-\gamma h_1}(z;\bm c)&-\norm{z}^{\gamma\ps{Q-\alpha_0,e_1}}R_{s_1}(\hat s_1(\alpha_0-\gamma h_2))\Phi_{\hat s_1(\alpha_0-\gamma h_2)}(\bm c)\\
			&-\norm{z}^{\gamma\ps{Q-\alpha_0,\rho}}B^{(2)}_\gamma(\alpha_0,\gamma)\Phi_{\alpha_0-\gamma h_3}(\bm c).
		\end{align*}
		In order to explain why this term does not contribute to the identification of the coefficients in the expansion of $\mathcal H$ from Theorem~\ref{thm:OPE} we make the simplifying assumption that $\ps{\bm s,\omega_i}>0$ for all $i$. If this is not the case then we can reproduce the argument developed in Subsection~\ref{sub:refl1} to show that the reasoning remains valid in that case too.
		
		To start with we already know thanks to Theorem~\ref{thm:BPZ} that the \emph{a priori} expansion of $\int_{\R^2}\Phi_{\alpha_0,-\gamma h_1}(z;\bm c)d\bm c$ only features terms which are (infinite) polynomials in $z,\bar z$ multiplied by $\norm{z}^{\gamma\ps{Q-\alpha_0,h_1-h_i}}$ for $i=1,2,3$ as soon as $\mathcal{H}$ makes sense. Combining this with the above description of $\mathfrak R(z)$ allows to write that this is also the case for $\mathfrak R(z)$ in that
		\begin{equation}\label{eq:JPP}
			\int_{\R^2}\mathfrak{R}(z)d\bm c=\sum_{i=0}^2 \norm{z}^{\gamma\ps{Q-\alpha_0,\sum_{j=1}^ie_j}}\mathrm{P}_i(z,\bar z;\alpha_0)
		\end{equation}
		for some power series $\mathrm{P}_i(z,\bar z;\alpha_0)$ in $z,\bar z$, provided that $\mathcal{H}$ is well-defined and under the assumption that $\ps{\alpha_0,e_2}<\frac2\gamma$ (and where we have not added the reflection terms that may appear above not to overload the discussion). Our goal is to show that $\mathrm{P}_i(0,0;\alpha_0)=0$, and for this we will now carry a study of the behavior of $\mathfrak{R}(z)$ as $z\to0$.
		
		To start with the explicit expression of $\mathfrak{R}(z)$ shows, in agreement with Theorem~\ref{thm:analycity}, that the dependence in $\ps{\alpha_0,e_2}$ of $\mathfrak{R}(z)$ is actually analytic in a complex neighborhood of $(\frac2\gamma-\eps,q)$. For instance let us consider the first remainder term $\mathfrak{R}^1(z)$, defined by
		\[
		\mathfrak{R}^1(z)=\Phi^1(\bm c)-\norm{z}^{\gamma\ps{Q-\alpha_0,e_1}}B^{(1)}_\gamma(\alpha_0,\gamma)R_{s_1}(\hat s_1(\alpha_0-\gamma h_2))\Phi_{\hat s_1(\alpha_0-\gamma h_2)}(\bm c).
		\]
		Then the probabilistic representation of $\Phi_{\hat s_1(\alpha_0-\gamma h_2)}$ makes sense for $\ps{\hat s_1(\alpha_0-\gamma h_2)-Q,e_i}<0$ for $i=1,2$, and in particular makes sense for $\alpha_0$ close to $Q$. Likewise the term
		\[
		\Phi^1(\bm c)=e^{\ps{\bm s,\bm c}}\expect{\left(\exp\left(-e^{\gamma\ps{\bm c,e_1}}\delta J_r(z)\right)-1\right)e^{-\sum_{i=1}^2e^{\gamma\ps{\bm c,e_i}}I^i_{\alpha_0-\gamma h_1}}}
		\]
		is perfectly well-defined when $\alpha_0$ is taken close to $Q$. The reasoning developed along the proof of Theorem~\ref{thm:analycity} shows that in that case both terms are analytic in $\ps{\alpha_0,e_2}$. We can proceed in the same way for the other remainder terms, which are probabilistically speaking perfectly well-defined for $\alpha_0$ close to $Q$. This shows that $\mathfrak{R}$ depends analytically in $\ps{\alpha_0,e_2}$ in a complex neighborhood of $(\frac2\gamma-\eps,q)$.
		In particular this allows to extend the validity of Equation~\eqref{eq:JPP} for $\ps{\alpha_0,e_2}$ in $(\frac2\gamma-\eps,q)$, the corresponding integral in the $\bm c$ variable being still absolutely convergent as argued in the proof of Theorem~\ref{thm:analycity}.
		
		Now we can use the fusion asymptotics~\eqref{eq:fusion} to infer that
		\begin{equation}\label{eq:JPP1}
			\mathfrak{R}^2(z)=o\left(\norm{z}^{\gamma\ps{Q-\alpha_0,e_1}+\frac{\left(\ps{\alpha_0,e_1}-\frac2\gamma\right)^2}{4}-\eps.}\right),
		\end{equation}
		where the $o(\cdot)$ is uniformly integrable with respect to $\bm c$.
		To see why we can reproduce the argument developed in the proof of Lemma~\ref{lemma:OPE1}:
		\begin{align*}
			\int_{\C\setminus B_r}& \frac{\norm{x-z}^{\gamma^2}-\norm{x}^{\gamma^2}}{\norm{x}^{\gamma\ps{\alpha_0,e_1}}}F_1(x)\times\\
			&\E\left[\int_0^1e^{-te^{\gamma\ps{\bm c,e_{1}}}\delta J(z,x)}dt e^{-e^{\gamma\ps{\bm c,e_1}}J_{\alpha_0-\gamma h_1}^1(x)}e^{-e^{\gamma\ps{\bm c,e_2}}I_{\alpha_0-\gamma h_2}^2}\right]d^2x\\
			\sim \norm{z}^{\gamma^2}e^{-r(2-\gamma\ps{\alpha_0,e_1})} \int_{\C\setminus B_0}& \norm{x}^{-\gamma\ps{\alpha_0,e_1}}F_1(xe^{-r})\times\\
			&\E\left[\int_0^1e^{-te^{\gamma\ps{\bm c,e_{1}}}\delta J(z,e^{-r}x)}dt e^{-e^{\gamma\ps{\bm c,e_1}}J_{\alpha_0-\gamma h_1}^1(e^{-r}x)}e^{-e^{\gamma\ps{\bm c,e_2}}I_{\alpha_0-\gamma h_2}^2}\right]d^2x
		\end{align*}
		where we can use Equation~\eqref{eq:fusion} to see that the expectation term is at most a $o\left(e^{-r\left(\frac{(\ps{\alpha_0,e_1}-\frac2\gamma)^2}{4}-\eps\right)}\right)$. Since $r$ is chosen so that $\frac{e^{-r}}{z}\to 0$ as $z\to 0$ we recover Equation~\eqref{eq:JPP1}. This shows that $\mathfrak{R}^2(z)$ is a $o\left(\norm{z}^{\gamma\ps{Q-\alpha_0,\rho}}\right)$ as soon as $\ps{\alpha_0,e_2}$ is taken close enough to $q$ and $\ps{\alpha_0,e_1}$ different from $\frac2\gamma$. Moreover this term is seen to be integrable with respect to $\bm c$.

		The same applies to the other terms $\mathfrak{R}^1(z)$ and $\Phi^3(z;\bm c)$. This is readily seen for $\Phi^3$ which is a lower order term compared to $\Phi^2$, while for $\mathfrak{R}^1$ we can write that
		\begin{align*}
			&\expect{\left(\exp\left(-e^{\gamma\ps{\bm c,e_1}}\delta J_r(z)\right)-1\right)e^{-\sum_{i=1}^2e^{\gamma\ps{\bm c,e_i}}I^i_{\alpha_0-\gamma h_1}}}\\
			&=\int_0^{+\infty}(-\nu)e^{\nu\M_1}\expect{\left(\exp\left(-e^{\gamma\left(\ps{\bm c,e_1}+\lambda_r'+\M_1\right)}\mathrm J_r(z; -\M_1)\right)-1\right)e^{-\sum_{i=1}^2e^{\gamma\ps{\bm c,e_i}}I^i_{\alpha_0-\gamma h_1}}}d\M_1\\
			&=\norm{z}^{\gamma\ps{Q-\alpha_0,e_1}}e^{\ps{Q-\alpha_0,e_1}{\ps{\bm c,e_1}}}\times\\
			&\expect{e^{-\nu\X_r(0)-\nu^2r}\int_{\ps{\bm c,e_1}+\lambda_r'}^{+\infty}(-\nu)e^{\nu\M_1}\left(e^{-e^{\gamma\M_1}\mathrm J_r(z;\ps{\bm c,e_1}+\lambda_r' -\M_1)}-1\right)e^{-\sum_{i=1}^2e^{\gamma\ps{\bm c,e_i}}I^i_{\alpha_0-\gamma h_1}}}d\M_1.
		\end{align*}
		In the latter note that $I^i_{\alpha_0-\frac2\gamma h_1}$ does depend on $\M_1$ and $\mathrm{J}$ since it involves the sigma-algebra generated by the $(\X(x))_{\norm{x}<e^{-r}}$. However we can control the integral over $B_r$ that enters $I^i_{\alpha_0-\frac2\gamma h_1}$ in the same fashion as in the proof of Lemma~\ref{lemma:OPE1}. As a consequence and in order to keep the proof concise we can assume that $I^i_{\alpha_0-\gamma h_1}$ is independent of $\M_1$ and $\mathrm{J}$. 
		Now the first exponential term entering the expectation is a Girsanov transform, thanks to which we end up with
		\begin{align*}
			&=\norm{z}^{\gamma\ps{Q-\alpha_0,e_1}}e^{\ps{Q-\alpha_0,e_1}{\ps{\bm c,e_1}}}\times\\
			&\expect{\int_{\ps{\bm c,e_1}+\hat{\lambda}_r}^{+\infty}(-\nu)e^{\nu\M_1}\left(e^{-e^{\gamma\M_1}\mathrm J_r(z;\ps{\bm c,e_1}+\hat{\lambda}_r -\M_1)}-1\right)e^{-\sum_{i=1}^2e^{\gamma\ps{\bm c,e_i}}I^i_{\hat s_1\alpha_0-\gamma h_1}}}d\M_1.
		\end{align*}
		where $\hat\lambda_r=\ps{\X_r(0)+(Q-\alpha_0)r+\gamma h_1\ln\norm{z},e_1}$. Therefore we can write that
		\begin{align*}
			\mathfrak{R}^1(z)&=\norm{z}^{\gamma\ps{Q-\alpha_0,e_1}}e^{\ps{Q-\alpha_0,e_1}{\ps{\bm c,e_1}}}\times\\
			&\expect{\int_{-\infty}^{\ps{\bm c,e_1}+\hat{\lambda}_r}(-\nu)e^{\nu\M_1}\left(e^{-e^{\gamma\M_1}\mathrm J(-\infty)}-1\right)e^{-\sum_{i=1}^2e^{\gamma\ps{\bm c,e_i}}I^i_{\hat s_1\alpha_0-\gamma h_1}}}d\M_1\\
			+&\expect{\int_{\ps{\bm c,e_1}+\hat{\lambda}_r}^{+\infty}(-\nu)e^{\nu\M_1}\left(e^{-e^{\gamma\M_1}\mathrm J_r(z;\ps{\bm c,e_1}+\hat{\lambda}_r -\M_1)}-e^{-e^{\gamma\M_1}\mathrm J(-\infty)}\right)e^{-\sum_{i=1}^2e^{\gamma\ps{\bm c,e_i}}I^i_{\hat s_1\alpha_0-\gamma h_1}}}d\M_1
		\end{align*}
		up to lower order terms, where $\mathrm{J}(-\infty)\coloneqq \lim\limits_{z\to 0}\mathrm J_r(z;\ps{\bm c,e_1}+\hat{\lambda}_r -\M_1)$. The first expectation term is asymptotically equivalent to 
		\begin{align*}
			\expect{\mathrm J(-\infty)\int_{-\infty}^{\ps{\bm c,e_1}+\hat{\lambda}_r}\nu e^{(\nu+\gamma)\M_1}e^{-\sum_{i=1}^2e^{\gamma\ps{\bm c,e_i}}I^i_{\hat s_1\alpha_0-\gamma h_1}}}d\M_1
		\end{align*} which is at most a $\mathcal{O}\left(\norm{z}^{\gamma^2}\right)$ while the second one can also be seen to be a $o\left(\norm{z}^{\eps}\right)$ for some positive $\eps$ that only depends on $\ps{\alpha_0,e_1}$. Indeed one can use the Markov property of the process $\mathcal{B}$ in the same fashion as in the proof of~\cite[Proposition 4.10]{Toda_correl1} to see that  the second expectation term is governed by a term of same order as
		\begin{align*}
			\expect{e^{\gamma(\ps{\bm c,e_1}+\hat{\lambda}_r)}\int_{\ps{\bm c,e_1}+\hat{\lambda}_r}^{+\infty}e^{(\nu+\gamma)\M_1}e^{-e^{\gamma\M_1}\mathrm J(-\infty)}e^{-\sum_{i=1}^2e^{\gamma\ps{\bm c,e_i}}I^i_{\hat s_1\alpha_0-\gamma h_1}}}d\M_1\\
			&
		\end{align*}
		which is as desired.
		In particular for $\ps{Q-\alpha_0,e_2}$ small enough we see that the corresponding term in the expansion of $\mathfrak{R}$ is a $o\left(\norm{z}^{\gamma\ps{Q-\alpha_0,\rho}}\right)$.
		
		Recollecting terms allows to claim that for $\ps{\alpha_0,e_2}$ close to $q$ 
		\[
		\mathfrak{R}(z)=o\left(\norm{z}^{\gamma\ps{Q-\alpha_0,\rho}}\right)
		\]
		where the $o(\cdot)$ term in absolutely integrable with respect to $\bm c$. From this we can infer that
		\[
		\int_{\R^2}\mathfrak{R}(z)d\bm c=o\left(\norm{z}^{\gamma\ps{Q-\alpha_0,\rho}}\right).
		\]
		As a consequence we get that the polynomials $\mathrm{P}_i(z,\bar z;\alpha_0)$ are such that $\mathrm{P}_i(0,0;\alpha_0)=0$ for $i=1,2,3$ under this assumption.
		Now this coefficient is known to depend analytically in $\ps{\alpha_0,e_2}$, so that we can deduce that $\mathrm{P}_i(0,0;\alpha_0)=0$ as soon as $\mathfrak{R}$ makes sense. This allows to discard the remainder term $\mathfrak{R}(z)$ in the identification of the coefficients arising in the expansion of $\mathcal{H}$ from either Theorem~\ref{thm:BPZ} and Lemma~\ref{lemma:OPE2}.
		
		To sum things up, we have proved that $\mathcal H$ has the following expansion:
		\begin{align*}
			\mathcal H(z)&=C_\gamma(\alpha_0-\gamma h_1,\alpha_1,\alpha_\infty) + Az+\bar A\bar z\\
			&+\norm{z}^{\gamma\ps{Q-\alpha_0,e_1}}B^{(1)}_\gamma(\alpha_0,\gamma)R_{s_1}(\hat s_1(\alpha_0-\gamma h_2))C_\gamma(\hat s_1(\alpha_0-\gamma h_2),\alpha_1,\alpha_\infty)\left(Bz+\bar B\bar z\right)\\
			& + \norm{z}^{\gamma\ps{Q-\alpha_0,\rho}}B^{(2)}_\gamma(\alpha_0,\gamma)C_\gamma(\alpha_0-\gamma h_3,\alpha_1,\alpha_\infty)+l.o.t.
		\end{align*}

		We can conclude for the proof of Lemma~\ref{lemma:OPE2} in the same fashion as we did for Lemma~\ref{lemma:OPE1} by identification with the coefficients arising in the expansion of Theorem~\ref{thm:BPZ}.
	\end{proof}
	
	
	\subsection{The case where $\chi=\frac2\gamma$}\label{subsec:OPE_3}
	Eventually we treat the case where $\chi$ is equal to $\frac2\gamma$. Under this assumption the expansion features two reflection terms, as the following statement discloses:
	\begin{lemma}\label{lemma:OPE3}
		Assume that $\chi=\frac2\gamma$. Then 
		\begin{equation}\label{eq:OPE_wrefl32}
			\begin{split}
				\mathcal{H}(z)=&C_\gamma(\alpha_0-\frac2\gamma\gamma h_1,\alpha_1,\alpha_\infty)\norm{\mathcal{H}_0(z)}^2\\
				&+B^{(1)}_\gamma(\alpha_0,\frac2\gamma)R_{s_1}(\hat s_1(\alpha_0-\frac2\gamma h_2))C_\gamma(\hat s_1(\alpha_0-\frac2\gamma h_2),\alpha_1,\alpha_\infty)\norm{\mathcal{H}_1(z)}^2\\
				&+B^{(2)}_\gamma(\alpha_0,\frac 2\gamma)R_{s_1s_2}(\hat s_1\hat s_2(\alpha_0-\frac2\gamma h_3))C_\gamma(\hat s_1\hat s_2(\alpha_0-\frac2\gamma h_3),\alpha_1,\alpha_\infty)\norm{\mathcal{H}_2(z)}^2
			\end{split}
		\end{equation}
		as soon as $(\alpha_0,-\frac2\gamma h_1,\alpha_1,\alpha_\infty)\in\mathcal{A}_4$.
	\end{lemma}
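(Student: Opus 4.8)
The plan is to follow the template of the proofs of Lemmas~\ref{lemma:OPE1} and~\ref{lemma:OPE2}, now with $\chi=\frac2\gamma$, the new feature being that \emph{two} reflection terms must be produced. As before I would first establish the expansion~\eqref{eq:OPE_wrefl32} under the additional assumption that $\alpha_0$ lies close to $Q$ (so that every shifted three-point function $C_\gamma(\cdot,\alpha_1,\alpha_\infty)$ admits a convergent probabilistic representation after the appropriate reflection, and the arising $x$-integrals converge), and then extend it to the whole set $(\alpha_0,-\frac2\gamma h_1,\alpha_1,\alpha_\infty)\in\mathcal{A}_4$ by uniqueness of the analytic continuation, invoking the analyticity of the correlation functions from Theorem~\ref{thm:analycity} and of the extended $C_\gamma$ from Theorem~\ref{thm:OPE}. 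The starting point is the decomposition $I^1_{\alpha_0,-\frac2\gamma h_1}(z)=I^1_{\alpha_0-\frac2\gamma h_1}+\delta I(z)$, where now $\delta I(z)=\int_\C\frac{\norm{x-z}^2-\norm{x}^2}{\norm{x}^{\gamma\ps{\alpha_0,e_1}}}F_1(x)M^{\gamma e_1}(d^2x)$ since $\chi\gamma\ps{h_1,e_1}=2$ while $\ps{h_1,e_2}=0$. Because $\norm{x-z}^2-\norm{x}^2=\norm{z}^2-z\bar x-\bar z x$ is a genuine polynomial in $z,\bar z$, the integer-power contributions $Az+\bar A\bar z$ and $\norm{z}^2$ will have to be separated (exactly as in the $\gamma>1$ case of Lemma~\ref{lemma:OPE1}) and recognized as part of the analytic $\norm{\mathcal H_0(z)}^2$ block rather than of the non-integer powers.

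I would then expand $\Phi_{\alpha_0,-\frac2\gamma h_1}(z;\bm c)$ and isolate three groups of terms. The zeroth-order term $\Phi_{\alpha_0-\frac2\gamma h_1}(\bm c)$ requires no reflection, since $\alpha_0-\frac2\gamma h_1\in Q+\mathcal{C}_-$. The term of order $\norm{z}^{\frac2\gamma\ps{Q-\alpha_0,e_1}}$ comes from the singular behaviour of the $i=1$ integral at $x=0$: applying the radial--angular decomposition~\eqref{eq:rad_ang_dec} together with the one-dimensional Williams decomposition in the $e_1$ direction around $z=0$, exactly as in Lemma~\ref{lemma:OPE2}, produces a single reflection. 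Using $\hat s_1\alpha_0-\frac2\gamma h_1=\hat s_1(\alpha_0-\frac2\gamma h_2)$ (which follows from $s_1h_2=h_1$) together with $R_{s_1}(\alpha_0)=B^{(1)}_\gamma(\alpha_0,\frac2\gamma)R_{s_1}(\hat s_1(\alpha_0-\frac2\gamma h_2))$, this gives the coefficient $B^{(1)}_\gamma(\alpha_0,\frac2\gamma)R_{s_1}(\hat s_1(\alpha_0-\frac2\gamma h_2))$ in front of $\Phi_{\hat s_1(\alpha_0-\frac2\gamma h_2)}(\bm c)$, the relevant $x$-integral being evaluated with Lemma~\ref{lemma:evaluate_B}.

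The genuinely new and hardest point is the term of order $\norm{z}^{\frac2\gamma\ps{Q-\alpha_0,\rho}}$, which carries the composite reflection $R_{s_1s_2}$. Indeed, near $Q$ the shift $\alpha_0-\frac2\gamma h_3$ first violates the $e_2$-wall and, after reflecting there, also violates the $e_1$-wall (since $\ps{\hat s_2(\alpha_0-\frac2\gamma h_3)-Q,e_1}\to\frac2\gamma>0$), so a single reflection does not suffice. To extract it I would invoke the full two-dimensional path decomposition of Subsection~\ref{subsec:brown_cond_neg}: decomposing the planar drifted Brownian motion $\X_{t+r}(0)-\X_r(0)+(\alpha_0-Q)t$ around $z=0$ through the process $\B^\nu$ obtained by joining $\X^1$ (run until hitting $\partial\mathcal{C}_1$), $\X^2$ (until hitting $\partial\mathcal{C}_2$) and $\X^3$ (conditioned to stay inside $\mathcal{C}_-$), and then tracking the two successive Girsanov shifts along $s_2$ and then $s_1$. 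By the multiplicativity $R_{s\tau}(\alpha)=R_s(\tau\alpha)R_\tau(\alpha)$ of the reflection coefficients, together with $s_1s_2h_3=h_1$, this should reproduce precisely $B^{(2)}_\gamma(\alpha_0,\frac2\gamma)R_{s_1s_2}(\hat s_1\hat s_2(\alpha_0-\frac2\gamma h_3))$ in front of $\Phi_{\hat s_1\hat s_2(\alpha_0-\frac2\gamma h_3)}(\bm c)$. I expect the bookkeeping of these two chained Girsanov transforms, and checking that the two-step decomposition yields $R_{s_1s_2}$ with exactly the right multiplicative constant, to be the main obstacle.

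Finally, I would repeat this expansion for the reflection terms $\expect{\bm{\mathrm R}^1_{\bm\alpha}(\bm c)}$, $\expect{\bm{\mathrm R}^2_{\bm\alpha}(\bm c)}$ and $\expect{\bm{\mathrm R}^{1,2}_{\bm\alpha}(\bm c)}$, as in Lemmas~\ref{lemma:OPE1} and~\ref{lemma:OPE2}, verifying that the non-integrable pieces cancel against the corresponding terms in $\mathfrak R^1_{\alpha_0,\alpha}$ and $\mathfrak R^{1,2}_{\alpha_0,\alpha}$, and that the leftover remainders are $o(\norm{z}^{\frac2\gamma\ps{Q-\alpha_0,\rho}})$ via the fusion asymptotics~\eqref{eq:fusion}; as in Lemma~\ref{lemma:OPE2}, the analyticity of the remainder in $\ps{\alpha_0,e_2}$ near $\alpha_0=Q$ is what lets one conclude that the corresponding polynomial coefficients $\mathrm P_i(0,0;\alpha_0)$ vanish, so that the remainders can be discarded. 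Identifying the coefficients of $\norm{\mathcal H_i(z)}^2$ with those furnished by Theorem~\ref{thm:BPZ} then yields~\eqref{eq:OPE_wrefl32}, and analytic continuation removes the auxiliary assumption that $\alpha_0$ is near $Q$.
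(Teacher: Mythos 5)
Your plan is correct and follows essentially the same route as the paper: work first with $\alpha_0$ close to $Q$, extract the $R_{s_1}$ term via the one-dimensional Williams decomposition as in Lemma~\ref{lemma:OPE2}, extract the $R_{s_1s_2}$ term via the two-dimensional path decomposition of Subsection~\ref{subsec:brown_cond_neg}, control remainders by fusion asymptotics together with the a priori power-series form from Theorem~\ref{thm:BPZ} and analyticity in $\ps{\alpha_0,e_2}$, then conclude by analytic continuation. The only organizational difference is the opening split: rather than subtracting to form $\delta I(z)$, the paper decomposes both $I^1$ and $I^2$ into a small ball $B_r$ around the origin plus its complement (the composite reflection arising precisely from the product term in which both ball integrals concentrate simultaneously), a setup your invocation of the planar path decomposition would force in any case.
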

	\begin{proof}
		The terms associated to the reflection coefficients can be dealt with in the same way as in the proof of Lemma~\ref{lemma:OPE1} so that we will omit them in what follows and focus on the expansion of $\Phi_{\alpha_0,-\frac2\gamma h_1}(z;\bm c)$. To start with we make the assumption that $\alpha_0$ is close to $Q$, for a meaning of close that will be made precise during the proof. 
		
		We start by picking $r=r(z)>0$ such that $\norm{z}e^r\to +\infty$ as $z\to0$, to be fixed later on, and split the integrals involved as
		\begin{align*}
			I^i_{\alpha_0,-\frac2\gamma h_1}(z)=I^i_r(z)+J^i_r(z),\quad\text{with}\quad J^i_{r}(z)\coloneqq \int_{B_r}\frac{\norm{x_i-z}^{\frac{2}{\gamma}\ps{h_1,e_i}}}{\norm{x_i}^{\gamma\ps{\alpha_0,e_i}}}F_i(x)M^{\gamma e_i}(d^2x_i)
		\end{align*}
		where $B_r=B(0,e^{-r})$. By doing so we can write that
		\begin{align*}
			\Phi_{\alpha_0,-\frac2\gamma h_1}(z;\bm c)=&\Phi_{\alpha_0-\frac2\gamma h_1}(\bm c) + \Phi^1(z;\bm c)+\Phi^2(z;\bm c)+\Phi^3(z;\bm c),\quad\text{where}\\
			\Phi^1(z;\bm c)\coloneqq& e^{\ps{\bm s,\bm c}}\E\left[e^{-\sum_{i=1}^2e^{\gamma\ps{\bm c,e_i}}I^i_r(0)}\left(e^{-e^{\gamma\ps{\bm c,e_1}}J^1_r(z)}-1\right)\right] \\
			\Phi^2(z;\bm c)\coloneqq &e^{\ps{\bm s,\bm c}}\E\left[e^{-\sum_{i=1}^2e^{\gamma\ps{\bm c,e_i}}I^i_r(0)}\left(e^{-e^{\gamma\ps{\bm c,e_1}}J^1_r(z)}-1\right)\left(e^{-e^{\gamma\ps{\bm c,e_2}}J^2_r}-1\right)\right]\\
			\Phi^3(z;\bm c)\coloneqq& e^{\ps{\bm s,\bm c}}\E\left[e^{-e^{\gamma\ps{\bm c,e_1}}I^1_{\alpha_0,-\frac2\gamma h_1}(z)}e^{-e^{\gamma\ps{\bm c,e_2}}I^2_{\alpha_0,-\frac{2}\gamma h_1}}\left(1-e^{-e^{\gamma\ps{\bm c,e_i}}(I^1_r(0)-I^1_r(z))}\right)\right]\\
			+ &e^{\ps{\bm s,\bm c}}\E\left[e^{-e^{\gamma\ps{\bm c,e_1}}I^1_r(0)}e^{-e^{\gamma\ps{\bm c,e_2}}I^2_{\alpha_0,-\frac{2}\gamma h_1}}\left(1-e^{-e^{\gamma\ps{\bm c,e_i}}J^1_r(0)}\right)\right].
		\end{align*}
		As we will see, the expansion of $\Phi^1$ allows to infer the equality
		\[
		R_{s_1}(\alpha_0)C_\gamma\left(\hat s_1\alpha_0-\frac2\gamma h_1,\alpha_1,\alpha_\infty\right)=A_\gamma^{(1)}\left(-\frac2\gamma h_1,\alpha_0,\alpha_1,\alpha_\infty\right)C_\gamma\left(\alpha_0-\frac2\gamma h_1,\alpha_1,\alpha_\infty\right)
		\] 
		while the expansion of $\Phi^2$ yields 
		\[
		R_{s_1s_2}(\alpha_0)C_\gamma\left(\hat s_1\hat s_2\alpha_0-\frac2\gamma h_1,\alpha_1,\alpha_\infty\right)=A_\gamma^{(2)}\left(-\frac2\gamma h_1,\alpha_0,\alpha_1,\alpha_\infty\right)C_\gamma\left(\alpha_0-\frac2\gamma h_1,\alpha_1,\alpha_\infty\right)
		\] 
		Since the coefficients that appear in these expansions satisfy the properties that
		\[
		R_{s_1}(\alpha_0)=B^{(1)}_\gamma(\alpha_0,\frac2\gamma)R_{s_1}\left(\alpha_0-\frac2\gamma h_2\right),\quad R_{s_1s_2}(\alpha_0)=B^{(2)}_\gamma(\alpha_0,\frac2\gamma)R_{s_1s_2}\left(\alpha_0-\frac2\gamma h_3\right),
		\]
		the statement of Lemma~\ref{lemma:OPE3} follows from these equalities along the same lines as in the proof of Lemma~\ref{lemma:OPE1}.
		
		\subsubsection{The leading terms in the expansion}
		We start by treating the first term in the expansion of $\Phi_{\alpha_0,-\frac2\gamma h_1}(z;\bm c)$, given by $\Phi^1$. The very same reasoning as the one conducted along the proof of Lemmas~\ref{lemma:OPE1} and~\ref{lemma:OPE2} still applies and shows that this term is asymptotically equivalent to 
		\[
		\norm{z}^{\frac2\gamma\ps{Q-\alpha_0,e_1}}R_{s_1}(\alpha_0)\Phi_{\hat s_1\alpha_0-\frac2\gamma h_1}(\bm c).
		\] 
		To see why note that the only change to make compared to the proof of Lemma~\ref{lemma:OPE2} is to consider $\lambda_r+\frac{2}{\gamma}h_1\ln\norm{z}$ instead of $\lambda_r+\gamma h_1\ln\norm{z}$ hereafter. With this new notation at hand we again use the radial-angular decomposition~\eqref{eq:rad_ang_dec} to write that
		\begin{align*}
			J^1_r(z)=& e^{\gamma(\lambda_r+\M_1)} \int_{0}^{+\infty}e^{\gamma\ps{\B^\nu_t,e_1}}\int_0^{2\pi }\norm{\frac{e^{-t-r+i\theta}}{z}-1}^{2}F_1(e^{-t-r+i\theta})M^{\gamma e_1}_{\Y}(dt+r,d\theta).
		\end{align*}
		The asymptotic is thus described in the same fashion as in the proof of Lemma~\ref{lemma:OPE1}, the only difference being the value of $\lambda_r$ considered there. 
		Moreover and similarly to the $\chi=\gamma$ case we can use the remarkable property of the coefficients $B^{(1)}_\gamma$ and $R_{s_1}$
		\[
		R_{s_1}(\alpha_0)=B^{(1)}_\gamma(\alpha_0,\frac2\gamma)R_{s_1}(\hat s_1(\alpha_0-\frac2\gamma h_2))
		\] 
		to infer the following expansion for $\Phi^1$:
		\[
		\Phi^1(z;\bm c)=\norm{z}^{\frac2\gamma\ps{Q-\alpha_0,e_1}}B^{(1)}_\gamma(\alpha_0,\frac2\gamma)R_{s_1}(\hat s_1(\alpha_0-\frac2\gamma h_2))\Phi_{\hat s_1(\alpha_0-\gamma h_2)}(\bm c)+\mathfrak{R}_1(z).
		\]
		Here $\mathfrak{R}^1(z)$ is a lower order term which can be processed like before using fusion asymptotics. This reasoning shows that it is a $o\left(\norm{z}^{\frac2\gamma\ps{Q-\alpha_0,e_1}+\eps}\right)$ where $\eps$ is positive and is independent of $\ps{\alpha_0,e_2}$. In particular for $\ps{Q-\alpha_0,e_2}$ close enough to $0$ we see that $\mathfrak{R}^1=o\left(\norm{z}^{\frac2\gamma\ps{Q-\alpha_0,\rho}}\right)$.
		
		We now turn our attention to $\Phi^2(z;\bm c)$. Then we can use the two-dimensional path decomposition for the planar, drifted Brownian motion  $t\mapsto\X_{t+r}(0)-\X_r(0)+(\alpha_0-Q)t$ to write that, using the notations from Subsection~\ref{subsec:brown_cond_neg} 
		\begin{align*}
			&\E\left[e^{-\sum_{i=1}^2e^{\gamma\ps{\bm c,e_i}}I^i_r(0)}\left(e^{-e^{\gamma\ps{\bm c,e_1}}J^1_r(z)}-1\right)\left(e^{-e^{\gamma\ps{\bm c,e_2}}J^2_r}-1\right)\right]\\
			&=\sum_{s\in W_{1,2}}\lambda_s\int_{\mathcal{C}}e^{\ps{\alpha_0-\hat s\alpha_0,\M}}\E\Big[\prod_{i=1}^2e^{-e^{\gamma\ps{\bm c,e_i}}I^i_r(0)}\left(e^{-e^{\gamma\ps{\Lambda_r+\M,e_1}}\mathrm J_r^i(z;-\M)}-1\right)\Big]d\M,
		\end{align*}
		where $\Lambda_r\coloneqq\X_r(0)+(\alpha_0-Q)r+\frac2\gamma h_1\ln\norm{z}+\bm c$, and with the process from Subsection~\ref{subsec:brown_cond_neg} that enters the definition of \[
		\mathrm J_r^i(z; -\M)\coloneqq \int_{0}^{+\infty}e^{\gamma \ps{\B^{\nu}_t,e_i}}\int_0^{2\pi}\norm{\frac{e^{-t-r+i\theta}}z-1}^{2\ps{h_1,e_i}}F_i(e^{-t-r+i\theta})M^{\gamma e_i}_Y(dt+r,d\theta)
		\] having drift $\nu=\alpha_0-Q$ and being started from $-\M$. 
		
		For fixed $s\in W_{1,2}$, we make the change of variable $\M\leftrightarrow \Lambda_r+\M$ to end up with
		\begin{align*}
			\E\Big[e^{\ps{\hat s\alpha_0-\alpha_0,\Lambda_r}}&\int_{\mathcal{C}+\Lambda_r}e^{\ps{\alpha_0-\hat s\alpha_0,\M}}\prod_{i=1}^2e^{-e^{\gamma\ps{\bm c,e_i}}I^i_r(0)}\left(e^{-e^{\gamma\ps{\M,e_i}}\mathrm J^i_r(z;\Lambda_r-\M)}-1\right)\Big]d\M.
		\end{align*}
		As usual the first exponential term is a Girsanov transform whose effect is to shift the law of $(\X(x))_{\norm{x}>e^{-r}}$ by $\left(\hat s\alpha_0-\alpha_0\right) G(0,\cdot)$, and to shift the law of $\Lambda_r$ to that of $$\Lambda^{s}_r\coloneqq \bm c+\X_r(0)+(\hat s\alpha_0-Q)r +\frac2\gamma h_1\ln\norm{z}.$$ 
		This allows to rewrite the latter as
		\begin{align*}
			\norm{z}^{\frac2\gamma\ps{\hat s\alpha_0-\alpha_0,h_1}}e^{\ps{\hat s\alpha_0-\alpha_0,\bm c}}\E\Big[&\int_{\mathcal{C}+\Lambda_r^{s}}e^{\ps{\alpha_0-\hat s\alpha_0,\M}}\prod_{i=1}^2e^{-e^{\gamma\ps{\bm c,e_i}}\hat I^{i}_{s,r}(0)}\left(e^{-e^{\gamma\ps{\M,e_i}}\mathrm J^i_r(z;\Lambda_r-\M)}-1\right)\Big]d\M
		\end{align*}
		where the notation $\hat I^{i}_{s,r}(0)$ means that $\alpha_0$ is replaced by $\hat s\alpha_0$ in the definition of $I^i_r(0)$.
		
		Now ne easily checks that $\hat s\alpha_0-\frac2\gamma h_1-Q$ belongs to the Weyl chamber $\mathcal{C}_-$ as soon as $\ps{Q-\alpha_0,\rho}<\frac2\gamma$, which we assumed to hold. Moreover if $s=s_1s_2$ we see that \[
		\ps{\hat s_1\hat s_2\alpha_0-\hat s_2\hat s_1\alpha_0,\hat s_1\hat s_2\alpha_0-\frac2\gamma h_1-Q}=\ps{\alpha_0-Q,e_1}^2 +\ps{\alpha_0-Q,e_2}\ps{\alpha_0-Q,\rho}+\frac2\gamma\ps{Q-\alpha_0,e_2}
		\]
		so that $\ps{\hat s_1\hat s_2\alpha_0-\hat s_2\hat s_1\alpha_0,\bm c+\Lambda^{s_1s_2}_r}\to+\infty$ almost surely. As a consequence the reasoning conducted in the proof of~\cite[Theorem 4.4]{Toda_correl1} shows that in this asymptotic the term corresponding to $s=s_1s_2$ in the expectation term will be given by
		\[
		\norm{z}^{\frac2\gamma\ps{Q-\alpha_0,\rho}}R_{s_1s_2}(\alpha_0)\Phi_{\hat s_1\hat s_2\alpha_0-\chi h_1}(\bm c)+l.o.t.
		\]
		Therefore it remains to consider the terms that correspond to $s=s_2s_1$ and $s=s_1s_2s_1$. 
		
		For $s=s_2s_1$ the issue is that the limit $\lim\limits_{r\to+\infty}\hat I^{2}_{s,r}(0)$ does not make sense, and therefore the asymptotic is governed by the fusion asymptotics~\eqref{eq:fusion}. They yield the estimate
		\begin{align*}
			\E\Big[&\int_{\mathcal{C}+\Lambda_r^{s}}e^{\ps{\alpha_0-\hat s\alpha_0,\M}}\prod_{i=1}^2e^{-e^{\gamma\ps{\bm c,e_i}}\hat I^{i}_{s,r}(0)}\left(e^{-e^{\gamma\ps{\M,e_i}}\mathrm J^i_r(z;\Lambda_r-\M)}-1\right)\Big]d\M=o\left(e^{-r\left(\frac{\ps{\alpha_0-Q,e_2}^2}{4}-\eta\right)}\right)
		\end{align*}
		for any positive $\eta$. As a consequence by choosing $r$ large enough compared to $-\ln\norm{z}$ we see that this expectation term is a $o\left(\norm{z}^{\frac2\gamma\ps{Q-\alpha_0,e_2}}\right)$. The corresponding term in the expansion of $\Phi^2$ is thus seen to be a $o\left(\norm{z}^{\frac2\gamma\ps{Q-\alpha_0,\rho}}\right)$ as desired.
		
		As for the case where $s=s_1s_2s_1$, the leading term is $\norm{z}^{\frac2\gamma\ps{Q-\alpha_0,\rho}}$ while the expectation term has the same feature in terms of fusion asymptotics. As a consequence it is easily seen to be a lower order term too.
		
		Eventually this shows that for $\alpha_0$ close to $Q$,
		\begin{equation}\label{eq:last_one_i_hope}
			\begin{split}
				\Phi_{\alpha_0,-\frac2\gamma h_1}(z;\bm c)=&\Phi_{\alpha_0-\frac2\gamma h_1}(\bm c)+\norm{z}^{\frac2\gamma\ps{Q-\alpha_0,e_1}}R_{s_1}(\alpha_0)\Phi_{\hat s_1(\alpha_0-\frac2\gamma h_2)}(\bm c)\\
				&+\norm{z}^{\frac2\gamma\ps{Q-\alpha_0,\rho}}R_{s_1s_2}(\alpha_0)\Phi_{\hat s_1\hat s_2(\alpha_0-\frac2\gamma h_3)}(\bm c)+\mathfrak{R}(z)
			\end{split}
		\end{equation}
		where $\mathfrak{R}$ is a lower order term in $z$.
		
		
		\subsubsection{Extending the validity of Equation~\eqref{eq:last_one_i_hope}}
		We have just provided an expansion of $\Phi_{\alpha_0,-\frac2\gamma h_1}(z;\bm c)$ under the assumption that $\alpha_0$ is close to $Q$. Using the \emph{a priori} form of the expansion given by Theorem~\ref{thm:BPZ} we can actually extend the range of values for which this expansion is valid.
		
		Indeed we can use the same type of arguments that we have used in Subsection~\ref{sub:remainder}. Namely along the same lines we now that based on the description of $\mathcal H$ given by Theorem~\ref{thm:BPZ}, the remainder term $\mathfrak{R}(z)$ is such that
		\begin{equation}\label{eq:JPP2}
			\int_{\R^2}\mathfrak{R}(z)d\bm c=\sum_{i=0}^2 \norm{z}^{\frac2\gamma\ps{Q-\alpha_0,\sum_{j=1}^ie_j}}\mathrm{P}_i(z,\bar z;\alpha_0)
		\end{equation}
		with $\mathrm{P}_i(z,\bar z;\alpha_0)$ a power series in $z,\bar z$, as soon as $\mathcal{H}$ makes sense. 
		We will see below that $\mathcal{H}$ can be defined provided that $\gamma>1$ by choosing $\alpha_0$ close to $q\omega_1+\frac2\gamma\omega_2$ with $\ps{\alpha_0,e_2}<\frac2\gamma$. Therefore the expansion~\eqref{eq:JPP2} is valid provided that $\alpha_0$ satisfies such assumptions.
		
		Now the left-hand side as well as the three first terms in the right-hand side in Equation~\eqref{eq:last_one_i_hope} depend analytically on $\ps{\alpha_0,e_2}$ for the whole range of values for which $\ps{\alpha_0,e_2}<q$. As a consequence the power series $\mathrm{P}_i(z,\bar z;\alpha_0)$ are actually analytic in $\ps{\alpha_0,e_2}$ (in a complex neighborhood of) for $\ps{\alpha_0,e_2}<q$, which allows to extend the validity of Equation~\eqref{eq:JPP2} for $\ps{\alpha_0,e_2}$ close to $q$.
		But in that case we have proved that $\mathfrak{R}$ is such that 
		\[
		\mathfrak{R}(z)=o\left(\norm{z}^{\frac2\gamma\ps{Q-\alpha_0,\rho}}\right),
		\]
		so that $\mathrm{P}_i(0,0;\alpha_0)=0$ for $\alpha_0$ close to $Q$. By analycity this equality extends for $\ps{\alpha_0,e_2}<\frac2\gamma$ too, which is the framework where $\mathcal{H}$ makes sense. Therefore the remainder term $\mathfrak{R}$ does not contribute to the identification of the coefficients coming from the distinct expansions of $\mathcal{H}$ given by Theorem~\ref{thm:BPZ} and Lemma~\ref{lemma:OPE3}.

		All in all, we can conclude that Lemma~\ref{lemma:OPE3} does indeed hold.
	\end{proof}
	

	
	\subsection{Conclusion of the proof of Theorem~\ref{thm:OPE}}
	
	\subsubsection{Analycity of the extension}
	To start with let us consider $\alpha_0$ as in Lemma~\ref{lemma:OPE1} in such a way that the set of weights $\bm\alpha=(\alpha_0,-\gamma h_1,\alpha_1,\alpha_\infty)\in\mathcal{A}_4$ with $\alpha_1=\kappa\omega_2$ is non-empty. Note that this is possible for all values of $\gamma$: indeed if $\alpha_0=\frac2\gamma\omega_1+q\omega_2$, $\alpha_\infty=\frac2\gamma\omega_2+q\omega_1$ and $\alpha_1=q\omega_2$ then we have $\ps{\bm s,\omega_1}+\gamma=\frac{1}{3}(\frac{2}{\gamma}-\gamma)$, which is positive since $\gamma\in(0,\sqrt 2)$. Likewise $\ps{\bm s,\omega_2}=\frac23\left(\frac2\gamma -\gamma\right)$ is positive too.
	
	Then for such an $\alpha_0$, we have provided (under the assumptions of Lemma~\ref{lemma:OPE1}) an expansion of the four-point correlation function $\ps{V_{-\gamma h_1}(z)V_{\alpha_0}(0)V_{\alpha_1}(1)V_{\alpha_\infty}(\infty)}$ similar to that of Theorem~\ref{thm:BPZ}. In particular by linear independence of the hypergeometric functions the coefficients that appear in this expansion can be identified, and therefore we see that we have the equalities
	\begin{equation*}
		\begin{split}
			&\frac{C_\gamma(\alpha_0-\gamma h_2,\alpha_1,\alpha_\infty)}{A_\gamma^{(1)}(-\gamma h_1,\alpha_0,\alpha_1,\alpha_\infty)}=\frac{C_\gamma(\alpha_0-\gamma h_1,\alpha_1,\alpha_\infty)}{B_\gamma^{(1)}(\alpha_0,\gamma)}\quad\text{and}\\
			&\frac{R_{s_2}(\hat s_2(\alpha_0-\gamma h_3))C_\gamma(\hat s_2(\alpha_0-\gamma h_3),\alpha_1,\alpha_\infty)}{A_\gamma^{(2)}(-\gamma h_1,\alpha_0,\alpha_1,\alpha_\infty)}=\frac{C_\gamma(\alpha_0-\gamma h_1,\alpha_1,\alpha_\infty)}{B^{(2)}_\gamma(\alpha_0,\gamma)}\cdot
		\end{split}
	\end{equation*}
	Using analycity of three-point correlation functions as well as analycity of $A_\gamma$ and $B$ we can extend the first equality to the whole range of values of $\alpha_0$ such that $(\alpha_0-\gamma h_i,\alpha_1,\alpha_\infty)\in\mathcal{A}_3$ for $i=1,2$ and the second equality for $(\hat s_2(\alpha_0-\gamma h_3),\alpha_1,\alpha_\infty)$ and $(\alpha_0-\gamma h_1,\alpha_1,\alpha_\infty)$ in $\mathcal{A}_3$. 
	
	We can proceed in a similar way by taking $\alpha_0$ as considered in the statements of Lemma~\ref{lemma:OPE2}. Like above the set of $(\alpha_0,-\gamma h_1,\alpha_1,\alpha_\infty)$ satisfying the assumptions of Lemma~\ref{lemma:OPE2} is non-empty for any value of $\gamma\in(0,\sqrt 2)$, by taking $\alpha_0$ close to $q\omega_1+\frac2\gamma\omega_2$, while  $\alpha_\infty=\frac2\gamma\omega_1+q\omega_2$ and $\alpha_1=q\omega_2$. This shows that
	\begin{align*}
		&\frac{R_{s_1}(\hat s_1(\alpha_0-\gamma h_2))C_\gamma(\hat s_1(\alpha_0-\gamma h_2),\alpha_1,\alpha_\infty)}{A_\gamma^{(1)}(-\gamma h_1,\alpha_0,\alpha_1,\alpha_\infty)}=\frac{C_\gamma(\alpha_0-\gamma h_1,\alpha_1,\alpha_\infty)}{B^{(1)}_\gamma(\alpha_0,\gamma)}\quad\text{and}\\
		&\frac{C_\gamma(\alpha_0-\gamma h_3,\alpha_1,\alpha_\infty)}{A_\gamma^{(2)}(-\gamma h_1,\alpha_0,\alpha_1,\alpha_\infty)}=\frac{C_\gamma(\alpha_0-\gamma h_1,\alpha_1,\alpha_\infty)}{B^{(2)}_\gamma(\alpha_0,\gamma)}
	\end{align*}
	as soon as $\alpha_0$ is such that $(\hat s(\alpha_0-\gamma h_i),\alpha_1,\alpha_\infty)$ for $i=2$ or $i=3$ and $(\alpha_0-\chi h_1,\alpha_1,\alpha_\infty)$ belong to $\mathcal{A}_3$.
	
	Now let us consider the map \[
	F:\beta\mapsto C_\gamma(\beta-\gamma e_1,\alpha_1,\alpha_\infty)\frac{A_\gamma^{(1)}(-\gamma h_1,\beta+\gamma h_2,\alpha_1,\alpha_\infty)}{B^{(1)}(\beta+\gamma h_2)}\cdot
	\]
	Then this map is analytic in a complex neighborhood of the set of $\beta$ such that $(\beta-\gamma e_1,\alpha_1,\alpha_\infty)\in\mathcal A_3$. Moreover we have seen above that it coincides with $C_\gamma(\beta,\alpha_1,\alpha_\infty)$ provided that $\beta$ is in addition such that $(\beta,\alpha_1,\alpha_\infty)$ belongs to $\mathcal{A}_3$. Likewise we have proved that it is equal to $R_{s_1}(\hat s_1(\alpha_0-\gamma h_2))C_\gamma(\hat s_1(\alpha_0-\gamma h_2),\alpha_1,\alpha_\infty)$ as soon as $(\hat s_1\beta,\alpha_1,\alpha_\infty)\in\mathcal A_3$.
	This shows that the map defined by setting $R_s(\hat s\beta)C_\gamma(\hat s\beta,\alpha_1,\alpha_\infty)$ where $s\beta\in Q+\mathcal{C}_-$ is actually equal to $F$, and is therefore analytic over a complex neighborhood of the subset of $\beta$ such that $(\beta,\alpha_1,\alpha_\infty)$ or $(\hat s_1\beta,\alpha_1,\alpha_\infty)$ belongs to $\mathcal{A}_3$.
	
	The same reasoning applies when $s=s_2$. Namely we have obtained the equality
	\[
	R_s(\hat s\beta)C_\gamma(\hat s\beta,\alpha_1,\alpha_\infty)=G(\beta),\quad G(\beta)\coloneqq C_\gamma(\beta-\gamma \rho,\alpha_1,\alpha_\infty)\frac{A_\gamma^{(2)}(-\gamma h_1,\beta+\gamma h_3,\alpha_1,\alpha_\infty)}{B^{(2)}(\beta+\gamma h_3)}
	\]
	for $s\in\{Id,s_2\}$ such that $(\hat s\beta,\alpha_1,\alpha_\infty)\in\mathcal{A}_3$, depending on the values of $\alpha_0$ around $\frac2\gamma\omega_1+q\omega_2$. 
	Like above this shows that the map $R_s(\hat s\beta)C(\hat s\beta,\alpha_1,\alpha_\infty)$ is analytic in a complex neighborhood of the subset of $\beta$ such that $(\beta,\alpha_1,\alpha_\infty)$ or $(\hat s_2\beta,\alpha_1,\alpha_\infty)$ belongs to $\mathcal{A}_3$. 
	
	Let us now look at what happens for other elements of the Weyl group $W$. It is readily seen that for any fixed $s\in W$, the map
	\[
	\beta\mapsto R_{s}(\beta)C_\gamma(\hat s\beta,\alpha_1,\alpha_\infty)
	\]
	is analytic in a complex neighborhood of $\mathcal{U}_s(\alpha_1,\alpha_\infty)\coloneqq\{\beta\in\R^2, (\hat s\beta,\alpha_1,\alpha_\infty)\in\mathcal{A}_3\}$. Moreover we have proved above that for $i=1,2$ the map defined by setting 
	\[
	\beta\mapsto R_{s_i}(\hat s\beta)C_\gamma(\hat s_i\hat s\beta,\alpha_1,\alpha_\infty)
	\]
	was analytic in a complex neighborhood of $\mathcal{U}_s(\alpha_1,\alpha_\infty)\cup\mathcal{U}_{s_is}(\alpha_1,\alpha_\infty)$. Because the reflection coefficients enjoy the property that $R_{s_i}(\hat s \beta)R_{s}(\beta)=R_{s_is}(\beta)$ we therefore see that the map from Equation~\eqref{eq:ext_3ptsbis} is actually analytic in a complex neighborhood of $\mathcal{U}_s(\alpha_1,\alpha_\infty)\cup\mathcal{U}_{s_is}(\alpha_1,\alpha_\infty)$. As a consequence this map is seen to be analytic in a complex neighbourhood of
	\[
	\bigcup_{w\in <s_1,s_2>} \mathcal{U}_w(\alpha_1,\alpha_\infty)
	\]
	where $<s_1,s_2>$ is the group generated by $s_1$ and $s_2$. The latter being nothing but $W$, we infer that the map is analytic in a complex neighborhood of $\mathcal{U}(\alpha_1,\alpha_\infty)$. This shows that as desired, the extension of the three-points correlation functions from Equation~\eqref{eq:ext_3ptsbis} is analytic in a
	complex neighborhood of $\mathcal{U}(\alpha_1,\alpha_\infty)$.
	
	\subsubsection{Operator Product Expansions and shift equations}
	Having proved that the extension $\alpha\mapsto R_s(\alpha)C_\gamma(\hat s\alpha,\alpha_1,\alpha_\infty)$ is analytic in a complex neighborhood of $\mathcal{U}(\alpha_1,\alpha_\infty)$, we can denote by $C_\gamma(\alpha,\alpha_1,\alpha_\infty)$ this extension.
	
	Then as explained above, we know that the equalities
	\begin{equation}\label{eq:shift_proba1}
		\frac{C_\gamma(\alpha_0-\gamma h_2,\alpha_1,\alpha_\infty)}{A_\gamma^{(1)}(-\gamma h_1,\alpha_0,\alpha_1,\alpha_\infty)}=\frac{C_\gamma(\alpha_0-\gamma h_1,\alpha_1,\alpha_\infty)}{B^{(1)}_\gamma(\alpha_0,\gamma)}\quad\text{and}
	\end{equation}
	\begin{equation}\label{eq:shift_proba2}
		\frac{C_\gamma(\alpha_0-\gamma h_3,\alpha_1,\alpha_\infty)}{A_\gamma^{(2)}(-\gamma h_1,\alpha_0,\alpha_1,\alpha_\infty)}=\frac{C_\gamma(\alpha_0-\gamma h_1,\alpha_1,\alpha_\infty)}{B^{(2)}_\gamma(\alpha_0,\gamma)}
	\end{equation}
	hold true in some open subset of $\mathcal{U}(\alpha_1,\alpha_\infty)$. By analycity of the left and right-hand sides this equality extends to the whole range of values for which it makes sense. Put differently we recover Equation~\eqref{eq:OPE} in the case where $\chi=\gamma$.
	
	When $\chi=\frac2\gamma$, the same reasoning remains valid. Namely thanks to Lemma~\ref{lemma:OPE3} we know that as soon as the set of $(\alpha_0,-\frac2\gamma h_1,\alpha_1,\alpha_\infty)\in\mathcal{A}_4$ with $\alpha_0$ as in the statement of Lemma~\ref{lemma:OPE3} is non-empty we have the equalities
	\begin{equation}\label{eq:shift_proba3}
		\frac{C_\gamma(\alpha_0-\frac2\gamma h_2,\alpha_1,\alpha_\infty)}{A_\gamma^{(1)}(-\frac2\gamma h_1,\alpha_0,\alpha_1,\alpha_\infty)}=\frac{C_\gamma(\alpha_0-\frac2\gamma h_1,\alpha_1,\alpha_\infty)}{B^{(1)}_\gamma(\alpha_0,\frac2\gamma)}\quad\text{and}
	\end{equation}
	\begin{equation}\label{eq:shift_proba4}
		\frac{C_\gamma(\alpha_0-\frac2\gamma h_3,\alpha_1,\alpha_\infty)}{A_\gamma^{(2)}(-\frac2\gamma h_1,\alpha_0,\alpha_1,\alpha_\infty)}=\frac{C_\gamma(\alpha_0-\frac2\gamma h_1,\alpha_1,\alpha_\infty)}{B^{(2)}_\gamma(\alpha_0,\frac2\gamma)}\cdot
	\end{equation}
	Therefore to conclude for the proof of Theorem~\ref{thm:OPE} it suffices to check that for any fixed value of $\gamma\in(0,\sqrt 2)$, if the set of  $(\alpha_0,-\frac2\gamma h_1,\alpha_1,\alpha_\infty)\in\mathcal{A}_4$ is non-empty then we can find $(\alpha_0,-\frac2\gamma h_1,\alpha_1,\alpha_\infty)$ that meets the requirements of Lemma~\ref{lemma:OPE3}. Now one can check that as soon as $(\alpha_0,-\frac2\gamma h_1,\alpha_1,\alpha_\infty)\in\mathcal{A}_4$, we have $\ps{\bm s,\omega_1}<-\frac{1}{3}\left(\gamma+\frac2\gamma\right)$ so that for $\gamma\leq1$ (which implies that $-\frac{1}{3}\left(\gamma+\frac2\gamma\right)\leq-\gamma$) this set is empty. Conversely if $\gamma>1$ by choosing $\alpha_0=\alpha_\infty=q\omega_1+\frac2\gamma\omega_2$ while $\alpha_1=q\omega_2$ then $\ps{\bm s,\omega_1}=-\frac{1}{3}\left(\gamma+\frac2\gamma\right)>-\gamma$ so that this set is non-empty, and by choosing the weights close to the above choice the assumptions of Lemma~\ref{lemma:OPE3} are fulfilled. This wraps up the proof of Theorem~\ref{thm:OPE}.

	
	
	
	\section{Shift equations and computation of the three-point correlation functions}\label{sec:toda_end}
	This concluding section brings together the building blocks unveiled in the previous sections to provide a proof of the main statement of the present document, Theorem~\ref{thm:main_result}. To do so we first provide some background on the special function $\Upsilon$ and then bring the proof of Theorem~\ref{thm:main_result} to its end.
	
	
	
	\subsection{On the $\Upsilon$ function}
	The expression~\eqref{eq:fali} proposed for the three-point correlation functions involves the special function $\Upsilon$, which is ubiquitous in Liouville theory and more generally in Toda CFTs. One of the reasons why it is so is that it enjoys remarkable shift equations. These shift equations take the form\footnote{Note that the special function considered here differs from the standard expression because of the convention on the length of the simple roots. One recovers the usual expression via the correspondence $\Upsilon(z)=\Upsilon_{\frac{\gamma}{\sqrt 2}}(\frac1{\sqrt 2}z)$}:
	\begin{equation}\label{eq:shift_upsilon}
		\begin{split}
			\Upsilon\left(z+\chi\right)=l\left(\frac\chi2 z\right)\left(\frac\chi{\sqrt 2}\right)^{1-\chi z}\Upsilon\left(z\right),
		\end{split}
	\end{equation}
	valid for $z\in\C$ and $\chi\in\{\gamma,\frac 2\gamma\}$.
	When $0<\mathfrak{R}(z)<q=\gamma+\frac2\gamma$, this special function admits the integral representation
	\begin{equation}
		\ln\Upsilon(z)=\int_0^{+\infty}\left(\left(\frac {q}{2}-z\right)^2\frac{e^{-t}}2-\frac{\left(\text{sinh}\left(\left(\frac {q}2-z\right)\frac t{2\sqrt 2}\right)\right)^2}{\text{sinh}\left(\frac {t\gamma}{2\sqrt 2}\right)\text{sinh}\left(\frac {2\sqrt2t}\gamma\right)}\right)\frac{dt}{t},
	\end{equation}
	while the shift equations~\eqref{eq:shift_upsilon} allow to continue it to an analytic function over $\C$, with no poles and zeros given by the $(-\gamma\N-\frac2\gamma\N)\cup(q+\gamma\N+\frac2\gamma\N)$, and which satisfies $\Upsilon(q-z)=\Upsilon(z)$.

	The formula proposed in~\cite{FaLi1} for $\mathfrak{sl}_n$ Toda three-point correlation functions closely resembles that of the DOZZ formula~\cite{DO94, ZZ96, KRV_DOZZ} for Liouville theory. And actually the DOZZ formula can be recovered from the expression of the $\mathfrak{sl}_3$ Toda three-point correlation functions as the following statement discloses:
	\begin{lemma}\label{lemma:res_correl}
		Assume that $\ps{\bm s,\omega_1}\to0$ while $\ps{\bm s,\omega_2}$ remains positive. Then
		\begin{equation}
			\mathfrak{C}_\gamma(\alpha_0,\alpha_1,\alpha_\infty)\sim \frac1{\sqrt 2\ps{\bm s,\omega_1}}C^{\text{DOZZ}}_{\sqrt 2\gamma}\left(\frac{\ps{ \alpha_0,e_2}}{\sqrt 2},\frac{\ps{ \alpha_1,e_2}}{\sqrt 2},\frac{\ps{ \alpha_\infty,e_2}}{\sqrt 2}\right)\cdot
		\end{equation}
	\end{lemma}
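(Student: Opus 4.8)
The plan is to show that the right-hand side of the Fateev--Litvinov formula~\eqref{eq:fali} develops a \emph{simple pole} along the hyperplane $\ps{\bm s,\omega_1}=0$ and to identify its residue with the DOZZ expression. The first step is to locate the factor responsible for the pole. Writing $p_j\coloneqq\ps{\alpha_0-Q,h_j}$ and $q_k\coloneqq\ps{\alpha_\infty-Q,h_k}$, and using $\alpha_1=\kappa\omega_2$ together with the elementary computation $\ps{\omega_2,h_1}=\frac13$ and $\ps{Q,h_1}=q\ps{\rho,\omega_1}=q$, one checks that
\[
\ps{\bm s,\omega_1}=\ps{\alpha_0+\alpha_1+\alpha_\infty-2Q,h_1}=\frac\kappa3+p_1+q_1,
\]
which is exactly the argument of the $(j,k)=(1,1)$ factor appearing in the denominator of~\eqref{eq:fali}. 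Since $\Upsilon$ vanishes simply at $0$, as $\ps{\bm s,\omega_1}\to0$ this denominator factor behaves like $\Upsilon'(0)\ps{\bm s,\omega_1}+o(\ps{\bm s,\omega_1})$; all the other $\Upsilon$'s, the prefactor and $\Upsilon(\kappa)$ remain bounded and bounded away from zero for generic weights with $\ps{\bm s,\omega_2}>0$, so $\mathfrak{C}_\gamma(\alpha_0,\alpha_1,\alpha_\infty)$ indeed has a simple pole whose leading term is $\frac{\Upsilon'(0)}{\ps{\bm s,\omega_1}}$ times the surviving factors.

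The second step is to account for the constant $\frac1{\sqrt2}$. Here I would use the footnote identity $\Upsilon(z)=\Upsilon_{\gamma/\sqrt2}\!\left(\frac{z}{\sqrt2}\right)$, whose differentiation gives $\Upsilon'(0)=\frac1{\sqrt2}\Upsilon_{\gamma/\sqrt2}'(0)$. One of the two copies of $\Upsilon'(0)^2$ in the numerator cancels with the $\Upsilon'(0)$ extracted from the vanishing factor, leaving a single $\Upsilon'(0)=\frac1{\sqrt2}\Upsilon_{\gamma/\sqrt2}'(0)$; this produces simultaneously the global prefactor $\frac1{\sqrt2}$ and the factor $\Upsilon_{\gamma/\sqrt2}'(0)$, which is precisely the numerator normalisation of the DOZZ formula with Liouville parameter $b=\frac\gamma{\sqrt2}$.

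The third and main step is the termwise matching of what remains against $C^{\mathrm{DOZZ}}_{\sqrt2\gamma}$. I would set $b=\frac\gamma{\sqrt2}$, so that $b^2=\frac{\gamma^2}2$ and $Q_L=b+\frac1b=\frac q{\sqrt2}$, and put $\hat\alpha=\frac{\ps{\cdot,e_2}}{\sqrt2}$ for each of the three weights. The prefactor base matches at once, since $\pi\mu\, l\!\left(\frac{\gamma^2}2\right)\!\left(\frac\gamma{\sqrt2}\right)^{2-\gamma^2}=\pi\mu\, l(b^2)\,b^{2-2b^2}$, and the exponent matches after rewriting $\frac{\ps{2Q-\bar\alpha,\rho}}\gamma$ in terms of the $e_2$-projections via $\ps{\rho,e_2}=1$ and the locus constraint $\ps{\bm s,\omega_1}=0$. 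For the special functions, I would rewrite every surviving $\Upsilon$ through $\Upsilon(w)=\Upsilon_b(w/\sqrt2)$ and the reflection $\Upsilon(q-z)=\Upsilon(z)$ (for instance $\Upsilon(\ps{Q-\alpha_0,e_2})=\Upsilon(q-\sqrt2\,\hat\alpha_0)=\Upsilon_b(\hat\alpha_0)$) and check that all factors genuinely depending on the $e_1$-components of the weights cancel between the seven surviving numerator $\Upsilon$'s ($\Upsilon(\kappa)$ and $\prod_{e\in\Phi^+}\Upsilon(\ps{Q-\alpha_0,e})\Upsilon(\ps{Q-\alpha_\infty,e})$) and the eight surviving denominator $\Upsilon$'s (those indexed by $(j,k)\neq(1,1)$), collapsing the ratio to the three-over-four DOZZ ratio $\frac{\prod_i\Upsilon_b(2\hat\alpha_i)}{\Upsilon_b\!\left(\sum_i\hat\alpha_i-Q_L\right)\prod_i\Upsilon_b\!\left(\sum_j\hat\alpha_j-2\hat\alpha_i\right)}$.

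I expect this last cancellation to be the principal obstacle: it is a purely algebraic but delicate bookkeeping task, relying on the explicit root/weight relations $e_1=h_1-h_2$, $e_2=h_2-h_3$, $e_1+e_2=h_1-h_3$ and $\sum_jh_j=0$ coming from~\eqref{eq:definition_hi}, on repeated use of the functional equation~\eqref{eq:shift_upsilon} and the reflection $\Upsilon(q-z)=\Upsilon(z)$, and crucially on the constraint $\frac\kappa3+p_1+q_1=0$ defining the pole locus, which is what allows the transverse ($e_1$-direction) data to drop out and leave a genuine function of the $e_2$-projections alone. Once both the $\Upsilon$-ratio and the prefactor are matched, the claimed equivalence $\mathfrak{C}_\gamma(\alpha_0,\alpha_1,\alpha_\infty)\sim\frac1{\sqrt2\ps{\bm s,\omega_1}}C^{\mathrm{DOZZ}}_{\sqrt2\gamma}\!\left(\frac{\ps{\alpha_0,e_2}}{\sqrt2},\frac{\ps{\alpha_1,e_2}}{\sqrt2},\frac{\ps{\alpha_\infty,e_2}}{\sqrt2}\right)$ follows.
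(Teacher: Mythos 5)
Your proposal is correct and follows essentially the same route as the paper: identify the $(j,k)=(1,1)$ denominator factor $\Upsilon(\ps{\bm s,\omega_1})$ as the source of the simple pole, linearize it to extract $\Upsilon'(0)\ps{\bm s,\omega_1}$, cancel one copy of $\Upsilon'(0)^2$, and collapse the surviving $\Upsilon$-ratio on the locus $\ps{\bm s,\omega_1}=0$ to the DOZZ expression via the relations $e_i=h_i-h_{i+1}$, the reflection $\Upsilon(q-z)=\Upsilon(z)$, and the convention $\Upsilon(z)=\Upsilon_{\gamma/\sqrt2}(z/\sqrt2)$ (which indeed yields the factor $\frac1{\sqrt2}$). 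The cancellation you flag as the main obstacle does go through exactly as you predict — the factors $(j,k)\in\{(1,2),(1,3),(2,1),(3,1)\}$ cancel the four $e_1$-dependent numerator factors, leaving the three-over-four DOZZ ratio — and the paper itself only asserts this bookkeeping rather than carrying it out.
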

	\begin{proof}
		As $\ps{\bm s,\omega_1}\to0$, the prefactor converges to $\left(\pi\mu l\left(\frac{\gamma^2}{2}\right)\left(\frac{\gamma^2}{2}\right)^{2-\gamma^2}\right)^{\frac{\ps{2Q-\bar\alpha,e_2}}{\gamma}}$ where we have used that $\ps{\bm s,\omega_1}=0$. Likewise by looking at each factor appearing in the product $\prod_{1\leq j,k\leq 3}\displaystyle \Upsilon\left(\frac\kappa 3 + \ps{\alpha_0-Q,h_j})+\ps{\alpha_\infty-Q,h_k}\right)$ one can check that
		\begin{align*}
			\mathfrak{C}_\gamma(\alpha_0,\alpha_1,\alpha_\infty)&\sim\frac{1}{\ps{\bm s,\omega_1}}\left(\pi\mu l\left(\frac{\gamma^2}{2}\right)\left(\frac{\gamma^2}{2}\right)^{2-\gamma^2}\right)^{\frac{\ps{2Q-\bar\alpha,e_2}}{\gamma}}\\
			&\frac{\Upsilon'(0)\Upsilon(\kappa)\Upsilon(\ps{Q-\alpha_0,e_2})\Upsilon(\ps{Q-\alpha_\infty,e_2})}{\Upsilon\left(\frac{\ps{\bar\alpha-2Q,e_2}}2\right)\Upsilon\left(\frac{\ps{\bar\alpha,e_2}}2-\ps{\alpha_0,e_2}\right)\Upsilon\left(\frac{\ps{\bar\alpha,e_2}}2-\ps{\alpha_1,e_2}\right)\Upsilon\left(\frac{\ps{\bar\alpha,e_2}}2-\ps{\alpha_\infty,e_2}\right)}\cdot
		\end{align*}
		Using the fact that $\Upsilon(\ps{Q-\alpha_\infty,e_2})=\Upsilon(\ps{\alpha_\infty,e_2})$, via our convention on the Upsilon function the latter is nothing but the DOZZ formula, up to the normalization factor $\sqrt 2$, which shows that $\mathfrak{C}_\gamma(\alpha_0,\alpha_1,\alpha_\infty)\sim \frac1{\ps{\bm s,\omega_1}}\frac1{\sqrt 2}C^{\text{DOZZ}}_{\sqrt 2\gamma}\left(\frac{\ps{ \alpha_0,e_2}}{\sqrt 2},\frac{\ps{ \alpha_1,e_2}}{\sqrt 2},\frac{\ps{ \alpha_\infty,e_2}}{\sqrt 2}\right)$.
	\end{proof}
	A counterpart statement also holds for the probabilistically defined correlation functions:
	\begin{lemma}\label{lemma:res_correlbis}
		Assume that $\alpha_0,\alpha_\infty \in Q+\mathcal{C}_-$ are close enough to $Q$ so that $\kappa\coloneqq\ps{2Q-\alpha_0-\alpha_\infty,3\omega_1}$ is such that $\kappa<q$. 
		Then, as $\eps\to0$ with $\eps>0$,
		\begin{equation}
			C_\gamma(\alpha_0,(\kappa+\eps)\omega_2,\alpha_\infty)\sim \frac{3}{\sqrt 2\eps}C^{\text{DOZZ}}_{\sqrt 2\gamma}\left(\frac{\ps{ \alpha_0,e_2}}{\sqrt 2},\frac{\ps{ \alpha_1,e_2}}{\sqrt 2},\frac{\ps{ \alpha_\infty,e_2}}{\sqrt 2}\right)\cdot
		\end{equation}
	\end{lemma}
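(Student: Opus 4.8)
The plan is to read off the claimed equivalence directly from the $\bm c$-integral representation of $C_\gamma$, isolating a simple pole in $\eps$ whose residue is a single Liouville correlation function.

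First I would record the algebraic fact that drives everything. With $\alpha_1=(\kappa+\eps)\omega_2$ and $\kappa=\ps{2Q-\alpha_0-\alpha_\infty,3\omega_1}$, one computes
\[
\ps{\bm s,\omega_1}=\ps{\alpha_0+\alpha_\infty-2Q,\omega_1}+(\kappa+\eps)\ps{\omega_2,\omega_1}=-\tfrac\kappa3+\tfrac{\kappa+\eps}3=\tfrac\eps3,
\]
using $\ps{\omega_2,\omega_1}=\tfrac13$. Since $\alpha_0,\alpha_\infty$ are taken close to $Q$ and $\kappa<q$, the remaining Seiberg bounds hold: $\ps{\alpha_k-Q,e_i}<0$ for all $k,i$, and $\ps{\bm s,\omega_2}>0$ stays bounded away from $0$ as $\eps\to0$. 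Hence $(\alpha_0,\alpha_1,\alpha_\infty)$ satisfies the Seiberg bounds and $C_\gamma$ is given by the convergent integral of Equation~\eqref{eq:def_correl_infty},
\[
C_\gamma(\alpha_0,\alpha_1,\alpha_\infty)=\int_{\R^2}e^{\ps{\bm s,\bm c}}\expect{e^{-\mu_1 e^{\gamma\ps{\bm c,e_1}}\tilde I_1(\C)-\mu_2 e^{\gamma\ps{\bm c,e_2}}\tilde I_2(\C)}}d\bm c,
\]
the prefactor $\prod_{j<k}\norm{z_j-z_k}^{-\ps{\alpha_j,\alpha_k}}$ being trivial for insertions at $0,1,\infty$.

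Next I would pass to the coordinates $t_i\coloneqq\ps{\bm c,e_i}$, in which $\ps{\bm s,\bm c}=\ps{\bm s,\omega_1}t_1+\ps{\bm s,\omega_2}t_2$ and $e^{\gamma\ps{\bm c,e_i}}=e^{\gamma t_i}$, and extract the pole coming from the region $t_1\to-\infty$. There the factor $e^{-\mu_1 e^{\gamma t_1}\tilde I_1(\C)}\to1$ pointwise, so setting $h(t_1)\coloneqq\int_\R e^{\ps{\bm s,\omega_2}t_2}\expect{e^{-\mu_1 e^{\gamma t_1}\tilde I_1-\mu_2 e^{\gamma t_2}\tilde I_2}}dt_2$ one has $h(t_1)\to h(-\infty)=\int_\R e^{\ps{\bm s,\omega_2}t_2}\expect{e^{-\mu_2 e^{\gamma t_2}\tilde I_2(\C)}}dt_2$, and an Abelian/Laplace argument gives $\int_\R e^{\frac\eps3 t_1}h(t_1)\,dt_1\sim\frac3\eps\,h(-\infty)$ as $\eps\to0^+$. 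The substantive point here --- and the main obstacle --- is to justify this interchange of limit and integral rigorously: I must dominate the integrand uniformly in $\eps$, controlling the $t_1\to+\infty$ tail through the superexponential decay of $e^{-\mu_1 e^{\gamma t_1}\tilde I_1}$ (equivalently the negative moments of $\tilde I_1(\C)$ supplied by~\eqref{eq:moments_I}), and controlling the inner $t_2$-integral uniformly using the Liouville integrability $\ps{\bm s,\omega_2}>0$, which persists in the limit. The correlation between $\tilde I_1(\C)$ and $\tilde I_2(\C)$ makes the dominated-convergence step delicate but does not obstruct it, since the $\mu_1$-term vanishes pointwise as $t_1\to-\infty$.

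Finally I would identify the residue $h(-\infty)$ with the Liouville DOZZ formula. The measure $M^{\gamma e_2}$ is a Gaussian multiplicative chaos of parameter $\gamma_L\coloneqq\sqrt2\gamma$, because $\ps{e_2,e_2}=2$ doubles the variance; under the matching $\alpha_k^L\coloneqq\frac{\ps{\alpha_k,e_2}}{\sqrt2}$ one has $\gamma\ps{\alpha_k,e_2}=\gamma_L\alpha_k^L$ and $\tfrac{\gamma_L}2+\tfrac2{\gamma_L}=\tfrac{q}{\sqrt2}=\frac{\ps{Q,e_2}}{\sqrt2}$, so that $\tilde I_2(\C)$ is exactly the Liouville integral $\rho_L$ and, after the zero-mode rescaling $t_2=\sqrt2\,c$, the quantity $h(-\infty)$ becomes $\sqrt2\int_\R e^{s_L c}\expect{e^{-\mu_2 e^{\gamma_L c}\rho_L}}dc$ with $s_L=\frac1{\sqrt2}\ps{\bm s,e_2}\to\sqrt2\ps{\bm s,\omega_2}$ as $\eps\to0$ (using $e_2=-\omega_1+2\omega_2$). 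Since $s_L>0$ here, no reflection terms arise, and Corollary~\ref{cor:DOZZ} identifies this integral with $C^{\text{DOZZ}}_{\sqrt2\gamma}(\alpha_0^L,\alpha_1^L,\alpha_\infty^L)$. Collecting the residue $\frac3\eps$ from $\ps{\bm s,\omega_1}=\frac\eps3$, the Jacobian of the linear change of variables $\bm c\mapsto(t_1,t_2)$, and the rescaling $t_2=\sqrt2\,c$ produces the overall prefactor $\frac{3}{\sqrt2\,\eps}$, which is the asserted equivalence. I would take particular care over these normalization constants, since it is precisely this residue --- matched against the one computed from the Fateev--Litvinov formula in Lemma~\ref{lemma:res_correl} --- that pins down the multiplicative constant in Theorem~\ref{thm:main_result}.
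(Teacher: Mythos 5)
Your strategy is, in substance, the paper's own: both arguments isolate the simple pole produced by $\ps{\bm s,\omega_1}=\eps/3\to0$ and identify the residue with a Liouville quantity through the dictionary $\gamma_L=\sqrt2\gamma$, $\alpha_k^L=\ps{\alpha_k,e_2}/\sqrt2$, $Q_L=q/\sqrt2$, together with the key check that $\ps{\bm s,e_2}=2\ps{\bm s,\omega_2}$ precisely when $\ps{\bm s,\omega_1}=0$ (your computations of these facts, and your verification of the Seiberg bounds for small $\eps$, are correct). The difference is only in execution: the paper starts from the representation \eqref{eq:correl_1bis}, in which the $\bm c$-integral has already been performed conditionally on the GMC, so the pole is read off from $\Gamma\big(\tfrac{\eps}{3\gamma}\big)\sim\tfrac{3\gamma}{\eps}$ and the only remaining limit interchange is dominated convergence inside a single expectation, namely $\tilde I_1(\C)^{-\eps/(3\gamma)}\to1$, controlled by negative moments of GMC. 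Your Abelian/Laplace argument on the raw $\bm c$-integral of \eqref{eq:def_correl_infty} proves the same thing --- and your domination plan (monotonicity of $e^{-\mu_1e^{\gamma t_1}\tilde I_1}$ in $t_1$ on one side, superexponential decay via negative moments on the other) is sound --- but it re-derives by hand what the Gamma function gives exactly.

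The genuine soft spot is the constant, and here it is not a detail: as you yourself note, this lemma exists precisely so that, matched against Lemma~\ref{lemma:res_correl}, it pins the multiplicative constant $a_\gamma(\alpha_1,\alpha_\infty)$ in Theorem~\ref{thm:main_result} to $1$. The three factors you enumerate do not multiply out to $\tfrac{3}{\sqrt2\,\eps}$. If $d\bm c$ is Euclidean Lebesgue measure on $\R^2$, the change of variables $t_i=\ps{\bm c,e_i}$ carries the Jacobian $d\bm c=\tfrac1{\sqrt3}\,dt_1\,dt_2$ (since $\det A=3$), and the zero-mode matching $\gamma t_2=\gamma_L c$, i.e.\ $t_2=\sqrt2\,c$, contributes $dt_2=\sqrt2\,dc$; your residue would then be $\tfrac3\eps\cdot\tfrac{1}{\sqrt3}\cdot\sqrt2=\tfrac{\sqrt6}{\eps}$ times the Liouville $c$-integral. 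If instead one adopts the convention under which \eqref{eq:correl_1} and \eqref{eq:correl_1bis} are consistent as written (no $\sqrt3$), one gets $\tfrac{3\sqrt2}{\eps}$ times that integral. Neither coincides with the stated prefactor until one states precisely which normalization of the probabilistic DOZZ representation (the $c$-integral of Corollary~\ref{cor:DOZZ} versus the KRV correlation function, which differ by convention-dependent factors) is being equated with $C^{\text{DOZZ}}_{\sqrt2\gamma}$. So your closing sentence ("produces the overall prefactor $\tfrac{3}{\sqrt2\,\eps}$") is an assertion, not a computation, and it is exactly the computation the lemma is for. To repair it, follow the paper's route: use \eqref{eq:correl_1bis} so that no Jacobian ever enters, write the residue as $\tfrac3\eps\cdot\tfrac{\Gamma(\ps{\bm s,\omega_2}/\gamma)\,\mu_2^{-\ps{\bm s,\omega_2}/\gamma}}{\gamma}\,\E\big[\tilde I_2(\C)^{-\ps{\bm s,\omega_2}/\gamma}\big]$, and then carry out the normalization matching between $\tfrac1\gamma$ and $\tfrac1{\gamma_L}=\tfrac1{\sqrt2\gamma}$ explicitly, with every factor accounted for; this is where the $\sqrt2$ in the statement must come from.
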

	\begin{proof}
		Under the assumptions made on the weights, we see that the three-point correlation functions $C_\gamma(\alpha_0,(\kappa+\eps)\omega_2,\alpha_\infty)$ admits the probabilistic representation
		\[
		\left ( \prod_{i=1}^{2} \frac{\Gamma \left(\frac{\ps{\bm s,\omega_i}}{\gamma}\right)\mu_i^{-\frac{\ps{\bm s,\omega_i}}{\gamma}}}\gamma   \right )\E \left [      \prod_{i=1}^{2}\left(\int_\C \frac{\norm{1-x_i}^{-\gamma\ps{\alpha_1,e_i}}}{\norm{x_i}^{\gamma\ps{\alpha_0,e_i}}}\norm{x_i}_+^{\gamma\ps{\alpha_0+\alpha_1+\alpha_\infty,e_i}}M^{\gamma e_i}(d^2x_i)\right)^{-\frac{\ps{\bm s,\omega_i}}{\gamma}} \right ],
		\]
		where $\ps{\bm s,\omega_1}=\frac\eps3$ while $ \ps{\bm s,\omega_2}$ converges towards $\ps{2Q-\alpha_0-\alpha_\infty,e_1}$. Therefore as $\eps\to 0$ we see that $C_\gamma(\alpha_0,(\kappa+\eps)\omega_2,\alpha_\infty)$ is asymptotically equivalent to
		\[
		\frac{3}{\eps} \frac{\Gamma \left(\frac{\ps{\bm s,\omega_2}}{\gamma}\right)\mu_i^{-\frac{\ps{\bm s,\omega_2}}{\gamma}}}\gamma \E \left [      \left(\int_\C \frac{\norm{1-x_2}^{-\gamma\kappa }}{\norm{x_2}^{\gamma\ps{\alpha_0,e_2}}}\norm{x_2}_+^{\gamma\ps{\alpha_0+\alpha_1+\alpha_\infty,e_2}}M^{\gamma e_2}(d^2x_2)\right)^{-\frac{\ps{\bm s,\omega_2}}{\gamma}} \right].
		\]
		To check that the expectation term does indeed coincide with the probabilistic representation of the DOZZ formula proved in~\cite{KRV_DOZZ}, it remains to ensure that $2\ps{\bm s,\omega_2}=\ps{\alpha_0+\alpha_1+\alpha_\infty-2Q,e_2}$ where the factor $2$ comes from the fact that the simple root $e_2$ has norm $\sqrt 2$. Using the fact that $2\omega_2-e_2=\omega_1$ the above equality boils down to $\ps{\alpha_0+\alpha_1+\alpha_\infty-2Q,\omega_1}=0$, which was our assumption.
	\end{proof}
	The second statement about the proposed expression for the three-point correlation function is concerned with shift equations, which take the form:
	\begin{equation}\label{eq:shift_fali}
		\frac{\mathfrak{C}_\gamma(\alpha_0-\chi h_{i+1},\alpha_1,\alpha_\infty)}{\mathfrak{C}_\gamma(\alpha_0-\chi h_1,\alpha_1,\alpha_\infty)}=\frac{A_\gamma^{(i)}(-\chi h_1,\alpha_0,\alpha_1,\alpha_\infty)}{B^{(i)}(\alpha_0)}\quad\text{for $i=1,2$ and $\chi\in\{\gamma,\frac2\gamma\}$}.
	\end{equation}
	Equation~\eqref{eq:shift_fali} follows from the shift equation~\eqref{eq:shift_upsilon} of the $\Upsilon$ function after some elementary but lengthy computations.
	

	
	\subsection{From shift equations to three-point correlation functions}
	We are now in position to conclude for the proof of our main statement, Theorem~\ref{thm:main_result}. Indeed under the assumption that $\gamma>1$, we have already seen along the proof of Theorem~\ref{thm:OPE} that the set of weights $\bm\alpha=(\alpha,\alpha_0,\alpha_1,\alpha_\infty)\in\mathcal{A}_4$ such that $\alpha=-\chi h_1$ with $\chi\in\{\gamma,\frac2\gamma\}$ and $\alpha_1=\kappa\omega_2$ for $\kappa<q$ is non-empty and open in $\mathcal{A}_4$.
	
	Now we have seen in Theorem~\ref{thm:BPZ} that for such weights, the associated four-point correlation functions can be expressed in terms of hypergeometric functions as follows:
	\begin{equation}
		\begin{split}
			&\ps{V_{\alpha}(z)V_{\alpha_0}(0)V_{\alpha_1}(1)V_{\alpha_\infty}(\infty)}=\norm{z}^{\chi\ps{h_1,\alpha_0}}\norm{z-1}^{\frac{\chi\kappa}3}\mathcal H(z),\quad\text{where}\\
			&\mathcal H(z)=C_\gamma(\alpha+\alpha_0,\alpha_1,\alpha_\infty)\left(\norm{\mathcal H_0(z)}^2+\sum_{i=1}^2A_\gamma^{(i)}(\alpha,\alpha_0,\alpha_1,\alpha_\infty)\norm{\mathcal H_i(z)}^2\right).
		\end{split}
	\end{equation}
	
	On the other hand we proved in Theorem~\ref{thm:OPE} that under the same assumptions on the weights, this function $\mathcal{H}$ could be expressed using different coefficients that involve three-point correlation functions: 
	\begin{equation*}
		\begin{split}
			&\ps{V_{\alpha}(z)V_{\alpha_0}(0)V_{\alpha_1}(1)V_{\alpha_\infty}(\infty)}=\norm{z}^{\chi\ps{h_1,\alpha_0}}\norm{z-1}^{\frac{\chi\kappa}3}\mathcal H(z),\quad\text{with}\\
			&\mathcal H(z)=\sum_{i=1}^3B_\gamma^{(i-1)}(\alpha_0,\chi)C_\gamma(\alpha_0-\chi h_i,\alpha_1,\alpha_\infty)\norm{\mathcal H_{i-1}(z)}^2\end{split}
	\end{equation*}
	and where $C_\gamma(\alpha_0-\chi h_i,\alpha_1,\alpha_\infty)$ denotes the extension of $C_\gamma$ defined by setting $C_\gamma(\alpha,\alpha_1,\alpha_\infty)\coloneqq R_s(\alpha)C_\gamma(\hat s\alpha,\alpha_1,\alpha_\infty)$ where $s\in W$ is such that $\hat s\alpha\in Q+\mathcal{C}_-$. This extension is analytic in virtue of Theorem~\ref{thm:OPE}.
	
	Combining these two equalities and because the hypergeometric functions are linearly independent, we get for $i=1,2$ and $\chi\in\{\gamma,\frac2\gamma\}$, the following shift equations:
	\begin{equation}
		\begin{split}
			&\frac{C_\gamma(\alpha_0-\chi h_{i+1},\alpha_1,\alpha_\infty)}{C_\gamma(\alpha_0-\chi h_1,\alpha_1,\alpha_\infty)}=\frac{A_\gamma^{(i)}(-\chi h_1,\alpha_0,\alpha_1,\alpha_\infty)}{B^{(i)}(\alpha_0)}\cdot
		\end{split}
	\end{equation}
	In particular this shift equation allows to extend the map $\alpha\mapsto C_\gamma(\alpha,\alpha_1,\alpha_\infty)$ to an open complex neighborhood of $\R^2$ of the form $\R^2\times (-\delta,\delta)^2$ on which it is analytic.
	
	Now we have seen before that the expression $\FaLi$ proposed for the three-point correlation functions satisfies the very same set of shift equations. As a consequence the map defined by setting $\alpha\mapsto\frac{\mathfrak{C}_\gamma(\alpha,\alpha_1,\alpha_\infty)}{C_\gamma(\alpha,\alpha_1,\alpha_\infty)}$ is analytic over $\R^2\times (-\delta,\delta)^2$ and is periodic with periods $\chi e_i$ for $\chi\in\{\gamma,\frac2\gamma\}$ and $i=1,2$. 
	Now as soon as $\frac{\gamma^2}{2}$ is not a rational number, it is readily seen that the set $\gamma\Z e_1+\gamma\Z e_2+\frac2\gamma\Z e_1+\frac2\gamma\Z e_2$ is dense in $\R^2$. Therefore under this assumption that $\frac{\gamma^2}{2}\not\in\Q$, we get that $C_\gamma(\alpha_0,\alpha_1,\alpha_\infty) = a_\gamma(\alpha_1,\alpha_\infty)\mathfrak{C}_\gamma(\alpha_0,\alpha_1,\alpha_\infty)$ for some constant $a_\gamma(\alpha_1,\alpha_\infty)$ independent of $\alpha_0$. By symmetry in the weights $\alpha$ we see that this constant is actually independent of these three variables. It can be evaluated using Lemmas~\ref{lemma:res_correl} and~\ref{lemma:res_correlbis}, thanks to which it is found to be equal to $1$.
	Therefore we have proved that $C_\gamma(\alpha_0,\alpha_1,\alpha_\infty) = \mathfrak{C}_\gamma(\alpha_0,\alpha_1,\alpha_\infty)$ for $\frac{\gamma^2}{2}\in(1,\sqrt 2)\setminus\Q$. Because both quantities are continuous in the variable $\gamma$ this equality extends to all values of $\gamma\in[1,\sqrt2)$. This concludes for the proof of our main statement, Theorem~\ref{thm:main_result}.


	
	\bibliography{biblio}
	\bibliographystyle{plain}

\end{document}